\numberwithin{equation}{section}
\theoremstyle{plain}
\newtheorem{theorem}[equation]{Theorem}
\newtheorem{proposition}[equation]{Proposition}
\newtheorem{lemma}[equation]{Lemma}
\newtheorem{corollary}[equation]{Corollary}
\theoremstyle{definition}
\newtheorem{definition}[equation]{Definition}
\newtheorem{example}[equation]{Example}
\newtheorem{remark}[equation]{Remark}
\newtheorem{warning}[equation]{Warning}
\title{logarithmic de Rham comparison for open rigid spaces}
\author{Shizhang Li}
\address{Department of Mathematics, Columbia University, MC 4406, 2990 Broadway,
New York, NY, 10027, U.S.}
\email{shanbei@math.columbia.edu}
\author{Xuanyu Pan}
\address{Institute of Mathematics, AMSS, Chinese Academy of Sciences, 55 ZhongGuanCun East Road, Beijing, 100190, China and Max Planck Institute for Mathematics, Vivatsgasse 7, Bonn, Germany 53111}
\email{pan@amss.ac.cn}
\email{panxuanyu@mpim-bonn.mpg.de}
\date{\today}
\thanks{${}^{\dagger}$ The second named author is partially supported by NSFC No. 11688101.}
\begin{document}

\maketitle

\begin{abstract}
  In this note, we prove the logarithmic $p$-adic comparison theorem for open
  rigid analytic varieties. We prove that a smooth rigid analytic variety with a
  strict simple normal crossing divisor is locally $K(\pi,1)$ (in a certain
  sense) with respect to $\mathbb{F}_p$-local systems and ramified coverings
  along the divisor. We follow Scholze's method to produce a pro-version of the
  Faltings site and use this site to prove a primitive comparison theorem in our
  setting. After introducing period sheaves in our setting, we prove aforesaid
  comparison theorem.
\end{abstract}

\tableofcontents

% * Introduction
\section{Introduction}

Historically, classical Hodge theory was developed from Hodge's results up
through Deligne's papers on mixed Hodge structures in the early 1970's. The
famous decomposition theorem is the following.
\begin{theorem}[Hodge, Deligne]
Let $X$ be a smooth proper variety over complex numbers $\mathbb{C}$ 
with a strict simple normal crossing divisor $D$. Then we have
\[
  H^m_{sing}(X-D,\mathbb{Z})\otimes_{\mathbb{Z}} \mathbb{C} \cong
  H^m(X,\Omega_X^{\bullet}(\log D)) \cong \bigoplus\limits_{i+j=m}
  H^i(X,\Omega_X^j(\log D)),
\]
where $\Omega_{X}^j(\log D)$ is the sheaf of $j$-forms with logarithmic
singularities along $D$ on $X$.
\end{theorem}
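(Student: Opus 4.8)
The plan is to prove the two displayed isomorphisms separately. The first identifies the hypercohomology of the logarithmic de Rham complex with the singular cohomology of the open variety $U := X - D$, while the second is the degeneration at $E_1$ of the Hodge-to-de Rham spectral sequence attached to the stupid filtration on $\Omega_X^\bullet(\log D)$. Throughout I would pass to the complex-analytic category: since $X$ is proper, GAGA identifies each coherent cohomology group $H^i(X,\Omega_X^j(\log D))$ and the hypercohomology $H^m(X,\Omega_X^\bullet(\log D))$ with their analytic counterparts on $X^{an}$, so it suffices to work with the associated complex manifold and its closed submanifold $D^{an}$.

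\textbf{First isomorphism.} Let $j : U^{an} \hookrightarrow X^{an}$ be the open immersion. The content here is the classical comparison, due to Grothendieck and Deligne, that the inclusion $\Omega_X^\bullet(\log D) \hookrightarrow j_*\Omega_U^\bullet$ induces a quasi-isomorphism $\Omega_X^\bullet(\log D) \xrightarrow{\sim} Rj_*\mathbb{C}_U$. This is a local assertion, which I would check on a polydisc chart $\Delta^n$ in which $D = \{z_1\cdots z_k = 0\}$, so that $U$ becomes $(\Delta^*)^k \times \Delta^{n-k}$; there the cohomology is the exterior algebra generated by the classes of the logarithmic forms $\tfrac{dz_1}{z_1},\dots,\tfrac{dz_k}{z_k}$, and a Koszul-type homotopy shows that precisely these forms realize $Rj_*\mathbb{C}$. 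Taking hypercohomology and invoking the analytic de Rham theorem then yields
\[
H^m(X,\Omega_X^\bullet(\log D)) \cong H^m(X^{an}, Rj_*\mathbb{C}_U) = H^m(U^{an},\mathbb{C}) \cong H^m_{sing}(X-D,\mathbb{Z})\otimes_{\mathbb{Z}}\mathbb{C},
\]
the final step being the universal-coefficient comparison, which is an isomorphism after $\otimes\,\mathbb{C}$.

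\textbf{Second isomorphism.} The stupid filtration $F^p = \sigma_{\geq p}\Omega_X^\bullet(\log D)$ produces a spectral sequence $E_1^{p,q} = H^q(X,\Omega_X^p(\log D)) \Rightarrow H^{p+q}(X,\Omega_X^\bullet(\log D))$, and the desired decomposition is equivalent to its degeneration at $E_1$. To obtain this I would introduce, following Deligne, the weight filtration $W$ on $\Omega_X^\bullet(\log D)$ by the number of logarithmic factors. Writing $a_p : D^{(p)} \to X$ for the normalization of the $p$-fold intersection locus (with $D^{(0)} = X$), the Poincaré residue map identifies $\mathrm{Gr}^W_p\,\Omega_X^\bullet(\log D) \cong (a_p)_*\Omega_{D^{(p)}}^\bullet[-p]$. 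Since each $D^{(p)}$ is smooth and proper, classical Hodge theory applies to these graded pieces, the $d_1$-differentials of the associated weight spectral sequence are Gysin maps and hence morphisms of pure Hodge structures, and strictness of such morphisms forces the weight spectral sequence to degenerate at $E_2$. The compatibility of $W$ with $F$ then transfers this to $F$-degeneration at $E_1$, and bigrading the resulting filtered pieces by the Hodge decompositions on the $D^{(p)}$ produces the direct-sum decomposition in the statement.

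\textbf{Main obstacle.} The first isomorphism is essentially formal once the local residue computation is in hand; the genuine difficulty is the degeneration in the second isomorphism. Unlike the compact case, one cannot argue purely by harmonic theory on the non-compact $U$, so the degeneration is forced only after building the full weight-and-Hodge bifiltered structure and exploiting strictness of morphisms of mixed Hodge structures (or, alternatively, a logarithmic analogue of the Deligne--Illusie characteristic-$p$ lifting argument). Keeping track of the Tate twists in the residue and Gysin maps, and verifying the compatibility of $W$ and $F$ that transfers $E_2$-degeneration of the weight sequence to $E_1$-degeneration of the Hodge sequence, is where the substantive work lies.
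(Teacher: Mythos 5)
The paper never proves this statement: it is quoted in the introduction as classical background, attributed to Hodge and Deligne, with the actual argument residing in Deligne's \emph{Th\'eorie de Hodge II}. So there is no in-paper proof to compare against; the relevant comparison is with the classical argument behind the citation, and your outline is exactly that argument. Your first step (GAGA, then the local quasi-isomorphism $\Omega^\bullet_X(\log D)\to Rj_*\mathbb{C}_U$ checked on a polydisc chart where $U=(\Delta^*)^k\times\Delta^{n-k}$, the cohomology being the exterior algebra on the classes $dz_i/z_i$) is the standard Grothendieck--Deligne comparison, and your second step (weight filtration by number of logarithmic factors, the residue identification $\mathrm{Gr}^W_p\,\Omega^\bullet_X(\log D)\cong (a_p)_*\Omega^\bullet_{D^{(p)}}[-p]$, degeneration of the weight spectral sequence at $E_2$ and of the Hodge spectral sequence at $E_1$) is Deligne's proof of the degeneration. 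This is the correct and, historically, the intended route.

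One small imprecision worth fixing: the weight spectral sequence does not degenerate at $E_2$ because $d_1$ is strict. Rather, with the standard indexing the term $E_1^{-m,n+m}\cong H^{n-m}(D^{(m)},\mathbb{Q})(-m)$ is pure of weight $n+m$, so for $r\geq 2$ the differential $d_r$ is a morphism between pure Hodge structures of \emph{different} weights and therefore vanishes --- but to know that $d_r$ is a morphism of Hodge structures at all (and to transfer $E_2$-degeneration of the $W$-sequence into $E_1$-degeneration of the $F$-sequence) one needs Deligne's two-filtrations lemma, i.e.\ the verification that the three natural ways $F$ induces filtrations on ${}_WE_r$ coincide. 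You correctly flag this compatibility as the substantive work, so the gap is one of emphasis rather than of substance; the sketch is sound.
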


The $p$-adic Hodge theory properly began around 1966 when Tate~\cite[p.~180
Remark]{p-divisible} proved a $p$-adic version of the comparison theorem for an
abelian variety of good reduction over a $p$-adic field. After the works of
Fontaine, Messing, Bloch, Kato, et al., Faltings proved the following.

\begin{theorem}
  \label{faltings} \cite{Fal} Let $X$ be a smooth proper variety over a $p$-adic
  field $k$ with a strict simple normal crossing divisor $D$. Then there exists
  a $\mathrm{Gal}(\overline{k}/k)$-equivariant isomorphism
\[
  H^m_{\textnormal{\'et}}((X-D)_{\overline{K}},\mathbb{Q}_p)\otimes_{\mathbb{Q}_p}
  \mathbb{C}_{p} \cong \bigoplus\limits_{i+j=m} H^i(X,\Omega_X^j(\log
  D))\otimes_K \mathbb{C}_p(-j),
\]
where $\Omega_{X}^j(\log D)$ is the sheaf of $j$-forms with logarithmic
singularities along $D$ on $X$ and $\mathbb{C}_p=\hat{\overline{\mathbb{Q}}}_p$.
\end{theorem}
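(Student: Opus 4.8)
The plan is to follow Scholze's perfectoid method, carried out on the logarithmic pro-étale site (the pro-version of the Faltings site of the abstract). First I would pass to the analytic category: replace $X$ by the associated adic space over $\mathrm{Spa}(K)$, replace $D$ by the induced strict simple normal crossing divisor, and equip this space with the log structure determined by $D$. By rigid-analytic GAGA the coherent groups $H^a\bigl(X,\Omega_X^b(\log D)\bigr)$ are unchanged, and by Huber's comparison between algebraic and analytic étale cohomology the left-hand side is computed by the $\mathbb{F}_p$- (hence $\mathbb{Q}_p$-) étale cohomology of the open adic space $(X-D)_{\overline{K}}$. Everything is thereby reduced to a statement about the log-adic space attached to $(X,D)$.

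The second step is a primitive comparison theorem. Using the local $K(\pi,1)$ property announced in the abstract---that étale-locally the complement of $D$, together with its Kummer (ramified) coverings along the divisor, carries no $\mathbb{F}_p$-cohomology beyond that of a standard torus---and the almost purity theorem in the logarithmic setting, I would identify the boundary charts with relative perfectoid Kummer towers and deduce an almost isomorphism
\[
H^m_{\textnormal{\'et}}\bigl((X-D)_{\overline{K}},\mathbb{F}_p\bigr)\otimes_{\mathbb{F}_p}\mathcal{O}_{\mathbb{C}_p}^+/p \;\cong\; H^m\bigl((X,D),\,\mathcal{O}^+/p\bigr),
\]
where the right-hand side is the cohomology of the integral completed structure sheaf on the pro-version of the Faltings site. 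Passing to the limit along Frobenius and inverting $p$ yields finiteness of the $\mathbb{Q}_p$-cohomology of the open variety and, after tensoring with $\mathbb{C}_p$, an identification of the left-hand side of the theorem with the cohomology of the completed log structure sheaf $\widehat{\mathcal{O}}$.

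For the third step I would introduce the logarithmic period sheaves $\mathcal{O}\mathbb{B}_{\mathrm{dR}}^{+}$ and $\mathcal{O}\mathbb{B}_{\mathrm{dR}}$ together with their log connection $\nabla$, and prove a logarithmic Poincaré lemma: the de Rham complex $\mathcal{O}\mathbb{B}_{\mathrm{dR}}\otimes\Omega^{\bullet}(\log D)$ with differential $\nabla$ resolves the constant period sheaf $\mathbb{B}_{\mathrm{dR}}$. Taking cohomology gives the filtered, Galois-equivariant logarithmic de Rham comparison relating $H^m_{\textnormal{\'et}}\bigl((X-D)_{\overline{K}},\mathbb{Q}_p\bigr)\otimes B_{\mathrm{dR}}$ to the log de Rham cohomology $H^m\bigl(X,\Omega_X^{\bullet}(\log D)\bigr)\otimes B_{\mathrm{dR}}$. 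Passing to the associated graded then produces the Hodge--Tate spectral sequence
\[
E_2^{a,b}=H^a\bigl(X,\Omega_X^b(\log D)\bigr)\otimes_K\mathbb{C}_p(-b)\;\Longrightarrow\;H^{a+b}_{\textnormal{\'et}}\bigl((X-D)_{\overline{K}},\mathbb{Q}_p\bigr)\otimes_{\mathbb{Q}_p}\mathbb{C}_p,
\]
the graded pieces of $\mathcal{O}\mathbb{B}_{\mathrm{dR}}$ contributing exactly the twists $\Omega_X^b(\log D)(-b)$. Degeneration at $E_2$ and the splitting of the abutment follow from Tate's weight argument: any nonzero differential or extension would be a $\mathrm{Gal}(\overline{K}/K)$-equivariant map between $\mathbb{C}_p$-twists of distinct weights, which must vanish; this yields the asserted $\mathrm{Gal}(\overline{K}/K)$-equivariant direct sum decomposition.

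The main obstacle lies in the boundary contributions of the second and third steps. Establishing the local $K(\pi,1)$ statement and almost purity along $D$ forces one to analyze the Kummer-étale coverings that ramify over the divisor and to verify that the associated towers are perfectoid; likewise, the construction of the log period sheaves and the proof of the logarithmic Poincaré lemma hinge on controlling the log connection near $D$, whose residues along the components of the divisor are precisely what produce the extra $\log D$ terms in $\Omega_X^b(\log D)$. Once these local computations are secured, the global assembly and the degeneration argument proceed formally as in the proper, non-logarithmic case treated by Scholze.
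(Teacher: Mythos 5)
Your proposal is correct and follows the same architecture as the paper: analytification, the Faltings site $X_{\log}$ with the local $K(\pi,1)$ property proved via rigid Abhyankar plus the Gysin sequence, the pro-site with its affinoid perfectoid basis of Kummer towers along $D$, the primitive comparison via tilting and the Artin--Schreier sequence, and the sheaf $\mathcal{O}\mathbb{B}^+_{\mathrm{logdR}}$ with its logarithmic Poincar\'e lemma. (Note that the paper never proves this statement directly --- it cites Faltings and establishes the rigid-analytic generalization, Theorems \ref{first comparison} and \ref{second comparison}; your GAGA/Huber reduction is exactly the bridge the paper leaves implicit.) The one genuinely different step is the endgame. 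You form a Hodge--Tate spectral sequence and kill its differentials and extensions by Tate's vanishing of $H^i\bigl(\mathrm{Gal}(\bar{k}/k),\mathbb{C}_p(j)\bigr)$ for $j \neq 0$; this is valid since $k$ is discretely valued with perfect residue field, and it spares you any $B_{\mathrm{dR}}^+$-freeness input. The paper instead needs no weight argument at all for the splitting: Griffiths transversality forces the induced differential on the graded pieces of the filtered complex $\Omega^{\bullet}_{X_{\log}}(\log D) \otimes_k B_{\mathrm{dR}}$ to vanish, so the Hodge--Tate decomposition falls out of the $0$-th graded piece of the filtered quasi-isomorphism in \Cref{first comparison}, with base change supplied by \Cref{analogue Lemma 7.13}; Hodge--de Rham degeneration is then a dimension count using freeness of $H^i\bigl((X,X_{\bar{k}}),\mathbb{B}^+_{\mathrm{dR}}\bigr)$ over $B^+_{\mathrm{dR}}$ from \Cref{second comparison}, which simultaneously yields the full filtered $B_{\mathrm{dR}}$-comparison rather than only its graded shadow. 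One caution on your closing claim that the ``global assembly \dots proceeds formally'': in the logarithmic setting it does not --- the paper has to develop Theorems A and B \`a la Cartan for vector bundles on $X_{\log}$ (acyclicity of the ramified Kummer covers via vanishing of group cohomology of divisible modules, and Kiehl-style finiteness in \Cref{Theorem C}) precisely to prove the base-change lemma underlying the comparison, so this step contains genuinely new work beyond Scholze's proper case.
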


Afterwards, many people have found other ways to produce this comparison isomorphism.
There is another remarkable approach to prove such a comparison, due to
Beilinson (see~\cite{Beilinson}), using derived de Rham cohomology (of
Illusie), $h$-topology and de Jong’s alterations.

Recently, Scholze generalized Theorem~\ref{faltings}, namely,
the de Rham comparison theorem for a smooth proper rigid analytic space over a
$p$-adic field. Moreover, the comparison theorem that he proved allows
coefficients to be local systems, see~\cite[Theorem 8.4]{Sch13}. However, his
theorem does not include the logarithmic case. The purpose of this note is to
prove the de Rham comparison in the logarithmic case (for constant coefficients)
using the same methods.

It is also worth mentioning that in the work of Colmez--Nizio{\l}, they proved a
semistable comparison for semistable formal log-schemes~(see~\cite[Corollary
5.26]{C-N}). In particular, they've already obtained the de Rham comparison in
the logarithmic case (for constant coefficients) assuming the appearance of a
semistable formal model.

% \footnote{Nizio{\l} also made the following remark in a private communication
% with us: once the semistable comparison theorem is proved for semistable
% formal schemes it follows for log-smooth formal schemes (with Cartier type
% reduction) because all such schemes can be resolved into semistable schemes by
% log-blow-ups (and possibly a base change). But log-blow-ups do not change
% log-crystalline cohomology and other logarithmic cohomologies.}

Let $k$ be a discretely valued complete non-archimedean extension of
$\mathbb{Q}_p$ with perfect residue field $\kappa$. Our main comparison theorem
(see~Theorem~\ref{first comparison} and Theorem~\ref{second comparison}) is the
following:

\begin{theorem}
\label{Main Theorem}
Let $X$ be a proper smooth adic space over $\mathrm{Spa}(k,\mathcal{O}_k)$ 
with a strict simple normal crossing divisor $D$ and 
complement $U \coloneqq X \setminus D$. 
Then, there is a natural $\mathrm{Gal}(\overline{k}/k)$-equivariant isomorphism
\[
H^i_{\mathrm{\acute{e}t}}(U_{\bar{k}},\mathbb{Z}_p) 
\otimes_{\mathbb{Z}_p} B_{\mathrm{dR}} \cong
H^i(X,\Omega^{\bullet}_{X}(\log D)) \otimes_k B_{\mathrm{dR}}
\]
preserving filtrations. Moreover, the logarithmic Hodge--de Rham spectral sequence
\[
  \xymatrix{ E_1^{i,j} = H^j(X,\Omega_X^i(\log D))\ar@{=>}[r] &
    H^{i+j}(X,\Omega^{\bullet}_{X}(\log D)) }
\]
degenerates. In particular, the logarithmic Hodge--Tate spectral sequence also degenerates and yields the logarithmic Hodge--Tate decomposition
\[
H^i(U_{\overline{k}},\mathbb{Q}_p)\otimes_{\mathbb{Q}_p} \hat{\bar{k}} \cong
\bigoplus\limits_j H^{i-j}(X,\Omega_X^j(\log D ))\otimes_k \hat{\bar{k}}(-j). 
\]
\end{theorem}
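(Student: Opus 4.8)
The plan is to follow Scholze's strategy from \cite{Sch13} adapted to the logarithmic setting, so the first task is to set up the correct geometric framework. First I would introduce a logarithmic version of the pro-étale site: because we are dealing with the open variety $U = X \setminus D$ together with ramified coverings along $D$, the relevant coverings are not just finite étale covers of $U$ but Kummer-type covers that ramify along the components of $D$. Locally on $X$, in coordinates where $D = \{t_1 \cdots t_r = 0\}$, these are the towers obtained by adjoining $p$-power roots $t_i^{1/p^n}$ of the coordinate functions. The key local statement, announced in the abstract, is that $(X, D)$ is locally $K(\pi,1)$ with respect to $\mathbb{F}_p$-local systems and such ramified coverings; this is the logarithmic analogue of Scholze's result that smooth affinoids are $K(\pi,1)$, and I would prove it by an explicit analysis of the pro-finite-Kummer-étale fundamental group of a small enough polydisc-with-normal-crossings chart.

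With the site in place, the second step is to construct the pro-version of the Faltings site (equivalently, the perfectoid tower) and establish the \emph{primitive comparison theorem} in the logarithmic setting. Concretely, over a suitable affinoid perfectoid cover one obtains an almost isomorphism computing $H^i_{\mathrm{\acute{e}t}}(U_{\bar k}, \mathbb{Z}/p^n)$ in terms of the structure sheaf $\mathcal{O}^+$ modulo $p^n$ on the logarithmic pro-étale site, with the usual almost-mathematics error terms bounded by the ramification. Passing to the limit over $n$ and inverting $p$ yields an isomorphism
\[
H^i_{\mathrm{\acute{e}t}}(U_{\bar k},\mathbb{Z}_p) \otimes_{\mathbb{Z}_p} \hat{\bar k}
\;\cong\;
H^i\bigl(X_{\hat{\bar k}}, \widehat{\mathcal{O}}_X\text{-cohomology with log poles}\bigr),
\]
which is the Hodge--Tate shape of the comparison before the de Rham refinement.

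The third step is to introduce the logarithmic period sheaves $\mathbb{B}_{\mathrm{dR}}^+$, $\mathcal{O}\mathbb{B}_{\mathrm{dR}}$, and their filtered/connection structure on the Kummer pro-étale site, now carrying a logarithmic connection $\nabla$ whose residues along the components of $D$ encode the ramification. Using the logarithmic Poincaré lemma — i.e.\ the acyclicity of the log de Rham complex of $\mathcal{O}\mathbb{B}_{\mathrm{dR}}$ resolving $\mathbb{B}_{\mathrm{dR}}$ — I would identify the $\mathbb{B}_{\mathrm{dR}}$-cohomology of $U_{\bar k}$ with the hypercohomology of $\Omega^\bullet_X(\log D) \otimes \mathcal{O}\mathbb{B}_{\mathrm{dR}}$, and then descend along the connection to $H^i(X,\Omega^\bullet_X(\log D)) \otimes_k B_{\mathrm{dR}}$. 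Combining this with the primitive comparison gives the filtered isomorphism in Theorem~\ref{Main Theorem}; the degeneration of the logarithmic Hodge--de Rham spectral sequence and the Hodge--Tate decomposition then follow formally by comparing graded pieces and counting dimensions, exactly as in the non-logarithmic case.

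I expect the main obstacle to be the local $K(\pi,1)$ statement together with the correct formulation of the logarithmic Poincaré lemma. The subtlety is that the ``coverings'' ramify along $D$, so one is forced to work with Kummer-étale covers and to control the pro-$p$ fundamental group of a punctured/log polydisc; getting the almost-mathematics bookkeeping right — so that the residue terms of $\nabla$ do not obstruct the descent and the comparison remains an \emph{isomorphism} rather than merely a map — is where the real work lies. The rest of the argument is a careful but largely formal transcription of Scholze's machinery once these logarithmic inputs are established.
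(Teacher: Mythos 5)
Your overall architecture is the paper's: a Faltings-type (pro-)log site, a primitive comparison theorem, period sheaves $\mathbb{B}_{\mathrm{dR}}$ and $\mathcal{O}\mathbb{B}_{\log\mathrm{dR}}$ with a logarithmic Poincar\'e lemma, and then degeneration by dimension count from freeness over $B_{\mathrm{dR}}^+$. However, there is a genuine gap in your setup of the site: you only allow Kummer towers of $p$-power roots $t_i^{1/p^n}$ along $D$. The paper crucially allows roots of \emph{all} orders: in \Cref{defaffpert} an affinoid perfectoid object must contain the covers $\mathrm{Sp}(R'[\sqrt[k]{f_i}])$ for \emph{every} $k \in \mathbb{N}$; the rigid Abhyankar lemma (\Cref{Abhyankar's Lemma}, via L\"utkebohmert) only extends a degree-$d$ cover after pulling back along $z_i \mapsto T_i^{d!}$, and $d!$ is not a $p$-power; and correspondingly the local Galois group in \Cref{analogue Lemma 5.5} is $\mathbb{Z}_p^{n-r} \times \widehat{\mathbb{Z}}^{\,r}$, not $\mathbb{Z}_p^n$. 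With only $p$-power roots, your site does not see the prime-to-$p$ part of the local monodromy along $D$, so its topos does not compute $H^i_{\mathrm{\acute{e}t}}(U_{\bar k}, -)$ for $\mathbb{F}_p$-local systems, and the local $K(\pi,1)$ statement fails: a rank-one $\mathbb{F}_p$-local system on the punctured disc whose monodromy around the puncture has order dividing $p-1$ is not trivialized by any $p$-power Kummer pullback, so its $H^1$ cannot be killed inside your site.

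You might hope to evade this because the Main Theorem only concerns constant $\mathbb{Z}_p$-coefficients, but the paper's proof of the vanishing $R^iu_{X*}\mathbb{L} = 0$ (\Cref{comparison logtopos}) is an induction on the number of divisor components using the Gysin sequence (\Cref{Gysin}), and this induction unavoidably introduces non-constant coefficients: in \Cref{lemmakill} one replaces $\mathcal{E}$ by the pushforward $(f \times id_{\mathbb{D}})_*\mathcal{E}$ along the finite \'etale cover produced at the previous stage, and in \Cref{corokill} one works with $f_*\mathbb{L}$ for the given finite \'etale $f$. These pushforwards have monodromy along $D$ of essentially arbitrary finite order (landing in some $GL_m(\mathbb{F}_p)$, hence typically of order prime to $p$), so \Cref{lemmext} — extension of the local system across $D$ after a Kummer pullback — genuinely requires roots of arbitrary, sufficiently divisible order. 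The same issue propagates to your primitive comparison and to the trivialization of vector bundles (the paper's Theorem A) used in the de Rham comparison. The fix is exactly the paper's design choice, flagged in its introduction: the log site must allow adjoining $\sqrt[k]{f_i}$ for all $k$, with perfectoidness supplied by the $p$-power subtower; also note that the paper never computes the local fundamental group "explicitly" (it is not accessible) — the argument runs entirely through Abhyankar, Scholze's affinoid $K(\pi,1)$, and Gysin.
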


During the preparation of this note, we learned that Hansheng Diao, Kai-Wen Lan,
Ruochuan Liu and Xinwen Zhu have proved a more
powerful version of this comparison theorem including allowing more general
coefficients (see~\cite[Theorem 1.1]{DLLZ}).

We hope our approach following Scholze and using the Faltings site is still
interesting in its own. 

In the rest of this introduction, we give a brief
descriptions of the organization of this note.
In Subsection~\ref{subsection 2.3}
%Section 3 and 4, we prove a rigid version of Abhyankar's Lemma. We also 
we introduce the Faltings site $X_{\log}$ and show that the complement of a
strict simple normal crossing divisor is locally $K(\pi,1)$ (in a certain sense)
with respect to $\mathbb{F}_p$-local systems, see~\Cref{comparison logtopos}
and~\Cref{corokill}.\footnote{Shortly after posting the first version of this
  note on arXiv, we are pointed out that similar result has been obtained by
  Colmez--Nizio\l~(see~\cite[5.1.4]{C-N}) via similar methods.} The main
consequence is that we can compute the cohomology of local systems on
$U_{\textnormal{\'{e}t}}$ via $X_{\log}$. The main ingredients for the proofs
are results of L\"{u}tkebohmert in~\cite{Lut93}, Scholze's $K(\pi,1)$-result for
affinoid spaces and Gysin sequence.

In \Cref{section 3}, we introduce a general method to produce a pro-site $X_{\mathrm{prolog}}$ 
of the Faltings site $X_{\log}$. This general method is recapturing~\cite[Section 3]{Sch13}.
We also show that the pro-site $X_{\mathrm{prolog}}$ shares a lot of good properties, 
e.g.~algebraicity and it has a coherent terminal object if the rigid space $X$ is proper over $k$.
Most of the arguments are formal and similar to counterparts in~\cite[Section 3]{Sch13}.

In \Cref{section 4}, we introduce structure sheaves on $X_{\log}$ and $X_{\mathrm{prolog}}$.
We also show that $X_{\mathrm{prolog}}$ has affinoid perfectoid basis, see \Cref{lemmascholze410}.
The main difference of $X_{\mathrm{prolog}}$ from the pro-\'{e}tale site
is that we are allowed to take any root
(not just $p$-root) of the coordinates defining the divisor D,
see \Cref{model example}. This difference is clear from \cite{Fal}. 

In \Cref{section 5}, we follow the method of Scholze to show the primitive comparison 
Theorem \ref{thmpc} in our setting. A similar result has been
obtained by Diao in the setting of
(pro)-Kummer \'{e}tale site, see~\cite[Proposition 4.4]{log-adic}. To show the comparison theorem for $X_{\mathrm{prolog}}$, 
we need to enhance some Scholze's results in the case allowing ramified coverings.

In \Cref{section 6}, we introduce the period sheaves on $X_{\mathrm{prolog}}$. 
The new ingredient is a logarithmic version of the period sheaf, $\mathcal{O}\mathbb{B}^+_{\mathrm{logdR}}$. 
The main result of this section is the logarithmic Poincar\'{e} Lemma, see Corollary \ref{corologPL}.

In Subsection~\ref{subsection 7.1}, we introduce a notion of vector bundles on the Faltings site $X_{\log}$
and prove Theorem A and Theorem B \`{a} la Cartan for them. 
Then in Subsection~\ref{subsection 7.2} we prove the aforesaid de Rham comparison theorem.

\subsection*{Acknowledgments}

The first author wants to express his gratitude to his advisor 
Johan de Jong for his warm encouragement
and all the discussions on this project. The first author also wants to thank David Hansen
for lecturing on~\cite{Sch13} at Columbia University, which inspired him to work on this project 
as an initiative to understand~\cite{Sch13} better, as well as for keeping track of his progress 
and sharing the excitements.
The second author is thankful to Gerd Faltings for his two lectures
on $p$-adic Hodge theory at the Max Planck Institute
which brings his attention to this project.
We thank Bhargav Bhatt, Dingxin Zhang and Weizhe Zheng for helpful discussions.

We thank Hansheng Diao, Kai-Wen Lan,
Ruochuan Liu and
Xinwen Zhu for communicating with us. We especially thank
Xinwen Zhu heartily for many precious comments and advice on an early draft of this note.

We also thank Pierre Colmez and Wies{\l}awa Nizio{\l} for communications concerning the first version of this note.

% ** Notations
\subsection*{Notations and Conventions}

In this note, unless specified otherwise, we will use the following notations
and conventions. Let \(k\) be a \(p\)-adic field, i.e., discretely valued
complete non-archimedean field extension of \(\mathbb{Q}_p\) with perfect
residue field. We denote its ring of integers by \(\mathcal{O}_k\). We will use
$K$ to denote a perfectoid field which is the completion of some algebraic
extension of $k$.

Let \(X\) be a smooth proper rigid space over \(k\) of dimension \(n\) and let
\(D = \bigcup_{i \in I} D_i \subset X\) be a divisor, here \(I\) is a finite
index set. For any subset \(J \subset I\), we use \(D^J\) to denote \(\bigcap_{i
  \in J} D_i\). We say \(D\) is a strict simple normal crossing (shorthand by
SSNC from now on) divisor if all of \(D^{J}\)'s are smooth of codimension
\(\lvert J \rvert\) where \(\lvert J \rvert \coloneqq \text{number of elements
  in } J\). Here $D^J$ has codimension greater than $n$ means it is empty. We denote \(X
\setminus D\) by \(U\). For any rigid space \(V \to X\) admitting a map to
\(X\), we denote the preimage of \(U\) by \(V^{\circ}\).

We use notation \(\mathbb{D}^r(\underline{T})\) to denote \(r\)-dimensional
unit polydisc with coordinates given by \(T_i\). We denote 
$\mathbb{D}^r(\underline{T}) \setminus V(T_1 \cdots T^r)$ by 
$\mathbb{D}^{\circ,r}(\underline{T})$.

Let \(A\) be a ring, we denote its normalization by \(A^{\nu}\). If $f_1,\ldots,f_r$ are $r$ elements in
an affinoid algebra $A$, then we denote $\mathrm{Sp}((A[\sqrt[m]{f_1},\ldots,\sqrt[m]{f_r}])^\nu)$ by
$\mathrm{Sp}(\mathrm{A}[\sqrt[m]{f_l}])$.

We use both the language of adic spaces of finite type over $\mathrm{Spa}(k,\mathcal{O}_k)$ and rigid
spaces over $\mathrm{Sp}(k)$ interchangeably, we hope this does not confuse the reader.

% * Preliminaries
\section{Preliminaries}
\label{section 2}

% ** Abhyankar's Lemma
\subsection{Abhyankar's Lemma}
\label{subsection 2.1}

Let us discuss Abhyankar's Lemma for rigid spaces over \(p\)-adic fields. This
is more or less already obtained by L\"{u}tkebohmert in~\cite{Lut93}, see
also~\cite[Section 2.2]{Hansen17}.

\begin{proposition}
\label{urgent Abhyankar}
Let \(S\) be a smooth rigid space over \(k\) which is not necessarily
quasi-compact or quasi-separated. Let
\[
  \phi \colon Y \to X = S \times \mathbb{D}^{\circ,r}(\underline{z})
\]
be a finite \'{e}tale covering of degree \(d\). Then after pulling \(Y\) back to
\(S \times \mathbb{D}^{\circ,r}(\underline{T})\) along 
\[
S \times \mathbb{D}^{\circ,r}(\underline{T}) \to S \times \mathbb{D}^{\circ,r}(\underline{z}),~z_i \mapsto T_i^{d!}
\] 
it extends to a finite \'{e}tale covering of \(S \times
\mathbb{D}^r(\underline{T})\).
\end{proposition}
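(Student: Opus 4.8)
The plan is to combine two reductions with the local structure theory of finite \'etale covers of one-dimensional annuli due to L\"utkebohmert. First I would reduce to the case where $S = \mathrm{Sp}(R)$ is affinoid, since the assertion is local on $S$ and the formation of the pullback, of normalizations, and the property of being finite \'etale all glue over an admissible affinoid cover of $S$. Next I would reduce to $r = 1$ by peeling off one coordinate at a time. Concretely, viewing $\phi$ as a degree-$d$ cover of $\left(S \times \mathbb{D}^{\circ,r-1}(z_1,\dots,z_{r-1})\right) \times \mathbb{D}^{\circ}(z_r)$ over the smooth base $S \times \mathbb{D}^{\circ,r-1}$, the $r=1$ case produces, after the base change $z_r \mapsto T_r^{d!}$, a degree-$d$ finite \'etale extension over $S \times \mathbb{D}^{\circ,r-1}(z_1,\dots,z_{r-1}) \times \mathbb{D}(T_r)$. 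Treating the new coordinate $T_r$ as part of the still smooth base and noting that the degree is unchanged, so that $d!$ remains divisible by all relevant ramification indices, the inductive hypothesis for $r-1$ variables finishes the extension in the remaining coordinates. Thus everything reduces to $r=1$.

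For $r = 1$, the natural candidate for the extension is a normalization. Writing $Y_T$ for the pullback of $Y$ along $z \mapsto T^{d!}$, a finite \'etale cover of $S \times \mathbb{D}^{\circ}(T)$, I would let $\widetilde{Y}$ be the normalization of $S \times \mathbb{D}(T)$ in $Y_T$. Two of the three desired properties are then formal: $\widetilde{Y} \to S \times \mathbb{D}(T)$ is finite because affinoid algebras over $k$ are excellent, so normalization is a finite operation; and since $S$ is smooth, hence normal, the locus $\{T \ne 0\}$ of $S \times \mathbb{D}(T)$ is normal and $Y_T$ is finite \'etale over it, so $\widetilde{Y}$ restricts to $Y_T$ there. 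Hence $\widetilde{Y}$ genuinely extends $Y_T$, and it only remains to prove that $\widetilde{Y} \to S \times \mathbb{D}(T)$ is \'etale along the divisor $\{T = 0\}$.

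This last point is where L\"utkebohmert's work enters and is the main obstacle. The key input, extracted from the structure theory of finite \'etale covers of annuli in \cite{Lut93} (see also \cite[Section 2.2]{Hansen17}), is that there is some $0 < \rho < 1$ such that, possibly after a finite surjective base change $S' \to S$, the restriction of $Y$ to $S' \times \{0 < |z| \le \rho\}$ becomes a disjoint union of standard Kummer pieces of the shape $w^{e_j} = z\cdot u_j$ with $u_j$ a unit. Since these pieces together have total degree $d$, each exponent satisfies $e_j \le d$ and is therefore a divisor of $d!$; moreover $e_j$ is invertible in $k$ because $k$ has characteristic zero. Performing the base change $z = T^{d!}$ and writing $d! = e_j m_j$, each piece becomes $w^{e_j} = T^{d!}u_j = (T^{m_j})^{e_j} u_j$, whose normalization is generated by $v = w/T^{m_j}$ subject to $v^{e_j} = u_j$; as $u_j$ is a unit and $e_j$ is invertible, this is visibly finite \'etale across $T = 0$. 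Thus $\widetilde{Y}$ becomes \'etale along the divisor after the faithfully flat base change $S' \to S$, and since \'etaleness descends along faithfully flat maps, $\widetilde{Y} \to S \times \mathbb{D}(T)$ is \'etale along $\{T = 0\}$ as well.

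The hardest and least formal step is the appeal to L\"utkebohmert's structure theorem, together with arranging it in the required family form over the base $S$: one must produce a single radius $\rho$ and control the standard-form decomposition uniformly in $S$, and one must check that the auxiliary finite cover $S' \to S$ needed to reach standard form can be taken faithfully flat so that descent of \'etaleness applies. The numerology is comparatively soft---the only arithmetic fact used is that every ramification index along the divisor is at most the degree $d$ and hence divides $d!$, which is exactly what forces the exponent $d!$ in the statement and makes the Kummer base change simultaneously trivialize all local monodromy in characteristic zero.
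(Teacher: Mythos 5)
Your overall skeleton is the same as the paper's: the identical induction that peels off one coordinate at a time (extend in the last variable over the smooth, non-quasi-compact base $S \times \mathbb{D}^{\circ,r-1}$, then absorb $\mathbb{D}(T_r)$ into the base and induct, noting the degree is unchanged), with L\"utkebohmert's theory of covers of annuli as the engine for $r=1$. The difference is that the paper simply invokes \cite[Lemma 3.2]{Lut93} after replacing $S$ by an \'etale cover, the replacement being legitimate because extensions of finite \'etale covers across the divisor are unique (\cite[Proposition 2.9]{Hansen17}), so they glue; you instead re-derive that lemma from the Kummer decomposition, using the normalization $\widetilde{Y}$ of $S \times \mathbb{D}(T)$ in $Y_T$. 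Your formal steps for $\widetilde{Y}$ (finiteness via excellence of affinoid algebras, agreement with $Y_T$ over $\{T \neq 0\}$) are fine.

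The gap is in your descent step for $r=1$. You pass to a \emph{finite surjective} base change $S' \to S$ to reach Kummer standard form and then want to ``descend \'etaleness along the faithfully flat map $S' \times \mathbb{D}(T) \to S \times \mathbb{D}(T)$.'' First, finite surjective morphisms need not be flat (the normalization of a nodal curve is the standard example), a point you flag but do not resolve. Second, and more seriously: even granting flatness, flat descent would have to be applied to the morphism $\widetilde{Y} \times_S S' \to S' \times \mathbb{D}(T)$, whereas what the Kummer decomposition describes is $Z'$, the normalization of $S' \times \mathbb{D}(T)$ in $Y_T \times_S S'$. These two agree only if $\widetilde{Y} \times_S S'$ is itself normal, i.e.\ only if normalization commutes with the base change $S' \to S$. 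That commutation holds for \'etale (more generally regular) base change of excellent rings, but fails for general finite flat maps; and assuming it here is essentially circular, since the normality of $\widetilde{Y} \times_S S'$ is exactly the kind of conclusion (\'etale covers of the normal space $S' \times \mathbb{D}(T)$ are normal) that you are trying to establish. The repair is precisely the paper's move: the auxiliary localization that L\"utkebohmert's structure theory actually requires can be taken \emph{\'etale} on $S$, and then your argument closes, either because \'etaleness of a finite morphism is local on the base in the \'etale topology and normalization commutes with \'etale base change, or, as in the paper, because the extension itself descends by uniqueness of extensions over normal bases. As written, with ``finite surjective'' in place of ``\'etale,'' the step fails.
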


\begin{proof}
  Step 1: let us first prove this in the case where \(r=1\). Since extension of
  covering is faithful (see~\cite[Proposition 2.9]{Hansen17}), by descent it
  suffices to prove the statement after replacing \(S\) by an \'{e}tale cover of
  \(S\). Therefore we may assume that the conditions of~\cite[Lemma 3.2]{Lut93}
  are satisfied. Our statement just follows from~\cite[Lemma 3.2]{Lut93}.

  Step 2: let us prove the general case by induction on \(r\). Write 
  \[S \times \mathbb{D}^{\circ,r} = S \times (\mathbb{D}^{\circ,r-1}) \times (\mathbb{D} \backslash \{0\}).\] 
  By step 1 we see that after pulling \(Y\) back along 
  \[
  S \times (\mathbb{D}^{\circ,r-1}(\underline{z})) \times (\mathbb{D}(T_n) \backslash \{0\}) \to S \times (\mathbb{D}^{\circ,r-1}(\underline{z})) \times (\mathbb{D}(z_n) \backslash \{0\}),~z_n \mapsto T_n^{d!}
  \]
  it extends to a finite \'{e}tale
  covering of \(S \times (\mathbb{D}^{\circ,r-1}(\underline{z})) \times \mathbb{D}(T_n)\). Now by induction, we are done.
\end{proof}

From the proposition above, we can deduce the following Theorem which can be
thought of as the analogue of Abhyankar's Lemma in rigid geometry.

\begin{theorem}[Rigid Abhyankar's Lemma]
\label{Abhyankar's Lemma}
  Let \(N_1 = \mathrm{Sp}(A)\) be a smooth affinoid space over a \(p\)-adic
  field, let \(f_i\) be \(r\) functions which cut out \(r\) smooth divisors.
  Denote the union of these divisors by \(D\). Let \(N_2 \to N_1\) be a finite
  morphism which is \'{e}tale away from \(D\) with \(N_2 = \mathrm{Sp}(B)\)
  being normal. Then for sufficiently divisible \(k \in \mathbb{N}\), the map
  \[
    A[\sqrt[k]{f_i}] \to (B \otimes_A A[\sqrt[k]{f_i}])^{\nu}
  \]
  is finite \'{e}tale.
\end{theorem}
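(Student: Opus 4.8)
The plan is to verify that the map in question is \emph{finite étale}, reducing locally on $N_1$ to the product situation governed by \Cref{urgent Abhyankar}. First I would dispose of finiteness, which is automatic: affinoid algebras over $p$-adic fields are Japanese, so $A[\sqrt[k]{f_i}] = (A[\sqrt[k]{f_1},\dots,\sqrt[k]{f_r}])^{\nu}$ is module-finite over $A$, and likewise $(B \otimes_A A[\sqrt[k]{f_i}])^{\nu}$ is module-finite over $A[\sqrt[k]{f_i}]$; so only étaleness is at stake. Writing $N_1' = \mathrm{Sp}(A[\sqrt[k]{f_i}])$ and $N_2' = \mathrm{Sp}((B\otimes_A A[\sqrt[k]{f_i}])^{\nu})$, observe that over the complement $N_1 \setminus D$ the functions $f_i$ are invertible, so $N_1' \to N_1$ and hence $N_2' \to N_1'$ are already finite étale there (the latter being a base change of $N_2 \to N_1$ followed by a normalization that changes nothing over a normal étale locus). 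Thus the whole issue is concentrated along the preimage of $D$.

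Next I would reduce to a product model. Since étaleness may be checked after an étale surjection onto $N_1'$, and since normalization commutes with the flat (here étale) base changes involved, it suffices to treat the statement after replacing $N_1$ by the members of a suitable admissible affinoid cover. Using the smoothness of $N_1$ together with the fact that the $f_i$ cut out smooth divisors meeting transversally (the SSNC setting), the local structure theory of smooth rigid spaces lets me arrange, on each such chart and after shrinking and rescaling the relevant coordinates, the model situation $N_1 = S \times \mathbb{D}^r(\underline{z})$ with $S$ a smooth affinoid and $f_i = z_i$; then $N_1' = S \times \mathbb{D}^r(\underline{w})$ with $z_i = w_i^k$ is again smooth, and $N_1 \setminus D = S \times \mathbb{D}^{\circ,r}(\underline{z})$. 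This is the step I expect to be the main obstacle: producing honest product charts adapted to the divisor (rather than merely étale coordinate maps to $S \times \mathbb{D}^r$) and checking the compatibility of both the normalization and the base change $-\otimes_A A[\sqrt[k]{f_i}]$ with this localization.

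With the model in hand the engine is \Cref{urgent Abhyankar}. Let $d$ be the degree of the finite étale covering $N_2|_{S \times \mathbb{D}^{\circ,r}(\underline{z})} \to S \times \mathbb{D}^{\circ,r}(\underline{z})$. By \Cref{urgent Abhyankar}, after pulling back along $z_i \mapsto T_i^{d!}$ this covering extends to a finite étale covering $\widetilde{Y} \to S \times \mathbb{D}^r(\underline{T})$. Taking $k$ divisible by $d!$ and writing $T_i = w_i^{k/d!}$ (so that $T_i^{d!} = w_i^k = z_i$), the pullback $\widetilde{Y}' := \widetilde{Y} \times_{S \times \mathbb{D}^r(\underline{T})} (S \times \mathbb{D}^r(\underline{w}))$ is finite étale over $N_1' = S \times \mathbb{D}^r(\underline{w})$, hence normal, and over the dense open complement of the preimage of $D$ it agrees with the restriction of $N_2'$. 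Since $N_2'$ is normal and finite over $N_1'$, the uniqueness of the normalization of $N_1'$ in a finite étale cover of a dense Zariski-open forces $N_2' \cong \widetilde{Y}'$, so $N_2' \to N_1'$ is finite étale on each chart. Finally, as $N_1$ is quasi-compact the finitely many local degrees are bounded, so a single sufficiently divisible $k$ (e.g. divisible by $d_{\max}!$) works globally, and étale descent of the property \textit{finite étale} yields the conclusion.
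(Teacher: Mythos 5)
There is a genuine gap, and it sits exactly at the step you flagged as the main obstacle. The local structure theory you invoke (Kiehl, or~\cite[Theorem 2.11]{Mitsui}) does \emph{not} produce charts on which $f_i = z_i$ literally; it produces charts $A = A_0\langle T_1,\ldots,T_r\rangle$ on which the divisor $V(f_i)$ coincides with $V(T_i)$, i.e.\ $f_i = g_i T_i$ with $g_i \in A^{\times}$ a generally \emph{non-constant} unit. ``Shrinking and rescaling'' cannot repair this: rescaling absorbs at best a constant (and even then distorts the polydisc radius), while straightening a non-constant unit into the coordinate would require extracting a $k$-th root of $g_i$ inside $A$, which is exactly what is unavailable --- and any attempt to approximate $g_i$ by a constant after shrinking would need control depending on $v_p(k)$, whereas $k$ must range over all sufficiently divisible integers. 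The consequence is that your entire engine step is run in the $z$-coordinates: pulling back along $z_i \mapsto T_i^{d!}$, invoking \Cref{urgent Abhyankar}, and identifying the extension with the normalized base change via uniqueness of normalization proves that $(B \otimes_A A[\sqrt[k]{T_i}])^{\nu}$ is finite \'{e}tale over $A[\sqrt[k]{T_i}]$ --- but the theorem asserts this for $A[\sqrt[k]{f_i}]$, and since $f_i \neq T_i$ these are genuinely different Kummer covers of $N_1$ (they differ by the torsor of $k$-th roots of the units $g_i$).

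The paper's proof is built precisely to bridge this discrepancy, and the missing idea can be spliced into your argument. Set $X = \mathrm{Sp}(A\langle\sqrt[k]{f_i}\rangle)$, $Y = \mathrm{Sp}(A\langle\sqrt[k]{T_i}\rangle)$ and $W = (X \times_{N_1} Y)^{\nu}$. Then $W \to X$ and $W \to Y$ are both finite \'{e}tale, because each is obtained by adjoining $k$-th roots of the units $g_i = f_i/T_i$ (the discriminant of $z^k - g_i$ is a unit in characteristic $0$). Since finite \'{e}taleness descends along the finite \'{e}tale surjection $W \to X$, one checks $(N_2 \times_{N_1} X)^{\nu} \to X$ after pullback to $W$; but $W \to N_1$ also factors through $Y$, so this pullback coincides with the pullback of $(N_2 \times_{N_1} Y)^{\nu} \to Y$, which is the statement in coordinates that your chart argument (via \Cref{urgent Abhyankar} with $d! \mid k$ and uniqueness of normalization) actually establishes. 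Your peripheral points --- finiteness from excellence of affinoid algebras, \'{e}taleness away from $D$, the identification $N_2' \cong \widetilde{Y}'$ by uniqueness of normalization, and the global choice of $k$ by quasi-compactness --- are all sound and agree with the paper's skeleton; only the unit-handling via $W$ and descent is missing, and without it the proof as written establishes a different conclusion than claimed.
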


\begin{proof}
  Since the statement is local on \(N_1\), we may assume (by~\cite[Theorem
  1.18]{Kiehl67}, see also~\cite[Theorem 2.11]{Mitsui}) that \(A = A_0 \langle
  T_i \rangle\) where the divisor \(D\) is cut out by \(T_1T_2 \cdots T_r\).
  Then we see that \(f_i = g_i \cdot T_i\) with \(g_i\) units.

For a \(k \in \mathbb{N}\) to be chosen later, we let \(X = \mathrm{Sp}(A
\langle \sqrt[k]{f_i} \rangle)\), \(Y = \mathrm{Sp}(A \langle \sqrt[k]{T_i}
\rangle)\) and \(W = (X \times_{N_1} Y)^{\nu}\). Note that \(W \to X\) and \(W
\to Y\) are both finite \'{e}tale, since they are given by adjoining \(k\)-th
root of \(g_i\).

\[
  \xymatrix{
    X \ar[d] & W \ar[l] \ar[d] \\
    N_1 & Y \ar[l]
  }
\]

What we need to show is that after choosing \(k\) sufficiently divisible, the
base change map
\[
  (N_2 \times_{N_1} X)^{\nu} \to X
\]
is finite \'{e}tale. But since \(W \to X\) is finite \'{e}tale, by descent it is
enough to check after base changing to \(W\). Because \(W \to N_1\) also factors
through \(Y\), it suffices to choose \(k\) so that the base change to \(Y\) is
\'{e}tale. This can be achieved by~\Cref{urgent Abhyankar}.
\end{proof}

% ** Gysin Sequence
\subsection{Gysin sequence}
\label{subsection 2.2}

Let us gather facts concerning Gysin sequence (cohomological purity) in the
setup of rigid spaces as developed by Berkovich (see~\cite{Berk95}) and Huber
(c.f.~\cite[Section 3.9]{Huber96}).

\begin{theorem}[Gysin sequence]
\label{Gysin}
Let \(Y\) be a smooth rigid space over \(k\), \(\mathcal{E}\) an
\(\mathbb{F}_p\)-local system on \(Y\) and \(Z \subset Y\) a smooth divisor on
\(Y\). Then we have a long exact sequence
\[
  H^2_{\mathrm{\acute{e}t}}(Y,\mathcal{E}) \to H^2_{\mathrm{\acute{e}t}}(Y \setminus Z,
  \mathcal{E}|_{Y \setminus Z}) \to H^1_{\mathrm{\acute{e}t}}(Z, \mathcal{E}|_Z(-1)) \to
  H^3_{\mathrm{\acute{e}t}}(Y,\mathcal{E}) \to \ldots.
\]
Here $\mathcal{E}|_Z(-1))$ means the Tate twist of the pullback $\mathcal{E}|_Z$ of $\mathcal{E}$ to $Z$.
\end{theorem}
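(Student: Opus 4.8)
The plan is to deduce the sequence from the localization long exact sequence for cohomology with supports, combined with the cohomological purity (Gysin) isomorphism furnished by Berkovich and Huber. Write $j\colon U = Y\setminus Z \hookrightarrow Y$ for the open immersion and $i\colon Z\hookrightarrow Y$ for the complementary closed immersion.

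First I would set up the long exact sequence of \'etale cohomology with support along $Z$. Applying $R\Gamma(Y,-)$ to the standard distinguished triangle $i_*Ri^!\mathcal{E} \to \mathcal{E}\to Rj_*j^*\mathcal{E}$ (equivalently, using the short exact sequence $0 \to j_!j^*\mathcal{E} \to \mathcal{E}\to i_*i^*\mathcal{E}\to 0$) yields
\[
\cdots \to H^n_Z(Y,\mathcal{E}) \to H^n_{\mathrm{\acute{e}t}}(Y,\mathcal{E}) \to H^n_{\mathrm{\acute{e}t}}(U,\mathcal{E}|_U) \to H^{n+1}_Z(Y,\mathcal{E}) \to \cdots,
\]
where $H^\bullet_Z(Y,-) = H^\bullet\bigl(Z, Ri^!(-)\bigr)$ denotes cohomology with support in $Z$.

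Next I would invoke purity. Since $Y$ is smooth and $Z\subset Y$ is a smooth divisor, it has codimension one everywhere, so \'etale-locally the pair $(Y,Z)$ looks like a coordinate hyperplane inside a polydisc. The cohomological purity theorems of Berkovich (c.f.~\cite{Berk95}) and Huber (c.f.~\cite[Section 3.9]{Huber96}) then provide a Gysin isomorphism $Ri^!\mathcal{E}\cong \mathcal{E}|_Z(-1)[-2]$, and hence
\[
H^n_Z(Y,\mathcal{E}) \cong H^{n-2}_{\mathrm{\acute{e}t}}\bigl(Z, \mathcal{E}|_Z(-1)\bigr);
\]
the cohomological shift by $2$ and the single Tate twist both reflect that the codimension is $1$. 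Substituting this identification into the localization sequence, each term $H^{n+1}_Z(Y,\mathcal{E})$ is replaced by $H^{n-1}_{\mathrm{\acute{e}t}}\bigl(Z,\mathcal{E}|_Z(-1)\bigr)$, and reading off the portion beginning at $n=2$ produces exactly
\[
H^2_{\mathrm{\acute{e}t}}(Y,\mathcal{E}) \to H^2_{\mathrm{\acute{e}t}}(U,\mathcal{E}|_U) \to H^1_{\mathrm{\acute{e}t}}\bigl(Z,\mathcal{E}|_Z(-1)\bigr) \to H^3_{\mathrm{\acute{e}t}}(Y,\mathcal{E}) \to \cdots.
\]

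I expect the only real obstacle to be the correct invocation and applicability of purity for $\mathbb{F}_p$-local systems in the $p$-adic (i.e.\ equal to residue characteristic) setting: one must confirm that the cited results of Berkovich and Huber apply to the relative situation over $k$ and to a possibly non-constant local system $\mathcal{E}$, and that the Tate twist $(-1)$ is the correct one (note that for $\mathbb{F}_p$-coefficients in characteristic $0$ this twist is genuinely nontrivial since $\mu_p$ is not canonically split). Once purity is granted in this form, the remaining assembly of the two exact sequences is entirely formal.
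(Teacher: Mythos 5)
Your proposal is correct and is essentially the paper's own argument: the paper proves this by citing Huber's purity theorem~\cite[Theorem 3.9.1]{Huber96} and deriving the long exact sequence from it, which is exactly your localization-plus-purity assembly spelled out. The applicability concern you rightly flag (purity with $\mathbb{F}_p$-coefficients in residue characteristic $p$) is precisely what the paper disposes of by noting that condition (b) in Huber's discussion preceding that theorem --- the base field having characteristic zero --- is satisfied.
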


\begin{proof}
  This follows from a re-interpretation of~\cite[2.1 Theorem]{Berk95}, where we apply the Theorem in loc.~cit.~to our case where $S$ and $(Y,X)$ from loc.~cit.~correspond to $\mathrm{Sp}(k)$ and $(Z,Y)$, which satisfies the condition in loc.~cit.
\end{proof}

% ** The site \(X_{\log}\)
\subsection{The site $X_{\log}$}
\label{subsection 2.3}

In this subsection we introduce the log-\'{e}tale site \(X_{\log}\) (also known as the
Faltings site)
of the pair \((X,D)\) and show a comparison theorem between this site and \(U_{\mathrm{\acute{e}t}}\).
Note that this site depends on a choice of divisor \(D\), however we suppress that
in the notation for the sake of simplicity of notations.

\begin{definition}
\label{logtopos}
Let \(f\) be a morphism between two objects \(V_i\rightarrow X\) over \(X\) for \(i = 1, 2\). 
We denote the restriction of \(f\) to \(V_2^{\circ}\) by $f^{\circ}$.

We define a site \(X_{\log}\) as follows: an object of \(X_{\log}\) consists of
arrows
\[
  N \xrightarrow{f} V \xrightarrow{g} X
\]
(denoted by $(V,N)$) such that 
\begin{enumerate}
\label{condition}
\item the morphism $g$ is \'{e}tale;
\item $N$ is normal;
\item the morphism $f$ is finite with $f^{\circ} \colon N \backslash (g \circ f)^{-1}(D) \to V \backslash g^{-1}(D)$ being \'{e}tale and;
\item $(g \circ f)^{-1}(D)$ is nowhere dense in $N$.
\end{enumerate}
A morphism in this site from $(V,N)$ to
$(V',N')$ is given by a pair $(p,q)$ of two $X$-maps in a commutative diagram:
\[
  \xymatrix{
    N \ar[r]^q \ar[d] & N' \ar[d] \\
    V \ar[r]^p        & V'.
  }
\]

The morphisms 
\[
\{(p_i,q_i):(V_i,N_i) \rightarrow (V,N)\}
\]
form a covering if $N=\bigcup q_i(N_i)$. 
Notice that by~\Cref{lemmopen0} (2) below, the image of \(N_i\) in \(N\) are open subsets.

Similarly, for a $V \to X$ \'{e}tale over $X$ we can define a subsite $V_{\mathrm{f,log}}$
whose objects are consisting of $N \xrightarrow{f} V$ satisfying condition~\ref{condition}(2)-(4).
The morphisms are just usual morphisms in the category of rigid spaces over $V$.
Note that by~\cite[Theorem 1.6]{Hansen17}, we have $V_{\mathrm{f,log}} \cong V^{\circ}_{\mathrm{f\acute{e}t}}$.
\end{definition}

\begin{remark}
One should note the subtle difference between the above definition of the Faltings site
and that in~\cite[III.8.2]{AGT}. In particular, the counterpart of the counterexample
in~\cite[III.8.18]{AGT} in the Faltings site here does not form a covering.
\end{remark}

Before introducing the following Lemma, let us fix some notation. 
Given $(V',N') \rightarrow (V,N)$ and $(V'',N'') \rightarrow (V,N)$ in \(X_{\log}\),
let $W \coloneqq V' \times_V V''$, and observe that
$N' \times_N N'' = (N' \times_{V'} W) \times_{(N \times_V W)} (N'' \times_{V''} W)$.

\begin{lemma}
\label{lemmopen0}
The category $X_{\log}$ has the following properties:
\begin{enumerate}
\item finite projective limit and a terminal object exist. Moreover, the fiber products of 
$(V',N') \rightarrow (V,N)$ and $(V'',N'') \rightarrow (V,N)$ is given by 
\((W, (N' \times_N N'')^{\nu})\) (following the notation prior to this Lemma).
 In particular, the equalizer of two morphisms 
$(p,q), (s,t): (V',N') \rightarrow (V,N)$ is given by $\bigg(eq(p,s), eq(q,t)^{\nu}\bigg)$ where $eq(\cdot,\cdot)$ is the equalizer of the two morphisms;
\item the image of the morphism $(V,N)\rightarrow (V',N')$ in $|N'|$ is open
  and;
\item $(V,N)$ is quasi-compact (resp.~quasi-separated) if and only if $N$ is
  quasi-compact (resp.~quasi-separated) which will be valid if $V$ is quasi-compact (resp.~quasi-separated).
\end{enumerate}
\end{lemma}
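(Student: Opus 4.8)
The plan is to handle the three assertions separately, with part (1) carrying almost all of the content. For part (1) I would first record that a category with a terminal object and all fibre products has all finite projective limits, so it suffices to construct those two and extract the equalizer formula afterwards. The terminal object is $(X,X)$ with both maps the identity: $X$ is smooth hence normal, the identity is finite and \'etale, it restricts to an isomorphism over $U$, and $D$ is nowhere dense, so the four conditions of \Cref{logtopos} hold. For the fibre product of $(V',N')\to(V,N)\leftarrow(V'',N'')$ I would take the stated candidate $\bigl(W,(N'\times_N N'')^{\nu}\bigr)$ with $W\coloneqq V'\times_V V''$. The universal property is the formal step: a test object $(T,M)$ with compatible maps to $(V',N')$ and $(V'',N'')$ produces $T\to W$ and $M\to N'\times_N N''$, and because $M$ is normal the latter factors uniquely through the normalization by the universal property of $(-)^{\nu}$; combining the two factorizations yields the required unique map. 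The equalizer formula then follows from the usual expression of equalizers through fibre products.

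The real work in (1) is checking that $\bigl(W,(N'\times_N N'')^{\nu}\bigr)$ lies in $X_{\log}$. That $W\to X$ is \'etale and that $(N'\times_N N'')^{\nu}$ is normal are immediate. Finiteness over $W$ and \'etaleness over $W^{\circ}$ I would verify over the dense open part above $U$: condition (3) for the two factors makes $N'^{\circ}$ and $N''^{\circ}$ finite \'etale over $N^{\circ}$, so $N'^{\circ}\times_{N^{\circ}}N''^{\circ}$, which I identify with the preimage of $U$ in $N'\times_N N''$, is smooth and hence normal, whence normalization is an isomorphism there and the restriction over $W^{\circ}$ is \'etale. The hard part is condition (4): that the preimage of $D$ in $(N'\times_N N'')^{\nu}$ is nowhere dense, equivalently that no irreducible component of $N'\times_N N''$ lies over $D$. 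I expect this to be the main obstacle. My approach is local: by the structure theory behind \Cref{Abhyankar's Lemma} (ultimately \cite{Lut93}) the covers $N'\to N$ and $N''\to N$ are, \'etale-locally on $N$, products of a finite \'etale map with standard Kummer covers $z\mapsto z^{m}$ along the branches of $D$; for such standard shapes $N'\times_N N''$ is a finite union of components each dominating $N$, so each meets the locus above $U$, which gives the desired density. Normalizing then only separates branches and cuts the boundary down to a nowhere dense closed subset.

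For part (2), I must show that the image of $q\colon N\to N'$ in $|N'|$ is open. I would factor $q$ using the compatibility over $V'$: it gives $N\to N'\times_{V'}V$, which passes through $\widetilde N\coloneqq(N'\times_{V'}V)^{\nu}$ since $N$ is normal, followed by $\widetilde N\to N'\times_{V'}V\xrightarrow{\mathrm{pr}}N'$, where $\mathrm{pr}$ is the base change of the \'etale map $V\to V'$ and is therefore open. Over $U$ the map $q$ is \'etale and hence open there; to handle the boundary I would again use the local Kummer model from \Cref{Abhyankar's Lemma}, in which $q$ becomes a product of an \'etale map with power maps $z\mapsto z^{m}$ on discs. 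All of these are open maps, and composites and products of open maps are open, so $q$ is open and in particular has open image.

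Part (3) is the soft part. Since $f\colon N\to V$ is finite it is quasi-compact and separated, so $N$ is quasi-compact (resp.\ quasi-separated) whenever $V$ is, and $V$ inherits these properties from $X$ through the \'etale structure map. Finally, unwinding the definition of coverings in \Cref{logtopos}, whose members have open images in $N$ by part (2), an object $(V,N)$ is quasi-compact (resp.\ quasi-separated) in the site precisely when the rigid space $N$ is, which yields the stated equivalences.
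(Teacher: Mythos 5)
Your overall skeleton for (1) — terminal object $(X,X)$, reduce finite limits to fibre products plus terminal object, universal property of normalization (which, as a small but fixable point, needs the density of $M^{\circ}$ guaranteed by condition (4) of \Cref{logtopos}, not merely normality of $M$: maps from a normal space do not factor through a normalization in general) — is sound and agrees in outline with the paper. But the step you yourself identify as the crux contains a genuine gap. You justify condition (4) for $(N'\times_N N'')^{\nu}$, and openness in (2), by asserting that the covers $N' \to N$ are, \'etale-locally on $N$, products of a finite \'etale map with standard Kummer covers $z \mapsto z^m$ along the branches of $D$. That local structure theorem is the \emph{tame} Abhyankar picture (SGA~1), and it is false in the present setting: objects of $X_{\log}$ are arbitrary normal finite covers \'etale over $U$, and in the $p$-adic rigid world these may be wildly ramified along $D$. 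What \Cref{urgent Abhyankar} and \Cref{Abhyankar's Lemma} (ultimately L\"utkebohmert) give is strictly weaker: after pulling back along a Kummer map of sufficiently divisible degree, the cover \emph{extends to} a finite \'etale cover of the full polydisc — and that extension is in general a nontrivial \'etale cover, not a disjoint union of sections. So no product decomposition of $N'\to N$ itself follows, your component-dominance argument for nowhere-density is unsupported, and likewise your claim in (2) that $q$ is an open map (a stronger statement than the lemma asks for, since only open \emph{image} is needed) rests on the same unavailable model.

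The paper avoids local structure theory entirely. For (1) it invokes the equivalence $V_{\mathrm{f,log}} \cong V^{\circ}_{\mathrm{f\acute{e}t}}$ of \cite[Theorem 1.6]{Hansen17} (which packages the extension theory of L\"utkebohmert, wild ramification included): fibre products are computed in $V^{\circ}_{\mathrm{f\acute{e}t}}$, where they are classical, and transported back across the equivalence, which is exactly what produces the normalized fibre product and guarantees conditions (2)--(4) with no case analysis. For (2), instead of openness of $q$, the paper factors $N \to N'_V \coloneqq N'\times_{V'} V \to N'$ and shows the image of the finite map $N \to N'_V$ is a \emph{union of connected components} of $N'_V$: the image of $N^{\circ} \to (N'_V)^{\circ}$ is clopen since the map is finite \'etale, and \cite[Corollary 2.7]{Hansen17} (equality of the number of connected components of $N'_V$ and $(N'_V)^{\circ}$, together with density) upgrades this to $N'_V$; composing with the open \'etale projection $N'_V \to N'$ finishes. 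If you want to salvage your route, you would have to replace the Kummer-model claim by this kind of soft argument — e.g., one can show directly that every point of $N'\times_N N''$ over $D$ lies in the closure of the locus over $U$, using only that finite maps are closed and that \'etale covers of the punctured germs are surjective — but as written the proposal does not prove condition (4) or part (2). Your part (3) is essentially the paper's argument and is fine, modulo its dependence on (2).
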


Before giving the proof of this Lemma, we remark that this Lemma can be plainly
generalized to analogous statements for $X_{\mathrm{f,log}}$.
But since we do not need it in the rest of this article, we do not state them here.

\begin{proof}
  Proof of (1). The existence of finite projective limit and the explicit descriptions just follow
  from~\cite[Theorem 1.6]{Hansen17} and the existence and descriptions in
  \(X_{\mathrm{\acute{e}t}}\) (for the \(V\) part) and \(V^{\circ}_{f {\mathrm{\acute{e}t}}}\)
  (for the \(N\) part). The terminal object is clearly \((X,X)\). 

  Proof of (2). Let us consider the morphism $N \to N'_V \coloneqq N'
  \times_{V'} V$. We claim that the image is union of connected components of
  $N'_V$, (2) clearly follows from this claim. This claim follows from the fact
  that ${N'_V}^{\circ}$ has the same number of connected components as that of
  $N'_V$ (see~\cite[Corollary 2.7]{Hansen17}) and is dense in $N'_V$. But now
  since $N^{\circ} \to {N'_V}^{\circ}$ is finite \'{e}tale, therefore the image
  is union of connected components of ${N'_V}^{\circ}$. Because $N \to N'_V$ is
  finite, therefore the image is closure of the image of corresponding circ map.

  Proof of (3). Let us first show that if $N$ is quasi-compact, then $(V,N)$ is a quasi-compact
  object in this site. Let $(V_i,N_i) \to (V,N)$ be a covering. Because the
  image of $N_i$ is a union of connected components of preimage of image of
  $V_i$, we see that it must be an open subset of $N$. Since $N$ is
  quasi-compact, finitely many of $N_i \to N$ would have
  image covering $N$. Now if $(V,N)$ is a quasi-compact object, it
  is obvious that the image of $N$ in $V$ is quasi-compact. Hence $N$ being finite over that image, is also quasi-compact. The statement concerning
  quasi-separatedness just follows from the description of fibre product. The statement about $V$ easily follows from the fact that $N \to V$ is finite.
\end{proof}

\begin{definition}
\label{defux}
There is a natural morphism between sites
\[
  U_{\mathrm{\acute{e}t}} \rightarrow X_{\log},~(V,N) \mapsto N^{\circ}
\]
inducing a morphism between topoi
\[
  u_X:\mathrm{Sh} (U_{\mathrm{\acute{e}t}}) \rightarrow \mathrm{Sh} (X_{\log}).
\]
\end{definition}

The main result of this section is the following.

\begin{theorem}
\label{comparison logtopos}
Let \(\mathbb{L}\) be a $\mathbb{F}_p$-local system on \(U_{\mathrm{\acute{e}t}}\). 
Then we have
\begin{enumerate}
\item $u_{X*}(\mathbb{L})(V,N)=\mathbb{L}(N^{\circ})$ for an object $(V,N)\in
  X_{\log}$ and;
\item $R^iu_{X*}(\mathbb{L})=0$ for $i \geq 1$.
\end{enumerate}
\end{theorem}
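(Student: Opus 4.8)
The plan is to prove the two assertions by quite different means: (1) is a direct sheaf-theoretic identity, while (2) is a local cohomology-killing statement assembled from \Cref{Abhyankar's Lemma}, Scholze's $K(\pi,1)$ theorem and \Cref{Gysin}.

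For (1), I would first observe that the functor $(V,N)\mapsto N^{\circ}$ of \Cref{defux} sends a covering $\{(V_i,N_i)\to (V,N)\}$ to a family $\{N_i^{\circ}\to N^{\circ}\}$ which, by \Cref{lemmopen0}(2), consists of \'etale maps with open images covering $N^{\circ}$; moreover the fibre-product description in \Cref{lemmopen0}(1) identifies the circ-part of $(N_i\times_N N_j)^{\nu}$ with $N_i^{\circ}\times_{N^{\circ}}N_j^{\circ}$, since over $U$ the spaces are already finite \'etale over $N^{\circ}$ and normalization changes nothing there. Consequently the sheaf axiom for the presheaf $(V,N)\mapsto \mathbb{L}(N^{\circ})$ on a covering of $X_{\log}$ is \emph{verbatim} the sheaf axiom for $\mathbb{L}$ on the \'etale covering $\{N_i^{\circ}\to N^{\circ}\}$ in $U_{\mathrm{\acute{e}t}}$. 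As $\mathbb{L}$ is a sheaf on $U_{\mathrm{\acute{e}t}}$, this presheaf is a sheaf; since it visibly computes the presheaf-level direct image along the continuous functor of \Cref{defux}, it already equals $u_{X*}\mathbb{L}$, proving (1).

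For (2), recall that $R^i u_{X*}\mathbb{L}$ is the sheafification of the presheaf $(V,N)\mapsto H^i_{\mathrm{\acute{e}t}}(N^{\circ},\mathbb{L})$, so it suffices to show that every $\alpha\in H^i_{\mathrm{\acute{e}t}}(N^{\circ},\mathbb{L})$ with $i\geq 1$ dies after passing to a covering of $(V,N)$ in $X_{\log}$. All steps are carried out locally on $V$, which is legitimate since \'etale localizations of $V$ give coverings of $(V,N)$. \emph{First (smoothing)}, localizing by~\cite[Theorem 1.18]{Kiehl67} (see also~\cite[Theorem 2.11]{Mitsui}) I may assume $V=\mathrm{Sp}(A_0\langle T_i\rangle)$ with $D$ cut out by $T_1\cdots T_r$; writing $N=\mathrm{Sp}(B)$ and applying \Cref{Abhyankar's Lemma} to the finite normal map $N\to V$, I obtain an $m$ (taken divisible by $p$) so that $N^{(1)}\coloneqq \mathrm{Sp}\bigl((B\otimes_A A[\sqrt[m]{T_i}])^{\nu}\bigr)$ is finite \'etale over the smooth space $\mathrm{Sp}(A[\sqrt[m]{T_i}])$, hence itself smooth, with $D\cap N^{(1)}$ a SSNC divisor, and $(V,N^{(1)})\to(V,N)$ a covering. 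This reduces me to the case where $N$ is smooth with SSNC boundary $Z=\bigcup_j Z_j$, local equations $h_j$.

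\emph{Next (killing residues)}, for each component I adjoin a $p$-th root of $h_j$; the resulting $N^{(2)}$ is finite \'etale over the finer root base $\mathrm{Sp}(A[\sqrt[mp]{T_i}])$, hence smooth, finite over $V$ and \'etale over $N^{(1),\circ}$, so $(V,N^{(2)})\to(V,N^{(1)})$ is a covering. In \Cref{Gysin} the residue map along $Z_j$ is multiplied by the ramification index $p\equiv 0\pmod p$ upon pulling back along this order-$p$ ramified cover, so every residue of the pullback $\alpha^{(2)}$ vanishes. \emph{Finally (extension and $K(\pi,1)$)}, feeding the vanishing of all residues into the Gysin sequences of \Cref{Gysin} one component at a time exhibits $\alpha^{(2)}$ as the restriction of a class $\tilde{\alpha}\in H^i_{\mathrm{\acute{e}t}}(N^{(2)},\mathbb{L})$ on the smooth space $N^{(2)}$. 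Localizing $N^{(2)}$ to an affinoid chart admitting an \'etale map to a torus and invoking Scholze's $K(\pi,1)$ theorem~\cite[Theorem 4.9]{Sch13}, the positive-degree class $\tilde{\alpha}$ dies on a finite \'etale cover $N^{(3)}\to N^{(2)}$ (for a $K(\pi,1)$ space, $H^{\geq 1}$ of any $\mathbb{F}_p$-local system is killed on a suitable finite \'etale cover, since $H^{\geq 1}_{\mathrm{cont}}$ of the profinite fundamental group restricts to $0$ on an open subgroup). Then $\alpha$ restricts to $0$ on $N^{(3),\circ}$, and the composite $(V,N^{(3)})\to(V,N)$ is the desired covering.

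The hard part will be the residue computation in the middle step: one must verify carefully, inside Huber's \'etale-cohomology formalism underlying \Cref{Gysin}, that pulling back along the order-$p$ ramified covering multiplies the residue map by the ramification index, so that it vanishes modulo $p$. The compatibility of the successive Gysin sequences needed to extend $\alpha^{(2)}$ across all of $Z$, together with ensuring that Scholze's $K(\pi,1)$ theorem genuinely applies to the spaces produced by \Cref{Abhyankar's Lemma} (which are a priori only normal before the smoothing step), are the remaining points demanding care.
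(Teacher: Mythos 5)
Your argument for (1) is fine and matches what the paper treats as obvious: the functor $(V,N)\mapsto N^{\circ}$ is continuous (coverings go to coverings by \Cref{lemmopen0}(2), and fibre products restrict correctly over $U$ since normalization is an isomorphism there), so the pushforward is computed section-wise. For (2), however, your route genuinely diverges from the paper's, and as written it has a fatal gap at its central step. The paper (\Cref{lemmext}, \Cref{lemmakill}, \Cref{propkill}, \Cref{corokill}) first pushes $\mathbb{L}$ forward along the finite \'etale $f$ to the model $S\times\mathbb{D}^{\circ,r}\times\mathbb{D}^k$, \emph{extends the coefficient system} across the boundary after a Kummer pullback (\Cref{lemmext}), and only then runs an induction on $r$ in which the Gysin residue class $\beta$ is killed outright by a finite \'etale cover $\Delta_1'\to\Delta_1$ of the divisor supplied by the inductive hypothesis; the only Gysin functoriality ever needed is for the \emph{finite \'etale} map of smooth pairs $(\Delta_1'\times\mathbb{D},\Delta_1')\to(C,\Delta_1)$, and Scholze's theorem enters only as the base case $r=0$. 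Your plan replaces this by a single ramified index-$p$ Kummer cover that is supposed to annihilate all residues at once, followed by one application of \cite[Theorem 4.9]{Sch13}.

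The most serious gap: your residues are not defined. \Cref{Gysin} requires the local system to live on the \emph{ambient} smooth space $Y$, but your class $\alpha^{(2)}$ has coefficients $\mathbb{L}$ defined only on $N^{(2),\circ}$; neither your choice of $m$ (made via \Cref{Abhyankar's Lemma} applied to $B$, i.e., to the geometry of $N\to V$) nor the adjunction of $p$-th roots has anything to do with the monodromy of $\mathbb{L}$ around the boundary components, so there is no residue map to which your ramification-index argument could apply. Repairing this requires choosing the Kummer degree sufficiently divisible to unramify the coefficients as well — e.g., applying \Cref{Abhyankar's Lemma} to the finite \'etale cover representing $\mathbb{L}$, which is exactly the content of \Cref{lemmext} and of the paper's passage to $\mathcal{E}=f_*\mathbb{L}$ — and this step cannot be skipped. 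Two further points you flag but underestimate: first, the claim that pulling back along a degree-$p$ ramified cover multiplies the Gysin residue by $p$ is not part of Huber's purity package as cited and would need its own proof, whereas the paper's design avoids any statement about ramified functoriality; second, the ``one component at a time'' extension is not just delicate but structurally incomplete — the Gysin sequence sees one smooth divisor at a time, so your cover kills the residue of $\alpha^{(2)}$ along $Z_2$ only on the open stratum $Z_2\setminus(Z_1\cup\cdots)$, and after extending across $Z_1$ the (non-canonical) lift has a residue along the larger stratum $Z_2\setminus(Z_3\cup\cdots)$ whose vanishing is not implied by vanishing on a dense open. Fixing this forces an iterated cover-and-extend induction over the components, at which point you are essentially re-deriving the paper's inductive scheme, where the robust move is to kill the residue class itself by a further cover of the divisor rather than to argue it vanishes.
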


Before proving this Theorem, let us state and prove some Lemmas.

\begin{lemma}
\label{lemmext}
Let $\mathcal{E}$ be a $\mathbb{F}_p$-local system on \(S \times
\mathbb{D}^{\circ,r}\) where \(S\) is a smooth connected affinoid space over
\(k\). Then there is a Kummer map \(\mathbb{D}^r \xrightarrow{\varphi}
\mathbb{D}^r\) (i.e.~raise coordinates to sufficiently divisible power) such
that $(id_S\times \varphi)^*(\mathcal{E})$ is a restriction of a
$\mathbb{F}_p$-local system on $S \times \mathbb{D}^r$.
\end{lemma}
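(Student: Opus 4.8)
The plan is to trivialize $\mathcal{E}$ by a finite Galois covering, to extend that covering across the divisor by means of \Cref{urgent Abhyankar}, and then to descend the trivial local system back down. Since $\mathrm{GL}_n(\mathbb{F}_p)$ is finite (where $n$ is the rank of $\mathcal{E}$), the local system $\mathcal{E}$ is trivialized by a connected finite \'etale Galois covering $\phi\colon Y\to S\times\mathbb{D}^{\circ,r}$ with group $G$ of some degree $d=\lvert G\rvert$: concretely $\mathcal{E}$ corresponds to a representation $\rho\colon G\to\mathrm{GL}_n(\mathbb{F}_p)$ together with the $G$-torsor $Y$, being the associated sheaf $Y\times^{G}\mathbb{F}_p^{n}$, and $\phi^*\mathcal{E}\cong\mathbb{F}_p^{n}$.

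Let $\varphi\colon \mathbb{D}^{r}(\underline T)\to\mathbb{D}^{r}(\underline z)$ be the Kummer map $z_i\mapsto T_i^{d!}$, and set $V'\coloneqq S\times\mathbb{D}^{\circ,r}(\underline T)$ and $\bar V'\coloneqq S\times\mathbb{D}^{r}(\underline T)$. Applying \Cref{urgent Abhyankar} to $\phi$ yields a finite \'etale covering $\bar Y'\to\bar V'$ whose restriction to $V'$ is the base change $Y'\coloneqq Y\times_{S\times\mathbb{D}^{\circ,r}(\underline z)}V'$ (the fibre product taken along $id_S\times\varphi$). The covering $Y'\to V'$ is again a $G$-torsor, being a base change of one, and $(id_S\times\varphi)^*\mathcal{E}$ is the sheaf $Y'\times^{G}\mathbb{F}_p^{n}$ associated to $(Y',\rho)$.

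It then remains to upgrade $\bar Y'\to\bar V'$ to a $G$-torsor restricting to $Y'\to V'$, which is the crux of the argument. Because $\bar V'=S\times\mathbb{D}^{r}$ is smooth and the extension of finite \'etale coverings across the pullback of $D$ is fully faithful (\cite[Proposition 2.9]{Hansen17}), each $g\in G$, regarded as an automorphism of $Y'$ over $V'$, extends uniquely to an automorphism of $\bar Y'$ over $\bar V'$, so that $G$ acts on $\bar Y'$. The torsor identity, namely that the action map $\bar Y'\times G\to \bar Y'\times_{\bar V'}\bar Y'$ is an isomorphism, holds over the dense open $V'$ and hence over all of $\bar V'$ by the same full faithfulness, since both sides are finite \'etale over $\bar Y'$ of degree $d$. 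Thus $\bar Y'\to\bar V'$ is a $G$-torsor, and its associated local system $\bar{\mathcal E}\coloneqq \bar Y'\times^{G}\mathbb{F}_p^{n}$ is an $\mathbb{F}_p$-local system on $S\times\mathbb{D}^{r}$ whose restriction to $V'=S\times\mathbb{D}^{\circ,r}$ equals $Y'\times^{G}\mathbb{F}_p^{n}=(id_S\times\varphi)^*\mathcal{E}$. This exhibits the required extension, with the Kummer map raising each coordinate to the sufficiently divisible power $d!$. The only genuinely delicate point is the extension of the torsor structure past the divisor, and this is precisely what the full faithfulness of finite \'etale extension, combined with \Cref{urgent Abhyankar}, supplies.
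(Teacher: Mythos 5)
Your proof is correct, and it rests on the same key input as the paper's: \Cref{urgent Abhyankar} applied after a Kummer pullback, combined with the unique extendability of morphisms between finite covers across the divisor. The implementation differs, though. The paper's proof is a one-liner: it takes the finite \'etale covering representing $\mathcal{E}$ itself (its espace \'etal\'e, of degree $p^n$ over the base), extends that single covering by \Cref{urgent Abhyankar}, and lets the $\mathbb{F}_p$-module structure maps (addition, scalar multiplication, zero section) extend automatically by full faithfulness of the extension functor, so that the extended covering directly represents the desired local system. You instead extend the connected Galois covering trivializing $\mathcal{E}$, extend the $G$-action and verify the torsor identity by the same full faithfulness, and then recover the local system by the associated-sheaf construction $\bar{Y}'\times^{G}\mathbb{F}_p^{n}$, i.e.\ finite \'etale descent along the extended torsor. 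Your route is longer but makes explicit precisely what the paper leaves implicit --- why the extension carries the structure of an $\mathbb{F}_p$-local system --- and your verification that the torsor identity propagates from the dense open $V'$ to $\bar{V}'$ (extend the inverse of the action map by full faithfulness, conclude by faithfulness that the composites are identities) is exactly right. Two cosmetic remarks: the full faithfulness you invoke is really the equivalence of~\cite[Theorem 1.6]{Hansen17} between covers of $\bar{V}'$ \'etale away from the divisor and finite \'etale covers of $V'$, of which \cite[Proposition 2.9]{Hansen17} records only the faithfulness that the paper cites; and your exponent $d!$ with $d=\lvert G\rvert$ differs from the $(p^n)!$ implicit in the paper's choice of representing covering, which is harmless since the statement only requires raising coordinates to a sufficiently divisible power.
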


\begin{proof}
  It follows from~\Cref{urgent Abhyankar} and the fact that $\mathcal{E}$ is
  represented by a finite \'{e}tale covering of $S \times \mathbb{D}^{\circ,r}$.
\end{proof}

\begin{lemma}
\label{lemmakill}
Let $\mathcal{E}$ be a $\mathbb{F}_p$-local system on \(S \times \mathbb{D}^r
\times \mathbb{D}^k = S \times \mathbb{D}^{r+k}\) where \(S\) is a smooth
connected affinoid space over \(k\). Then for every cohomology class $\alpha \in
H^j_{\mathrm{\acute{e}t}}(S \times \mathbb{D}^{\circ,r} \times \mathbb{D}^k,
\mathcal{E})$ where \(j \geq 1\), there is a finite \'{e}tale covering
$N^{\circ} \xrightarrow{\varPhi} S \times \mathbb{D}^{\circ,r} \times
\mathbb{D}^k$ with $\varPhi^*(\alpha)=0$ in $H^j_{\mathrm{\acute{e}t}}
(N^{\circ},\varPhi^* \mathcal{E})$.
\end{lemma}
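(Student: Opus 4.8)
The plan is to induct on the number $r$ of punctured coordinates, peeling off one divisor at a time with the Gysin sequence (\Cref{Gysin}) and reducing to the affinoid case. The base case $r=0$ is exactly $S\times\mathbb{D}^k$, a smooth affinoid, where any class in $H^j_{\mathrm{\acute{e}t}}$ with $j\geq1$ dies on a finite \'etale cover by Scholze's $K(\pi,1)$ theorem for smooth affinoids \cite{Sch13}. The degree $j=1$ will be handled uniformly for every $r$ without using the induction: a class $\alpha\in H^1_{\mathrm{\acute{e}t}}(W,\mathcal{E})$, where $W := S\times\mathbb{D}^{\circ,r}\times\mathbb{D}^k$, classifies an $\mathcal{E}$-torsor $T\to W$, which is finite \'etale since $\mathcal{E}$ is a finite $\mathbb{F}_p$-local system, and $\alpha$ dies on $T$ because the torsor acquires a section there. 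By \Cref{urgent Abhyankar} (compare \Cref{lemmext}), after a Kummer covering of the first $r$ coordinates $T$ becomes the $\circ$-part of a finite cover of $S\times\mathbb{D}^{r+k}$, so it is of the required form.

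For the inductive step assume $j\geq2$ and $r\geq1$. Set $Y := S\times\mathbb{D}^{\circ,r-1}\times\mathbb{D}(T_r)\times\mathbb{D}^k$ and $Z := \{T_r=0\}\cap Y = S\times\mathbb{D}^{\circ,r-1}\times\mathbb{D}^k$, a smooth divisor in $Y$ with $Y\setminus Z = W$. \Cref{Gysin} produces a residue map $\mathrm{res}\colon H^j_{\mathrm{\acute{e}t}}(W,\mathcal{E})\to H^{j-1}_{\mathrm{\acute{e}t}}(Z,\mathcal{E}|_Z(-1))$ whose kernel is the image of $H^j_{\mathrm{\acute{e}t}}(Y,\mathcal{E})$. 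The key observation is that both $Z$ and $Y$ are instances of the statement with one fewer punctured coordinate: $Z = S\times\mathbb{D}^{\circ,r-1}\times\mathbb{D}^k$ carries $\mathcal{E}|_Z(-1)$, which still extends to the full polydisc, while $Y = S\times\mathbb{D}^{\circ,r-1}\times\mathbb{D}^{k+1}$ after absorbing $\mathbb{D}(T_r)$ into the unpunctured factors. I first apply the inductive hypothesis in degree $j-1\geq1$ to obtain a finite \'etale cover $\psi\colon M^{\circ}\to Z$ with $\psi^{*}\mathrm{res}(\alpha)=0$. Spreading $\psi$ out as $\Phi_0 := \psi\times\mathrm{id}_{\mathbb{D}^{\circ}(T_r)}\colon M^{\circ}\times\mathbb{D}^{\circ}(T_r)\to W$ and using functoriality of the Gysin sequence, the class $\Phi_0^{*}\alpha$ has vanishing residue and therefore lifts to some $\beta\in H^j_{\mathrm{\acute{e}t}}(M^{\circ}\times\mathbb{D}(T_r),\mathcal{E})$.

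The main obstacle is that $M^{\circ}\times\mathbb{D}(T_r)$ is only a finite \'etale cover of $S\times\mathbb{D}^{\circ,r-1}\times\mathbb{D}^{k+1}$, rather than a product of $S$ with polydiscs, so the inductive hypothesis cannot be applied to $\beta$ on the nose. I will circumvent this by pushing $\beta$ forward along the finite \'etale map $q := \psi\times\mathrm{id}_{\mathbb{D}(T_r)}\colon M^{\circ}\times\mathbb{D}(T_r)\to S\times\mathbb{D}^{\circ,r-1}\times\mathbb{D}^{k+1}$. After a further Kummer covering of the first $r-1$ coordinates, \Cref{urgent Abhyankar} extends $\psi$ to a finite \'etale cover $\bar{\psi}\colon\bar{M}\to S\times\mathbb{D}^{r-1}\times\mathbb{D}^k$ of the \emph{full} polydisc, hence $q$ extends to a finite \'etale map $\bar{q}\colon\bar{M}\times\mathbb{D}(T_r)\to S\times\mathbb{D}^{r-1}\times\mathbb{D}^{k+1}$, and $\bar{q}_{*}\mathcal{E}$ is a local system on the full polydisc. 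Since $q$ is finite \'etale, $H^j_{\mathrm{\acute{e}t}}(M^{\circ}\times\mathbb{D}(T_r),\mathcal{E})\cong H^j_{\mathrm{\acute{e}t}}(S\times\mathbb{D}^{\circ,r-1}\times\mathbb{D}^{k+1},q_{*}\mathcal{E})$, so $\beta$ corresponds to a class in product form with coefficients $q_{*}\mathcal{E}$, the restriction of $\bar{q}_{*}\mathcal{E}$ to the $\circ$-part. The inductive hypothesis (one fewer punctured coordinate, degree $j$, local system $\bar{q}_{*}\mathcal{E}$) kills this class on a finite \'etale cover; pulling that cover back along $q$ kills $\beta$, and composing it with $\Phi_0$ and the Kummer coverings yields the desired $\Phi$ with $\Phi^{*}\alpha=0$. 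Every cover appearing is finite \'etale on $\circ$-parts and, by \Cref{urgent Abhyankar}, extends over the divisor after a Kummer twist, so it is of the form $N^{\circ}$ as required.
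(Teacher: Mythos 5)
Your proposal is correct and follows essentially the same route as the paper's proof: the torsor argument for $j=1$, induction on $r$ with Scholze's $K(\pi,1)$ theorem as the base case, the Gysin sequence to lift the class after killing its residue by the inductive hypothesis, and then pushing the lifted class forward along the finite \'etale cover so that \Cref{lemmext}/\Cref{urgent Abhyankar} extends the coefficient system to the full polydisc and the inductive hypothesis applies again. Your reindexing ($r-1 \to r$ rather than $r \to r+1$) and your phrasing of the final step as pullback along $q$ instead of the paper's Cartesian diagram are purely cosmetic differences.
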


\begin{proof}
  For $j=1$, the lemma is easily deduced from the torsor interpretation of
  cohomology classes of degree $1$.

  In the following, we assume $j$ is at least $2$. We prove the lemma by the
  induction on $r$. When $r=0$, it is a special case of~\cite[Theorem
  4.9]{Sch13}. Suppose that the lemma holds for $r$. We consider the
  $(r+1)$-case.

  Note that
  \[
    S \times \mathbb{D}^{\circ,r+1} \times \mathbb{D}^k = (S \times
    \mathbb{D}^{\circ,r} \times \mathbb{D}^{k+1}) \setminus (S \times
    \mathbb{D}^{\circ,r} \times \mathbb{D}^k \times \{0\} ) \eqqcolon C
    \setminus \Delta_1
  \]
  where $C = S \times \mathbb{D}^{\circ,r} \times \mathbb{D}^{k+1}$ and
  $\Delta_1 = S \times \mathbb{D}^{\circ,r} \times \mathbb{D}^k \times \{0\}$.
  The Gysin sequence (\Cref{Gysin}) applied to the pair $(C, \Delta_1)$ gives
  the connecting map
  \[
    H^j_{\mathrm{\acute{e}t}}(C \setminus \Delta_1,\mathcal{E}) \rightarrow
    H^{j-1}_{\mathrm{\acute{e}t}}(\Delta_1,\mathcal{E}|_{\Delta_1}(-1))
  \]
  mapping $\alpha$ to $\beta$. By the induction and $j\geq 2$, there is a finite
  \'{e}tale covering $f \colon \Delta_1' \rightarrow \Delta_1$ with
  $f^*(\beta)=0$. It give us a finite covering of $C$
  \[
    f \times id_{\mathbb{D}}: \Delta_1' \times \mathbb{D} \rightarrow \Delta_1
    \times \mathbb{D} = S \times \mathbb{D}^{\circ,r} \times \mathbb{D}^k \times
    \mathbb{D} = C
  \]
  whose restriction to $\Delta_1 \times \{0\}$ is $f$. The map $f \times
  id_{\mathbb{D}}$ induces a map from the Gysin sequence of $(\Delta_1' \times
  \mathbb{D}, \Delta_1')$ to that of $(C, \Delta_1)$ as follows:
  \[
    \xymatrix{ H^j_{\mathrm{\acute{e}t}}(C) \ar[r] \ar[d]_{(f \times id_{\mathbb{D}})^*}
      & H^j_{\mathrm{\acute{e}t}}(C \setminus \Delta_1) \ar[r] \ar[d]_{(f \times
        id_{\mathbb{D}})|_{C \setminus \Delta_1}^*} &
      H^{j-1}_{\mathrm{\acute{e}t}}(\Delta_1) \ar[d]_{f^*} \ar[r] &
      H^{j+1}_{\mathrm{\acute{e}t}}(C) \ar[d]_{(f \times id_{\mathbb{D}})^*} \\
      H^j_{\mathrm{\acute{e}t}}(\Delta_1' \times \mathbb{D}) \ar[r]^h &
      H^j_{\mathrm{\acute{e}t}}(\Delta_1' \times \mathbb{D} \setminus \Delta_1') \ar[r] &
      H^{j-1}_{\mathrm{\acute{e}t}}(\Delta_1') \ar[r]& H^{j+1}_{\mathrm{\acute{e}t}}(C)}.
  \]
  It follows that $(f \times id_{\mathbb{D}})|_{C \setminus
    \Delta_1}^*(\alpha)=h(\gamma)$ for some $\gamma \in
  H^j_{\mathrm{\acute{e}t}}(\Delta_1' \times \mathbb{D}, \mathcal{E}|_{\Delta_1' \times
    \mathbb{D}})$.

  We claim that there is a finite \'{e}tale covering $\theta \colon N
  \rightarrow \Delta_1' \times \mathbb{D}$ with $\theta^*(\gamma)=0$. This
  proves that
  \[
    \left(\theta \circ (f \times id_{\mathbb{D}})|_{C \setminus
        \Delta_1}\right)^*(\alpha)=0
  \]
  which is what we need to show in the $(r+1)$-case.

  Now we show the claim above. In fact, let $\mathcal{E}'$ be the
  \(\mathbb{F}_p\)-local system \((f \times
  id_{\mathbb{D}})_*(\mathcal{E}|_{\Delta'_1 \times \mathbb{D}})\) on $C$. Now
  $\gamma$ can be viewed as an element in $H^j_{\mathrm{\acute{e}t}}(C,\mathcal{E}') =
  H^j_{\mathrm{\acute{e}t}}(\Delta_1' \times \mathbb{D}, \mathcal{E}|_{\Delta_1' \times
    \mathbb{D}})$. By \Cref{lemmext}, there is a finite \'{e}tale
  covering
  \[
    \varphi \colon S \times \mathbb{D}^{\circ,r} \times \mathbb{D}^{k+1}
    \rightarrow S \times \mathbb{D}^{\circ,r} \times \mathbb{D}^{k+1} =
    \Delta_1\times \mathbb{D} = C
  \]
  such that the pullback $\varphi^*(\mathcal{E}')$ is a restriction of a
  $\mathbb{F}_p$-local system on $S \times \mathbb{D}^r \times
  \mathbb{D}^{k+1}$. Therefore by the induction (applied to $\varphi^*(\gamma)$), we have a finite 
  \'{e}tale covering $ g \colon W \rightarrow S\times \mathbb{D}^{\circ,r} \times \mathbb{D}^{k+1}$ with 
  $(\varphi \circ g)^*(\gamma)=g^*(\varphi^*(\gamma))=0.$ Considering the Cartesian diagram,
\[ 
\xymatrix{
N \ar[d] \ar[r]^\theta & \Delta'_1 \times \mathbb{D} \ar[d]^{f \times id_{\mathbb{D}}} \\
W \ar[r]^{\varphi \circ g} & \Delta \times \mathbb{D}=C
} 
\]
we see that $\theta$ is a finite \'{e}tale covering with $\theta^*(\gamma)=0$.
\end{proof}

\begin{lemma}
\label{propkill}
Let $\mathcal{E}$ be a $\mathbb{F}_p$-local system on $S \times
\mathbb{D}^{\circ,r} \times \mathbb{D}^k$ where $S$ is a smooth connected
affinoid space over \(k\). Then for every cohomology class $\alpha \in
H^j_{\mathrm{\acute{e}t}}(S \times \mathbb{D}^{\circ,r} \times \mathbb{D}^k,
\mathcal{E})$ where $j\geq 1$, there is a finite \'{e}tale covering $N^{\circ}
\xrightarrow{\varphi} S \times \mathbb{D}^{\circ,r} \times \mathbb{D}^k$
with
\[
  \varphi^*(\alpha)=0~\text{ in }~H^j_{\mathrm{\acute{e}t}}(N^{\circ},\varphi^*
  \mathcal{E}).
\]
\end{lemma}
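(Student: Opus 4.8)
The plan is to deduce this from the special case already handled in \Cref{lemmakill}, whose only extra hypothesis is that the local system is defined on the full polydisc $S \times \mathbb{D}^r \times \mathbb{D}^k$ rather than merely on the punctured locus. Thus the entire point is to arrange, after a preliminary finite \'{e}tale pullback, that $\mathcal{E}$ becomes the restriction of such an extendable system; once this is done, \Cref{lemmakill} applies verbatim and one composes the two coverings.

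Concretely, I would first absorb the clean factor $\mathbb{D}^k$ into the base by setting $S' \coloneqq S \times \mathbb{D}^k$, which is again a smooth connected affinoid space over $k$. Regarding $\mathcal{E}$ as a local system on $S' \times \mathbb{D}^{\circ,r}$, I would invoke \Cref{lemmext} to produce a Kummer map $\varphi \colon \mathbb{D}^r \to \mathbb{D}^r$, raising each coordinate to a sufficiently divisible power, such that $(\mathrm{id}_{S'} \times \varphi)^* \mathcal{E}$ is the restriction of an $\mathbb{F}_p$-local system $\widetilde{\mathcal{E}}$ on $S' \times \mathbb{D}^r = S \times \mathbb{D}^r \times \mathbb{D}^k$. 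Writing $\psi \coloneqq \mathrm{id}_{S'} \times \varphi$, its restriction $\psi^{\circ}$ to the punctured locus is a finite \'{e}tale covering of $S \times \mathbb{D}^{\circ,r} \times \mathbb{D}^k$, since $T \mapsto T^m$ is finite \'{e}tale over the punctured disc.

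Next I would pull $\alpha$ back along $\psi^{\circ}$, obtaining a class $(\psi^{\circ})^* \alpha \in H^j_{\mathrm{\acute{e}t}}(S \times \mathbb{D}^{\circ,r} \times \mathbb{D}^k, (\psi^{\circ})^* \mathcal{E})$. Since $(\psi^{\circ})^* \mathcal{E}$ is now the restriction of the extendable system $\widetilde{\mathcal{E}}$, I am exactly in the situation of \Cref{lemmakill}, which provides a finite \'{e}tale covering $\Phi \colon N^{\circ} \to S \times \mathbb{D}^{\circ,r} \times \mathbb{D}^k$ with $\Phi^* (\psi^{\circ})^* \alpha = 0$. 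The composite $\psi^{\circ} \circ \Phi \colon N^{\circ} \to S \times \mathbb{D}^{\circ,r} \times \mathbb{D}^k$ is then finite \'{e}tale, being a composition of two such maps, and satisfies $(\psi^{\circ} \circ \Phi)^* \alpha = 0$, which is what is required.

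Since this is a formal reduction, I expect no genuine obstacle beyond bookkeeping. The only point requiring a little care is that \Cref{lemmext} demands a smooth \emph{connected} affinoid base, so I must check that $S \times \mathbb{D}^k$ inherits these properties (both smoothness and connectedness are preserved under the product with a polydisc) and that the Kummer map leaves the $\mathbb{D}^k$ directions untouched, so that the extended system $\widetilde{\mathcal{E}}$ genuinely lives on $S \times \mathbb{D}^r \times \mathbb{D}^k$ with the final $\mathbb{D}^k$ factor in the unpunctured form demanded by \Cref{lemmakill}.
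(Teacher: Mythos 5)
Your proposal is correct and takes essentially the same route as the paper, whose entire proof is the single sentence that the lemma follows from \Cref{lemmext} and \Cref{lemmakill}. Your write-up merely supplies the bookkeeping the paper leaves implicit: regrouping $S \times \mathbb{D}^k$ as the new smooth connected affinoid base so that \Cref{lemmext} applies, pulling back along the (finite \'etale on the punctured locus) Kummer map, and then composing with the covering produced by \Cref{lemmakill}.
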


\begin{proof}
  It follows from \Cref{lemmext} and \Cref{lemmakill}.
\end{proof}

\begin{proposition}
\label{corokill}
Let $S$ be a smooth connected affinoid space over $K$, and let $\mathbb{D}^r$ be
the unit ball with coordinates $z_1,\ldots,z_r$. Set $\Delta=V(z_1\cdots z_r)$
and $\mathbb{D}^{\circ,r}=\mathbb{D}^r-\Delta$. Let $f:N^{\circ} \rightarrow S
\times \mathbb{D}^{\circ,r}$ be a finite \'{e}tale covering. For a
$\mathbb{F}_p$-local system $\mathbb{L}$ on $S \times \mathbb{D}^{\circ,r}$ and
a cohomology class $\alpha \in H^i_{\mathrm{\acute{e}t}}(N^{\circ},
\mathbb{L}|_{N^{\circ}})$ where $i\geq 1$, there is a finite \'{e}tale covering
$\varphi:M^{\circ} \rightarrow N^{\circ}$ such that
\[
  \varphi^*(\alpha) = 0 \in H^i_{\mathrm{\acute{e}t}}(M^{\circ},\mathbb{L}|_{M^{\circ}}).
\]
\end{proposition}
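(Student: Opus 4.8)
The plan is to reduce the statement to \Cref{propkill} (its special case $k=0$) by pushing the class $\alpha$ forward along the finite \'etale covering $f$, and then pulling the resulting trivializing cover back to $N^\circ$. For brevity write $Y \coloneqq S \times \mathbb{D}^{\circ,r}$. Since $f \colon N^\circ \to Y$ is finite \'etale, the pushforward $\mathcal{E} \coloneqq f_*(\mathbb{L}|_{N^\circ})$ is again an $\mathbb{F}_p$-local system on $Y$ (\'etale-locally $f$ is a disjoint union of copies of the base, so $f_*$ carries local systems to local systems and is exact). Moreover, because $f$ is finite we have $R^{>0}f_* = 0$, so the Leray spectral sequence for $f$ degenerates into a canonical isomorphism
\[
H^i_{\mathrm{\acute{e}t}}(N^\circ, \mathbb{L}|_{N^\circ}) \cong H^i_{\mathrm{\acute{e}t}}(Y, \mathcal{E}),
\]
under which I regard $\alpha$ as a class in $H^i_{\mathrm{\acute{e}t}}(Y, \mathcal{E})$.

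Next I would apply \Cref{propkill} with $k=0$ to the local system $\mathcal{E}$ on $Y = S \times \mathbb{D}^{\circ,r}$ and the class $\alpha \in H^i_{\mathrm{\acute{e}t}}(Y,\mathcal{E})$, $i \geq 1$. This produces a finite \'etale covering $\psi \colon P^\circ \to Y$ with $\psi^*(\alpha)=0$ in $H^i_{\mathrm{\acute{e}t}}(P^\circ, \psi^*\mathcal{E})$. (Here one uses that the ingredients feeding \Cref{propkill}, namely \Cref{lemmext}, \Cref{urgent Abhyankar}, \Cref{lemmakill} and Scholze's $K(\pi,1)$ result for smooth affinoids, hold equally over the perfectoid field $K$, so that \Cref{propkill} applies verbatim with $S$ smooth affinoid over $K$.) To transport this back to a covering of $N^\circ$, I form the Cartesian square
\[
\xymatrix{
M^\circ \ar[r]^-{\tilde f} \ar[d]_{\varphi} & P^\circ \ar[d]^{\psi} \\
N^\circ \ar[r]^-{f} & Y,
}
\]
with $M^\circ \coloneqq P^\circ \times_Y N^\circ$. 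Since $\psi$ is finite \'etale, so is its base change $\varphi \colon M^\circ \to N^\circ$, and this is the covering sought. By finite base change for the square, $\psi^*\mathcal{E} = \psi^* f_*(\mathbb{L}|_{N^\circ}) \cong \tilde f_* \varphi^*(\mathbb{L}|_{N^\circ}) = \tilde f_*(\mathbb{L}|_{M^\circ})$, and since $\tilde f$ is finite the Leray isomorphism identifies $H^i_{\mathrm{\acute{e}t}}(P^\circ, \psi^*\mathcal{E}) \cong H^i_{\mathrm{\acute{e}t}}(M^\circ, \mathbb{L}|_{M^\circ})$.

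The crux is to verify that, under these two Leray isomorphisms, pullback along $\psi$ on the base corresponds to pullback along $\varphi$ on the covers, i.e.\ that the square
\[
\xymatrix{
H^i_{\mathrm{\acute{e}t}}(N^\circ, \mathbb{L}|_{N^\circ}) \ar[r]^-{\sim} \ar[d]_{\varphi^*} & H^i_{\mathrm{\acute{e}t}}(Y, \mathcal{E}) \ar[d]^{\psi^*} \\
H^i_{\mathrm{\acute{e}t}}(M^\circ, \mathbb{L}|_{M^\circ}) \ar[r]^-{\sim} & H^i_{\mathrm{\acute{e}t}}(P^\circ, \psi^*\mathcal{E})
}
\]
commutes. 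This is the only genuinely non-formal point: it follows from the compatibility of the Leray spectral sequence with the morphism of fibrations encoded by the Cartesian square, together with the finite base-change isomorphism $\psi^* f_* \cong \tilde f_* \varphi^*$. Granting this commutativity, $\varphi^*(\alpha)$ is sent to $\psi^*(\alpha)=0$, whence $\varphi^*(\alpha)=0$, which is exactly the required vanishing. I expect the main obstacle to be precisely this base-change compatibility bookkeeping; once it is in place, the rest is a direct invocation of \Cref{propkill} and the exactness of finite pushforward.
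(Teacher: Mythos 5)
Your proposal is correct and is essentially the paper's own proof: the paper disposes of this proposition in one line by applying \Cref{propkill} to $\mathcal{E}=f_*(\mathbb{L}|_{N^{\circ}})$ and the class corresponding to $\alpha$ under the finite-pushforward isomorphism, exactly as you do. Your additional bookkeeping (the Cartesian square, the base-change isomorphism $\psi^* f_* \cong \tilde f_* \varphi^*$, and the compatibility of the two Leray identifications with pullback) is the standard verification the paper leaves implicit, and it is sound.
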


\begin{proof}
  This follows from applying \Cref{propkill} to $\mathcal{E}=f_*(\mathbb{L})$
  and \(f_*(\alpha)\).
\end{proof}

Now we are ready to give the

\begin{proof}[Proof of \Cref{comparison logtopos}]
  The statement (1) is obvious. It is clear that $R^iu_{X*}(\mathbb{L})$ is the
  sheaf associated to the presheaf
  \[
    (N \xrightarrow{f} V \xrightarrow{g} X) \rightarrow
    H^i_{\mathrm{\acute{e}t}}(N^{\circ},(g\circ f)^{*}\mathbb{L})=
    H^i_{\mathrm{\acute{e}t}}(N^{\circ},\mathbb{L}|_{N^{\circ}}).
  \]
  The statement (2) is a local property, hence (by~\cite[Theorem 1.18]{Kiehl67})
  we may assume that $V$ is $S \times \mathbb{D}^r$ with finite
  \[
    f: N \rightarrow S \times \mathbb{D}^r =V
  \]
  such that $f^{\circ}$ is \'{e}tale and $V^{\circ} = S \times \mathbb{D}^{\circ,r}$ where $S$
  is a smooth and connected affinoid space over \(k\). By~\cite[Theorem
  1.6]{Hansen17} it suffices to show that, for every cohomology class $\alpha
  \in H^i_{\mathrm{\acute{e}t}}(N^{\circ}, \mathbb{L}|_{N^{\circ}})$, there is a finite \'{e}tale
  covering $N'^{\circ} \xrightarrow{g} N^{\circ}$ such that \(g^*(\alpha)=0\).
  But this follows from \Cref{corokill}.
\end{proof}

% * The site \(X_{\mathrm{pro-log}}\)
\section{The site \(X_{\mathrm{prolog}}\)}
\label{section 3}

In this section we introduce the pro-log-\'{e}tale site \(X_{\mathrm{prolog}}\)
of the pair \((X,D)\) and show a comparison theorem between it and the previous site
\(X_{\log}\). It is parallel to~\cite[Section 3]{Sch13} except we will use a categorical
way to introduce this site.

In the following, we denote by $\mathcal{C}$ a category which has arbitrary
finite projective limits and a distinguished terminal object $X$.

Let $\mathcal{C}_f$ be a wide (i.e.~lluf) subcategory of $\mathcal{C}$ such that
the morphisms of $\mathcal{C}_f$ are stable under the base change via any
morphism in $\mathcal{C}$, i.e.~if $W\rightarrow V \in \mathrm{Hom}_{\mathcal{C}_f}$,
then $W \times_V Z \rightarrow Z$ is in $\mathrm{Hom}_{\mathcal{C}_f}$ for any $Z
\rightarrow V$. For the category $\mathcal{C}$, we have a functor
$|-|_{\mathcal{C}} \colon \mathcal{C} \rightarrow Top$ from $\mathcal{C}$ to the
category of topological spaces such that
\[
  |A\times_B C|_{\mathcal{C}}\rightarrow
  |A|_{\mathcal{C}}\times_{|B|_{\mathcal{C}}} |C|_{\mathcal{C}}
\]
is surjective with finite fibers for any maps $A\rightarrow B$ and $C\rightarrow
B$ in $\mathcal{C}$. Consider the pro-category pro-$\mathcal{C}$ of
$\mathcal{C}$. The functor $|-|_{\mathcal{C}}$ extends to a functor from
pro-$\mathcal{C}$ to $Top$ by $|\varprojlim N_i|=\varprojlim
|N_i|_{\mathcal{C}}$, and we denote it by $|-|$.

In the category of pro-$\mathcal{C}$, we can define several types of
morphisms.

\begin{definition}
\label{C-map}
  Let $W\rightarrow V$ be a morphism of pro-$\mathcal{C}$. We say
  $W\rightarrow V$ is a $\mathcal{C}$ map (resp.~$\mathcal{C}_f$ map) if
  $W\rightarrow V$ is induced by a morphism $W_0\rightarrow V_0$ in
  $\mathcal{C}$ (resp.~$\mathcal{C}_f$), i.e.~$W=V\times_{V_0} W_0$ via some map
  $V\rightarrow V_0$.

  We say that $W\rightarrow V$ is surjective if the corresponding map
  $|W|\rightarrow |V|$ is surjective. We say $W\rightarrow V$ is a
  pro-$\mathcal{C}$ map if $W=\varprojlim W_j$ can be written as a
  cofiltered inverse limit of $\mathcal{C}$ maps $W_i\rightarrow V$ over $V$ and
  $W_j\rightarrow W_i$ are surjective $C_f$ maps for large $j>i$. Note that $W_i$
  is an object of pro-$\mathcal{C}$. The presentation $W=\varprojlim
  W_i$ is called a pro-$\mathcal{C}$ presentation.

  We define a full subcategory $X_{\mathrm{pro}\mathcal{C}}$ of
  $\mathrm{pro}$-$\mathcal{C}$. The object of this category consists of objects
  in pro-$\mathcal{C}$ which are pro-$\mathcal{C}$ over $X$,
  i.e.~each object has a pro-$\mathcal{C}$ map to $X$. The morphisms
  are pro-$\mathcal{C}$ maps.
\end{definition}

The following lemma is almost identical to~\cite[Lemma 3.10]{Sch13}, except we
do not state the seventh sub-statement (which is the only non-formal one) here.

\begin{lemma}
\label{lemmascholze}
\leavevmode
\begin{enumerate}
\item Let $W\rightarrow V$ be a surjective morphism in $\mathcal{C}$. For any
  morphism $W'\rightarrow V$ in $\mathcal{C}$, the base change $W'\times_V W
  \rightarrow W'$ is surjective.
\item Let $W\rightarrow V$ be a $\mathcal{C}$ map (resp.~$\mathcal{C}_f$ map,
  resp.~pro-$\mathcal{C}$ map) in pro-$\mathcal{C}$. For any morphism
  $W'\rightarrow V$ in pro-$\mathcal{C}$, the base change $W'\times_V
  W\rightarrow W'$ is a $\mathcal{C}$ map (resp.~$\mathcal{C}_f$ map,
  resp.~pro-$\mathcal{C}$ map) and the map $|W'\times_V W| \rightarrow
  |W'|\times_{|V|} |W|$ is surjective, in particular, $W' \times_V W \rightarrow
  W'$ is surjective if $W'\rightarrow V$ is.
\item A composition of $E \rightarrow F \rightarrow G$ of two $\mathcal{C}$ maps
  (resp.~$\mathcal{C}_f$ maps) in pro-$\mathcal{C}$ is a $\mathcal{C}$
  map (resp.~$\mathcal{C}_f$ map).
\item A surjective $\mathcal{C}$ map (resp.~$\mathcal{C}_f$ map) $W\rightarrow
  V$ with $V\in X_{\mathrm{pro}\mathcal{C}}$ comes from a pull back via
  $V\rightarrow V_0$ from a surjective map $W_0\rightarrow V_0$ with $W_0,V_0
  \in \mathcal{C}$.
\item Let $E\rightarrow F\rightarrow G\rightarrow X$ be a sequence of morphisms where all the
  arrows are pro-$\mathcal{C}$ maps. Then $E,F \in X_{\mathrm{pro}\mathcal{C}}$
  and the composition $E\rightarrow G$ is a pro-$\mathcal{C}$ map.
\item If all maps in $\mathcal{C}$ have open images, then any pro-$\mathcal{C}$
  map $W \rightarrow V$ in pro-$\mathcal{C}$ has open image.
\end{enumerate}
\end{lemma}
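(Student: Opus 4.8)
The plan is to treat all six parts as formal consequences of the single geometric input recorded just before the lemma, namely that for any $A \to B$ and $C \to B$ in $\mathcal{C}$ the comparison map $|A \times_B C| \to |A|_{\mathcal{C}} \times_{|B|_{\mathcal{C}}} |C|_{\mathcal{C}}$ is surjective with finite fibers, supplemented by the facts that $\mathcal{C}$ has finite projective limits and that $\mathcal{C}_f$-morphisms are stable under base change. I would follow the proof of \cite[Lemma 3.10]{Sch13} essentially verbatim, establishing the parts in the stated order since each uses its predecessors. Throughout, the only non-categorical ingredient is the elementary fact that a cofiltered inverse limit of nonempty finite sets is nonempty; this is what lets surjectivity statements survive passage to the limit.

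For (1) I would observe that surjectivity is a statement about underlying sets, that the base change of a surjection of sets along $|W'| \to |V|$ is again surjective, and that this composes with the surjectivity of $|W' \times_V W| \to |W'| \times_{|V|} |W|$ to give surjectivity of $|W' \times_V W| \to |W'|$. For (2), the assertion that the base change of a $\mathcal{C}$ map (resp.\ $\mathcal{C}_f$ map) stays of the same type is immediate from associativity of fibre products: if $W = V \times_{V_0} W_0$ then $W' \times_V W = W' \times_{V_0} W_0$, and for $\mathcal{C}_f$ one invokes stability under base change. The pro-$\mathcal{C}$ case is then handled stage by stage on a presentation $W = \varprojlim W_i$, using (1) to keep the transition maps surjective. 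The surjectivity of $|W' \times_V W| \to |W'| \times_{|V|} |W|$ in the pro case is where the finite-fibre hypothesis enters: expressing everything as a limit of the corresponding finite-fibre surjections and applying the nonempty-inverse-limit fact yields the claim.

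For (3) the essential point is that a morphism out of a cofiltered limit into an object of $\mathcal{C}$ factors through a finite stage, i.e.\ $\mathrm{Hom}_{\mathrm{pro}\text{-}\mathcal{C}}(\varprojlim_i X_i, Y) = \varinjlim_i \mathrm{Hom}_{\mathcal{C}}(X_i, Y)$ for $Y \in \mathcal{C}$. Using this I would factor the structure map to the base $F_0$ of the first $\mathcal{C}$ map through a finite stage of a presentation of the intermediate object as a cofiltered limit of objects of $\mathcal{C}$, recombine the two fibre products into a single fibre product over an object of $\mathcal{C}$ (here finite limits in $\mathcal{C}$ are used), and read off that the composite is again a $\mathcal{C}$ map; the $\mathcal{C}_f$ case follows verbatim by base-change stability. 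For (4), given a surjective $\mathcal{C}$ map $W = V \times_{V_0} W_0 \to V$ with $V \in X_{\mathrm{pro}\mathcal{C}}$, I would factor $V \to V_0$ through a stage $V_\mu$ of a pro-$\mathcal{C}$ presentation of $V$ taken large enough that $V \to V_\mu$ is already surjective (again the inverse-limit fact), replace $W_0$ by $W_0 \times_{V_0} V_\mu$, and use the finite-fibre surjectivity to transport surjectivity of $|W| \to |V|$ into surjectivity of $|W_0 \times_{V_0} V_\mu| \to |V_\mu|$, so that $V_\mu$ and the new base do the job. Finally, (5) reduces to showing that a composite of two pro-$\mathcal{C}$ maps is pro-$\mathcal{C}$: I would combine the two presentations into a single cofiltered system of $\mathcal{C}$ maps to $G$, reducing each relevant map to a finite stage as in (3), and check via (2) and (1) that the transition maps can be arranged to remain surjective $\mathcal{C}_f$ maps; the memberships $E, F \in X_{\mathrm{pro}\mathcal{C}}$ are then immediate from composability of the structure maps to $X$.

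I expect the main obstacle to be organizational rather than conceptual: keeping the diagram chases in the pro-category honest, especially in (3) and (5), where one must repeatedly trade a map out of a limit for a map at a finite stage and re-express iterated fibre products over a common base in $\mathcal{C}$. The one genuinely substantive step, hidden inside (2) and (4), is the propagation of surjectivity through cofiltered limits; but because the fibres of the comparison maps are finite this is exactly the statement that an inverse limit of nonempty finite sets is nonempty, so no real difficulty arises there either.
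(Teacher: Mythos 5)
Your arguments for parts (1)--(5) are correct in outline and follow essentially the same route as the paper, which itself transcribes \cite[Lemma 3.10]{Sch13}: the same set-theoretic base-change argument in (1), the same associativity-of-fibre-products and stage-by-stage reduction in (2), the same finite-stage factorization and recombination of fibre products in (3) and (4), and in (5) the same reduction of a pro-$\mathcal{C}$ map to a $\mathcal{C}$ map followed by an inverse system of surjective $\mathcal{C}_f$ maps, with the nonempty-inverse-limit fact carrying all surjectivity statements through the limits.

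There is, however, a genuine gap: part (6) is never proved. Your opening sentence announces a treatment of ``all six parts,'' but the proposal then discusses only (1) through (5), and (6) is not subsumed by your blanket claim that everything is a formal consequence of the finite-fibre surjectivity of $|A\times_B C| \to |A|_{\mathcal{C}}\times_{|B|_{\mathcal{C}}}|C|_{\mathcal{C}}$: it also uses the open-image hypothesis on $\mathcal{C}$ together with the topological surjectivity statement established in (2). The missing argument is short but not vacuous. Given a pro-$\mathcal{C}$ map $W \to V$ with presentation $W = \varprojlim W_i \to V$, the map $|W| \to |W_i|$ is surjective for large $i$ (by the same compact-fibre/inverse-limit reasoning as in (2), applied to the surjective transition maps), so the image of $|W| \to |V|$ coincides with the image of $|W_i| \to |V|$ and one reduces to a single $\mathcal{C}$ map $W_i \to V$. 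Writing $W_i = V \times_{V_0} W_{i0}$ for some $W_{i0} \to V_0$ in $\mathcal{C}$, the surjectivity of $|V \times_{V_0} W_{i0}| \to |V| \times_{|V_0|} |W_{i0}|$ from (2) identifies the image of $|W_i| \to |V|$ with the preimage, under the continuous map $|V| \to |V_0|$, of the image of $|W_{i0}| \to |V_0|$; the latter is open by hypothesis, hence so is its preimage. This part is not idle: the paper invokes it in \Cref{lemmaxprolog}(3), which in turn underlies the quasi-compactness statements for $X_{\mathrm{prolog}}$, so omitting it leaves both the lemma and its later applications incomplete.
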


\begin{proof}
\leavevmode
\begin{enumerate}
\item It follows from the surjectivity of the map \(|W' \times_V
  W|_{\mathcal{C}} \rightarrow |W'|_{\mathcal{C}}\times_{|V|_{\mathcal{C}}}
  |W|_{\mathcal{C}}\).

\item If $W\rightarrow V$ is a $\mathcal{C}$ map (resp.~$\mathcal{C}_f$ map)
  then by definition we reduce to the case $W,V\in \mathcal{C}$. Write
  $W'=\varprojlim W'_i$ with a compatible system of maps $W'_i\rightarrow V \in
  \mathcal{C}$. Then $W\times_V W'=\varprojlim W\times_V W'_i$ and $W\times_V W'
  \rightarrow W'$ is by definition again a $\mathcal{C}$ map
  (resp.~$\mathcal{C}_f$ map). As for the topological spaces, we have
  \[
    |W'\times_V W|=\varprojlim |W\times_V W'_i|\rightarrow \varprojlim
    |W|\times_{|V|} |W'_i|=|W|\times_{|V|} |W'|
  \]
  where the first equality follows from definition, and the last equality is due
  to that fiber products commute with inverse limits. The middle map is
  surjective because it is surjective with compact fibers at each finite stage,
  and inverse limits of nonempty compact spaces are nonempty. Actually, the
  fibers are nonempty compact spaces.

  In the general case, take a pro-\(\mathcal{C}\) presentation \(W'=\varprojlim
  W'_i \rightarrow V\). Then we have that $W'\times_V W=\varprojlim W'_i\times_V
  W \rightarrow W'$ is a pro-$\mathcal{C}$ map over $W'$ by what we have just
  proved. The map
  \[
    |W'\times_V W|=\varprojlim |W\times_V W'_i|\rightarrow \varprojlim
    |W|\times_{|V|} |W'_i|=|W|\times_{|V|} |W'|
  \]
  is surjective by the same reasoning as before.

\item Write $F=F_0\times_{G_0} G$ as a pullback of a $\mathcal{C}$ map
  (resp.~$\mathcal{C}_f$ map) $F_0\rightarrow G_0$. Moreover, write
  $G=\varprojlim G_i$ with a compatible system of maps $G_i\rightarrow G_0\in
  \mathcal{C}$. Then $F=\varprojlim ( G_j\times_{G_0} F_0)$. 

  Moreover write $E=F\times_{F'_0} E_0$ as a pullback of a $\mathcal{C}$ map
  (resp.~\(\mathcal{C}_f\) map) $ E_0\rightarrow F'_0$. Therefore the map
  $F\rightarrow F'_0$ factors through $ G_j\times_{G_0} F_0\rightarrow F'_0$ for
  large $j$. It follows that
  \[
    E=F\times_{ (G_j\times_{G_0} F_0) }\bigg((G_j\times_{G_0} F_0)\times_{F'_0}
    E_0\bigg)
  \]
  and $E\rightarrow G$ is a pullback of a $\mathcal{C}$ map
  (resp.~$\mathcal{C}_f$ map).

\item Let $V=\varprojlim V_i$ be a pro-$\mathcal{C}$ presentation over $X$. Note
  that $W\rightarrow V$ is induced by a pullback of a morphism $W_0\rightarrow
  V_0$ with $W_0,V_0\in \mathcal{C}$ via some map $V\rightarrow V_0$. The map
  $V\rightarrow V_0$ factors throguh a map $V_j\rightarrow V_0$ for large $j$.
  Therefore, we have $W=W_0\times_{V_0} V=(W_0\times_{V_0} V_j)\times_{V_j} V$.
  On the other hand, $|V|\rightarrow |V_j|$ is surjective for large $j$.
  Therefore $W_0\times_{V_0} V_j\rightarrow V_j$ is surjective.

\item One can write $E\rightarrow F$ as the composition $E\rightarrow
  E_0\rightarrow F$ of an inverse system $E=\varprojlim E_i\rightarrow E_0$ of
  surjective $\mathcal{C}_f$ maps $E_i\rightarrow E_j \rightarrow E_0$, and a
  $\mathcal{C}$ map $E_0\rightarrow F$. We check the statement separately in the
  case that $E\rightarrow F$ is a $\mathcal{C}$ map or an inverse system of
  surjective $\mathcal{C}_f$ maps. Assume that $E\rightarrow F$ is a
  $\mathcal{C}$ map which is induced by a map $E_0\rightarrow F_0 \in
  \mathcal{C}$, i.e. $E=F\times_{F_0} E_0$ via some map $F\rightarrow F_0$.
  Write $F=\varprojlim F_i \rightarrow G$ as a pro-$C$ presentation. Therefore,
  $F\rightarrow F_0$ factors through $F_i\rightarrow F_0$ for large $i$. It
  follows from (2) and (3) that $E=F\times_{F_0} E_0=\varprojlim (F_i\times_{
    F_0} E_0)\rightarrow G$ is a pro-$\mathcal{C}$ presentation over $G$, in
  other words, the composition $E\rightarrow G$ is a pro-$\mathcal{C}$ map.

  So it reduces us to consider all maps $E\rightarrow F\rightarrow G \rightarrow
  X$ are inverse systems of surjective $\mathcal{C}_f$ maps. Using (1) and (4),
  it is an easy exercise to show that all the compositions are still inverse
  systems of surjective $\mathcal{C}_f$ maps.
  
\item Let $U\rightarrow V$ be a pro-$\mathcal{C}$ map with a pro-$\mathcal{C}$
  presentation $U=\varprojlim U_i \rightarrow V$. Therefore we have
  $|U|\rightarrow |U_i| \rightarrow |V|$ with $|U|\rightarrow |U_i|$ surjective.
  It reduces us to show $|U_i|\rightarrow |V|$ has open image. Since
  $U_i\rightarrow V$ is a $C$ map, we have $U_i=V\times_{V_0} U_{i0}$ for some
  map $U_{i0}\rightarrow V_0$ in $\mathcal{C}$. By (2), we have a surjection
  $|U_i|=|V\times_{V_0} U_{i0}|\rightarrow |V|\times_{|V_0|} |U_{i0}|$.
  Therefore, the image of $|U_i|\rightarrow |V|$ is open.
\end{enumerate}
\end{proof}

We declare coverings in \(X_{\mathrm{pro}\mathcal{C}}\) as following: a covering
in $X_{\mathrm{pro}\mathcal{C}}$ is given by a family of pro-$\mathcal{C}$ maps
$\{f_i \colon V_i \rightarrow V\}$ such that $|V|=\bigcup_i |f_i| (|V_i|)$. From
\Cref{lemmascholze}, we know that $X_{\mathrm{pro}\mathcal{C}}$ is a site.

\begin{lemma}
\label{lemmacoverings}
Let $M\rightarrow N$ be a $\mathcal{C}$ map (see Definition~\ref{C-map}). 
If $M\rightarrow N$ is a
covering of $X_{\mathrm{pro}\mathcal{C}}$, then $M\rightarrow N$ is induced by a
covering $M_0\rightarrow N_0$ of $\mathcal{C}$.
\end{lemma}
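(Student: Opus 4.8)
The plan is to reduce the pro-$\mathcal{C}$ map to a single $\mathcal{C}$-stage and then invoke the descent statement \Cref{lemmascholze}(4). Fix a pro-$\mathcal{C}$ presentation $M = \varprojlim_i M_i$ over $N$, so that each structure map $M_i \rightarrow N$ is a $\mathcal{C}$ map and the transition maps $M_j \rightarrow M_i$ are surjective $\mathcal{C}_f$ maps for large $j>i$. Pick any index $i_0$ and consider $M_{i_0} \rightarrow N$; I claim this $\mathcal{C}$ map is already surjective, i.e.~a covering. Indeed, the structure map $M \rightarrow N$ factors as $M \rightarrow M_{i_0} \rightarrow N$, so given $x \in |N|$ and a preimage $m \in |M|$ under the surjection $|M| \rightarrow |N|$, the image of $m$ in $|M_{i_0}|$ lies over $x$; hence $|M_{i_0}| \rightarrow |N|$ is surjective. (The same factorization shows every $|M_i| \rightarrow |N|$ is surjective.)

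Next I would feed the surjective $\mathcal{C}$ map $M_{i_0} \rightarrow N$ into \Cref{lemmascholze}(4). Since $N \in X_{\mathrm{pro}\mathcal{C}}$, that statement produces a morphism $N \rightarrow N_0$ and a surjective morphism $M_0 \rightarrow N_0$ in $\mathcal{C}$ with $M_{i_0} = N \times_{N_0} M_0$; that is, $M_0 \rightarrow N_0$ is a covering of $\mathcal{C}$ inducing $M_{i_0} \rightarrow N$. Finally, the remaining transition $M \rightarrow M_{i_0}$ is a cofiltered limit of surjective $\mathcal{C}_f$ maps and is therefore itself surjective, so $M \rightarrow N$ and the $\mathcal{C}$-induced covering $M_{i_0} = N \times_{N_0} M_0 \rightarrow N$ define the same covering. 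Thus the covering $M \rightarrow N$ is induced by $M_0 \rightarrow N_0$, as desired.

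The one genuinely non-formal ingredient is hidden in \Cref{lemmascholze}(4): descending surjectivity of $M_{i_0} \rightarrow N$ from the pro-object $N = \varprojlim_j N_j$ to a finite stage relies on the quasi-compactness of the spectral spaces $|N_j|$ — concretely, that $|N| \rightarrow |N_j|$ is surjective for $j$ large, which in turn rests on the fact that cofiltered limits of nonempty quasi-compact spaces are nonempty. This is why the present statement, unlike the purely formal parts of \Cref{lemmascholze}, cannot be obtained by diagram chasing alone. Everything else is a matter of unwinding the presentation of a pro-$\mathcal{C}$ map and keeping track of which maps are $\mathcal{C}$ versus $\mathcal{C}_f$, and I expect the only point demanding care is making the reduction to $M_{i_0}$ compatible with the conventions, after which \Cref{lemmascholze}(4) does the real work.
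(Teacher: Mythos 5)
Your core reduction is sound and is, in substance, the paper's own argument: pass to a finite stage, observe that the stage map is a surjective $\mathcal{C}$ map, and descend it along a presentation of $N$. The paper does not cite \Cref{lemmascholze}(4) but re-proves it inline (present $N = \varprojlim N_i$, write $M \to N$ as induced by some $M_0' \to N_0'$ in $\mathcal{C}$, factor $N \to N_0'$ through a large stage $N_i$, and pull back), which is exactly the content of the black box you invoke. Your surjectivity argument for $M_{i_0} \to N$ (factoring $|M| \to |M_{i_0}| \to |N|$) is correct, and you correctly locate the non-formal ingredient (surjectivity of $|N| \to |N_j|$ onto large finite stages, via cofiltered limits of nonempty compact spaces).

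The genuine gap is your last step. In this paper, ``induced by a covering $M_0 \to N_0$ of $\mathcal{C}$'' means precisely $M = N \times_{N_0} M_0$; what you produced is $M_{i_0} = N \times_{N_0} M_0$, and $M \to M_{i_0}$ is in general a nontrivial tower, so $M \neq N \times_{N_0} M_0$. The claim that $M \to N$ and $M_{i_0} \to N$ ``define the same covering'' fails: $M \to N$ factors through $M_{i_0}$ and hence refines $\{M_{i_0} \to N\}$, but $M_{i_0} \to N$ does not factor through $M$, so the two covering sieves (and their \v{C}ech complexes, which is what matters in \Cref{lemmacohomology}) differ. Nor can this be repaired: since being ``induced by'' a morphism of $\mathcal{C}$ is by definition being a $\mathcal{C}$ map, the statement read literally would force every pro-$\mathcal{C}$ covering to be a $\mathcal{C}$ map, which is false --- e.g.\ $\tilde{\mathbb{T}}^{n-r,r} \to \mathbb{T}^{n-r,r}$ of \Cref{model example} is a pro-log-\'{e}tale covering that is the pullback of no single covering in $X_{\log}$. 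What rescues the paper is that its proof silently assumes $M \to N$ is a $\mathcal{C}$ map (that is what the sentence ``$M \to N$ is induced by a morphism $M_0' \to N_0'$ in $\mathcal{C}$'' asserts), and its only application, in \Cref{lemmacohomology}(1), applies the lemma to the finite-stage maps $N_i' \to N$, which are $\mathcal{C}$ maps. Under that reading your argument, with the constant presentation $M_{i_0} = M$, is complete and coincides with the paper's; what you actually proved in general is the (correct, and weaker) statement that every pro-$\mathcal{C}$ covering is refined by one induced from $\mathcal{C}$.
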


\begin{proof}
  Write $N=\varprojlim N_i$ as a pro-$\mathcal{C}$ presentation of $N$. It
  follow that $|N|\rightarrow |N_i|$ is surjective for large $i$. Note that
  $M\rightarrow N$ is induced by a morphism $M'_0\rightarrow N'_0$ in
  $\mathcal{C}$ via some map $N\rightarrow N'_0$. The map $N\rightarrow N'_0$
  factors over $N_i\rightarrow N'_0$ for large $i$. Therefore, the map
  $M\rightarrow N$ is induced by the map $ N_i\times_{N'_0}M'_0\rightarrow N_i$.
  Hence $N_i \times_{N'_0} M'_0 \rightarrow N_i$ is a covering, and we may
  choose \(N_i\) (resp.~\(N_i \times_{N'_0} M'_0\)) to be the \(N_0\) (resp.~\(M_0\)) we
  are looking for.
\end{proof}

\begin{example}
  We can take $\mathcal{C}=X_{\mathrm{\acute{e}t}}$ and $\mathcal{C}_f$ to be the wide
  subcategory only allowing finite \'{e}tale maps to be morphisms. The functor
  $|-|_{\mathcal{C}}$ is the functor associating to an object its underlying
  topological space. Then \(X_{\mathrm{pro}\mathcal{C}}\) is just the
  pro-\'{e}tale site \(X_{\mathrm{pro}{\mathrm{\acute{e}t}}}\) introduced
  in~\cite{Sch13}.
\end{example}

\begin{example}
  Now we specialize our construction above to the Faltings site
  $\mathcal{C}=X_{\log}$, $|(V,N)|_{\mathcal{C}}=|N|$ for $(V,N)\in X_{\log}$
  and we take $\mathcal{C}_f$ to be the wide subcategory
  only allowing morphisms of the form $(V,N) \rightarrow (V,N')$, namely, every
  morphism of $\mathcal{C}_f$ is a morphism in $V_{\mathrm{f,log}} \cong
  V^{\circ}_{\mathrm{f}\mathrm{\acute{e}t}}$ for some $V \in X_{\mathrm{\acute{e}t}}$. Recall that
  a fiber product of morphisms $(V',N')\rightarrow (V,N)$ and $(V'',N'')\rightarrow (V,N)$ 
  in $X_{\log}$ is given by
  \begin{equation}
    \label{fiberproduct}
    \bigg( W=V'\times_V V'', (pr_1^*N' \times_{N \times_V W} pr_2^* N'')^{\nu} \bigg)
  \end{equation} 
  where $pr_1: W\rightarrow V'$ and $pr_2: W\rightarrow V''$ are the natural
  projections. It is easy to check $\mathcal{C},\mathcal{C}_f$ and
  $|-|_{\mathcal{C}}$ satisfy our assumptions of previous results, hence produce
  a site $X_{\mathrm{prolog}}$. In this site, we will call a pro-$\mathcal{C}$
  map (resp. $\mathcal{C}$ map, $\mathcal{C}_f$ map, pro-$\mathcal{C}$
  presentation) by pro-log-\'{e}tale map (resp.~log \'{e}tale map, finite log
  \'{e}tale map, pro-log-\'{e}tale presentation).
\end{example}

In concrete terms, the objects of $X_{\mathrm{prolog}}$ are of the form $(V,N)$ where 
$N=\varprojlim N_i$ is a tower of $N_i \xrightarrow{f_i} V$ such that $f_i$ is finite 
with $f_i^{\circ}$ \'{e}tale and \(N_i \to N_j\) is surjective for large \(i > j\). 
The space $|(V,N)|$ is given by
$\varprojlim |(V,N_i)|=\varprojlim |N_i|$. The category $X_{\mathrm{prolog}}$
has a natural fibered category structure over $X_{\mathrm{\acute{e}t}}$, namely we have a
natural functor $X_{\mathrm{prolog}} \rightarrow X_{\mathrm{\acute{e}t}}$ sending $(V,N)$
to $V$, and associating a morphism $V\xrightarrow{p} V'$ in $X_{\mathrm{\acute{e}t}}$ the
pullback map sending $N'=\varprojlim N'_i$ to $p^*(N')=\varprojlim p^*(N_i') = \varprojlim (N_i' \times_{V'} V)$.

If there is no confusion seemingly to arise, we will denote an object $(V,N) \in
X_{\mathrm{prolog}}$ by $N$.

\begin{lemma}
\label{lemmaxprolog}
\leavevmode
\begin{enumerate} 
\item The category $X_{\mathrm{prolog}}$ has arbitrary finite projective limits.
\item We have $\pi((V',N')\times_{ (V,N)}(V'',N')) = V'\times_V V''$ where $\pi$
  is the fibered structure functor $X_{\mathrm{prolog}} \xrightarrow{\pi}
  X_{\mathrm{\acute{e}t}}$.
\item The pro-log-\'{e}tale morphisms in pro-$X_{\log}$ have open images.
\end{enumerate}
\end{lemma}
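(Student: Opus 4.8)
The plan is to obtain all three assertions from the general formalism of \Cref{lemmascholze}, specialized to $\mathcal{C}=X_{\log}$ with $\mathcal{C}_f$ the maps of the form $(V,N)\to(V,N')$ and $|(V,N)|_{\mathcal{C}}=|N|$. I would treat (3) first, since it is immediate: by \Cref{lemmopen0}(2) every morphism of $X_{\log}$ has open image in $|N'|$, which is exactly $|-|_{\mathcal{C}}$ of the target. Thus the hypothesis of \Cref{lemmascholze}(6) is satisfied, and that statement yields at once that every pro-log-\'etale map in $\mathrm{pro}$-$X_{\log}$ has open image.

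For (1), recall that a category has all finite projective limits as soon as it has a terminal object and fiber products. A terminal object is furnished by $(X,X)$, to which every object of $X_{\mathrm{prolog}}$ admits a pro-log-\'etale map by the very definition of the category. To produce fiber products, given pro-log-\'etale maps $(V',N')\to(V,N)$ and $(V'',N'')\to(V,N)$, I would first form their fiber product $P$ in $\mathrm{pro}$-$X_{\log}$, which exists because $X_{\log}$ itself has finite projective limits by \Cref{lemmopen0}(1), with the explicit shape recorded in \eqref{fiberproduct}. The two projections $P\to(V'',N'')$ and $P\to(V',N')$ are base changes of the given maps, hence pro-log-\'etale by \Cref{lemmascholze}(2); composing $P\to(V'',N'')\to X$ and invoking \Cref{lemmascholze}(5) shows $P$ is pro-log-\'etale over $X$, so $P\in X_{\mathrm{prolog}}$. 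It remains to verify the universal property inside $X_{\mathrm{prolog}}$, where the morphisms are restricted to pro-log-\'etale maps: a compatible cone gives a comparison map of pro-objects by the universal property in $\mathrm{pro}$-$X_{\log}$, and one must check this comparison map is itself pro-log-\'etale. This last verification, carried out by threading the pro-log-\'etale presentations through the composition and base-change bookkeeping of \Cref{lemmascholze}(2),(3),(5), is where essentially all of the (formal) work sits.

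For (2), the key observation is that the transition maps in any pro-log-\'etale presentation are $\mathcal{C}_f$ maps, which by construction fix the $V$-component; hence the structure functor $\pi\colon (V,N)\mapsto V$ is constant along such a presentation and is computed on this fixed $V$-part. Choosing presentations $N'=\varprojlim_i N'_i$ and $N''=\varprojlim_j N''_j$, I would write $P=\varprojlim_{i,j}\bigl((V',N'_i)\times_{(V,N)}(V'',N''_j)\bigr)$, and then apply the explicit formula \eqref{fiberproduct} at each finite stage, which shows that every term has $V$-component $W=V'\times_V V''$. Since $\pi$ reads off this (stabilized) common $V$-component, I conclude $\pi(P)=V'\times_V V''$, as desired.

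The step I expect to be the genuine obstacle is the universal-property verification in (1). Existence of $P$ and pro-log-\'etaleness of its projections follow immediately from base change, but checking that an arbitrary cone induces a morphism \emph{within} $X_{\mathrm{prolog}}$, that is, a pro-log-\'etale map rather than merely a morphism of pro-objects, requires carefully tracking the presentations. Everything else, including (3) and the $V$-component computation in (2), is formal.
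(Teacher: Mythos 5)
Your parts (2) and (3) are fine and agree with the paper: (3) is exactly \Cref{lemmopen0}(2) combined with \Cref{lemmascholze}(6), and (2) is the level-wise computation that the paper summarizes by ``the description of the fiber product in $X_{\log}$.'' The genuine gap is in (1), and it sits exactly where you flagged it, but it cannot be closed by the ``composition and base-change bookkeeping'' you invoke. Reducing finite limits to a terminal object plus fiber products forces you to build the equalizer of $f,g\colon A\to B$ as the pullback of $(f,g)\colon A\to B\times B$ against the diagonal $\Delta_B\colon B\to B\times B$. But in $X_{\mathrm{prolog}}$ the morphisms are by definition pro-log-\'etale maps, and neither $\Delta_B$ nor $(f,g)$ is known to be pro-log-\'etale: nothing in \Cref{lemmascholze} produces a pro-log-\'etale presentation of a comparison map into a product (the natural candidate presentations of the diagonal of a genuine pro-object fail the surjectivity requirement on transition maps). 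Indeed, even asserting that $B\times_{(X,X)}B$ is a \emph{product in} $X_{\mathrm{prolog}}$, rather than merely an object of it, already requires knowing that the comparison map attached to the cone $(\mathrm{id}_B,\mathrm{id}_B)$ --- which is $\Delta_B$ itself --- is pro-log-\'etale, so your argument is circular at this point. The statement you would need (a map over a common base between two objects pro-log-\'etale over that base is itself pro-log-\'etale) is precisely the analogue of the seventh sub-statement of \cite[Lemma 3.10]{Sch13}, which the paper explicitly declines to state because it is ``the only non-formal one''; \Cref{lemmascholze}(2),(3),(5) give closure under base change along pro-log-\'etale maps and under certain compositions, and no more.

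The paper takes a different decomposition precisely to avoid this: it checks finite products (which are formal, by \Cref{lemmascholze}) and equalizers, and for equalizers it supplies a genuine geometric input. Namely, one writes $f=\varprojlim f_i$ and $g=\varprojlim g_i$ with a common index category via (the proof of) \cite[Corollary 6.1.8]{catsheaf}, forms the level-wise equalizers $E_i$ in $X_{\log}$, and observes that the image $E^i_j$ of $E_i$ in $N'_j$ is open and closed by \Cref{lemmopen0}(2); since the $N'_j$ may be taken affinoid, each has only finitely many connected components, so $E^i_j$ stabilizes for $i$ large, and the stabilized images $E^{\infty}_j$ assemble into the equalizer. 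This finiteness-of-connected-components argument is the non-formal ingredient for which your proposal has no substitute; without it (or the avoided seventh sub-statement), neither the existence of your pullback $A\times_{B\times B}B$ as an object of $X_{\mathrm{prolog}}$ nor its universal property can be established.
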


\begin{proof}
\leavevmode
\begin{enumerate}
\item It suffices to check that finite products and equalizers exist. The first
  case follows from \Cref{lemmascholze} which is formal. The non-formal prat is
  to check for equalizers and we need to use the fact that locally $|N|$ has
  only a finite number of connected components. In fact, suppose that
  $f,g:N'\rightarrow N$ are two morphism of $X_{\mathrm{prolog}}$. By (the proof
  of)~\cite[Corollary 6.1.8]{catsheaf}, we can write $N'=\varprojlim N'_i$ and
  $N=\varprojlim N_i$ as pro-log-\'{e}tale presentations with the same index
  category and maps $f_i,g_i:N'_i\rightarrow N_i$ such that $f=\varprojlim f_i$
  and $g=\varprojlim g_i$. Let $E_i$ be the equalizer of $f_i$ and $g_i$ in
  $X_{\log}$. We get the following diagram (cf.~\Cref{lemmopen0}):
  \[
    \xymatrix{
      E_i\ar[r]\ar[d] & N'_i \ar[d] \ar@<-.5ex>[r]_{g_i}
      \ar@<.5ex>[r]^{f_i} & N_i \ar[d] \\
      E_j\ar[r] & N'_j \ar@<-.5ex>[r]_{g_j} \ar@<.5ex>[r]^{f_j} & N_i
    }
  \]
  where $N'_i\rightarrow N'_j$ and $N_i\rightarrow N_j$ are surjective for large
  $i$. We may assume that $V_i$ and $V'_i$ are affinoids. Denote the image of
  $E_i$ in $N_j'$ by $E^i_j$. Note that $E^i_j$ is open and closed by
  \Cref{lemmopen0} (2). Since $N'_j$ has finitely many connected components, the
  image $E^i_j$ stabilizes for $i$ larger than some $i_j$. Hence we see
  $E^\infty_j = E^{i_j}_j$ is the equalizer that we are looking for.
\item This follows from the description of fiber product in $X_{\log}$.
\item It follows from \Cref{lemmopen0} (2) and \Cref{lemmascholze} (6).
\end{enumerate}
\end{proof}

\begin{lemma}
\leavevmode
\begin{enumerate}
\item For an object $(V,N)\in X_{\mathrm{prolog}}$, if $V$ is affinoid, then $(V,N)$
  is a quasi-compact object of $X_{\mathrm{prolog}}$.

\item The family of all objects $(V,N)\in X_{\mathrm{prolog}}$ with $V$ affinoid is
  generating $X_{\mathrm{prolog}}$, and stable under fiber products.

\item The topos $\mathrm{Sh}(X_{\mathrm{prolog}})$ is algebraic and all objects
  $(V,N)$ of $X_{\mathrm{prolog}}$ with $V$ affinoid are quasi-compact and
  quasi-separated.

\item An object $(V,N)\in X_{\mathrm{prolog}}$ is quasi-compact if and only if
  $|(V,N)|$ is quasi-compact.

\item An object $(V,N)\in X_{\mathrm{prolog}}$ is quasi-separated if and only if
  $|(V,N)|$ is quasi-separated.
\end{enumerate}
\end{lemma}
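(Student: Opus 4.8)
The plan is to follow the pattern of \cite[Section 3]{Sch13}: statements (1) and (2) are the substantive geometric input, (3) is then purely formal, and (4) and (5) compare quasi-compactness and quasi-separatedness \emph{as an object of the site} with the corresponding topological properties of the space $|(V,N)|$. For (1) I would first observe that when $V$ is affinoid, every $N_i$ in a pro-log-\'etale presentation $N=\varprojlim N_i$ is finite over $V$, hence itself affinoid, so each $|N_i|$ is a spectral space; since finite maps induce spectral maps, $|(V,N)|=|N|=\varprojlim|N_i|$ is a cofiltered inverse limit of spectral spaces along spectral transition maps and is therefore quasi-compact. Given any covering $\{f_\lambda\colon(V_\lambda,N_\lambda)\to(V,N)\}$, the images $|f_\lambda|(|(V_\lambda,N_\lambda)|)$ are open by \Cref{lemmaxprolog}(3) and cover $|N|$ by definition of a covering, so quasi-compactness of $|N|$ yields a finite subcovering; hence $(V,N)$ is a quasi-compact object.

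For the generating statement in (2), given any $(V,N)$ I would choose an admissible affinoid cover $V=\bigcup_\alpha V_\alpha$ and pull $N$ back to get objects $(V_\alpha,N\times_V V_\alpha)\to(V,N)$ in the family; since every point of $|N|$ lies over a point of some $V_\alpha$, their images cover $|N|$, so they form a covering. For stability under fiber products I would use the explicit description of \Cref{lemmaxprolog}(2): the fiber product of $(V',N')$ and $(V'',N'')$ over $(V,N)$ has first component $V'\times_V V''$. As $X$ is separated and all structure maps are \'etale, every $V$ occurring is quasi-separated, so for $V',V''$ affinoid the space $V'\times_V V''$ is quasi-compact; being separated and \'etale over the affinoid $V'$ it is affinoid, and the fiber product again lies in the family. (If one wishes to avoid the fact that a quasi-compact separated \'etale rigid space over an affinoid is affinoid, it suffices to cover $V'\times_V V''$ by affinoids, which is enough for what follows.)

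Granting (1) and (2), statement (3) is then formal: the objects with $V$ affinoid form a generating family of quasi-compact (by (1)) and quasi-separated objects that is stable under fiber products, so the standard criterion (SGA~4, Expos\'e~VI) shows that $\mathrm{Sh}(X_{\mathrm{prolog}})$ is an algebraic topos and that these objects are coherent, i.e.\ quasi-compact and quasi-separated. The quasi-separatedness used here is the one established below in (5), but it can also be read off directly from the fact that $V$ is quasi-separated and each finite level $N_i$ is finite over $V$.

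For (4), the implication ``$|(V,N)|$ quasi-compact $\Rightarrow$ $(V,N)$ quasi-compact'' is exactly the argument of (1). Conversely, if $(V,N)$ is quasi-compact I would cover it by the affinoid-$V$ objects from (2); quasi-compactness gives a finite subcovering $\{(V_{\alpha_k},N_{\alpha_k})\}$, and $|N|=\bigcup_k|N_{\alpha_k}|$ is then a finite union of quasi-compact spaces (each by (1)), hence quasi-compact. Statement (5) is handled in the same spirit: because $X$ is separated, every $V$ is quasi-separated and every $N_i$ is finite over it, so $|N|$ is automatically quasi-separated; the object-level statement follows by covering two quasi-compact objects over $(V,N)$ by affinoid-$V$ pieces and using that, $V$ being quasi-separated, the relevant fiber products $V_{1,\alpha}\times_V V_{2,\beta}$ remain quasi-compact, hence (by (2)) lie in the family and are quasi-compact by (1), so their finite union is too. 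I expect the main obstacle to be the two points where geometry rather than formalism intervenes: establishing that $|(V,N)|$ is genuinely spectral as an inverse limit so that topological quasi-compactness is available, and the affinoid-ness of the quasi-compact \'etale fiber products in (2). Once these are secured, the remainder is a routine combination of \Cref{lemmopen0}, \Cref{lemmascholze}, \Cref{lemmaxprolog}, and the topos-theoretic machinery.
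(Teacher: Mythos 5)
Your (1), the generating half of (2), and both directions of (4) do match the paper's own arguments (spectral inverse limits plus the open-image property of \Cref{lemmaxprolog}(3), and covering $V$ by affinoids), but two supporting claims you rely on are false, and they break (2) and (5) as written. First, a space \'{e}tale over the separated $X$ need \emph{not} be quasi-separated: glue two copies of an affinoid open $W \subseteq X$ along the complement of a point of $W$; the result $V$ is \'{e}tale over $X$ (locally an open immersion), yet the intersection of the two quasi-compact copies is not quasi-compact. Since \Cref{logtopos} imposes no quasi-separatedness on $V$, objects $(V,N)$ with $|N|$ not quasi-separated genuinely occur, so your assertion in (5) that $|N|$ is ``automatically quasi-separated'' --- which reduces the equivalence to two unconditional statements --- is wrong, and with it your whole treatment of (5). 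The paper proves (5) as a genuine equivalence: cover $(V,N)$ by $(V_i,N|_{V_i})$ with $V_i$ affinoid and use \cite[VI Corollaire 1.17]{SGA42} together with (4) to translate quasi-separatedness of the object into quasi-compactness of $|(V_i,N|_{V_i})|\times_{|(V,N)|}|(V_j,N|_{V_j})|$, i.e.\ into topological quasi-separatedness of $|(V,N)|$. The same false claim undermines your fiber-product argument in (2): what is needed (and what the paper's ``obviously'' means) is stability over bases \emph{in the family}, where $V$ is affinoid, hence separated, so that $V'\times_V V''$ is a closed subspace of the affinoid $V'\times_k V''$ and is again affinoid --- no \'{e}taleness of $V$ enters. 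Your auxiliary ``fact'' that a quasi-compact separated space \'{e}tale over an affinoid is affinoid is also false (a rigid Hartogs figure in $\mathbb{D}^2$ is a quasi-compact separated open subspace, hence \'{e}tale over $\mathbb{D}^2$, but every function on it extends to $\mathbb{D}^2$, so it cannot be affinoid), and your fallback of merely covering $V'\times_V V''$ by affinoids does not produce the fiber-product-stable generating family that the SGA~4 criteria require.

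Your (3) is likewise not ``purely formal.'' From (1), (2) and \cite[VI Proposition 2.1]{SGA42} one only obtains that $\mathrm{Sh}(X_{\mathrm{prolog}})$ is \emph{locally} algebraic and that the affinoid objects are coherent (this, incidentally, is how the paper gets their quasi-separatedness, avoiding the near-circular appeal to (5) you make). Because $X$ is proper, the terminal object $(X,X)$ is not in the family, and algebraicity of the topos requires in addition that products of generators over $(X,X)$ be coherent. The paper supplies this by checking the criterion \cite[VI Proposition 2.2 (ii bis)]{SGA42} on the subfamily of $(V,N)$ for which $V \to X$ factors through an affinoid open $V_0 \subseteq X$, so that $(V,N)\times_{(X,X)}(V,N)=(V,N)\times_{(V_0,V_0)}(V,N)$ stays in the family; your proposal omits any such step. (Invoking separatedness of $X$ to see directly that $V\times_X V'$ is affinoid would also work, but some argument of this kind must be given.)
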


\begin{proof}
\leavevmode
\begin{enumerate}
\item It follows from \Cref{lemmaxprolog} (3) that an object $(W,M)$ of
  $X_{\mathrm{prolog}}$ is quasi-compact if $|(W,M)|$ is quasi-compact. If $V$
  is affinoid, we can write $N=\varprojlim N_i$ with $N_i$ affinoid. Moreover,
  the space $|N_i|$ is a spectral space and the transition maps are spectral.
  Hence the inverse limit $\varprojlim |N_i|$ is a spectral space, and in
  particular quasi-compact. It follows that $(V,N)$ is a quasi-compact object of
  $X_{\mathrm{prolog}}$.

\item For an object $(V,N)\in X_{\mathrm{prolog}}$, we can use affinoid objects
  to cover $V$, i.e., $V=\cup V_i$. It is clear that $\{(V_i, N|_{V_i})\}$ is a
  covering of $(V,N)$ in $ X_{\mathrm{prolog}}$. The family is obviously stable
  under fiber products.

\item By (2) and~\cite[VI Proposition 2.1]{SGA42}, the topos
  $\mathrm{Sh}(X_{\mathrm{prolog}})$ is locally algebraic (see~\cite[VI Definition
  2.3]{SGA42}) and all objects $(V,N)$ of $X_{\mathrm{prolog}}$ with $V$
  affinoid are quasi-compact and quasi-separated. We check the criterion
  of~\cite[VI Proposition 2.2 (ii bis)]{SGA42} by considering the class of
  $(V,N)$ as in (1) that $V\rightarrow X$ factors over an affinoid open subset
  $V_0$ of X. It consists of coherent objects and is still generating
  $X_{\mathrm{prolog}}$. Note that $(V,N)\times_{(X,X)} (V,N)=(V,N)
  \times_{(V_0,V_0)} (V,N)$ is an object as in (1) which is quasi-separated.

\item Without loss of generality we may assume $|N| \to |V|$ is surjective. Therefore, the space $|V|$ is quasi-compact if $|(V,N)|$ is quasi-compact. Use finitely many objects $(V_i,N_i)$ of form in (1) to cover $(V,N)$. Note that $(V_i,N_i)$ are quasi-compact by (1). It follows that $(V,N)$ is quasi-compact.
  Conversely, if $(V,N)$ is compact, then we can find finitely many $(V_i,N_i)$
  with $V_i$ affinoid cover $V$. Note that $|(V_i,N_i)|$ is quasi-compact by the
  proof of (1). It follows that $|(V,N)|$ is quasi-compact.

\item Cover $(V,N)$ by $(V_i, N|_{V_i})$ as in the proof of (2). It follows from~\cite[VI Colloary 1.17]{SGA42} that the object $(V,N)$ is quasi-separated if and only if $(V_i,
  N|_{V_i}) \times_{(V,N)} (V_j, N|_{V_j})$ is quasi-compact if and only if
  $|(V_i, N|_{V_i})| \times_{|(V,N)|} |(V_j, N|_{V_j})|$ is quasi-compact if and
  only if $|(V,N)|$ is quasi-separated.
\end{enumerate}
\end{proof}

There is a natural projection $\nu:\mathrm{Sh}(X_{\mathrm{prolog}}) \rightarrow
\mathrm{Sh}(X_{\log})$ induced by the morphism of sites $X_{\mathrm{prolog}} \rightarrow X_{\log}$ sending $(V,N)$ to the constant tower $(V, \varprojlim N)$.

\begin{lemma}
\label{lemmacohomology}
\leavevmode
\begin{enumerate}
\item Let $\mathcal{F}$ be an abelian sheaf on $X_{\log}$. For any quasi-compact
  and quasi-separated $(V,N)=(V,\varprojlim N_j)\in X_{\mathrm{prolog}}$ and any
  $i \geq 0$, we have
  \[
    H^i((V,N),\nu^*\mathcal{F})=\varinjlim H^i((V,N_j), \mathcal{F}).
  \]
\item Let $\mathcal{F}$ be an abelian sheaf on $X_{\log}$. The adjunction
  morphism $\mathcal{F}\rightarrow R\nu_*\nu^*(\mathcal{F})$ is an isomorphism.
\end{enumerate}
\end{lemma}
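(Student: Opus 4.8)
The plan is to deduce part (1) from two ingredients, the case $i=0$ and an acyclicity statement for pullbacks of injectives, and then to obtain part (2) formally from part (1).

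First I would treat the case $i=0$. Since $\nu$ is induced by the functor $X_{\log}\to X_{\mathrm{prolog}}$ sending an object to its constant tower, a cofinality argument identifies the presheaf pullback with $\nu^p\mathcal F(V,N)=\varinjlim_j\mathcal F(V,N_j)$ for a chosen presentation $N=\varprojlim N_j$. I claim that this presheaf is already a sheaf on the generating family of objects $(V,N)$ with $V$ affinoid, which are quasi-compact and quasi-separated by the preceding lemma. Given such a covering, quasi-compactness lets me pass to a finite subcover, and by \Cref{lemmacoverings} it is induced from a covering at some finite level $j_0$ in $X_{\log}$; since finite products and equalizers commute with filtered colimits (filtered colimits being exact), the sheaf condition for $\nu^p\mathcal F$ at $(V,N)$ follows from the sheaf condition for $\mathcal F$ at each level $j\geq j_0$. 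As these objects generate, $\nu^*\mathcal F$ agrees with $\nu^p\mathcal F$ on them, giving $H^0((V,N),\nu^*\mathcal F)=\varinjlim_j H^0((V,N_j),\mathcal F)$.

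The main obstacle is the second ingredient: for an injective abelian sheaf $\mathcal I$ on $X_{\log}$, the pullback $\nu^*\mathcal I$ is acyclic for $\Gamma((V,N),-)$ on every quasi-compact quasi-separated object. I would argue this by \v{C}ech theory. For injective $\mathcal I$ the augmented \v{C}ech complex of any covering in $X_{\log}$ is exact, so $\check H^i(\mathcal U,\mathcal I)=0$ for $i>0$. Given a covering $\mathcal U$ of a qcqs object $(V,N)$ in $X_{\mathrm{prolog}}$, I reduce as above to a finite covering induced from a level $j_0$; since fiber products in $X_{\mathrm{prolog}}$ are computed levelwise (\Cref{lemmaxprolog}) and $H^0$ commutes with the colimit by the previous step, the \v{C}ech complex $\check C^\bullet(\mathcal U,\nu^*\mathcal I)$ is the filtered colimit over $j\geq j_0$ of the complexes $\check C^\bullet(\mathcal U_j,\mathcal I)$. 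Exactness of filtered colimits then yields $\check H^i(\mathcal U,\nu^*\mathcal I)=0$ for all $i>0$ and all coverings $\mathcal U$, so Cartan's criterion (vanishing of higher \v{C}ech cohomology on a generating family stable under fiber products forces acyclicity) gives $H^i((V,N),\nu^*\mathcal I)=0$ for $i>0$. With these inputs part (1) is immediate: choosing an injective resolution $\mathcal F\to\mathcal I^\bullet$ and applying the exact functor $\nu^*$ gives a resolution of $\nu^*\mathcal F$ by $\Gamma((V,N),-)$-acyclic sheaves, so $H^i((V,N),\nu^*\mathcal F)$ is the cohomology of $\Gamma((V,N),\nu^*\mathcal I^\bullet)=\varinjlim_j\Gamma((V,N_j),\mathcal I^\bullet)$, and exactness of filtered colimits lets me pull the colimit outside the cohomology to obtain $\varinjlim_j H^i((V,N_j),\mathcal F)$.

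Finally, part (2) follows formally from part (1). The sheaf $R^i\nu_*\nu^*\mathcal F$ is the sheafification of the presheaf on $X_{\log}$ sending $(V,N)$ to $H^i((V,\varprojlim N),\nu^*\mathcal F)$, where $\varprojlim N$ is the constant tower. Applying part (1) to this trivially cofiltered system gives $H^i((V,\varprojlim N),\nu^*\mathcal F)=H^i((V,N),\mathcal F)$. For $i=0$ this is $\mathcal F(V,N)$, whose sheafification is $\mathcal F$; for $i>0$ the presheaf $(V,N)\mapsto H^i((V,N),\mathcal F)$ sheafifies to $0$ by the standard fact that higher cohomology presheaves have vanishing sheafification. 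Hence the adjunction morphism $\mathcal F\to R\nu_*\nu^*\mathcal F$ is an isomorphism.
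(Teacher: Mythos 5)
Your proposal is correct and takes essentially the same route as the paper's proof: both reduce to the injective case (your acyclicity of $\nu^*\mathcal{I}$ is the paper's assumption that $\mathcal{F}$ is injective), both use quasi-compactness to pass to a finite subcover and \Cref{lemmacoverings} to descend the covering to a finite level so that the \v{C}ech complex becomes a filtered colimit of \v{C}ech complexes of coverings in $X_{\log}$, both conclude via the Cartan-type criterion of~\cite[V Proposition 4.3]{SGA42}, and part (2) is the same formal sheafification argument. One small point worth tightening: in part (2) the identity $H^i((V,\varprojlim N),\nu^*\mathcal{F})=H^i((V,N),\mathcal{F})$ from part (1) is only available for quasi-compact quasi-separated $(V,N)$, so you should evaluate the presheaf on the generating family of such objects (e.g.\ those with $V$ affinoid), which suffices for the sheafification — exactly as the paper does.
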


\begin{proof}
\begin{enumerate}
\item We may assume that $\mathcal{F}$ is injective and that $X$ is
  quasi-compact and quasi-separated. Let us work with the subsite
  $X_{\mathrm{prolog}qc} \subset X_{\mathrm{prolog}}$ consisting of
  quasi-compact objects; note that
  $\mathrm{Sh}(X_{\mathrm{prolog}qc})=\mathrm{Sh}(X_{\mathrm{prolog}})$. Define
  a presheaf $G((V,N))=\varinjlim \mathcal{F}((V,N_i)) $ where $N=\varprojlim N_i$ with
  $N_i\in V_{\mathrm{f,log}}$. It is clear that $\nu^*\mathcal{F}$ is the sheaf associated
  to $G$. It suffices to show $G$ is a sheaf with $H^i((V,N),G)=0$ for all
  $(V,N)\in X_{\mathrm{prolog}qc}$ and $i>0$. By~\cite[V Proposition 4.3 (i) and
  (iii)]{SGA42}, we just need to prove that for any $(V,N)\in X_{\mathrm{prolog}qc}$
  with a pro-log-\'{e}tale covering $(V_k,N_k) \rightarrow (V,N)$ in $
  X_{\mathrm{prolog}qc}$, the correspondding Cech complex
  \[
    0\rightarrow G((V,N)) \rightarrow \prod \limits_k G(( V_k,N_k)) \rightarrow
    \prod\limits_{k,k'} G((V_k,N_k)\times_{(V_k,N_k)} (V_{k'},N_{k'}))
    \rightarrow \ldots
  \]
  is exact. This shows that $G$ is a sheaf and then all higher cohomology groups
  vanish.

  Since $(V,N)$ is quasi-compact, we can pass to a finite subcover and combine
  them into a single morphism $(V',N') \xrightarrow{(p,q)} (V,N)$. Write it in a
  pro-log-\'{e}tale presentation $N'=\varprojlim N'_i\rightarrow N$. In the following,
  we write the Cech complex of $G$ with respect to the covering $(p,q)$ as
  $\mathrm{Cech}(N'\rightarrow N)$. Therefore, we
  have 
\[
\mathrm{Cech}(N'\rightarrow N)=\varinjlim \mathrm{Cech}(N'_i \rightarrow N) 
\]  
where $N'_i\rightarrow N$ is a covering for large $i$. Therefore, it
  suffices to show the exactness of $\mathrm{Cech}(N'_i\rightarrow N)$. By
  \Cref{lemmacoverings}, the cover $N'_i\rightarrow N$ is induced by a cover
  $N'_0\rightarrow N_0$ in $X_{\log}$, i.e. $N'_i= N'_0\times_{N_0} N$.
  Therefore, $\mathrm{Cech}(N'_i\rightarrow N)$ is the direct limit of the Cech
  complexes for some covers in $X_{\log}$. But this is acyclic by the
  injectivity of $G$ on $X_{\log}$.

\item Note that $R^i\nu_*\nu^*\mathcal{F}$ is the sheaf on $X_{\log}$ associated
  to the presheaf $(V,N) \mapsto H^i((V,N), \nu^*\mathcal{F})$ where $(V,N)$ is
  considered as an element of $X_{\mathrm{prolog}}$. Hence, (1)
  says that we get an isomorphism for $i=0$. Moreover, for $i$ positive, (1) says that $ H^i((V,N), \nu^*\mathcal{F})=H^i((V,N), \mathcal{F})$
  if $(V,N) \in X_{\log}$ is quasi-compact and quasi-separated. By the local acyclicity of
  higher cohomology group, a section of $H^i((V,N), \mathcal{F})$ vanishes
  locally in the topology $X_{\log}$, so the associated sheaf is trivial. It
  follows that $R^i\nu_*\nu^*\mathcal{F}=0$ for $i>0$.
\end{enumerate}
\end{proof}

% * The structure sheaves
\section{The structure sheaves}
\label{section 4}

In this section, parallel to~\cite[Section 4]{Sch13}, we introduce the structure
sheaves \(\mathcal{O}^+\), \(\mathcal{O}\), \(\hat{\mathcal{O}}^+\) and
\(\hat{\mathcal{O}}\) on our site \(X_{\mathrm{prolog}}\). In the following we will not distinguish rigid spaces and their associated adic spaces.

\begin{definition}
  With the notations as in \Cref{defux}, let $X$ be a rigid space over $\mathrm{Sp}(k)$
  with an SSNC divisor $D$. Consider the following sheaves on $X_{\log}$ and $X_{\mathrm{prolog}}$.
\begin{enumerate}
\item The integral structure sheaf $\mathcal{O}_{X_{\log}}^+$ on $X_{\log}$ is
  given by $u_{X,*}(\mathcal{O}_{U_{\mathrm{\acute{e}t}}}^+)$. By~\cite[Theorem 2.6]{Hansen17} we have
  $\mathcal{O}_{X_{\log}}^+\left((V,N)\right)=\mathcal{O}_N^+(N)$ for an object
  $(V,N)\in X_{\log}$. The structure sheaf
  $\mathcal{O}_{X_{\log}}$ on $X_{\log}$ is given by
  $\mathcal{O}_{X_{\log}}^+[\frac{1}{p}]=
  u_{X,*}(\mathcal{O}_{U_{\mathrm{\acute{e}t}}}^+)[\frac{1}{p}]$, namely,
  $\mathcal{O}_{X_{\log}}(V,N)=\mathcal{O}_N(N)$ for quasi-compact and
  quasi-separated $(V,N)$.

\item The (uncompleted) structure sheaf is defined to be
  $\nu^*\mathcal{O}_{X_{\log}}$ on $X_{\mathrm{prolog}}$ with
  subring of integral elements $\nu^*\mathcal{O}_{X_{\log}}^+$. 
  If no confusion seems to arise, we will still denote them by $\mathcal{O}_{X_{\log}}$
  and $\mathcal{O}_{X_{\log}}^+$ respectively.

\item We define the completed integral structure sheaf (on $X_{\mathrm{prolog}}$) to be $\hat{\mathcal{O}}_{X_{\log}}^+=\varprojlim
  \mathcal{O}_{X_{\log}}^+/p^n$, and the completed structure is defined as
  $\hat{\mathcal{O}}_{X_{\log}}=\hat{\mathcal{O}}_{X_{\log}}^+[\frac{1}{p}]$.
\end{enumerate}
\end{definition}

For simplicity, for the rest of this section we assume $X$ is a rigid space over a perfectoid field $K$.

\begin{definition}
\label{defaffpert}
  Let $(V,N) \in X_{\mathrm{prolog}}$ with $V \xrightarrow{f} X$. We say that 
  $(V,N)$ is affinoid perfectoid if
\begin{enumerate}
\item $V$ is affinoid with $V=\mathrm{Sp}(R')$ and $f^{-1}(D_i)$ is cut out 
by one equation $f_i$;
\item $N$ has a presentation $N=\varprojlim N_j$ for a cofiltered system
  $\{N_j=\mathrm{Sp}(R_j)\}$ of objects in $V_{\mathrm{f,log}}$ such that 
  \begin{itemize}
  \item $N_j$ are smooth;
  \item $\{N_j\}$ contains a cofiltered subsystem consisting of all branched coverings
  $\mathrm{Sp}\left(R'[\sqrt[k]{f_i}]\right)$ for all $k\in
  \mathbb{N}$ and;
  \item denote by $R^+$ the $p$-adic completion of $\varinjlim R_i^{\circ} $,
  and $R=R^+[\frac{1}{p}]$, the pair $(R,R^+)$ is a perfectoid affinoid
  $(K,K^{\circ})$-algebra.
  \end{itemize}
\end{enumerate}
\end{definition}

\begin{remark}
In the above definition (2), one can actually drop the first condition. Indeed, any cofiltered system
satisfying second condition automatically has a cofinal subsystem with $N_j$ being smooth by 
\Cref{Abhyankar's Lemma}.
\end{remark}

\begin{warning}
In the following, the index of an inverse system in $X_{\mathrm{prolog}}$
is denoted by $i$. For instance, we will write $N = \varprojlim N_i$.
But the readers should be cautious that the $i$ here is not the same as the index of $f_i$'s.
Since these two indexing systems have very different meanings, 
we hope no confusions should arise from this.
\end{warning}

We say that $(V,N)$ is perfectoid if it has an open cover by affinoid
perfectoid. To an affinoid perfectoid $(V,N)$ as above, we can associate
$\hat{N}=\mathrm{Spa}(R,R^+)$ which is an affinoid perfectoid space over $\mathrm{Spa}(K,\mathcal{O}_K)$.
One immediately checks that this is well-defined, i.e.~independent of the presentation of
$N=\varprojlim N_i$. Moreover, we have $\hat{N} \sim \varprojlim N_i $ in the
sense of~\cite[Definition 7.14]{Scholze1}, in particular $|\hat{N}|=|N|$.

\begin{example}
\label{model example}  
Take 
\[
X=V=\mathrm{Sp}\left(K \langle Z_1^{\pm 1},\ldots, Z_{n-r}^{\pm 1},
  Z_{n-r+1},\ldots, Z_{n} \rangle \right)=\mathbb{T}^{n-r} \times \mathbb{D}^{r},
\]
denote it by $\mathbb{T}^{n-r,r}$, with the divisor $D$ given by $Z_{n-r+1}
\cdots Z_n=0$. Then $(\mathbb{T}^{n-r,r},N) \in X_{\mathrm{prolog}}$ with
$N=\tilde{\mathrm{T}}^{n-r,r}$ being the inverse limit of the
\[
  \mathrm{Sp}\left(K \langle Z_1^{\pm 1/p^k},\ldots, Z_{n-r}^{\pm 1/p^k},
    Z_{n-r+1}^{1/l},\ldots, Z_{n}^{1/l} \rangle \right)
\]
is an affinoid perfectoid. Using the notations from discussion before this example, we have
\[
R = K \langle Z_1^{\pm 1/p^{\infty}}, \ldots, Z_{n-r}^{\pm 1/p^{\infty}}, Z_{n-r+1}^{1/\infty}, 
\ldots, Z_n^{1/\infty} \rangle
\]
and
\[
R^+ = \mathcal{O}_K \langle Z_1^{\pm 1/p^{\infty}}, \ldots, Z_{n-r}^{\pm 1/p^{\infty}}, Z_{n-r+1}^{1/\infty}, 
\ldots, Z_n^{1/\infty} \rangle.
\]
\end{example}

The following lemma is an analogue of~\cite[Lemma 4.5]{Sch13}. The proof is exactly the same.

\begin{lemma}
\label{lemmafet}
With the notations as in \Cref{defaffpert}, let $(V,N) \in X_{\mathrm{prolog}}$ be an
affinoid perfectoid with $N=\varprojlim N_i$ and $N_i=\mathrm{Spa}(R_i, R_i^{\circ})$ so that
$\hat{N}=\mathrm{Spa}(R,R^+)$.

Assume that $M_i=\mathrm{Spa}(S_i, S_i^{\circ}) \rightarrow N_i$ is an \'{e}tale map which can be
written as a composition of rational subsets and finite \'{e}tale maps. For $ j \geq
i$, write $M_j = M_i \times_{N_i} N_j = \mathrm{Spa}(S_j,S_j^{\circ})$ and $M = M_i\times _{N_i}
N=\varinjlim M_j \in \mathrm{pro}$-({Rigid}/$M_i$). Let $A_j$ be the $p$-adic
completion of the p-torsion free quotient of $S_j^{\circ} \otimes_{R_j^{\circ}}R^+$. Then
\begin{enumerate}
\item The completion $(S,S^+)$ of the direct limit of the $(S_j,S_j^{\circ})$ is a
  perfectoid affinoid $(K,K^{\circ})$-algebra. Moreover, $\hat{M} = M_j \times _{M_j}
  \hat{N}$ in the category of adic spaces over $\mathrm{Spa}(K,K^{\circ})$, and $S=A_j[\frac{1}{p}]$ for
  any $j\geq i$, where $\hat{M}$ is similarly defined as $\hat{N}$, i.e.
  $\hat{M}=\mathrm{Spa} \left( (\hat{\varinjlim S_j^{\circ} })[\frac{1}{p} ], \hat{\varinjlim S_j^{\circ} }
  \right)$.
\item For any $j\geq i$, the cokernel of the map $A_j \rightarrow S^+$ is
  annihilated by some power $p^N$ of p.
\item Let $\epsilon > 0$, $\epsilon \in \log\Gamma$. Then there exists some $j$ such that the cokernel of the map $A_j \to S^+$ is annihilated by $p^{\epsilon}$.
\end{enumerate}
\end{lemma}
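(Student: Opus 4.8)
The plan is to mimic the proof of~\cite[Lemma 4.5]{Sch13} essentially verbatim; the essential external inputs are from the perfectoid theory of~\cite{Scholze1}, namely that a rational localization of an affinoid perfectoid space is again affinoid perfectoid, and that, by the almost purity theorem, a finite \'{e}tale algebra over a perfectoid affinoid is again perfectoid and almost finitely presented projective as a module. Since by hypothesis $M_i \to N_i$ is a composition of rational embeddings and finite \'{e}tale maps, and since all three assertions are compatible with composition of such maps (possibly after shrinking the approximation parameter), I would first reduce to treating these two cases separately. In either case one forms the base change $M_i \times_{N_i} \hat{N}$ along $\hat{N} = \mathrm{Spa}(R, R^+) \to N_i$; the perfectoid inputs show this base change is affinoid perfectoid, and the fact that $\hat{N} \sim \varprojlim N_i$ in the sense of~\cite[Definition 7.14]{Scholze1} propagates, via compatibility of the $\sim$-relation with rational localization and finite \'{e}tale base change, to the statement $\hat{M} = M_i \times_{N_i} \hat{N} \sim \varprojlim M_j$. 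This identifies the completion $(S,S^+)$ of $\varinjlim (S_j, S_j^\circ)$ with the perfectoid ring of functions on $M_i \times_{N_i} \hat{N}$, giving the perfectoidness claim in (1). The identity $S = A_j[1/p]$ then follows because $A_j$ is, by construction, a $p$-adically complete integral model whose generic fibre is the completion of $S_j \otimes_{R_j} R$, and this completion equals $S$ in both cases below.

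For the rational subset case, write $M_i = N_i(\underline{T}/s)$ for suitable functions. Then $S_j^\circ$ is, up to bounded $p$-power torsion, the $p$-adic completion of $R_j^\circ\langle \underline{T}/s \rangle$, while $S^+$ is the analogous completion over $R^+ = (\varinjlim R_j^\circ)^{\wedge}$. Here the cokernel of $A_j \to S^+$ is governed entirely by the difference between forming this localization over $R_j^\circ$ versus over $R^+$, and an explicit estimate as in~\cite{Sch13} shows it is killed by a fixed power of $p$, giving (2); moreover that power tends to $0$ in the value group as $j \to \infty$, giving (3), because the relevant denominators become arbitrarily divisible along the tower.

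The finite \'{e}tale case carries the real content, and I expect the approximation statement (3) to be the main obstacle. Here $S_i$ is finite \'{e}tale over $R_i$, so $S = S_i \otimes_{R_i} R$ is already complete and $A_j[1/p] = S$, whence the cokernel of $A_j \to S^+$ is $p$-power torsion; its finiteness, via almost finite presentation of $S^+$ over $R^+$, gives the uniform bound $p^N$ of (2). For (3) I would invoke almost purity: $S^+$ is almost finitely generated projective over $R^+$, so for any $\epsilon \in \log\Gamma$ with $\epsilon > 0$ there exist finitely many elements of $S^+$ together with approximate dual generators realizing the identity map of $S^+$, up to $p^\epsilon$-error, through a finite free $R^+$-module. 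Since $S^+ = (\varinjlim S_j^\circ)^{\wedge}$ and $R^+ = (\varinjlim R_j^\circ)^{\wedge}$, these finitely many data can be lifted, up to a further $p^\epsilon$-error, into some finite level $S_j^\circ$ and $R_j^\circ$; this exhibits $S_j^\circ \otimes_{R_j^\circ} R^+ \to S^+$ as $p^\epsilon$-almost surjective for $j$ large, which is exactly (3). The delicate point is to package the almost-projectivity data and the lifting so that the two sources of error combine into a single $p^\epsilon$ bound controlling the whole cokernel uniformly, rather than merely annihilating individual elements.
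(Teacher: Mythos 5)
Your proposal is correct and follows essentially the same route as the paper: the paper's proof simply defers to~\cite[Lemma 4.5]{Sch13}, citing exactly the two inputs you identify, namely that rational subsets of affinoid perfectoid spaces are affinoid perfectoid (\cite[Theorem 6.3 (ii)]{Scholze1}) for the rational case and the almost purity theorem (\cite[Theorem 7.9(iii)]{Scholze1}) for the finite \'{e}tale case. Your extra detail on parts (2) and (3) --- bounding the cokernel via almost finite generation of $S^+$ over $R^+$ and lifting approximate generating data to a finite level of the tower --- is precisely the content of Scholze's argument that the paper invokes by reference rather than repeating.
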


\begin{proof}
The proof is the same as~\cite[Lemma 4.5]{Sch13}. Roughly speaking, for $M_i \subseteq N_i$ being a rational subset, it follows from the property that a rational subset of an affinoid perfectoid space is affinoid perfectoid, see \cite[Theorem 6.3 (ii)]{Scholze1}. 
For $M_i \to N_i$ being a finite \'{e}tale morphism, it follows from the almost purity theorem \cite[Theorem 7.9(iii)]{Scholze1}.
\end{proof}

\begin{lemma} 
\label{lemmaloget}
Let $(V,N')\rightarrow (V,N)$ be a finite log \'{e}tale morphism in $X_{\mathrm{prolog}}$. If $(V,N)$ is affinoid perfectoid, then the morphism $(V,N')\rightarrow (V,N)$ is induced by a finite \'{e}tale morphism between two objects of $V_{\mathrm{f,log}}$, i.e. $N'=N\times_{N_0} N'_0 $ via some finite \'{e}tale morphism $N'_0\rightarrow N_0$, and $(V,N')$ is affinoid perfectoid.
\end{lemma}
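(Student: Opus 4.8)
The plan is to mirror the proof of \cite[Lemma 4.6]{Sch13}, the essential new ingredient being Rigid Abhyankar's Lemma (\Cref{Abhyankar's Lemma}), which is needed to deal with ramification along $D$. Write $N=\varprojlim_j N_j$ for a pro-log-\'{e}tale presentation with $N_j\in V_{\mathrm{f,log}}$ smooth, and let $\hat N=\mathrm{Spa}(R,R^+)$ be the associated affinoid perfectoid space.

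First I would descend the morphism to a finite level. Being a finite log \'{e}tale (i.e.~$\mathcal{C}_f$) map, $(V,N')\to(V,N)$ is the base change along some $(V,N)\to(V_0,M_0)$ of a morphism $(V_0,M'_0)\to(V_0,M_0)$ in $\mathcal{C}_f\subset X_{\log}$, where $M'_0\to M_0$ is finite \'{e}tale over $V_0^{\circ}$ and the $V$-part of this latter map is the identity, so that the $V$-part of $(V,N')$ is again $V$. As $M_0$ is an honest object of $\mathcal{C}$ and $N=\varprojlim_j N_j$, the structure map $N\to M_0$ factors through a finite stage $N_0:=N_{j_0}\to M_0$. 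Put $N'_0:=(N_0\times_{M_0}M'_0)^{\nu}$. Using the fiber product formula~\eqref{fiberproduct} one checks that $N'_0\to N_0$ lies in $V_{\mathrm{f,log}}$ — it is finite, and \'{e}tale over $N_0^{\circ}$ since $V^{\circ}\to V_0^{\circ}$ and \'{e}tale maps are stable under base change — and that $N'=N\times_{N_0}N'_0$ in $X_{\mathrm{prolog}}$. This proves the first assertion.

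It remains to see that $(V,N')$ is affinoid perfectoid. The difficulty is that $N'_0\to N_0$ may ramify along $D$, so \Cref{lemmafet} cannot be applied at this level. To get around this I would apply \Cref{Abhyankar's Lemma} to $N'_0\to N_0$: for a sufficiently divisible $k$ the normalized base change $(N'_0\times_{N_0}N_0[\sqrt[k]{f_i}])^{\nu}\to N_0[\sqrt[k]{f_i}]$ is finite \'{e}tale. Since $N_0[\sqrt[k]{f_i}]=(N_0\times_V V[\sqrt[k]{f_i}])^{\nu}$ and the presentation of $N$ contains all branched coverings $V[\sqrt[k]{f_i}]$, for $j$ large the (smooth, hence normal) space $N_j$ dominates $N_0[\sqrt[k]{f_i}]$; base changing the finite \'{e}tale cover along $N_j\to N_0[\sqrt[k]{f_i}]$ shows $N'_j:=(N_j\times_{N_0}N'_0)^{\nu}\to N_j$ is finite \'{e}tale, no further normalization being needed since a finite \'{e}tale cover of a smooth space is normal. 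Thus $N'=\varprojlim_j N'_j$ with $N'_j\to N_j$ finite \'{e}tale for $j\gg 0$, and \Cref{lemmafet} identifies the completion of $N'$ as a perfectoid affinoid $(K,K^{\circ})$-algebra, equal to $N'_j\times_{N_j}\hat N$. Enlarging the presentation of $N'$ to contain all $V[\sqrt[k]{f_i}]$, which it dominates through $N'\to N$, all conditions of \Cref{defaffpert} are met, so $(V,N')$ is affinoid perfectoid.

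The main obstacle is exactly the possible ramification of $N'_0\to N_0$ over $D$: the almost purity input packaged in \Cref{lemmafet} demands an honest finite \'{e}tale morphism, whereas a finite log \'{e}tale morphism is only \'{e}tale over the complement $U$. The crux is that \Cref{Abhyankar's Lemma} lets one absorb this (tame) ramification by descending far enough into the tower of $k$-th root coverings, after which the map becomes finite \'{e}tale and Scholze's arguments apply verbatim.
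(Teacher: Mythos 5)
Your proposal is correct and takes essentially the same route as the paper's proof: both descend the finite log \'{e}tale map to a smooth finite stage $N_0$ with an induced map $N_0' \to N_0$ in $V_{\mathrm{f,log}}$, apply Rigid Abhyankar's Lemma (\Cref{Abhyankar's Lemma}) to make it finite \'{e}tale over $N_0[\sqrt[k]{f_i}]$, use that the tower of $N$ dominates this branched covering to get finite \'{e}tale maps $N_j' \to N_j$ cofinally, deduce perfectoidness from almost purity (packaged in \Cref{lemmafet}), and finally enlarge the presentation of $N'$ by the coverings $V[\sqrt[k]{f_i}]$ to meet \Cref{defaffpert}. Your opening step simply makes explicit the factorization of $N \to M_0$ through a finite stage $N_{j_0}$ of the presentation, which the paper leaves implicit when it posits a smooth $N_0$.
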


\begin{proof}
Use the notations as in \Cref{defaffpert}. Suppose that 
$(V,N') \rightarrow (V,N)$ is induced by $(V,N'_0)\rightarrow (V,N_0)$ in 
$V_{\mathrm{f,log}}$, i.e. $(V,N')=(V,N'_0) \times_{(V,N_0)} (V,N)$ via some map 
$N\rightarrow N_0$ of pro-$V_{\mathrm{f,log}}$ where $N_0$ is smooth. By Lemma \ref{Abhyankar's Lemma}, we know there is $N_0[\sqrt[k]{f_i}]=N_1 \rightarrow N_0$ for large $k$ such that $N_1' \coloneqq N_1 \times_{N_0} N_0' \to N_1$ is finite \'{e}tale. 
Now by our assumption of $(V,N)$ being affinoid perfectoid, we may find $N_2$ inside the tower of $N$ dominating $N_1$. 
Therefore $N'$ is induced by the morphism $N_2' \coloneqq N_2 \times_{N_0} N_0' \to N_2$ which is finite \'{e}tale. One checks $(V,N')$ is an affinoid perfectoid: it consists of cofinal system of smooth $N_j'$'s since $N'$ is induced by a finite \'{e}tale morphism; the completed algebra being perfectoid follows from almost purity (see~\cite[Theorem 7.9]{Scholze1}); and since our system has a subsystem dominating $\mathrm{Sp}(R'[\sqrt[k]{f_i}])$, throwing them in our system gives rise to a presentation of $N'$.
\end{proof}

\begin{theorem}
\label{perfectoid basis}
The set of $(V,N)\in X_{\mathrm{prolog}}$ which are affinoid perfectoid form a basis for the topology.
\end{theorem}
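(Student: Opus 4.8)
The plan is to show that every object of $X_{\mathrm{prolog}}$ admits a pro-log-\'etale covering by affinoid perfectoid objects; this is precisely the assertion that such objects form a basis for the topology. By the generating family of affinoid objects established above (and its stability under fiber products), it suffices to treat an object $(V,N)$ with $V$ affinoid, and since the statement is local on $V$ we are free to shrink $V$ further. Arguing exactly as in the proof of \Cref{Abhyankar's Lemma} via Kiehl's local description \cite[Theorem 1.18]{Kiehl67}, after covering $V$ by small affinoid opens I may assume there is an \'etale map $\psi \colon V \to \mathbb{T}^{n-r,r}$ under which $D$ is the preimage of the coordinate divisor $Z_{n-r+1}\cdots Z_n = 0$; in particular each local equation may be taken to be $f_i = \psi^* Z_{n-r+i}$.

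Next I would produce a perfectoid base over $V$ by pulling back the model example. Let $\tilde V$ denote the object obtained from $\tilde{\mathbb{T}}^{n-r,r}$ of \Cref{model example} by base change along $\psi$. Since $\psi$ is \'etale (hence, after shrinking, a composition of rational subsets and finite \'etale maps) and $\tilde{\mathbb{T}}^{n-r,r}$ is affinoid perfectoid, \Cref{lemmafet} shows that $(V,\tilde V)$ is affinoid perfectoid. Moreover its tower contains all the branched coverings $\mathrm{Sp}(R'[\sqrt[k]{f_i}])$, because $f_i = \psi^* Z_{n-r+i}$ and $\tilde{\mathbb{T}}^{n-r,r}$ already adjoins all roots of the $Z_{n-r+i}$.

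Finally I would pull back the given tower and invoke almost purity. Writing $N = \varprojlim N_i$ with $N_i \in V_{\mathrm{f,log}}$, I set $(V,M_i) \coloneqq (V,N_i) \times_{(V,V)} (V,\tilde V)$, the fiber product in $X_{\mathrm{prolog}}$. Each map $(V,M_i) \to (V,\tilde V)$ is finite log-\'etale, being the base change of the finite log-\'etale map $(V,N_i) \to (V,V)$; here \Cref{Abhyankar's Lemma} guarantees that after adjoining sufficiently many roots of the $f_i$ — all of which are already present in $\tilde V$ — the pullback of $N_i$ becomes genuinely finite \'etale over a suitable layer of $\tilde V$. Hence \Cref{lemmaloget} applies and shows that each $(V,M_i)$ is affinoid perfectoid. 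Passing to the limit $M = \varprojlim M_i$, the completed colimit of the corresponding perfectoid algebras is again perfectoid and its tower still contains the roots of the $f_i$, so $(V,M)$ is affinoid perfectoid. Since $|\tilde V| \to |V|$ is surjective, \Cref{lemmascholze}(2) gives that $|M| \to |N|$ is surjective, so $(V,M) \to (V,N)$ is the desired covering.

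I expect the main obstacle to be this last step: checking that the pullback of the given tower $N$ — whose layers are merely finite with \'etale interior — to the perfectoid base $\tilde V$ remains affinoid perfectoid. This is exactly where the interplay between Abhyankar's lemma (to remove ramification along $D$ after adjoining roots) and the almost-purity input packaged in \Cref{lemmafet} and \Cref{lemmaloget} is essential; the remaining verifications — that the fiber products interact correctly with normalization, and that a completed filtered colimit of perfectoid affinoid algebras is again perfectoid — are routine.
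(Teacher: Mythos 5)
Your proposal is correct and follows essentially the same route as the paper's proof: reduce via Kiehl's theorem to an affinoid $V$ with an \'{e}tale map to $\mathbb{T}^{n-r,r}$ factoring as rational embeddings and finite \'{e}tale maps, pull back the model tower $\tilde{\mathbb{T}}^{n-r,r}$ using \Cref{lemmafet}, pull back each layer $N_i$ using \Cref{lemmaloget}, and conclude by noting that a completed filtered colimit of perfectoid affinoid algebras is perfectoid. Your explicit verification of surjectivity via \Cref{lemmascholze}(2) and of the branched-covering condition in \Cref{defaffpert} only makes explicit what the paper leaves implicit.
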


\begin{proof}
Use the notations as in Example \ref{model example}. If $(X,D)=(\mathbb{T}^{n-r,r},D)$, then we have made an explicit cover of $X$ by an affinoid perfectoid $\tilde{\mathrm{T}}^{n-r,r} \in X_{\mathrm{prolog}}$.
Let $(V,N)$ be an object of $X_{\mathrm{prolog}}$ with $V \to X$ \'{e}tale, $N=\varprojlim N_i \xrightarrow{h} V$ where $N_i\xrightarrow{h_i}V \in V_{\mathrm{f,log}}$. 
By~\cite[Theorem 1.18]{Kiehl67} and~\cite[Corollary 1.6.10]{Huber96}, we may assume that $V$ admits an \'{e}tale morphism $V \xrightarrow{f} \mathbb{T}^{n-r,r}$ with divisor given by $f^{-1}(D)$. 
We may further assume that $f$ is the composite of a rational open embedding and a finite \'{e}tale morphism. Therefore, $(V,f^{*}(\tilde{\mathrm{T}}^{n-r,r}))=(V,V\times_{\mathbb{T}^{n-r,r}}\tilde{\mathrm{T}}^{n-r,r})\in X_{\mathrm{prolog}}$ is affinoid perfectoid by Lemma \ref{lemmafet}. By Lemma \ref{lemmaloget}, we know $(V, h_i^{*}(f^{*}(\tilde{\mathrm{T}}^{n-r,r}))$ is also affinoid perfectoid. Note that 

\begin{enumerate}
\item $N\times_V f^{*}(\tilde{\mathrm{T}}^{n-r,r})=h^{*}(f^{*}(\tilde{\mathrm{T}}^{n-r,r}))=\varprojlim h_i^{*}(f^{*}(\tilde{\mathrm{T}}^{n-r,r})) $ and;
\item the completion of a direct limit of perfectoid affinoid $(K,K^{\circ})$-algebra is again perfectoid affinoid.
\end{enumerate}   Therefore $(V, N\times_V f^{*}(\tilde{\mathrm{T}}^{n-r,r}))$ is affinoid perfectoid which covers $(V,N)$.
\end{proof}

\begin{lemma} 
\label{lemmascholze410}
Assume that $(V,N)\in X_{\mathrm{prolog}}$ is affinoid perfectoid with $\hat{N}=\mathrm{Spa}(R,R^+)$. 
\begin{enumerate}
\item For any nonzero element $b\in K^{\circ}$, we have $\mathcal{O}_{X_{\log}}^+((V,N))/b=R^+/b$ and it is almost equal to $(\mathcal{O}^+_{X_{\log}}/b)((V,N))$.
\item The image of $(\mathcal{O}^+_{X_{\log}}/b_1)((V,N))$ in $(\mathcal{O}^+_{X_{\log}}/b_2)((V,N))$ is equal to $R^+/b_2$ for any nonzero nilpotent elements $b_1, b_2 \in K^{\circ}$ with $|b_1|<|b_2|$
\item We have $\hat{\mathcal{O}}_{X_{\log}}^{+}((V,N))=R^+$ and $\hat{\mathcal{O}}_{X_{\log}}((V,N))=R$.
\item The ring $\hat{\mathcal{O}}_{X_{\log}}^{+}((V,N))$ is the $p$-adic completion of $\mathcal{O}_{X_{\log}}^{+}((V,N))$.
\item The cohomology groups $H^i((V,N),\hat{\mathcal{O}}^+_X) $ are almost zero for $i>0$.

\end{enumerate}
\end{lemma}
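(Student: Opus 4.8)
The plan is to reduce all five assertions to a single almost computation of the cohomology of the mod-$b$ structure sheaf on affinoid perfectoid objects, in parallel with \cite[Lemma 4.10]{Sch13}, and then to bootstrap the completed statements by passing to the inverse limit over powers of $p$. I would first record the degree-zero algebra. By \Cref{lemmacohomology}(1) applied with $i=0$, for $(V,N)=(V,\varprojlim_j N_j)$ we have $\mathcal{O}^+_{X_{\log}}((V,N))=\varinjlim_j \mathcal{O}^+_{N_j}(N_j)=\varinjlim_j R_j^{\circ}$, whose $p$-adic completion is $R^+$ by \Cref{defaffpert}. For nonzero $b\in K^{\circ}$ the claim is trivial if $b$ is a unit, and otherwise $b$ is topologically nilpotent and defines the same linear topology on the $K^{\circ}$-algebra $\varinjlim_j R_j^{\circ}$ as $p$ does; hence reduction mod $b$ commutes with $p$-adic completion and
\[
\mathcal{O}^+_{X_{\log}}((V,N))/b=\bigl(\varinjlim_j R_j^{\circ}\bigr)/b=R^+/b.
\]
This is the honest equality in (1), and, granting (3), it also yields (4), since $R^+=\hat{\mathcal{O}}^+_{X_{\log}}((V,N))$ is by construction the $p$-adic completion of $\varinjlim_j R_j^{\circ}=\mathcal{O}^+_{X_{\log}}((V,N))$.

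The technical heart, which I would isolate as a single input, is the almost computation: for affinoid perfectoid $(V,N)$ with $\hat N=\mathrm{Spa}(R,R^+)$ and any topologically nilpotent $b\in K^{\circ}$,
\[
H^i\bigl((V,N),\mathcal{O}^+_{X_{\log}}/b\bigr)\ \text{is almost}\ R^+/b\ \text{for}\ i=0,\ \text{and almost zero for}\ i>0.
\]
The strategy is to compare the localized site $X_{\mathrm{prolog}}/(V,N)$ with the analytic and finite-\'etale topology of the perfectoid space $\hat N$. By \Cref{lemmaloget} every finite log \'etale cover of the affinoid perfectoid $(V,N)$ is induced by a genuine finite \'etale cover and remains affinoid perfectoid; together with the tilde-limit relation $\hat N\sim\varprojlim_j N_j$ and the almost purity theorem \cite[Theorem 7.9]{Scholze1}, such covers are matched with the finite \'etale covers of $\hat N$, while rational localizations are matched using \cite[Theorem 6.3]{Scholze1}. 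Since affinoid perfectoids form a basis (\Cref{perfectoid basis}), a \v{C}ech argument then identifies $H^i((V,N),\mathcal{O}^+_{X_{\log}}/b)$ with the corresponding cohomology of $\mathcal{O}^+_{\hat N}/b$ on $\hat N$, which is almost $R^+/b$ in degree $0$ and almost zero in higher degrees by Scholze's almost acyclicity of the structure sheaf on affinoid perfectoid spaces \cite[Theorem 6.3]{Scholze1}. I expect this comparison to be the main obstacle: one must check that the coverings of the two sites genuinely correspond once all Kummer ramification along $D$ has already been absorbed into the tower $(V,N)$. This is exactly where the logarithmic nature of $X_{\mathrm{prolog}}$, which permits adjoining arbitrary roots $\sqrt[k]{f_i}$ rather than only $p$-power roots, enters, and where \Cref{lemmaloget} does the essential work of converting ramified covers into honest \'etale covers over the perfectoid base.

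With this input in hand the remaining parts are formal. The almost equality in (1) is the $i=0$ case. For (2), the short exact sequence of sheaves $0\to\mathcal{O}^+_{X_{\log}}/d\xrightarrow{\cdot b_2}\mathcal{O}^+_{X_{\log}}/b_1\to\mathcal{O}^+_{X_{\log}}/b_2\to 0$ with $b_1=b_2 d$, $d\in\mathfrak{m}=K^{\circ\circ}$ (possible since $|b_1|<|b_2|$), identifies the image in question as the kernel of the connecting map into $H^1((V,N),\mathcal{O}^+_{X_{\log}}/d)$; the point is that the transition on this almost-zero first cohomology is multiplication by $d\in\mathfrak{m}$, hence zero, which forces the image to be exactly the honest submodule $R^+/b_2$. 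For (3), global sections over the fixed object $(V,N)$ commute with the inverse limit defining $\hat{\mathcal{O}}^+_{X_{\log}}$, so that $\hat{\mathcal{O}}^+_{X_{\log}}((V,N))=\varprojlim_n(\mathcal{O}^+_{X_{\log}}/p^n)((V,N))$; by (2) the transition maps have stabilized image $R^+/p^n$, so the limit is $\varprojlim_n R^+/p^n=R^+$, and inverting $p$ gives $\hat{\mathcal{O}}_{X_{\log}}((V,N))=R$, completing (3) and hence (4). Finally for (5), since the transition maps of the sheaves $\mathcal{O}^+_{X_{\log}}/p^n$ are surjective we have $\hat{\mathcal{O}}^+_{X_{\log}}=R\varprojlim_n\mathcal{O}^+_{X_{\log}}/p^n$, and the Milnor exact sequence
\[
0\to{\varprojlim_n}^{1}H^{i-1}\bigl((V,N),\mathcal{O}^+_{X_{\log}}/p^n\bigr)\to H^i\bigl((V,N),\hat{\mathcal{O}}^+_{X_{\log}}\bigr)\to\varprojlim_n H^i\bigl((V,N),\mathcal{O}^+_{X_{\log}}/p^n\bigr)\to 0
\]
computes $H^i$ for $i>0$ from the almost-zero groups $H^i(\mathcal{O}^+_{X_{\log}}/p^n)$ together with the $\varprojlim^1$ of the degree-$(i-1)$ terms; the latter is almost zero either because $H^{i-1}$ is itself almost zero (for $i\ge 2$) or, for $i=1$, because the stabilization of images from (2) supplies the Mittag--Leffler condition. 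Hence $H^i((V,N),\hat{\mathcal{O}}^+_{X_{\log}})$ is almost zero for $i>0$.
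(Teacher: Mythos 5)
Most of your outline coincides with the paper's own proof, which is a direct adaptation of \cite[Lemma 4.10]{Sch13}: everything is reduced to the single almost computation that $(\mathcal{O}^+_{X_{\log}}/b)^a$ restricted to the localized site at an affinoid perfectoid $(V,N)$ is a sheaf with almost vanishing higher cohomology, proved by factoring pro-log-\'etale coverings --- via \Cref{lemmaloget} and \Cref{lemmafet} --- into composites of rational embeddings and finite \'etale maps of affinoid perfectoids and invoking \cite[Theorems 6.3, 7.9 and Proposition 7.13]{Scholze1}; your identification $\mathcal{O}^+_{X_{\log}}((V,N))=\varinjlim_j R_j^{\circ}$ and the completion argument for the honest equality in (1), as well as the deductions of (3), (4) and (5), are in substance the same as in loc.~cit.

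The genuine gap is in your proof of (2), which asserts an \emph{exact}, not almost, equality. From the exact sequence $0\to\mathcal{O}^+/d\xrightarrow{\,b_2\,}\mathcal{O}^+/b_1\to\mathcal{O}^+/b_2\to 0$ you correctly identify the image in question with $\ker\delta$ for the connecting map $\delta\colon(\mathcal{O}^+/b_2)((V,N))\to H^1((V,N),\mathcal{O}^+/d)$. But almost vanishing of this $H^1$ only says every class $\delta(x)$ is killed by the maximal ideal $\mathfrak{m}$, not that $\delta=0$; what follows is $\mathfrak{m}\cdot(\mathcal{O}^+/b_2)((V,N))\subseteq\ker\delta$, i.e.\ that the image is \emph{almost} equal to $R^+/b_2$ --- strictly weaker than (2). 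Your sentence about ``the transition on this almost-zero first cohomology being multiplication by $d$, hence zero'' does not supply the missing upper bound $\mathrm{image}\subseteq R^+/b_2$: killing transition maps between $H^1$'s says nothing about which sections of $\mathcal{O}^+/b_2$ lift. The honest equality is what your own arguments for (3) (stabilization of images) and for the Mittag--Leffler input to (5) consume, so the gap propagates. The repair is Scholze's division trick, which uses the subsheaf relation $\mathcal{O}^+\subset\mathcal{O}$ rather than homological algebra: given $x\in(\mathcal{O}^+/b_1)((V,N))$, the almost part of (1) shows $dx$ (with $d=b_1/b_2\in K^{\circ\circ}$) lies in the honest image of $\mathcal{O}^+((V,N))/b_1=R^+/b_1$; lifting to $y\in\mathcal{O}^+((V,N))$, the element $y/d\in\mathcal{O}((V,N))$ is locally a section of $\mathcal{O}^+$, hence lies in $\mathcal{O}^+((V,N))$, and $x\equiv y/d \bmod b_2$, so the image of $x$ lies in $R^+/b_2$. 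A smaller inaccuracy occurs in (5): on this site, surjectivity of the transition maps of the sheaves $\mathcal{O}^+/p^n$ alone does not yield $\hat{\mathcal{O}}^+_{X_{\log}}\cong R\varprojlim_n\mathcal{O}^+/p^n$; one must invoke the criterion of \cite[Lemma 3.18]{Sch13} (in its almost form), whose hypotheses --- almost acyclicity and almost surjectivity of sections over the affinoid perfectoid basis of \Cref{perfectoid basis} --- are exactly the ingredients you already have, so this is a citation issue rather than a missing idea, unlike the defect in (2).
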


\begin{proof}
The proof is almost identical to the proof of~\cite[Lemma 4.10]{Sch13}. We sketch the proof for the sake of completeness. 
As in the proof of~\cite[Lemma 4.10]{Sch13}, it suffices to show that $N\rightarrow \mathcal{F}(N)=(\mathcal{O}^+_{\hat{N}}(\hat{N})/b)^a=(\mathcal{O}^+_{N}(N)/b)^a$ is a sheaf of almost $K^{\circ}$-algebra, with $H^i(N,\mathcal{F})=0$ for $i>0$. 

Let $N$ be a quasi-compact object being covered by $N_k\rightarrow N$. 
By quasi-compactness of $N$, we can assume that the covering consists of only one pro-log-\'{e}tale morphism $N'\rightarrow N$. 
Write $N'=\varprojlim N'_i \rightarrow N'_0\rightarrow N$, where $N'_0\rightarrow N$ is log-\'{e}tale morphism and $N'_i\rightarrow N'_j$ is surjective finite log-\'{e}tale for $i>j \geq 0$. 
Note that the morphism $q$ of a morphism $(W,M)\xrightarrow{(p,q)} (V,M')$ of $X_{\log}$ can be written as a composition of an \'{e}tale morphism and a morphism of $W_{\mathrm{f,log}}$, e.g. $M\rightarrow p^*M'\rightarrow M'$. 
Therefore, we can assume that $N'_0\rightarrow N$ is induced by an \'{e}tale morphism $V'_0 \rightarrow V$ of $X_{\mathrm{\acute{e}t}}$.
Furthermore, by Lemma \ref{lemmaloget}, the morphisms $N'_i\rightarrow N'_j $ are induced by finite \'{e}tale morphisms of $(\pi (N'_j))_{\mathrm{f,log}}$. 

On the other hand, we have to show that the complex
\[
\mathcal{C} (N',N):  0 \rightarrow \mathcal{F}(N)\rightarrow \mathcal{F}(N')\rightarrow \mathcal{F}(N'\times_N N')\rightarrow \ldots 
\] 
is exact. Note that $\mathcal{F}(N')=\varinjlim \mathcal{F}(N'_j).$ So we have 
\[
\mathcal{C} (N',N)=\varinjlim \mathcal{C} (N'_i,N).
\]
and one reduces to the case that $N'\rightarrow N$ is a composite of rational embeddings and finite \'{e}tale maps. In this case, both $N$ and $N'$ are affinoid perfectoid, giving rise to perfectoid spaces $\hat{N'}$ and $\hat{N}$, and an \'{e}tale cover $\hat{N'}\rightarrow \hat{N}$. Then Lemma \ref{lemmafet} implies that 
\[
\mathcal{C} (N',N):  0 \rightarrow (\mathcal{O}^+_{\hat{N}_{\mathrm{\acute{e}t}}}(\hat{N})/b)^a\rightarrow (\mathcal{O}^+_{\hat{N}_{\mathrm{\acute{e}t}}}(\hat{N'})/b)^a\rightarrow (\mathcal{O}^+_{\hat{N}_{\mathrm{\acute{e}t}}}(\hat{N'}\times_{\hat{N}}\hat{N'} )/b)^a \rightarrow \ldots 
\] 
is exact. Note that $\mathcal{F}(N')=\varinjlim \mathcal{F}(N'_j)$. 
Therefore the statement follows from the vanishing of $H^i(W_{\mathrm{\acute{e}t}},\mathcal{O}_{W_{\mathrm{\acute{e}t}}}^{+a})=0$ for $i>0$ and any affinoid perfectoid space $W$, see~\cite[Proposition 7.13]{Scholze1}.
\end{proof}

\begin{lemma}
\label{analogue Lemma 4.12}
Assume that \((V,N)\) is an affinoid perfectoid, with
\(\hat{N}=\mathrm{Spa}(R,R^+)\). Let \(\mathbb{L}\) be an \(\mathbb{F}_p\)-local
system on \(U = X \setminus D\). Then for all \(i > 0\), the cohomology group
\[
  H^i \bigg((V,N), \nu^* ( u_X (\mathbb{L}) \otimes \mathcal{O}_{X_{\log}}^+/p)\bigg)^a = 0,
\]
and it is almost finitely generated projective \(R^{+a}/p\)-module \(M(N)\) for
\(i = 0\). If \((V',N')\) is affinoid perfectoid, corresponding to \(\hat{N}' =
\mathrm{Spa}(R',R^{'+})\), and \((V',N') \to (V,N)\) some map in
\(X_{\mathrm{prolog}}\), then \(M(N') = M(N) \otimes_{R^{+a}/p} R'^{+a}/p\).
\end{lemma}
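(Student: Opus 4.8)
The plan is to imitate the structure of Scholze's proof of \cite[Lemma 4.12]{Sch13}, replacing the pro-\'{e}tale site by $X_{\mathrm{prolog}}$ throughout and using the tools already assembled in this section. First I would reduce the computation to the level of $X_{\log}$-cohomology: by \Cref{lemmacohomology}(1), for an affinoid perfectoid $(V,N) = (V,\varprojlim N_j)$ we have
\[
H^i\big((V,N), \nu^*(u_X(\mathbb{L}) \otimes \mathcal{O}^+_{X_{\log}}/p)\big) = \varinjlim_j H^i\big((V,N_j), u_X(\mathbb{L}) \otimes \mathcal{O}^+_{X_{\log}}/p\big).
\]
Then I would use the comparison \Cref{comparison logtopos}, which identifies $u_{X*}\mathbb{L}$ (and hence, combined with the structure sheaf, the relevant sheaf on $X_{\log}$) with \'{e}tale cohomology on the open part $N_j^{\circ}$, so that each term in the colimit becomes $H^i(N_j^{\circ}, \mathbb{L}|_{N_j^{\circ}} \otimes \mathcal{O}^+_{N_j^{\circ}}/p)$ computed on the honest \'{e}tale site of the rigid space $N_j^{\circ}$.

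\textbf{Passage to the perfectoid limit.} The key geometric input is that $\hat{N} = \mathrm{Spa}(R,R^+)$ is an affinoid perfectoid space with $\hat{N} \sim \varprojlim N_j$, and likewise its open complement $\hat{N}^{\circ}$ is perfectoid. On the perfectoid space $\hat{N}^{\circ}$, the local system $\mathbb{L}$ corresponds to a finite \'{e}tale $\mathbb{F}_p$-cover, and I would invoke Scholze's almost vanishing/finiteness results for $\mathbb{F}_p$-local systems on affinoid perfectoid spaces (the analogue of \cite[Theorem 5.1, Lemma 4.12]{Sch13} via almost purity, \cite[Theorem 7.9]{Scholze1}) to conclude that $H^i(\hat{N}^{\circ}, \mathbb{L} \otimes \mathcal{O}^{+a}_{\hat{N}^{\circ}}/p)$ is almost zero for $i>0$ and almost finitely generated projective over $R^{+a}/p$ for $i=0$. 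The compatibility of \'{e}tale cohomology with the tilting/limit comparison (as in \Cref{lemmascholze410} and the results of \cite{Scholze1} on $\hat{N} \sim \varprojlim N_j$) lets me identify the colimit over $j$ with the cohomology on the perfectoid $\hat{N}^{\circ}$, yielding the stated vanishing and the module $M(N) := H^0$.

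\textbf{Base change and the main obstacle.} For the functoriality statement, given a map $(V',N') \to (V,N)$ of affinoid perfectoids, the map $\hat{N}'^{\circ} \to \hat{N}^{\circ}$ is a map of perfectoid spaces, and the base-change isomorphism $M(N') = M(N) \otimes_{R^{+a}/p} R'^{+a}/p$ should follow from the almost-projectivity (which makes formation of the $H^0$ compatible with the flat, or almost flat, base change $R^{+a}/p \to R'^{+a}/p$) together with the finite \'{e}tale descent of $\mathbb{L}$. I expect the main obstacle to be precisely this perfectoid base-change step: one must check that the cohomology of the local system is computed on a \emph{cofinal} tower that is insensitive to the ramified coverings along $D$ in a way compatible with arbitrary (not necessarily perfectoid-pullback-friendly) maps in $X_{\mathrm{prolog}}$, and that taking roots of the $f_i$ does not disturb the almost-finitely-generated-projective property. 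The delicate point is controlling the interaction between the finite \'{e}tale cover representing $\mathbb{L}$ on the open part $\hat{N}^{\circ}$ and the ramified branched coverings built into the definition of the tower $N = \varprojlim N_j$; here \Cref{lemmaloget} and \Cref{lemmafet}(2),(3)—which give almost-isomorphisms after inverting a small power of $p$—are exactly what make the finiteness and base change survive the passage through the ramification, so I would lean on them to finish.
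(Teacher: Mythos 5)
There is a genuine gap, and it sits at the very first reduction. You invoke \Cref{comparison logtopos} to identify $H^i\big((V,N_j),\,u_{X,*}\mathbb{L}\otimes\mathcal{O}^+_{X_{\log}}/p\big)$ with \'{e}tale cohomology of $\mathbb{L}\otimes\mathcal{O}^+/p$ on the \emph{open} part $N_j^{\circ}$, and then pass to the open perfectoid space $\hat{N}^{\circ}$. But \Cref{comparison logtopos} asserts $R^iu_{X*}\mathbb{L}=0$ only for the local system itself; it says nothing after tensoring with $\mathcal{O}^+_{X_{\log}}/p$, and the analogous statement for $\mathcal{O}^+_U/p$ is false --- the higher direct images of $\mathcal{O}^+_U/p$ along $u_X$ do not vanish (this discrepancy is exactly the content of Faltings' almost purity, and is the reason the Faltings site is used at all: its structure sheaf records sections over the whole normal space $N$, $\mathcal{O}^+_{X_{\log}}(V,N)=\mathcal{O}^+_N(N)$, not over $N^{\circ}$). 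Two further symptoms of the same problem: $\hat{N}^{\circ}$ is not quasi-compact, hence not affinoid perfectoid, so the almost vanishing results from \cite{Scholze1} you want to quote do not apply to it; and your $H^0$ would naturally be a module over $\mathcal{O}^+(\hat{N}^{\circ})/p$ rather than over $R^{+a}/p$, so the almost finite generation over $R^{+a}/p$ claimed in the lemma would not come out (already for $\mathbb{L}=\mathbb{F}_p$ the lemma must return $R^{+a}/p$, consistent with \Cref{lemmascholze410}, not sections over the punctured space).

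The paper's actual proof is a one-step reduction that avoids the open part entirely, and it hinges on a feature of \Cref{defaffpert} that your argument never uses: the tower $N=\varprojlim N_i$ of an affinoid perfectoid object contains all the branched coverings $\mathrm{Sp}\big(R'[\sqrt[k]{f_i}]\big)$. By the rigid Abhyankar Lemma (\Cref{Abhyankar's Lemma}), after passing to a sufficiently ramified finite stage $N_k$ of the tower, the finite \'{e}tale cover of $U$ representing $\mathbb{L}$ extends across the divisor, i.e.\ $\nu^*(u_{X,*}\mathbb{L})$ becomes the restriction of an honest $\mathbb{F}_p$-local system on all of $N_k$. At that point the localized situation is precisely the hypothesis of \cite[Lemma 4.12]{Sch13} (local system tensor $\mathcal{O}^+/p$ on a pro-\'{e}tale-type tower over $N_k$ with affinoid perfectoid limit), and Scholze's argument --- trivialization on a finite \'{e}tale cover, almost purity, and descent for almost finitely generated projective modules --- yields all three assertions, including the base-change formula $M(N')=M(N)\otimes_{R^{+a}/p}R'^{+a}/p$, verbatim. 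Your closing paragraph does gesture at the ramified coverings and at \Cref{lemmaloget} and \Cref{lemmafet}, which is the right instinct, but the concrete mechanism --- extending $\mathbb{L}$ itself at a \emph{finite} stage of the tower so that the boundary disappears from the coefficient sheaf before any limit is taken --- is the missing idea, and without it the reduction to known perfectoid results cannot be made to work on the open locus.
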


\begin{proof}
  We just need to notice that $\nu^* \bigg(u_{X,*} (\mathbb{L})\bigg)$ will be
  extended to an \(\mathbb{F}_p\)-local system on \(N_k\) for some \(k\) in the index category of $N$ (by \Cref{Abhyankar's Lemma}). Therefore it follows from~\cite[Lemma 4.12]{Sch13}.
\end{proof}

% * Primitive Comparison
\section{Primitive Comparison}
\label{section 5}

Following~\cite[Section 5]{Sch13}, in this section we show the primitive
comparison in our setting.

\begin{theorem}
\label{thmpc}
  Let $K$ be an algebraically closed complete extension of $\mathbb{Q}_p$, and
  let $X$ be a proper smooth rigid analytic space over $\mathrm{Sp}(K)$ with an
  SSNC divisor $D$. Let $\mathbb{L}$ be an $\mathbb{F}_p$-local system on
  $(X-D)_{\mathrm{\acute{e}t}}$. Then there is an isomorphism of almost finitely
  generated $K^{\circ a}$-modules
  \[
    H^{i}(X_{\log},u_{X,*}(\mathbb{L})) \otimes_{\mathbb{F}_p}K^{oa}/p \cong
    H^i(X_{\log}, u_{X *}(\mathbb{L}) \otimes \mathcal{O}^+_{X_{\log}}/p)
  \]
  for all $i \geq 0$, where $u_X$ is defined in \Cref{defux}.
\end{theorem}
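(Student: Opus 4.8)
The plan is to follow the blueprint of \cite[Section 5]{Sch13}, first transporting everything to the pro-log-\'{e}tale site, where the affinoid perfectoid basis and the local computation of \Cref{analogue Lemma 4.12} are at our disposal.

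First I would use \Cref{lemmacohomology} to identify, for an abelian sheaf $\mathcal{F}$ on $X_{\log}$, the cohomology $H^i(X_{\log},\mathcal{F})$ with $H^i(X_{\mathrm{prolog}},\nu^*\mathcal{F})$, via the adjunction isomorphism $\mathcal{F}\xrightarrow{\sim}R\nu_*\nu^*\mathcal{F}$ and the resulting degeneration of the Leray spectral sequence. Applying this to $\mathcal{F}=u_{X,*}(\mathbb{L})$ and to $\mathcal{F}=u_{X,*}(\mathbb{L})\otimes\mathcal{O}^+_{X_{\log}}/p$ rewrites both sides of the asserted isomorphism as cohomology on $X_{\mathrm{prolog}}$. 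Since $\nu^*$ is monoidal and, on the affinoid perfectoid basis, $\nu^*(\mathcal{O}^+_{X_{\log}}/p)$ is almost isomorphic to $\hat{\mathcal{O}}^+_{X_{\log}}/p$ by \Cref{lemmascholze410}, the statement reduces to showing that the map induced by the unit $\mathbb{F}_p\hookrightarrow\hat{\mathcal{O}}^+_{X_{\log}}/p$,
\[
H^i(X_{\mathrm{prolog}},\nu^*u_{X,*}\mathbb{L})\otimes_{\mathbb{F}_p}K^{\circ a}/p\longrightarrow H^i\bigl(X_{\mathrm{prolog}},\nu^*u_{X,*}\mathbb{L}\otimes\hat{\mathcal{O}}^+_{X_{\log}}/p\bigr),
\]
is an almost isomorphism whose target is an almost finitely generated $K^{\circ a}/p$-module.

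Next I would establish finiteness of the target by a Cartan--Serre argument. Using that $X$ is proper, I choose a finite cover of the terminal object $(X,X)$ by affinoid perfectoid objects (\Cref{perfectoid basis}); as affinoid perfectoids are stable under fibre product, all finite intersections are again affinoid perfectoid, so by the almost vanishing of higher cohomology on such objects (\Cref{analogue Lemma 4.12}) the associated \v{C}ech complex computes $R\Gamma$ up to almost isomorphism. Its terms are the almost finitely generated projective modules $M(N_J)$ of \Cref{analogue Lemma 4.12}, which satisfy base change. Comparing this cover with a refinement, the transition maps are completely continuous because the relevant cokernels are annihilated by $p^{\epsilon}$ for $\epsilon$ arbitrarily small (\Cref{lemmafet}(3)); the usual compactness argument then forces the cohomology to be almost finitely generated over $K^{\circ a}/p$ and to vanish for $i>2n$.

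The heart of the matter is the almost isomorphism itself. Almost injectivity follows since $\mathbb{F}_p\hookrightarrow\hat{\mathcal{O}}^+/p$ is locally almost split as a map of $\mathbb{F}_p$-modules. For the surjectivity I would tilt: on the affinoid perfectoid basis, $\hat{\mathcal{O}}^+/p$ is almost isomorphic to $\hat{\mathcal{O}}^{\flat+}/\varpi^{\flat}$ by \Cref{lemmascholze410} and \cite{Scholze1}, and I would use an Artin--Schreier-type almost exact sequence on the tilt, of the form
\[
0\to\mathbb{F}_p\to\hat{\mathcal{O}}^{\flat+}/\varpi^{\flat}\xrightarrow{F-1}\hat{\mathcal{O}}^{\flat+}/\varpi^{\flat}\to0
\]
(suitably interpreted, then twisted by $\nu^*u_{X,*}\mathbb{L}$), to compare $\nu^*u_{X,*}\mathbb{L}$ with $\nu^*u_{X,*}\mathbb{L}\otimes\hat{\mathcal{O}}^{\flat+}/\varpi^{\flat}$. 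Feeding the finiteness from the previous step into the resulting long exact sequence lets me run Scholze's bootstrap: up to the controlled error the operator $F-1$ is bijective, whence the comparison map is an almost isomorphism and finiteness of the left-hand side follows as well. The step I expect to be the main obstacle is precisely this bootstrap in the presence of \emph{ramified} coverings: the perfectoid basis here is built from arbitrary roots $\sqrt[k]{f_i}$ of the divisor equations (\Cref{perfectoid basis}, \Cref{lemmafet}), not merely $p$-power roots, so one must check that \Cref{analogue Lemma 4.12}, the tilting, and the Frobenius sequence are insensitive to this. This works because $\hat{\mathcal{O}}^+/p$ depends only on the associated perfectoid space $\hat{N}$, to which almost purity \cite{Scholze1} and the characteristic-$p$ Artin--Schreier sequence apply verbatim; checking this compatibility carefully is the one genuine enhancement of \cite{Sch13} required.
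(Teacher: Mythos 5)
Your overall architecture (pass to $X_{\mathrm{prolog}}$ via \Cref{lemmacohomology}, use the affinoid perfectoid basis of \Cref{perfectoid basis}, prove finiteness by a Cartan--Serre argument, then tilt and run Artin--Schreier) matches the paper, but the execution of the key bootstrap has a genuine gap. You propose to work only modulo $\varpi^{\flat}$, with an Artin--Schreier sequence on $\hat{\mathcal{O}}^{\flat+}_{X_{\log}}/\varpi^{\flat}$ and the finiteness of $M_1 = H^i(X_{\mathrm{prolog}}, u_{X,*}\mathbb{L}\otimes\hat{\mathcal{O}}^{\flat+}_{X_{\log}}/\pi)$. This cannot work: Frobenius at a fixed truncation level is not invertible --- it induces isomorphisms $\hat{\mathcal{O}}^{\flat+}_{X_{\log}}/\pi^{k}\cong\hat{\mathcal{O}}^{\flat+}_{X_{\log}}/\pi^{pk}$, so at level one it only identifies $M_1$ with $M_p$, and the induced endomorphism $F$ of $M_1$ has large kernel. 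Concretely, an almost finitely generated module such as $K^{\flat\circ}/\pi^{1/2}$ carries a Frobenius-semilinear structure with $\varphi=0$, for which $\varphi-1$ is \emph{bijective}; such almost-torsion summands are therefore invisible to your mod-$p$ Artin--Schreier long exact sequence, and finiteness plus that sequence cannot force $M_1\cong(K^{\flat\circ a}/\pi)^r$, which is exactly the content of the theorem. This is why the paper considers the whole tower $M_k = H^i(X_{\mathrm{prolog}}, u_{X,*}\mathbb{L}\otimes\hat{\mathcal{O}}^{\flat+}_{X_{\log}}/\pi^k)$ for all $k$, verifies via \Cref{analogue Lemma 5.8} the hypotheses of \cite[Lemma 2.12]{Sch13} (whose compatibilities $M_{k+1}/\pi^k\cong M_k$ and $\varphi\colon M_k\cong M_{pk}$ are precisely what kill the junk torsion), concludes $M_k\cong(K^{\flat\circ a}/\pi^k)^r$, passes to $R\varprojlim$ using \Cref{lemmascholze510} and \cite[Lemma 3.18]{Sch13}, inverts $\pi$ using coherence of the final object, and only \emph{then} applies Artin--Schreier at the rational level $\hat{\mathcal{O}}_{X^{\flat}_{\log}}$, where $f\mapsto f^p-f$ is genuinely surjective because $K^{\flat}$ is algebraically closed, to extract $H^i(X_{\log},u_{X,*}\mathbb{L})=\mathbb{F}_p^r$. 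Your "almost injectivity" claim (that $\mathbb{F}_p\hookrightarrow\hat{\mathcal{O}}^+_{X_{\log}}/p$ is locally almost split $\mathbb{F}_p$-linearly) is also unjustified and is not used in any form by the correct argument.

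A secondary inaccuracy: your finiteness step attributes the complete continuity of the transition maps to \Cref{lemmafet}(3), but that lemma controls approximation along the perfectoid tower of a fixed affinoid, not restriction maps between shrinking affinoid subsets. The actual input is \Cref{analogue Lemma 5.6} (almost finite generation of images along strict inclusions $V'\Subset V$), whose proof rests on \Cref{analogue Lemma 5.3} (covers adapted to the divisor strata) and \Cref{analogue Lemma 5.5} (the continuous cohomology of $\Delta=\mathbb{Z}_p^{n-r}\times\hat{\mathbb{Z}}^r$, where the $\hat{\mathbb{Z}}^r$ factors and Olsson's lemma handle the ramified directions). You correctly flag the ramified coverings as the genuine enhancement over \cite{Sch13}, but they enter in this group-cohomology computation, not merely in checking that tilting and almost purity apply to the basis of \Cref{perfectoid basis}.
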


One can see the finiteness from the proof. We remark a more direct proof
which is based on the primitive comparison of Scholze and functorial embedded
resolution for rigid spaces over characteristic $0$ fields due to Temkin.

\begin{remark}
\label{finiteness remark}
\leavevmode
\begin{enumerate}
\item Assume $\mathbb{L}$ comes from an $\mathbb{F}_p$-local system on $X$. Then by~\cite[Theorem 5.1]{Sch13}, \Cref{Gysin} and \Cref{comparison logtopos} we have that $H^{i}(X_{\log},u_{X,*}(\mathbb{L}))$ is a finite dimensional $\mathbb{F}_p$ vector space for all $i \geq 0$, which vanishes for $i > 2 \dim X$.
\item In general, by~\cite[Theorem 1.6]{Hansen17} and~\cite[Theorem 1.1.13]{Tem17}, we can find a $U'$ finite \'{e}tale over $U$ such that
\begin{itemize}
\item $\mathbb{L}|_{U'} \cong \mathbb{F}_p^{\oplus r}$ and;
\item $U'$ admits a smooth compactification with complement divisor SSNC.
\end{itemize}
Hence we have the finiteness for $H^{i}(X_{\log},u_{X,*}(\mathbb{L}))$ as in (1).
\end{enumerate}
\end{remark}

\begin{lemma}
\label{analogue Lemma 5.2}
Let \(k\) be a complete nonarchimedean field. Let \(V\) be an affinoid smooth
adic space over \(\mathrm{Spa}(k,\mathcal{O}_k)\). Let \(D = \bigcup_{i=1}^l
D_i\) be an SSNC divisor in \(V\) and let \(x \in D_{i_1 i_2 \cdots i_r}
\backslash \bigcup_{j \in \{1,2,\ldots,l\}\backslash\{i_1, i_2, \ldots, i_r\}}
D_{i_1 i_2 \cdots i_r j}\) with closure \(M= \overline{\{x\}}\). Then there
exists a rational subset \(U \subset V\) containing \(M\), with \(U \cong S
\times \mathbb{D}^m(\underline{s})\), together with an \'{e}tale map \(S
\xrightarrow{\phi} \mathbb{T}^{n-r}\) satisfying the following two conditions:
\begin{enumerate}
\item \(\phi\) factors as a composite of rational embeddings and finite
  \'{e}tale maps and;
\item \(D_{i_j} \cap U\) is given by the vanishing locus of \(s_j\) and if \(i
  \notin \{i_1,i_2,\ldots,i_r\}\) then \(D_i \cap U = \varnothing\).
\end{enumerate}
\end{lemma}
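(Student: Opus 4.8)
The plan is to reduce, on a suitable rational neighborhood of $M$, the pair $(V,D)$ to the explicit product model of \Cref{model example}, and then to convert the $n-r$ coordinates along $D^J$ into coordinates on a torus. Set $J=\{i_1,\dots,i_r\}$, so that $x\in D^J$ while the hypothesis says $x\notin D_j$ for every $j\notin J$. Since $\bigcup_{j\notin J}D_j$ is a closed analytic subset avoiding $x$, I would first pass to a rational subset of $V$ containing $M$ and disjoint from it, so that only the components $D_{i_1},\dots,D_{i_r}$ survive. On this rational subset I would invoke the local structure of a smooth affinoid carrying an SSNC divisor (Kiehl \cite[Theorem 1.18]{Kiehl67} and Mitsui \cite[Theorem 2.11]{Mitsui}), exactly as in the proof of \Cref{Abhyankar's Lemma}, to reduce to the case $A=A_0\langle T_1,\dots,T_r\rangle$ with each $D_{i_j}$ cut out by $T_j$ and $A_0$ smooth of dimension $n-r$. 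This already exhibits a product $U_0\cong S_0\times\mathbb{D}^r(\underline T)$, with $S_0=\mathrm{Sp}(A_0)$, $s_j:=T_j$ (so $m=r$), $D_{i_j}\cap U_0=V(s_j)$ and $D_i\cap U_0=\varnothing$ for $i\notin J$; this is condition (2) of the lemma.

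Next I would produce the étale map to the torus. By smoothness of $S_0$ I can choose, near the image $\bar x$ of $x$, functions $g_1,\dots,g_{n-r}$ defining an étale map $S_0\to\mathbb{D}^{n-r}$. To land in the torus rather than the disc I would turn these functions into units: choosing constants $c_i\in k$ with $|c_i|>\sup_{S_0}|g_i|$, the elements $1+g_i/c_i$ are units of norm $1$, so $Z_i\mapsto 1+g_i/c_i$ defines a map $S_0\to\mathbb{T}^{n-r}$. Since $d(1+g_i/c_i)=c_i^{-1}\,dg_i$, this map is étale wherever the $g_i$ are étale coordinates, and restricting $S_0$ to a rational subdomain $S$ on which this holds yields an étale $\phi\colon S\to\mathbb{T}^{n-r}$.

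Finally, to arrange condition (1) I would further shrink $S$ by a rational subset so that $\phi$ factors as a composite of rational embeddings and finite étale maps, using Huber's structure theory of étale morphisms (\cite[Corollary 1.6.10]{Huber96}), exactly as in the proof of \Cref{perfectoid basis}. Setting $U:=S\times\mathbb{D}^r(\underline s)\subseteq U_0$ then gives the desired chart, and $U\to\mathbb{T}^{n-r}\times\mathbb{D}^r=\mathbb{T}^{n-r,r}$ is étale, which is precisely the input needed for \Cref{perfectoid basis}. The main obstacle is the bookkeeping of the closure $M=\overline{\{x\}}$: each localization above is chosen near $x$, and one must check that it can be taken to contain all of $M$ rather than merely $x$. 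This rests on the fact that the complement of $\bigcup_{j\notin J}D_j$ together with the loci where the above constructions are valid are admissible opens containing $M$, so that a single rational subset $U\supseteq M$ realizing all the reductions simultaneously can be extracted; granting this, conditions (1) and (2) hold and the lemma follows.
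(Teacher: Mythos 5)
Your reduction to the product situation is essentially the paper's: Mitsui's structure theorem (\cite[Theorem 2.11]{Mitsui}) gives a rational subset $U_0 \cong S_0 \times \mathbb{D}^r$ containing $M$ and realizes condition (2), and your unit trick $Z_i \mapsto 1+g_i/c_i$ is the standard way to pass from disc to torus coordinates. The gap is in your last step, and it is precisely the content of the result the paper invokes at that point, namely \cite[Lemma 5.2]{Sch13}. Huber's \cite[Corollary 1.6.10]{Huber96} only says that an \'{e}tale morphism factors through an open immersion into a finite \'{e}tale cover \emph{locally at a point}: for each point $y$ it produces some open neighborhood of $y$, with no control that this neighborhood is rational, nor that a single neighborhood works along all of the closure $M=\overline{\{x\}}$. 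The condition ``$\phi$ factors as a composite of rational embeddings and finite \'{e}tale maps'' is a global condition on $S$; it does not follow from covering $M$ by such local neighborhoods, because a finite union of rational subsets is in general not rational, and the factorizations on different members of the cover go into different finite \'{e}tale covers and do not glue. So your closing sentence ``granting this, conditions (1) and (2) hold'' grants exactly the hard point.

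What makes the lemma nontrivial is the requirement $U \supseteq M$ rather than merely $U \ni x$. Some of your intermediate steps do survive this: Zariski-open conditions, such as \'{e}taleness of the map defined by $(g_1,\ldots,g_{n-r})$ or avoidance of $\bigcup_{j\notin J}D_j$, hold on all of $M$ once they hold at $x$, since every point of $M$ is a vertical specialization of $x$ with the same support; and $M$, being closed in a spectral space, is quasi-compact, so one can cut out a rational subset containing $M$ and avoiding a Zariski-closed set disjoint from $M$. But the factorization statement is not of this form, and obtaining it on a single rational subset containing all of $M$ is the entire point of Scholze's lemma, whose proof is a genuinely global argument and not a consequence of Huber's pointwise structure theory. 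The paper's proof simply applies \cite[Lemma 5.2]{Sch13} to the smooth factor $S_0$ and the point $x$ viewed in $S_0 \times \{0\}$; if you want to avoid citing it, you must reproduce its proof, which your sketch does not.
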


\begin{proof}
  By~\cite[Theorem 2.11]{Mitsui} we may first find a rational subset \(U_0
  \subset V\) containing \(M\) such that \(U_0 \cong S_0 \times
  \mathbb{D}^r(\underline{s})\), where \(S_0\) is a smooth affinoid, satisfying
  our condition (2). Note that one can find such a rational containing \(M\)
  because of condition (1) of~\cite[Theorem 2.11]{Mitsui}.

  Now we may apply~\cite[Lemma 5.2]{Sch13} to our \((x, S_0 \times \{0\})\) to
  find a rational subset \(S \subset S_0\) together with \(\phi\) satisfying our
  condition (1).
\end{proof}

\begin{lemma}
\label{analogue Lemma 5.3}
Let \(k\) be a complete nonarchimedean field. Let \(X\) be a proper smooth adic
space over \(\mathrm{Spa}(k,\mathcal{O}_k)\). Let \(\bigcup_{i \in I} D_i = D
\subset X\) be an SSNC divisor where \(I = \{1,2,\ldots,r\}\). For any integer
\(N \geq 1\) and \(N\) distinct elements \(\gamma_N < \gamma_{N-1} < \cdots <
\gamma_1 = 1\) in the norm group \(\Gamma\) of \(k\), one may find \(N\) finite
covers \(\mathcal{U}^{(i)} = \bigcup_{J \subset I} \{U^{J,(i)}_l\}\) of \(X\) by
affinoid open subsets. Here \(U^{J,(i)}_l \cong S^{J,(i)}_l \times
\mathbb{D}^{\left\vert J \right\vert}(\underline{s}/p^{\gamma_i})\) where
\(S^{J,(i)}_l\) (viewed as \(S^{J,(i)}_l \times \{0\}\)) are affinoid open
subsets of \(D_J\), such that the following conditions hold:
\begin{enumerate}
\item \(D \cap U^{J,(i)}_l\) is given by vanishing locus of coordinates on the
  disc;
\item For all i, J and l, the closure \(\overline{S^{J,(i+1)}_l}\) of
  \(S^{J,(i+1)}_l\) in \(D_J\) is contained in \(S^{J,(i)}_l\). Hence the
  closure of \(U^{J,(i+1)}_l\) in \(X\) is contained in \(U^{J,(i)}_l\);
\item For all l and J, \(S^{J,(N)}_l \subset \ldots \subset S^{J,(1)}_l\) is a
  chain of rational subsets. Hence the same holds for \(U^{J,(i)}_l\)'s;
\item For J, J', l and l', the intersection \(U^{J,(1)}_l \cap U^{J',(1)}_{l'}
  \subset U^{J,(1)}_l\) is a rational subset and;
\item For all l and J, there is an \'{e}tale map \(S^{J,(1)}_l \to
  \mathbb{T}^{n-\left\vert J \right\vert}\) that factors as a composite of
  rational subsets and finite \'{e}tale maps.
\end{enumerate}
\end{lemma}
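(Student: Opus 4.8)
The plan is to follow~\cite[Lemma 5.3]{Sch13}, substituting our local chart result \Cref{analogue Lemma 5.2} for Scholze's local structure lemma, and to obtain the $N$ nested covers by a shrinking argument controlled by the given radii $\gamma_i$ and powered by the quasi-compactness of the proper space $X$.

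\emph{Ambient charts.} First I would produce a finite family of ``ambient'' charts. Covering $X$ by finitely many affinoid opens and stratifying each by the locally closed strata $D_J \setminus \bigcup_{i \notin J} D_i$ ($J \subset I$), I apply \Cref{analogue Lemma 5.2} at a maximal point $x$ of each stratum. This yields a rational neighborhood of $\overline{\{x\}}$ of the form $\tilde S \times \mathbb{D}^{|J|}(\underline s)$, with $D$ cut out by the disc coordinates and an étale map $\tilde S \to \mathbb{T}^{n-|J|}$ factoring through rational subsets and finite étale maps. These neighborhoods cover $X$, and by properness (hence quasi-compactness) finitely many suffice. Their full-radius discs and bases serve as the ambient charts, inside which all the $U^{J,(i)}_l$ will be carved out; conditions (1) and (5) hold for them by construction.

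\emph{Shrinking and nesting.} Inside each ambient chart $\tilde S \times \mathbb{D}^{|J|}(\underline s)$ I would take $U^{J,(i)}_l = S^{J,(i)}_l \times \mathbb{D}^{|J|}(\underline s / p^{\gamma_i})$, where $S^{J,(N)}_l \subset \cdots \subset S^{J,(1)}_l \subset \tilde S$ is a chain of rational (Laurent) subdomains with each closure contained in the previous, and the disc radii shrink strictly since $\gamma_{i+1} < \gamma_i$. As $U = S \times \mathbb{D}$ has closure $\overline S \times \overline{\mathbb{D}}$, these choices give the chain in (3) and the nesting of closures in (2); condition (4) is arranged by a standard refinement, passing to a common affinoid so that the intersection of two charts is a rational subset of each.

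\emph{The main obstacle.} The crux is to choose the bases $S^{J,(i)}_l$ so that \emph{every} level, and in particular the finest level $i = N$ with the smallest radius $\gamma_N$ and smallest bases, still covers $X$; once the finest cover is secured, all coarser levels cover a fortiori, their charts being larger. Here the stratified nature of the construction is essential: a point whose disc-distance to $D_j$ exceeds the finest radius $\gamma_N$ does not lie on $D_j$, and is therefore picked up by a chart for the coarser stratum $J \setminus \{j\}$, in which that coordinate direction has been absorbed into the torus factor and only the base is (mildly) shrunk. Securing the finest cover then reduces to a shrinking statement applied to the bases: the ambient charts form a finite open cover of the quasi-compact $X$, so the complement of the union of the charts with slightly enlarged bases is a closed, hence quasi-compact, subset that is empty once the bases are taken large enough; properness of $X$ further ensures that the closures appearing in (2) are quasi-compact and behave well. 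I expect this simultaneous shrinking-while-covering, keeping the prescribed radii $\gamma_i$ and the product and rational structures intact, to be the main technical work, the remaining conditions following directly from \Cref{analogue Lemma 5.2} and the explicit form of the charts.
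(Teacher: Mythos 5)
Your proposal is correct and takes essentially the same route as the paper: the paper's entire proof is the remark that the argument is ``almost identical to that of \cite[Lemma 5.3]{Sch13} except we use \Cref{analogue Lemma 5.2} to replace \cite[Lemma 5.2]{Sch13},'' which is precisely your plan of feeding the stratified chart lemma into Scholze's shrinking-plus-quasi-compactness construction of nested covers. Your extra discussion of how points far from a given $D_j$ are caught by charts attached to shallower strata is exactly the adaptation the SSNC setting requires, which the paper leaves implicit.
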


\begin{proof}
  The proof is almost identical to that of~\cite[Lemma 5.3]{Sch13} except we
  use~\Cref{analogue Lemma 5.2} to replace~\cite[Lemma 5.2]{Sch13} in the
  argument.
\end{proof}

\begin{lemma}
\label{analogue Lemma 5.5}
Let \(K\) be a complete non-archimedean field extension of \(\mathbb{Q}_p\) that
contains all roots of unity; choose a compatible system \(\zeta_l \in K\) of
\(l\)-th roots of unity. Let
\[
  R_0 = \mathcal{O}_K \langle T_1^{\pm 1},\ldots,T_{n-r}^{\pm 1}, T_{n-r+1},
  \ldots, T_n \rangle,
\]
\[
  R' = \mathcal{O}_K \langle T_1^{\pm 1/{p^\infty}},\ldots,T_{n-r}^{\pm
    1/{p^\infty}}, T_{n-r+1}^{1/\infty}, \ldots, T_n^{1/\infty} \rangle
\]
where \(T^{1/\infty}\) means adjoining all power roots of \(T\), and
\[
  R = \mathcal{O}_K \langle T_1^{\pm 1/{p^\infty}},\ldots,T_{n-r}^{\pm
    1/{p^\infty}}, T_{n-r+1}^{\pm1/{p^\infty}}, \ldots, T_n^{\pm 1/{p^\infty}}
  \rangle.
\]
Let \(S_0\) be an \(R_0\)-algebra which is \(p\)-adically complete flat over
\(\mathbb{Z}_p\) with the \(p\)-adic topology. Let \(\Delta \coloneqq
\mathbb{Z}_p^{n-r} \times \widehat{\mathbb{Z}}^{r}\) such that the \(k\)-th
basis vector acts on \(R'\) via
\[
  \prod T_j^{i_j} \mapsto \zeta^{i_k} \prod T_j^{i_j},
\]
where \(\zeta^{i_k} = \zeta_l^{i_k l}\) whenever \(i_k l \in \mathbb{Z}\). Let
\(\Delta \twoheadrightarrow \Delta_{\infty} \coloneqq \mathbb{Z}_p^n\) be the
obvious projection. Then
\begin{enumerate}
\item \(H^q_{cont}(\Delta_{\infty}, S_0/p^m \otimes_{R_0/p^m} R/p^m) \to
  H^q_{cont}(\Delta, S_0/p^m \otimes_{R_0/p^m} R'/p^m)\) is an almost
  isomorphism,
\item \(H^q_{cont}(\Delta_{\infty}, R/p^m)\) is an almost finitely presented
  \(R_0\)-module for all \(m\),
\item the map
\[
  \bigwedge^q R_0^n = H^q_{cont}(\Delta_{\infty},R_0) \to H^q_{cont}(\Delta, R')
  (~=^a H^q_{cont}(\Delta_{\infty}, R) \text{ by (1) above })
\]
is injective with cokernel killed by \(\zeta_p-1\),
\item \(H^q_{cont}(\Delta_{\infty}, S_0/p^m \otimes_{R_0/p^m} R/p^m) = S_0/p^m
  \otimes_{R_0/p^m} H^q_{cont}(\Delta_{\infty}, R/p^m)\) for all \(m\) and,
\item \(H^q_{cont}(\Delta_{\infty}, S_0 \hat{\otimes}_{R_0} R) = S_0
  \hat{\otimes}_{R_0} H^q_{cont}(\Delta_{\infty}, R)\)
\end{enumerate}
\end{lemma}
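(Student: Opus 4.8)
The plan is to reduce all five statements to explicit computations of continuous group cohomology, computed by the Koszul complex on the commuting operators $\gamma_k-1$, exactly as in \cite[Lemma 5.5]{Sch13}. The basic mechanism is that $R/p^m$ and $R'/p^m$ each decompose as a topological direct sum of rank-one weight spaces $\mathcal{O}_K/p^m\cdot \underline{T}^a$, on which the $k$-th generator acts by the scalar $\zeta^{a_k}$. Because the Koszul differentials are diagonal for this decomposition, the whole computation splits weight-by-weight, reducing everything to the cohomology of the one-term complexes $[\mathcal{O}_K/p^m \xrightarrow{\zeta^{a_k}-1} \mathcal{O}_K/p^m]$. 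Statements (2)--(5) involve only $\Delta_\infty=\mathbb{Z}_p^n$ acting on $R$, which is verbatim Scholze's setting, so I would deduce them from \cite[Lemma 5.5]{Sch13}; the one new ingredient is (1), comparing the all-roots action of $\widehat{\mathbb{Z}}$ in the disc directions with the $p$-power-roots action of $\mathbb{Z}_p$.

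For (1) I would use the splitting $\Delta=\Delta_\infty\times H$ with $H=(\widehat{\mathbb{Z}}^{(p)})^r$ the prime-to-$p$ part of the disc factors, and run the Hochschild--Serre spectral sequence for $1\to H\to\Delta\to\Delta_\infty\to 1$. The crucial claim is that $H^\ast_{\mathrm{cont}}(H,\,S_0/p^m\otimes_{R_0/p^m}R'/p^m)$ is concentrated in degree $0$. Indeed, on a weight $a$ whose disc part has a denominator divisible by a prime $\ell\neq p$ the corresponding scalar $\zeta_d-1$ is a \emph{unit} of $\mathcal{O}_K$ (this is tameness away from $p$), so that weight space is $H$-acyclic; on the surviving weights $H$ acts trivially, and being a pro-(prime-to-$p$) group it has no higher cohomology with $p$-power-torsion coefficients. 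Thus passing to $H$-invariants simply deletes the weights with a prime-to-$p$ denominator and identifies $H^0(H,R'/p^m)$ with the $p$-power-root subalgebra, i.e.\ with $S_0/p^m\otimes_{R_0/p^m}R/p^m$ as a $\Delta_\infty$-module. The spectral sequence then degenerates and produces the almost isomorphism (1).

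Granting (1), statements (2) and (3) fall out of the weight computation for $\Delta_\infty$ on $R$. The integral weights $a\in\mathbb{Z}^{n-r}\times\mathbb{Z}_{\geq 0}^r$ carry the trivial action; assembling their Koszul contributions recovers the free module $\bigwedge^q R_0^n=H^q_{\mathrm{cont}}(\Delta_\infty,R_0)$, which gives the injectivity in (3). For a non-integral weight the relevant scalar is $\zeta^{a_k}-1\neq 0$, and one checks it is a unit unless $\zeta^{a_k}$ is a primitive $p^j$-th root of unity, in which case $v(\zeta^{a_k}-1)=\tfrac{1}{p^{j-1}(p-1)}\leq v(\zeta_p-1)$; hence every surviving torsion contribution $\mathcal{O}_K/(\zeta^{a_k}-1)$ is killed by $\zeta_p-1$. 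This is precisely the cokernel bound in (3), and summing these almost-zero pieces yields the almost finite presentation in (2). Statement (4) is base change: the Koszul complex for the left-hand side is obtained from the one for $R$ by applying the flat functor $S_0/p^m\otimes_{R_0/p^m}(-)$, which therefore commutes with cohomology; and (5) follows from (4) by passing to the limit over $m$, the finiteness in (2) ensuring the $\varprojlim^1$-terms vanish.

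The hard part will be (1), and within it the degree-$0$ concentration of the prime-to-$p$ cohomology: one must run the Hochschild--Serre argument inside the almost category and check that the units $\zeta_d-1$ annihilate the tame weight spaces uniformly in $a$, so that the passage to $H$-invariants is an honest (almost) isomorphism after $p$-adic completion rather than only weight-by-weight. Once this prime-to-$p$ acyclicity is in place, everything else is either formal or a direct citation of \cite[Lemma 5.5]{Sch13}, so essentially all the new content of the lemma is localized in that single vanishing statement.
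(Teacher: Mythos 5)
Your proposal is correct and follows essentially the same route as the paper: the paper handles (1) by citing \cite[Lemma 3.10]{Olsson} --- whose content is precisely your Hochschild--Serre/tameness argument for the prime-to-$p$ kernel $H=(\widehat{\mathbb{Z}}^{(p)})^{r}$ (weights with prime-to-$p$ denominator die because $\zeta_{d}-1$ is a unit, the surviving weights carry the trivial $H$-action, and a pro-prime-to-$p$ group has no higher cohomology on discrete $p$-power-torsion modules) --- and handles (2)--(5) by invoking the proof of \cite[Lemma 5.5]{Sch13}, exactly as you do. Your closing worry about uniformity in (1) is in fact unnecessary: statement (1) is mod $p^{m}$, so $R'/p^{m}$ is a discrete $\Delta$-module equal to the direct sum of its weight spaces, continuous cohomology commutes with such direct sums, and the tame weight spaces vanish identically in all degrees (a unit acts as zero only on the zero module), so no almost-mathematics or completion subtlety arises there.

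One justification you give is, however, false as stated, even though the claim it supports is true: in (4), the functor $S_{0}/p^{m}\otimes_{R_{0}/p^{m}}(-)$ is \emph{not} flat, since $S_{0}$ is only assumed flat over $\mathbb{Z}_{p}$, not over $R_{0}$ (it could be, e.g., the $p$-adic completion of a quotient of $R_{0}$ such as $\mathcal{O}_{K}\langle T_{2}^{\pm1},\ldots,T_{n}\rangle$). So (4) is not flat base change. The mechanism that actually makes (4) work --- and this is the content of the corresponding step in the proof of \cite[Lemma 5.5]{Sch13} that you are citing --- is the weight decomposition combined with $p$-torsion-freeness of $S_{0}$: each non-integral weight contributes a Koszul complex on scalars in $\mathcal{O}_{K}$ whose ideal is generated, $\mathcal{O}_{K}$ being a valuation ring, by a single element $x=\zeta^{a_{k}}-1$ dividing $p$; after the corresponding change of basis the complex is built from $[\,\cdot\xrightarrow{\;x\;}\cdot\,]$ tensored with exterior powers carrying zero differential, and because $S_{0}$ is $p$-torsion free it is also $x$-torsion free, so both $\ker$ and $\operatorname{coker}$ of $x$ on $S_{0}/p^{m}$ are obtained by base change from $\mathcal{O}_{K}/p^{m}$. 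With that substitution your outline is complete and coincides with the paper's argument.
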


\begin{proof}
  (1) follows from~\cite[Lemma 3.10]{Olsson}, notice that the action of
  \(\Delta\) is continuous with respect to the \(p\)-adic topology on these
  \(\Delta\)-modules. (2) to (5) follows from (the proof of)~\cite[Lemma
  5.5]{Sch13}.
\end{proof}

\begin{lemma}
\label{analogue Lemma 5.6}
Let \(K\) be as in the previous Lemma. Let \((V,N)\) be an object in
\(X_{\log}\) with an \'{e}tale map \(V \to \mathbb{T}^{n-r,r}\) as one of
the \(V^{J,(1)}_k\)'s in \Cref{analogue Lemma 5.3}. Let \(\mathbb{L}\) be an
\(\mathbb{F}_p\)-local system on \(U_{\mathrm{\acute{e}t}}\). Then
\begin{enumerate}
\item For \(i > n = \dim X\), the cohomology group
  \[
    H^i((V,N), (u_{X,*} \mathbb{L}) \otimes \mathcal{O}_{X_{\log}}^{+}/p)
  \]
  is almost zero as \(\mathcal{O}_K\)-module.
\item Assume \(V' \subset V\) is a rational subset which is strictly contained
  in \(V\). Then the image of
  \[
    H^i((V,N), (u_{X,*} \mathbb{L}) \otimes \mathcal{O}_{X_{\log}}^{+}/p) \to H^i((V',N'=V'
    \times_V N), (u_{X,*} \mathbb{L}) \otimes \mathcal{O}_{X_{\log}}^{+}/p)
  \]
  is an almost finitely generated \(\mathcal{O}_K\)-module.
\end{enumerate}
\end{lemma}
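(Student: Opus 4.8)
The plan is to reduce the computation to continuous group cohomology of the profinite group $\Delta = \mathbb{Z}_p^{n-r} \times \widehat{\mathbb{Z}}^{r}$ acting on an explicit module, exactly as in~\cite[Lemma 5.6]{Sch13}, and then to read off both assertions from~\Cref{analogue Lemma 5.5}. First I would view $(V,N)$ as an object of $X_{\mathrm{prolog}}$; by~\Cref{lemmacohomology} the cohomology of $(u_{X,*}\mathbb{L}) \otimes \mathcal{O}_{X_{\log}}^{+}/p$ over $(V,N)$ agrees with that of its pullback under $\nu^*$. Using the \'etale map $V \to \mathbb{T}^{n-r,r}$, which by hypothesis factors through rational embeddings and finite \'etale maps, I would pull back the affinoid perfectoid cover $\tilde{\mathrm{T}}^{n-r,r}$ of~\Cref{model example} to obtain $\tilde{V} = V \times_{\mathbb{T}^{n-r,r}} \tilde{\mathrm{T}}^{n-r,r}$ and then $\tilde{N} = N \times_V \tilde{V}$. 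By~\Cref{lemmafet} both are affinoid perfectoid, and since $\tilde{\mathrm{T}}^{n-r,r} \to \mathbb{T}^{n-r,r}$ is a pro-(finite log \'etale) $\Delta$-torsor, so is $\tilde{N} \to N$ after base change.

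With this cover in hand, the Cartan--Leray spectral sequence together with~\Cref{analogue Lemma 4.12} yields an almost isomorphism
\[
H^i\bigl((V,N), \nu^*((u_{X,*}\mathbb{L}) \otimes \mathcal{O}_{X_{\log}}^{+}/p)\bigr) \cong^a H^i_{\mathrm{cont}}\bigl(\Delta, M(\tilde{N})\bigr),
\]
because the higher cohomology of the sheaf on the affinoid perfectoid $\tilde{N}$, and on its fibre powers over $N$, is almost zero, while in degree $0$ it produces the almost finitely generated projective module $M(\tilde{N})$. Since $\mathbb{L}$ trivializes after a finite Kummer cover by~\Cref{Abhyankar's Lemma}, up to the rank of $\mathbb{L}$ and up to almost isomorphism $M(\tilde{N})$ has the shape $S_0/p \otimes_{R_0/p} R'/p$ with its semilinear $\Delta$-action, which is exactly the coefficient module treated in~\Cref{analogue Lemma 5.5}; replacing $\Delta$ by $\Delta_{\infty} = \mathbb{Z}_p^n$ costs only an almost isomorphism by part (1) of that lemma. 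For assertion (1) I would then invoke that $\Delta_{\infty}$ has continuous cohomological dimension $n$ on $p$-torsion modules, its cohomology being computed by the Koszul complex on $n$ topological generators, so that $H^i_{\mathrm{cont}}(\Delta_{\infty}, -) = 0$ for $i > n$; combined with the displayed almost isomorphism this gives the almost vanishing.

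The hard part is assertion (2). Here~\Cref{analogue Lemma 5.5}(4) identifies the cohomology over $V$ with $S_0/p \otimes_{R_0/p} H^i_{\mathrm{cont}}(\Delta_{\infty}, R/p)$, a finitely generated module over $S_0/p$ (where $S_0$ is an integral model of the affinoid ring of $V$, viewed as an $R_0$-algebra), but $S_0/p$ is far from finite over $\mathcal{O}_K/p$. The key geometric input is that for a rational subset $V' \Subset V$ strictly contained in $V$, the restriction map on affinoid functions is completely continuous: the image of the unit ball is relatively compact, hence contained in a finitely generated $\mathcal{O}_K$-submodule up to arbitrarily small $p$-power error. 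Consequently the image of the finitely generated $S_0/p$-module under base change along $S_0/p \to S_0'/p$ is almost finitely generated over $\mathcal{O}_K/p$, which is precisely what the restriction map in (2) factors through. I expect the main technical care to lie in the module-theoretic bookkeeping, namely matching the rank and the twist coming from $\mathbb{L}$, together with the passage between $\Delta$ and $\Delta_{\infty}$, against the complete-continuity estimate, so that the ``almost finitely generated'' conclusion survives all the intervening almost isomorphisms.
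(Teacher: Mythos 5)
Your proposal is correct and follows essentially the same route as the paper, whose proof simply runs~\cite[Lemma 5.6]{Sch13} with \Cref{lemmafet}, \Cref{lemmaloget}, \Cref{analogue Lemma 4.12}, \Cref{analogue Lemma 5.3} and \Cref{analogue Lemma 5.5} substituted: namely the Cartan--Leray reduction over the $\Delta$-cover $\tilde{N}=N\times_V \tilde{V}$, the passage from $\Delta$ to $\Delta_\infty=\mathbb{Z}_p^n$ via \Cref{analogue Lemma 5.5}(1) and cohomological dimension $n$ for part (1), and \Cref{analogue Lemma 5.5}(2),(4) together with the complete-continuity of restriction for $V'\Subset V$ for part (2). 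The only imprecision is cosmetic: by \Cref{analogue Lemma 4.12} the module $M(\tilde{N})$ is almost finitely generated \emph{projective}, so rather than being almost isomorphic to a free module of rank $\mathrm{rk}\,\mathbb{L}$ of the shape $S_0/p\otimes_{R_0/p}R'/p$, it is for each $\epsilon$ a $p^\epsilon$-retract of a finite free $\hat{R}^{+a}/p$-module, which is how one feeds it into the coefficients of \Cref{analogue Lemma 5.5} exactly as in Scholze's original argument.
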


\begin{proof}
  This follows from the proof of~\cite[Lemma 5.6]{Sch13}. In the argument we
  need to replace ~\cite[Lemma 4.5]{Sch13} by \Cref{lemmafet} and \Cref{lemmaloget},~\cite[Lemma 4.12]{Sch13} by \Cref{analogue Lemma
    4.12},~\cite[Lemma 5.3]{Sch13} by~\Cref{analogue Lemma 5.3} and~\cite[Lemma
  5.5]{Sch13} by~\Cref{analogue Lemma 5.5}.
\end{proof}

\begin{lemma}
\label{analogue Lemma 5.8}
Let \(K\) be a perfectoid field of characteristic \(0\) containing all
\(p\)-power roots of unity. Let \(\mathbb{L}\) be an \(\mathbb{F}_p\)-local
system on \(U_{\mathrm{\acute{e}t}}\). Then
\[
  H^j(X_{\log},(u_{X *}\mathbb{L}) \otimes \mathcal{O}^+_X/p)
\]
is an almost finitely generated \(\mathcal{O}_K\)-module, which is almost zero
for \(j > 2 \dim X\).
\end{lemma}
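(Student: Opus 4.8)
The plan is to transcribe the proof of~\cite[Lemma 5.8]{Sch13} into our setting, feeding in the logarithmic analogues proved above. Write $\mathcal{F} = (u_{X*}\mathbb{L}) \otimes \mathcal{O}^+_{X_{\log}}/p$ and $n = \dim X$. First I would apply~\Cref{analogue Lemma 5.3} with $N = 2$ to produce two finite covers $\mathcal{U}^{(1)}$ and $\mathcal{U}^{(2)}$ of $X$ by affinoid opens $V^{J,(i)}_l \cong S^{J,(i)}_l \times \mathbb{D}^{|J|}$, where $\mathcal{U}^{(2)}$ is strictly contained in $\mathcal{U}^{(1)}$ in the sense of conditions (2)--(3) there, and each $S^{J,(1)}_l$ carries an \'etale map to a torus factoring through rational embeddings and finite \'etale maps. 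I would lift each member $V$ to the object $(V,V) \in X_{\log}$ with trivial tower. Since every $V$ and every finite intersection of them is a smooth open subset of $X$, the fibre-product formula of~\Cref{lemmopen0}~(1) shows that the normalizations appearing there are trivial, so the $s$-fold intersections in the Čech nerve are simply $(V_S, V_S)$ with $V_S = V_{i_0} \cap \cdots \cap V_{i_s}$, which are rational subsets by~\Cref{analogue Lemma 5.3}~(4).

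Next I would write down the Čech-to-derived-functor spectral sequences
\[
E_1^{s,t}(\mathcal{U}^{(i)}) = \bigoplus_{|S| = s+1} H^t\left( (V^{(i)}_S, V^{(i)}_S), \mathcal{F} \right) \;\Longrightarrow\; H^{s+t}(X_{\log}, \mathcal{F}) \qquad (i = 1,2),
\]
together with the morphism between them induced by the strict inclusions $V^{(2)}_S \subset V^{(1)}_S$. The local input is~\Cref{analogue Lemma 5.6}: part (1), applied to each piece $V_S$ (one checks, as in \cite{Sch13}, that the argument applies to rational subsets of the distinguished opens), shows $E_1^{s,t}(\mathcal{U}^{(i)})$ is almost zero for $t > n$; part (2) shows that each restriction map $H^t((V^{(1)}_S, V^{(1)}_S), \mathcal{F}) \to H^t((V^{(2)}_S, V^{(2)}_S), \mathcal{F})$ has almost finitely generated image, hence so does the componentwise restriction $E_1^{s,t}(\mathcal{U}^{(1)}) \to E_1^{s,t}(\mathcal{U}^{(2)})$.

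For the finiteness I would run the almost analogue of the Cartan--Serre finiteness argument, exactly as in~\cite[Lemma 5.8]{Sch13}. The refinement $\mathcal{U}^{(2)} \subset \mathcal{U}^{(1)}$ induces a map of the Čech double complexes built from an injective resolution of $\mathcal{F}$; this map is a quasi-isomorphism because both total complexes compute $R\Gamma(X_{\log}, \mathcal{F})$, while on the $E_1$-level the transition maps have almost finitely generated image by~\Cref{analogue Lemma 5.6}~(2). Feeding these two facts into the almost version of the finiteness lemma for complexes then yields that $H^j(X_{\log}, \mathcal{F})$ is almost finitely generated for every $j$. For the vanishing statement a single cover suffices: the concentration $t \le n$ from~\Cref{analogue Lemma 5.6}~(1), combined with the bound $s \le n$ on the Čech direction (coming from the fact that the underlying space of the proper $n$-dimensional rigid variety $X$ has cohomological dimension at most $n$, exactly as in \cite{Sch13}), forces $E_\infty^{s,t} = 0$ for $s + t > 2n$.

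The step I expect to be the main obstacle is the finiteness bookkeeping in the almost category: one must invoke the correct almost version of the Cartan--Serre lemma and verify that ``almost finitely generated $\mathcal{O}_K$-module'' is stable under the subquotients and extensions used, which is where the non-Noetherianity of $\mathcal{O}_K/p$ could bite and must be controlled via the uniform almost-finite-generation estimates of Gabber--Ramero and~\cite[Section 4]{Sch13}. A secondary point requiring care is that~\Cref{analogue Lemma 5.6} must be applied to the intersection pieces $V_S$ rather than only to the distinguished opens $V^{J,(1)}_l$; this is harmless since the $V_S$ are rational subsets of the standard pieces, but it should be recorded explicitly. The cohomological-dimension input for the vanishing is then routine.
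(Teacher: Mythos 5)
Your outline is the right one in spirit---the paper's own proof is precisely ``run \cite[Lemma 5.8]{Sch13} with \Cref{analogue Lemma 5.3} and \Cref{analogue Lemma 5.6} fed in''---but both of the places where you shortcut the transcription contain genuine gaps. First, the vanishing argument: for a fixed finite cover by $m$ affinoid opens, the \v{C}ech index $s$ in $E_1^{s,t}$ runs up to $m-1$, not up to $n$; the cohomological dimension of the underlying space bounds derived-functor cohomology on $X_{an}$, but says nothing about the length of the \v{C}ech complex of one fixed cover, so your claimed bound $s \le n$ is simply not available and the concentration $t \le n$ alone gives no vanishing beyond $j > $ (size of cover) $+\, n$. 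The correct mediation, used both in \cite[Lemma 5.8]{Sch13} and in the paper, is the Leray spectral sequence for the projection $\lambda \colon X_{\log} \to X_{an}$ sending $V$ to $(V,V)$: \Cref{analogue Lemma 5.6}(1) shows $R^t\lambda_*\bigl((u_{X*}\mathbb{L}) \otimes \mathcal{O}^+_{X_{\log}}/p\bigr)$ is almost zero for $t > n$ (any covering of $(X,X)$ refines to ones satisfying the hypotheses of that lemma), and then the bound $\mathrm{cohdim}(X_{an}) \le n$ of \cite[Proposition 2.5.8]{dJvdP} gives almost vanishing of $H^j$ for $j > 2n$.

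Second, taking $N = 2$ in \Cref{analogue Lemma 5.3} does not suffice for the finiteness, and this is exactly why that lemma is stated for arbitrary $N$. Even granting the stability of almost finitely generated $\mathcal{O}_K$-modules under the subquotients and extensions that you flag as the main obstacle, a single refinement only controls $\mathrm{im}\bigl(E_r^{s,t}(\mathcal{U}^{(1)}) \to E_r^{s,t}(\mathcal{U}^{(2)})\bigr)$ on each page; it does not bound the abutment. The two spectral sequences induce two a priori different filtrations on $H^j$, the identity map only satisfies $F^{(1)}_s \subseteq F^{(2)}_s$, and the induced maps on $E_\infty$-graded pieces of an isomorphism of filtered modules can vanish identically (a filtration shift on $H = \mathcal{O}_K$ already does this), so almost finite generation of those images yields nothing about $H^j$. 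Your proposed complex-level shortcut does not rescue this: the map of \v{C}ech double complexes built from an injective resolution is, componentwise, a map between modules of sections of injective sheaves, whose images carry no finiteness whatsoever---the almost finitely generated images of \Cref{analogue Lemma 5.6}(2) appear only after taking column cohomology, i.e.\ on $E_1$, which throws you back onto the spectral-sequence bookkeeping. The argument the paper imports from \cite[Lemma 5.8]{Sch13} therefore uses a chain of $N$ nested covers with $N$ comparable to $2\dim X + 2$: one cover is spent for each page through which the spectral sequence must be pushed and for each step of the (length $\le j+1$) abutment filtration, so that the identity on $H^j$ factors through enough consecutive restriction maps and its image---which is all of $H^j$---is exhibited as a finite iterated extension of subquotients of the almost finitely generated images supplied by \Cref{analogue Lemma 5.6}(2). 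Your application of that lemma to the intersection pieces $V_S$ (rational inside the distinguished opens, with strict inclusions inherited from condition (2) of \Cref{analogue Lemma 5.3}) is fine; it is the two-cover Cartan--Serre step that must be replaced by the chain argument.
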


\begin{proof}
  Consider the projection \(\mu \colon X_{\log} \to X_{an}\) sending \(U\) to
  \((U,U)\). Previous \Cref{analogue Lemma 5.6} shows that \(R^j \lambda_*(u_{X
    *}\mathbb{L} \otimes \mathcal{O}^+_X/p)\) is almost zero for \(j > \dim X\).
  Notice that any covering of \((X,X)\) in $X_{\log}$ can be refined by ones meeting
  the condition of previous Lemma. The cohomological dimension of \(X_{an}\) is
  \(\leq \dim X\) by~\cite[Proposition 2.5.8]{dJvdP}, we get the desired
  vanishing result. The proof above is similar to the counterpart of~\cite[Lemma
  5.8]{Sch13}.

  The proof of almost finitely generatedness is also similar to that
  in~\cite[Lemma 5.8]{Sch13}. Again, we have to replace~\cite[Lemma 5.3]{Sch13}
  by \Cref{analogue Lemma 5.3} and~\cite[Lemma 5.6]{Sch13} by \Cref{analogue
    Lemma 5.6}.
\end{proof}

\begin{definition}
Let $(X,D)$ be as before. The tilted integral structure sheaf $\hat{\mathcal{O}}^+_{X^{\flat}_{\log}}$ is given by $\varprojlim \mathcal{O}^{+}_{X_{\log}}/p$ where the inverse limit is taken along the Frobenius map. 
Set $\hat{\mathcal{O}}_{X^{\flat}_{\log}}=\hat{\mathcal{O}}^+_{X^{\flat}_{\log}}[\frac{1}{p}]$.
\end{definition}

The next lemma follows from repeating the argument of its untilted version (Lemma \ref{lemmascholze410}).
\begin{lemma}\label{lemmascholze510}
Let $K$ be a perfectoid field of characteristic $0$, and let $X$ be an adic space associated to a rigid space over $\mathrm{Sp}(K)$. 
Let $N\in X_{\mathrm{prolog}}$ be affinoid perfectoid, with $\hat{N}=\mathrm{Spa}(R,R^+)$ where $(R,R^+)$ is a perfectoid affinoid $(K,K^o)$-algebra. Let $(R^{\flat},R^{\flat+})$ be its tilt. Then we have
\begin{enumerate}
\item $\hat{\mathcal{O}}^+_{X^{\flat}_{\log}}(N)=R^{\flat+}$ and $\hat{\mathcal{O}}_{X^{\flat}_{\log}}(N)=R^{\flat}$;
\item The cohomology groups $H^i(N,\hat{\mathcal{O}}_{X^{\flat}_{\log}}^+)$ are almost zero for $i>0$, with respect to the almost setting defined by the maxiaml ideal of topologically nilpotent elements in $K^{\circ}$.
\end{enumerate}
\end{lemma}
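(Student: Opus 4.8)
The plan is to transcribe the proof of \Cref{lemmascholze410} for the sheaf $\mathcal{O}^+_{X_{\log}}/p$ equipped with its Frobenius endomorphism $\Phi$, and to feed the output through the tilting dictionary. Recall from \cite{Scholze1} that the tilt of the perfectoid affinoid $(K,K^{\circ})$-algebra $(R,R^+)$ is given by
\[
R^{\flat+} = \varprojlim_{\Phi} R^+/p, \qquad R^{\flat} = R^{\flat+}[1/\varpi^{\flat}],
\]
where the transition maps are the Frobenius $\Phi \colon x \mapsto x^p$ and $\varpi^{\flat}$ is a tilt of a pseudo-uniformizer of $K$ (this is how the paper's $[1/p]$ is to be read on the characteristic $p$ sheaf). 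The crucial structural input is that $\Phi$ is surjective on $R^+/p$, which is precisely the perfectoid condition on $(R,R^+)$. We also use throughout that the maximal ideals $\mathfrak{m} \subset K^{\circ}$ and $\mathfrak{m}^{\flat} \subset K^{\flat\circ}$ correspond under tilting, so that the two almost settings agree on the relevant modules.

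For part (1), observe that $\hat{\mathcal{O}}^+_{X^{\flat}_{\log}}$ is by definition the inverse limit of the sheaves $\mathcal{O}^+_{X_{\log}}/p$ along $\Phi$. Since limits of sheaves are computed as presheaf limits (the inclusion of sheaves into presheaves is a right adjoint), evaluation on the fixed object $N$ commutes with this limit, giving
\[
\hat{\mathcal{O}}^+_{X^{\flat}_{\log}}(N) = \varprojlim_{\Phi} (\mathcal{O}^+_{X_{\log}}/p)(N).
\]
By \Cref{lemmascholze410}(1) each term is almost equal to $R^+/p$ compatibly with $\Phi$, while the honest Frobenius-limit of $\mathcal{O}^+_{X_{\log}}(N)/p = R^+/p$ is $R^{\flat+}$, which is $\varpi^{\flat}$-adically complete and $\varpi^{\flat}$-torsion free. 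Running the same argument that yields the honest equality $\hat{\mathcal{O}}^+_{X_{\log}}(N) = R^+$ in \Cref{lemmascholze410}(3)--(4) then identifies the limit with $R^{\flat+}$, and inverting $\varpi^{\flat}$ gives $\hat{\mathcal{O}}_{X^{\flat}_{\log}}(N) = R^{\flat}$.

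For part (2), I would compute $H^i(N,\hat{\mathcal{O}}^+_{X^{\flat}_{\log}})$ along the (countable) Frobenius tower. Since $\Phi$ is surjective on $\mathcal{O}^+_{X_{\log}}/p$ over the affinoid perfectoid basis of \Cref{perfectoid basis}, the tower is Mittag--Leffler on sections; hence its underived inverse limit computes $R\varprojlim$, and $R\Gamma(N,-)$ commutes with it. This produces, for each $i$, a short exact sequence
\[
0 \to {\varprojlim_{\Phi}}^{1}\, H^{i-1}(N,\mathcal{O}^+_{X_{\log}}/p) \to H^i(N,\hat{\mathcal{O}}^+_{X^{\flat}_{\log}}) \to \varprojlim_{\Phi} H^{i}(N,\mathcal{O}^+_{X_{\log}}/p) \to 0.
\]
By \Cref{lemmascholze410}(5), applied with $b = p$, the groups $H^{i}(N,\mathcal{O}^+_{X_{\log}}/p)$ are $\mathfrak{m}$-almost zero for $i > 0$; therefore the $\varprojlim$-term is almost zero for $i \geq 1$ and the $\varprojlim^{1}$-term is almost zero for $i \geq 2$. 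For $i = 1$ the remaining term $\varprojlim^{1}_{\Phi} H^0(N,\mathcal{O}^+_{X_{\log}}/p)$ vanishes almost, because $H^0 =^a R^+/p$ and the transition maps $\Phi$ are (almost) surjective. Hence $H^i(N,\hat{\mathcal{O}}^+_{X^{\flat}_{\log}}) =^a 0$ for all $i > 0$, the almost vanishing at the finite stages being inherited by both $\varprojlim$ and $\varprojlim^{1}$.

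I expect the degree-one vanishing in part (2) to be the main obstacle: it is the only place where one must genuinely invoke the surjectivity of Frobenius built into the perfectoid hypothesis, and one must check that the Mittag--Leffler formalism is compatible with the almost structure, so that almost-surjective transition maps still force the $\varprojlim^{1}$-term to be almost zero. The remaining points---commuting sections with the limit in (1), and the almost-to-honest upgrade deferred to \Cref{lemmascholze410}(3)--(4)---are formal once \Cref{lemmascholze410} is in hand.
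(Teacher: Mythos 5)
Your proposal is correct and follows essentially the same route as the paper, whose entire proof is the one-line remark that the lemma ``follows from repeating the argument of its untilted version (\Cref{lemmascholze410})'' --- your write-up is exactly that repetition, with the Frobenius tower $\varprojlim_{\Phi}\mathcal{O}^+_{X_{\log}}/p$ playing the role of the $p$-power tower and the almost-Mittag--Leffler bookkeeping (the analogue of \cite[Lemma 3.18]{Sch13}) handling the $\varprojlim^1$-terms. The subtleties you flag (reading $[1/p]$ as inverting $\varpi^{\flat}$, almost surjectivity of Frobenius on sections, compatibility of the two almost settings under tilting) are the right ones and are resolved just as you indicate.
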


Now we can follow Scholze's method to show \Cref{thmpc}.

\begin{proof}[Proof of \Cref{thmpc}] 
To simplify our notations, throughout the proof, we still denote $\nu^*(u_{X,*}(\mathbb{L}))$ by $u_{X,*}(\mathbb{L})$. Note that $K^{\flat}$ is an algebraically closed field of characteristic $p$. Fix an element $\pi\in \mathcal{O}_{K^b}$ such that $(\pi)^{\sharp} =p$.
Note that $\hat{\mathcal{O}}^+_{X^{\flat}_{\log}}$ is a sheaf of perfect flat $\mathcal{O}_{K^{\flat}}$-algebras with $\hat{\mathcal{O}}^+_{X^{\flat}_{\log}}/{\pi^k}=\mathcal{O}^{+}_{X}/p^k$ (by Lemma \ref{lemmascholze510} and Lemma \ref{lemmascholze410}).
Let $M_k=H^i(X_{\mathrm{prolog}}, u_{X,*}(\mathbb{L})\otimes \hat{\mathcal{O}}^+_{X^{\flat}_{\log}}/{\pi^k})^a$. 
It follows from Lemma \ref{analogue Lemma 5.8} that $M_k$ satisfy the hypotheses of \cite[Lemma 2.12]{Sch13}. Hence there is some $r \in \mathbb{N}$ such that $M_k=(K^{\flat o}/\pi^k)^r$ as almost $K^{\flat}$-modules, compatibly with the Frobenius action. 
By \Cref{perfectoid basis}, Lemma \ref{lemmascholze510} and \cite[Lemma 3.18]{Sch13}, we have $$R\varprojlim (u_{X,*}(\mathbb{L})\otimes \hat{\mathcal{O}}^+_{X^{\flat}_{\log}}/{\pi^k})^a= (u_{X,*}(\mathbb{L})\otimes \hat{\mathcal{O}}^+_{X^{\flat}_{\log}})^a.$$
Therefore, we have
\[H^i(X_{\mathrm{prolog}}, u_{X,*}(\mathbb{L})\otimes \hat{\mathcal{O}}^+_{X^{\flat}_{\log}})^a \cong \varprojlim H^i(X_{\mathrm{prolog}}, u_{X,*}(\mathbb{L})\otimes \hat{\mathcal{O}}^+_{X^{\flat}_{\log}}/ {\pi^k})^a \cong (\mathcal{O}_{K^{\flat}}^a)^r.\]
Note that the site $X_{\mathrm{prolog}}$ is algebraic and the final object $(X,X) \in X_{\mathrm{prolog}}$ is coherent. We invert $\pi$ and get 
\[H^i(X_{\mathrm{prolog}}, u_{X,*}(\mathbb{L})\otimes \hat{\mathcal{O}}_{X^{\flat}_{\log}}) \cong (K^{\flat})^r\]
which is still compatible with the action of $\mathrm{Frob}$. Then we use the Artin-Schreier sequence
\[
0\rightarrow u_{X,*}(\mathbb{L})\rightarrow u_{X,*}(\mathbb{L})\otimes \hat{\mathcal{O}}_{X^{\flat}_{\log}} \xrightarrow{h} u_{X,*}(\mathbb{L})\otimes \hat{\mathcal{O}}_{X^{\flat}_{\log}}\rightarrow 0
\]
where the map $h$ sends $v\otimes f$ to $v\otimes (f^p-f)$. 
This is an exact sequence of sheaves: by Lemma \ref{lemmext}, $u_{X,*}(\mathbb{L})$ is locally coming from a $\mathbb{F}_p$-local system on $X_{\mathrm{\acute{e}t}}$, moreover, $u_{X,*}(\mathbb{F}_p)=\mathbb{F}_p$ on $X_{\log}$. 
Therefore, it suffices to check the map $h$ is surjective locally on affinoid perfectoid $N\in X_{\mathrm{prolog}}$ and over which $u_{X,*}(\mathbb{L})$ is trivial. 
Note that $\hat{N}^{\flat}_{\mathrm{F}\mathrm{\acute{e}t}}\cong \hat{N}_{\mathrm{F}\mathrm{\acute{e}t}}$, and finite \'{e}tale covers of $\hat{N}$ come via pullback from finite \'{e}tale covers in $X_{\mathrm{prolog}}$ by \cite[Lemma 7.5 (i)]{Scholze1}. 

Denote $X_{\mathrm{prolog}}$ by $X$. The Artin-Schreier sequence gives 
\[\xymatrix{ \dots H^i(X,u_{X,*}(\mathbb{L}))\ar[r] \ar@{=}[d] & H^i(X, u_{X,*}(\mathbb{L})\otimes \hat{\mathcal{O}}_{X^{\flat}_{\log}})\ar[r]\ar@{=}[d]  & H^i(X, u_{X,*}(\mathbb{L})\otimes \hat{\mathcal{O}}_{X^{\flat}_{\log}}) \ar@{=}[d] \ldots \\
\mathbb{F}_p^r \ar[r] & (K^{\flat})^r \ar[r] & (K^{\flat})^r} \]
where the map $(K^{\flat})^r \rightarrow (K^{\flat})^r$ is coordinate-wise $x\mapsto x^p-x$. The map $(K^{\flat})^r \rightarrow (K^{\flat})^r$ is surjective since $K^{\flat}$ is algebraically closed. Using Lemma \ref{lemmacohomology} (2), we have
\[ 
H^i(X_{\log},u_{X,*}(\mathbb{L}))= H^i(X_{\mathrm{prolog}},u_{X,*}(\mathbb{L}))=H^i(X_{\mathrm{prolog}},u_{X,*}(\mathbb{L}) \otimes \hat{\mathcal{O}}_{X^{\flat}_{\log}})^{\mathrm{Frob}=id}=\mathbb{F}^r_p.
\]
which implies the theorem.
\end{proof}

\begin{remark}
\label{rmkpc}
By the same proof, one has the following variant of \Cref{thmpc}:
  let $X$ be a proper smooth rigid analytic space over $\mathrm{Sp}(k)$ with an
  SSNC divisor $D$. Let $\mathbb{L}$ be an $\mathbb{F}_p$-local system on
  $(X-D)_{\mathrm{\acute{e}t}}$. Then there is an isomorphism of almost
  finitely generated $\hat{\bar{k}}^{\circ a}$-modules
  \[
    H^{i}((X,X_{\bar{k}}),\nu^*(u_{X,*}(\mathbb{L}))) \otimes_{\mathbb{F}_p} \hat{\bar{k}}^{\circ a}/p \cong
    H^i((X,X_{\bar{k}}), \nu^*(u_{X *}(\mathbb{L})) \otimes \mathcal{O}^+_{X_{\log}}/p)
  \]
  for all $i \geq 0$. 
  Here $X_{\bar{k}}$ is the pro-system of $X_l$ where $l/k$ runs through all finite extension of $k$,
see also~\cite[Proposition 3.15]{Sch13} and the discussion before it.
\end{remark}

% * The period sheaves
\section{The period sheaves}
\label{section 6}

\begin{definition}
On $X_{\mathrm{log}}$ we have the \emph{sheaf of log differentials} 
\[
\Omega^1_{X_{\mathrm{log}}}(\log D) \coloneqq \lambda^{-1}(\Omega^1_X(\log D))
\bigotimes_{\lambda^{-1}(\mathcal{O}_X)} \mathcal{O}_{X_{\mathrm{log}}}
\]
where $\lambda \colon X_{\mathrm{log}} \to X_{\mathrm{\acute{e}t}}$ is the natural map sending 
$(V \to X)$ to $(V,V)$. Note that this is a locally finite free sheaf of $\mathcal{O}_{X_{\mathrm{log}}}$-modules.
\end{definition}

The following definitions are similar to \cite[Definition 6.1]{Sch13}. 
\begin{definition} 
Let $X$ be a rigid space over $\mathrm{Sp}(k)$ with SSNC divisor $D$. We have the following sheaves on $X_{\mathrm{prolog}}$.
\begin{enumerate}
\item The sheaf $\mathbb{A}_{\mathrm{inf}}:=W(\hat{\mathcal{O}}_{X^{\flat}_{\log}}^+)$ 
and its rational version $\mathbb{B}_{\mathrm{inf}}:=\mathbb{A}_{\mathrm{inf}}[\frac{1}{p}]$. 
We have $\theta:\mathbb{A}_{\mathrm{inf}}\rightarrow \hat{\mathcal{O}}_{X_{\log}}^+$ 
extended to $\theta:\mathbb{B}_{\mathrm{inf}}\rightarrow \hat{\mathcal{O}}_{X_{\log}}$.
\item The positive de Rham sheaf is given by 
$\mathbb{B}_{\mathrm{dR}}^+:=\varprojlim \mathbb{B}_{\mathrm{inf}}/(\ker \theta)^n$ with its filtration 
$\mathrm{Fil}^i\mathbb{B}_{\mathrm{dR}}^+=(\ker \theta)^i\mathbb{B}_{\mathrm{dR}}^+$.
\item The de Rham sheaf $\mathbb{B}_{\mathrm{dR}}=\mathbb{B}_{\mathrm{dR}}^+[t^{-1}],$ where 
$t$ is any element that generates $\mathrm{Fil}^1\mathbb{B}_{\mathrm{dR}}^+$. 
It has the filtration $\mathrm{Fil}^i\mathbb{B}_{\mathrm{dR}}= \sum \limits_{j \in \mathbb{Z}}t^{-j} \mathrm{Fil}^{i+j}\mathbb{B}_{\mathrm{dR}}^+$.
\end{enumerate}
\end{definition}

The analogue of~\cite[6.2-6.7]{Sch13} holds in our setting with the same proof, let us summarize it in the following:

\begin{remark}
\label{summary remark}
Let $K$ be a perfectoid field which is the completion of some algebraic extension of $k$
and fix $\pi \in K^{\flat}$ such that $\pi^{\sharp}/p \in (K^{\circ})^\times$. 
Let $(V,N)$ be an affinoid perfectoid in the localized site 
$X_{\mathrm{prolog}}/\mathrm{Spa}(K,K^{\circ})$ 
with $\hat{N} = \mathrm{Spa}(R,R^+)$. Then we have
\begin{enumerate}
\item There is an element $\xi \in \mathbb{A}_{\mathrm{inf}}(K,K^{\circ})$ that generates 
$\ker (\theta \colon \mathbb{A}_{\mathrm{inf}} (R,R^+) \to R^+) $, and is not a zero-divisor in $\mathbb{A}_{\mathrm{inf}}(R,R^+)$.
\item we have a canonical isomorphism
\[
\mathbb{A}_{\mathrm{inf}}(V,N) = \mathbb{A}_{\mathrm{inf}}(R,R^+),
\]
and analogous statements hold for $\mathbb{B}_{\mathrm{inf}}$, $\mathbb{B}_{\mathrm{dR}}^+$ 
and $\mathbb{B}_{\mathrm{dR}}$.
In particular, $\mathrm{Fil}^1 \mathbb{B}_{\mathrm{dR}}^+(V,N)$ is a principal ideal in 
$\mathbb{B}_{\mathrm{dR}}^+$ generated by a non-zero-divisor $\xi \in \mathbb{A}_{\mathrm{inf}}(K,K^{\circ})$.
\item All $H^i((V,N),\mathcal{F})$ are almost zero for $i > 0$, where $\mathcal{F}$ is any of the sheaves above. In particular, 
\[
\mathrm{gr}^{\bullet} \mathbb{B}_{\mathrm{dR}} (V,N) = \mathrm{gr}^{\bullet} \mathbb{B}_{\mathrm{dR}} (R,R^+) =
R[\xi^{\pm 1}].
\]
\item Let $S$ be a profinite set, and let $(V,N') = (V,N \times S) \in X_{\mathrm{prolog}}$
which is again affinoid perfectoid. Then
\[
\mathcal{F}(V,N') = \mathrm{Hom}_{cont}(S,\mathcal{F}(V,N))
\]
for any of the sheaves
\[
\mathcal{F} \in \{\hat{\mathcal{O}}_{X_{\log}},\hat{\mathcal{O}}_{X_{\log}}^+,\hat{\mathcal{O}}_{X_{\log}^{\flat}},\hat{\mathcal{O}}_{X_{\log}^{\flat}}^+,\mathbb{A}_{\mathrm{inf}},\mathbb{B}_{\mathrm{inf}},
\mathbb{B}_{\mathrm{dR}}^+,\mathbb{B}_{\mathrm{dR}},\mathrm{gr}^i\mathbb{B}_{\mathrm{dR}} \}.
\]
\end{enumerate}

For all $i \in \mathbb{Z}$, we have $\mathrm{gr}^i\mathbb{B}_{\mathrm{dR}} \cong \hat{\mathcal{O}}_{X_{\log}}(i)$
as sheaves on $X_{\mathrm{prolog}}$ where $(i)$ denotes a Tate twist in the same sense as
in~\cite[Proposition 6.7]{Sch13}.
\end{remark}

\begin{definition}
On $X_{\mathrm{log}}$ we have the \emph{sheaf of log differentials} 
\[
\Omega^1_{X_{\mathrm{log}}}(\log D) \coloneqq \lambda^{-1}(\Omega^1_X(\log D))
\bigotimes_{\lambda^{-1}(\mathcal{O}_X)} \mathcal{O}_{X_{\mathrm{log}}}
\]
where $\lambda \colon X_{\mathrm{log}} \to X_{\mathrm{\acute{e}t}}$ is the natural morphism of sites sending 
$(V \to X)$ to $(V,V)$. Note that this is a locally finite free sheaf of
$\mathcal{O}_{X_{\mathrm{log}}}$-modules.
\end{definition}

\begin{remark}
Note that the $(V,N)$'s in $X_{\log}$ satisfying the following conditions:
\begin{enumerate}
\item $V$ (hence $N$) is an affinoid space;
\item there is an \'{e}tale morphism $V \to \mathbb{T}^{n-r,r}(\underline{Z})$ such that 
\[
g^{-1}(D) = \bigcup_{l=n-r+1}^{n} V(Z_l)
\]
where $g \colon V \to X$ is the structure map;
\item there is a finite \'{e}tale morphism $N \to V[\sqrt[m]{Z_l}]$;
\end{enumerate}
form a basis of $X_{\log}$ by \Cref{Abhyankar's Lemma} and \Cref{analogue Lemma 5.2}.
For $(V,N)$ satisfying the above conditions with $N = \mathrm{Sp}(R)$, we have an isomorphism
\[
\Omega^1_{X_{\log}}(\log D)(V,N) \cong \bigoplus_{1 \leq l \leq n} R\cdot \frac{dZ_l}{Z_l}.
\]
Hence for such a $(V,N)$, we have $\Omega^1_{X_{\log}}(\log D)(V,N) = \Omega^1_{N}(\log (f^{-1}D))(N)$. 
Here $f \colon N \to X$ is induced from $N \to V \to X$.
\end{remark}

\begin{definition}
\label{period sheaves}
Let $X$ be a smooth rigid adic space over $\mathrm{Sp}(k)$ where $k$ is a discretely valued
complete non-archimedean extension of $\mathbb{Q}_p$ with perfect residue field $\kappa$.
Consider the following sheaves on $X_{\mathrm{prolog}}$.

\begin{enumerate}
\item The sheaf of differentials 
\[
\Omega^1_{X}(\log D) \coloneqq \nu^* \bigg( \Omega^1_{X_{\mathrm{log}}}(\log D) \bigg).
\footnote{One should notice the difference between $\nu^{-1}$ and $\nu^*$.}
\] 
We also define $\Omega^i_{X}(\log D):=\wedge^i\Omega^1_{X}(\log D)$.
\item The positive logarithmic structural de Rham sheaf $\mathcal{O}\mathbb{B}_{\log\mathrm{dR}}^+$ 
is given by the sheafification of the presheaf sending affinoid perfectoid $(V,N)$ with
\[
N=\varprojlim N_i=\varprojlim \mathrm{Sp}(R_i) \textnormal{~and~}\hat{N}=\mathrm{Sp}a(R,R^+)
\] 
to the colimit over $i$ of
\[
\label{outcome} 
\tag{\epsdice{1}}
\varprojlim_r \frac{\bigg(\big(R^{\circ}_i \hat{\otimes}_{W(\kappa)}
(\mathbb{A}_{\mathrm{inf}}(R,R^+)/\ker(\theta)^r)
\big)[\frac{1 \otimes [f_k^{\flat}]}{f_k \otimes 1} ] [\frac{1}{p}]\bigg)}{\ker(1 \otimes \theta)^r}
\]
Here $\{f_k \in \mathcal{O}(V)\}$ are defining functions of $D_k$ given as part of the
definition of $(V,N)$ being affinoid perfectoid. 
The completed tensor product is the $p$-adic completion of the tensor product. Here 
$1 \otimes \theta$ is the tensor product of the map $R^{\circ}_i \rightarrow R^+$ and 
$\theta \colon \mathbb{A}_{\mathrm{inf}}(R,R^+) \rightarrow R^+$,
moreover it sends $\frac{1 \otimes [f_k^{\flat}]}{f_k \otimes 1}$ to $1$.
Note that $R$ contains all roots of $f_k$, therefore we have $f_k^{\flat}=
(f_k, (f_k)^{\frac{1}{p}},\ldots,) \in (R^+)^\flat$, in particular $\theta ([f_k^{\flat}])=f_k$. 

\item The uncompleted logarithmic structure de Rham sheaf is given by
$\mathcal{O}\mathbb{B}_{\log\mathrm{dR}}^{uc} \coloneqq \mathcal{O}\mathbb{B}_{\log\mathrm{dR}}^+[t^{-1}]$ 
where $t$ is a generator of $\mathrm{Fil}^1 \mathbb{B}_{\mathrm{dR}}^+$.
\end{enumerate}
It is clear that we still have the map $\theta:\mathcal{O}\mathbb{B}_{\log\mathrm{dR}}^+ \rightarrow
\hat{\mathcal{O}}^+_X$ which induces its filtration \[
\mathrm{Fil}^i\mathcal{O}\mathbb{B}_{\log\mathrm{dR}}^+=(\ker \theta)^i \mathcal{O}\mathbb{B}_{\log\mathrm{dR}}^+.
\] 
We also have a filtration on $\mathcal{O}\mathbb{B}_{\log\mathrm{dR}}^{uc}$ by
\[
\mathrm{Fil}^i\mathcal{O}\mathbb{B}_{\log\mathrm{dR}}^{uc}=\sum_{j\in \mathbb{Z}} t^{-j}
\mathrm{Fil}^{i+j}\mathcal{O}\mathbb{B}_{\log\mathrm{dR}}^+.
\]
(4) Finally, the logarithmic structure de Rham sheaf is defined to be the completion of uncompleted logarithmic structure de Rham sheaf with respect to the filtration defined above\footnote{We thank Xinwen Zhu for pointing out to us that the original sheaf we defined was not complete, and we need to take completion with respect to this filtration, c.f.~\cite[Remark 3.11]{DLLZ}.}
\[
\mathcal{O}\mathbb{B}_{\log\mathrm{dR}} \coloneqq \widehat{\mathcal{O}\mathbb{B}_{\log\mathrm{dR}}^{uc}}.
\]
Note that $\mathcal{O}\mathbb{B}_{\log\mathrm{dR}}$ is equipped with the filtration coming from that on $\mathcal{O}\mathbb{B}_{\log\mathrm{dR}}^{uc}$, with respect to which it is complete, and that both two sheaves have the same graded pieces.
\end{definition}

\begin{remark}
(1) It is easy to check that the colimit over $i$ of \ref{outcome}
does not depend on the presentation of $N$, and it does define a presheaf.

(2) Later on we will see that for a set of basis $(V,N)$ of $X_{\mathrm{prolog}}$, there is a cofinal system
of $i$'s such that the outcomes \ref{outcome} corresponding to $i$ are the same, see~\Cref{compute OBDR}.

(3) Note that we have a natural $\mathbb{B}_{\mathrm{dR}}^+$-linear connection with log poles:
\[
\mathcal{O}\mathbb{B}_{\log\mathrm{dR}}^+ \xrightarrow{\nabla} \mathcal{O}\mathbb{B}_{\log\mathrm{dR}}^+ 
\otimes_{\mathcal{O}_{X_{\log}}} \Omega^1_{X_{\log}}(\log D)
\]
sending 
\[
\frac{1 \otimes [f_k^{\flat}]}{f_k \otimes 1} \mapsto 
-\frac{1 \otimes [f_k^{\flat}]}{(f_k \otimes 1)^2} \,df_k \\
= -\frac{1 \otimes [f_k^{\flat}]}{f_k \otimes 1} \cdot \,d\log(f_k),
\]
extended from the connection $\mathcal{O}_{X_{\log}} \xrightarrow{\nabla}
\Omega^1_{X_{\log}}(\log D)$. Because $t \in \mathbb{B}_{\mathrm{dR}}^+$,
inverting it, we get a natural $\mathbb{B}_{\mathrm{dR}}$-linear connection with
log poles:
\[
  \mathcal{O}\mathbb{B}_{\log\mathrm{dR}}^{uc} \xrightarrow{\nabla}
  \mathcal{O}\mathbb{B}_{\log\mathrm{dR}}^{uc} \otimes_{\mathcal{O}_{X_{\log}}}
  \Omega^1_{X_{\log}}(\log D).
\]
Take completion with respect to the induced filtration, we get:
\[
  \mathcal{O}\mathbb{B}_{\log\mathrm{dR}} \xrightarrow{\nabla}
  \mathcal{O}\mathbb{B}_{\log\mathrm{dR}} \otimes_{\mathcal{O}_{X_{\log}}}
  \Omega^1_{X_{\log}}(\log D).
\]

(4) The definition of these de Rham period sheaves uses the fact that $X$ is
defined over a $p$-adic field. This is the crucial place where we have to use
this fact.\footnote{ We thank Bhargav Bhatt for reminding us this in a private
  communication.}
\end{remark}

We describe $\mathcal{O}\mathbb{B}_{\log\mathrm{dR}}^+$ in the following
proposition (see also~\cite[Proposition 6.10]{Sch13} and~\cite{Corrigendum}).
Let $U \subset X$ be an open. Let $K$ be a perfectoid field which is the
completion of an algebraic extension of $k$. We get the base change $U_K$ of $U$
to $\mathrm{Sp}(K)$, and again consider $U_K \in X_{\mathrm{prolog}}$ by slight
abuse of notation. Let $\varphi : U \rightarrow
\mathbb{T}^{n-r,r}(\underline{Z})$ (cf.~\Cref{model example}) be an \'{e}tale
morphism such that $f_k \coloneqq \varphi^*(Z_{n-r+k})$ $(k=1,\ldots,r)$ defines
the component $D_k$ of $D \cap U$. Note that such $U$'s form a basis of $X$. Let
$\tilde{U}=U \times_{\mathbb{T}^{n-r,r}} \tilde{\mathbb{T}}^{n-r,r}$. Taking a
further base change to $K$, we get $(U_K,\tilde{U}_K) \in X_{\mathrm{prolog}}$
is perfectoid.

\begin{proposition} 
\label{compute OBDR}
Let notations be as above. Consider the localized site
$X_{\mathrm{prolog}}/(U_K,\tilde{U}_K)$. We have the elements
\[
  u_i=Z_i \otimes 1-1\otimes [Z_i^{\flat}] \in
  \mathcal{O}\mathbb{B}_{\log\mathrm{dR}}^+|_{(U,\tilde{U} ) }
\]
for $i=1,\ldots, n-r$, and
\[
  u_j=1-\frac{1 \otimes [Z_j^{\flat}]}{Z_j \otimes 1}\in
  \mathcal{O}\mathbb{B}_{\log\mathrm{dR}}^+|_{(U,\tilde{U})}
\]
for $j=n-r+1,\ldots, n$. Here we abuse the notations by using $Z_j$ to denote
$\varphi^*(Z_j)=f_j$. We will also use $Z_j$ (resp.~$[Z_j^{\flat}]$) to denote
$Z_j \otimes 1$ (resp.~$[Z_j^{\flat}] \otimes 1$) to simplify our notations.

% Let $(V,N)$ be an affinoid perfectoid object of
% $X_{\mathrm{prolog}}/(U_K,\tilde{U}_K)$.
The map
\[
  \mathbb{B}_{\mathrm{dR}}^+|_{(U_K,\tilde{U}_K)}[\![X_1,\ldots,X_n]\!]
  \rightarrow \mathcal{O}\mathbb{B}_{\log\mathrm{dR}}^+|_{(U_K,\tilde{U}_K)}
\]
sending $X_i$ to $u_i$ is an isomorphism of sheaves over
$X_{\mathrm{prolog}}/(U_K,\tilde{U}_K)$.
\end{proposition}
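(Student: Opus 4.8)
The plan is to follow the strategy of \cite[Proposition 6.10]{Sch13}: reduce to an explicit computation on the perfectoid basis and then identify the target filtered ring through its associated graded. First I would check that the map is well defined. By the recipe for $\theta$ in \Cref{period sheaves}, one has $\theta(Z_i \otimes 1) = Z_i = \theta(1 \otimes [Z_i^\flat])$ for the torus coordinates and $\theta\big(\tfrac{1\otimes[Z_j^\flat]}{Z_j \otimes 1}\big) = 1$ for the disc coordinates, so every $u_i$ lies in $\mathrm{Fil}^1 \mathcal{O}\mathbb{B}^+_{\log\mathrm{dR}} = \ker\theta$. Since $\mathcal{O}\mathbb{B}^+_{\log\mathrm{dR}}$ is $\mathrm{Fil}$-complete and separated (being a $\varprojlim_r$ over $\ker(1\otimes\theta)^r$), a power series in the $u_i$ with $\mathbb{B}^+_{\mathrm{dR}}$-coefficients converges, giving a $\mathbb{B}^+_{\mathrm{dR}}$-linear filtered map, where $\mathbb{B}^+_{\mathrm{dR}}[\![X_1,\dots,X_n]\!]$ is filtered so that each $X_i$ has weight one. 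As the statement is local, by \Cref{perfectoid basis} it suffices to prove it on the affinoid perfectoid basis, where both sides are computed from the presheaf formula \eqref{outcome}; by \Cref{summary remark}(2) this may be evaluated on $(U_K,\tilde U_K)$ with $\hat{\tilde U}_K = \mathrm{Spa}(R,R^+)$.

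Since both sides are complete and separated for their filtrations, it then suffices to show the induced map on associated gradeds is an isomorphism. On the source, with $\xi$ a generator of $\mathrm{Fil}^1\mathbb{B}^+_{\mathrm{dR}}$ (cf.~\Cref{summary remark}) and each $X_i$ in degree one, one has $\mathrm{gr}^\bullet \mathbb{B}^+_{\mathrm{dR}}[\![X_1,\dots,X_n]\!] = R[\bar\xi, \bar X_1, \dots, \bar X_n]$, a polynomial ring over $R = \mathrm{gr}^0$. Thus I must identify $\mathrm{gr}^\bullet \mathcal{O}\mathbb{B}^+_{\log\mathrm{dR}}$ with $R[\bar\xi, \bar u_1, \dots, \bar u_n]$. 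Injectivity of $R[\bar\xi, \bar X_i] \to \mathrm{gr}^\bullet\mathcal{O}\mathbb{B}^+_{\log\mathrm{dR}}$ I would extract from the log connection $\nabla$ of Remark (3) following \Cref{period sheaves}: Griffiths transversality gives $\nabla\colon \mathrm{gr}^m \to \mathrm{gr}^{m-1}\otimes_{\mathcal O}\Omega^1_{X}(\log D)$ with $\nabla\bar\xi = 0$, while $\nabla\bar u_i = dZ_i$ for $i \le n-r$ and $\nabla\bar u_j = d\log Z_j$ for $j > n-r$ (using $\tfrac{1\otimes[Z_j^\flat]}{Z_j\otimes1} \equiv 1$ modulo $\mathrm{Fil}^1$). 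Since $\{dZ_i\}_{i\le n-r}\cup\{d\log Z_j\}_{j>n-r}$ is an $\mathcal O$-basis of $\Omega^1_{X}(\log D)$, the maps $\nabla$ behave like the partials $\partial/\partial \bar u_i$ and separate monomials, forcing the $\bar u_i$ to be algebraically independent over $R[\bar\xi]$.

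For surjectivity on gradeds — equivalently, that $\ker(1\otimes\theta)$ is topologically generated by $\xi$ and the $u_i$ — I would argue that modulo $\ker(1\otimes\theta)$ the ring \eqref{outcome} maps onto $R$, and that it is generated over the image of $\mathbb{B}^+_{\mathrm{dR}}$ by the coordinate functions $Z_i$, the Teichmüller elements $[Z_i^\flat]$, and the adjoined units $w_k = \tfrac{1\otimes[f_k^\flat]}{f_k\otimes1}$, whose deviations from the base are exactly $u_i = Z_i\otimes1 - 1\otimes[Z_i^\flat]$ and $u_j = 1 - w_j$. This is cleanest on the model $\mathbb{T}^{n-r,r}$ of \Cref{model example}, where $R_i^\circ$ is an explicit Tate--Laurent algebra in roots of the $Z$'s; the general case then follows because $\varphi\colon U \to \mathbb{T}^{n-r,r}$ is étale and induces an isomorphism on $\Omega^1(\log D)$, so no new degree-one graded classes appear.

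I expect this last surjectivity to be the main obstacle: one must control the effect of the localization $[\tfrac{1\otimes[f_k^\flat]}{f_k\otimes1}]$ in the disc directions. Here $w_j$ is not a unit before completion but becomes the unit $1-u_j$ afterward via the convergent series $\sum_m u_j^m$, and one must verify that introducing $u_j = 1 - w_j$ in place of the naive difference $Z_j\otimes1 - 1\otimes[Z_j^\flat]$ neither creates hidden relations nor enlarges $\mathrm{gr}^1$. Granting this, the map is an isomorphism on associated gradeds, hence an isomorphism of complete filtered sheaves, which completes the proof.
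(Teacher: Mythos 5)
There is a genuine gap, and it sits exactly at the point you flag as the ``main obstacle.'' Your reduction to associated gradeds presupposes that $\mathcal{O}\mathbb{B}^+_{\log\mathrm{dR}}$ is complete and separated for its filtration $\mathrm{Fil}^i=(\ker\theta)^i$, which you justify by saying it is ``a $\varprojlim_r$ over $\ker(1\otimes\theta)^r$.'' But by \Cref{period sheaves}(2) the presheaf is the \emph{colimit over $i$} of such inverse limits, then sheafified; a filtered colimit of complete rings need not be complete or separated, and nothing computed so far identifies sections of the sheafified object over $(U_K,\tilde U_K)$ --- \Cref{summary remark}(2)--(3) covers $\mathbb{A}_{\mathrm{inf}},\mathbb{B}_{\mathrm{inf}},\mathbb{B}^+_{\mathrm{dR}},\mathbb{B}_{\mathrm{dR}}$ but not the $\mathcal{O}\mathbb{B}$-sheaves, so your step ``this may be evaluated on $(U_K,\tilde U_K)$'' is unjustified. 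This is precisely the pitfall that invalidated the original proof of \cite[Proposition 6.10]{Sch13} and forced the correction in \cite{Corrigendum}; the completeness of the filtration (equivalently, that the finite-stage outcomes \ref{outcome} stabilize along a cofinal system of $i$'s) is a \emph{consequence} of the proposition, not an available input. Accordingly, the paper's proof works at each finite stage: it shows that for every $N_i$ there is a higher $N_{i'}$ in the tower such that $\mathbb{A}_{\mathrm{inf}}(R,R^+)[\![X_1,\ldots,X_n]\!][1/p]/(\xi,X_i)^r \to S_{i',r}$ is an isomorphism for all $r$, and only then takes $\varprojlim_r$ and $\varinjlim_i$.

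The second, related gap is that your surjectivity on gradeds --- that $\ker(1\otimes\theta)$ is (topologically) generated by $\xi$ and the $u_i$ --- is asserted rather than proven, and the hand-off ``the general case follows because $\varphi$ is \'etale, so no new degree-one classes appear'' does not go through as stated. The hard content in the paper's Step 1 and Step 3 is exactly here: one must first pass up the tower via \Cref{Abhyankar's Lemma} to an $N_{i'}$ that is genuinely \'etale over $\mathbb{T}^{n-r,r}[\sqrt[m]{Z_l}]$ (your sketch never passes to $m$-th roots, without which the disc directions are only log-\'etale and the \'etale lifting fails), then construct, via the excellent-valuation-ring model lemma and Hensel's lemma, a lift $R_{i'}\to\mathbb{B}'_r$ whose image lies in an \emph{open bounded} subring; the boundedness is what allows the completed base change of the injection with $p$-power-torsion cokernel
\[
\frac{R_{i'}^{\circ}[X_1,\ldots,X_{n-r},\tilde{X}_{n-r+1},\ldots,\tilde{X}_n]}{(X_1,\ldots,\tilde{X}_n)^r} \longrightarrow (R_{i'}^{\circ}\hat{\otimes}_{W(\kappa)}R_{i'}^{\circ})/(\ker\theta_{i'})^r
\]
to survive inverting $p$ and yield surjectivity of $\alpha_r$, following \cite{Corrigendum}. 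Note also that the same Hensel-lifted map gives the paper its injectivity for free, as a retraction $\mathbb{B}_r \to S_{i',r} \to \mathbb{B}_r$ equal to the identity; your alternative injectivity argument via $\nabla$ and Griffiths transversality is plausible in spirit but leans on knowing $\mathrm{gr}^0=R$ and the module structure of the graded pieces, which again are downstream of the proposition (cf.~\Cref{analogue Corollary 6.15}). In short: the well-definedness and localization steps of your proposal are fine, but the filtered-completeness reduction is circular and the graded surjectivity --- the actual theorem --- is missing the Abhyankar passage, the bounded Hensel lift, and the $p$-power cokernel estimate that constitute the paper's proof.
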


\begin{proof}
\textbf{Step 0}: definition of the map. 

Let $(V,N)$ be an affinoid perfectoid over $(U_K,\tilde{U}_K)$ where $N =
\varprojlim N_i$ with $N_i = \mathrm{Spa}(R_i,R_i^{\circ})$ and $\hat{N} =
\mathrm{Spa}(R,R^+)$. For each $r$ and $i$, we use the fact that
\[
\frac{\mathbb{B}_{\mathrm{dR}}^+(R,R^+)[\![X_1,\ldots,X_n]\!]}{(\xi,X_i)^r} \cong
\frac{\mathbb{A}_{inf}(R,R^+)[\![X_1,\ldots,X_n]\!][1/p]}{(\xi,X_i)^r}
\]
to define the morphism
\[
  \frac{\mathbb{A}_{inf}(R,R^+)[\![X_1,\ldots,X_n]\!][1/p]}{(\xi,X_i)^r} \to
  \frac{\bigg(\big(R^{\circ}_i \hat{\otimes}_{W(\kappa)}
    (\mathbb{A}_{\mathrm{inf}}(R,R^+)/\ker(\theta)^r) \big)[\frac{1 \otimes
      [f_k^{\flat}]}{f_k \otimes 1} ] [\frac{1}{p}]\bigg)}{\ker(1 \otimes
    \theta)^r} \eqqcolon S_{i,r},
\]
by sending any element $a \in \mathbb{A}_{inf}(R,R^+)$ to $1 \otimes a$ and
$X_i$ to $u_i$ as described in the statement of this proposition. Here we used
the fact that the ideal $(\xi,X_i)$ is sent inside $\ker(1 \otimes \theta)$.
Taking inverse limit over $r$ and then colimit over $i$ gives the morphism in
the statement of this proposition.

We want to show that for any $N_i$ there exists a higher $N_{i'} \to N_i$ such
that the morphism
\[
\label{key morphism}
\tag{\epsdice{2}}
\frac{\mathbb{A}_{inf}(R,R^+)[\![X_1,\ldots,X_n]\!][1/p]}{(\xi,X_i)^r} \to
S_{i',r}
\]
is an isomorphism for all $r$. This shows in particular that in \Cref{period
  sheaves}(2), there is a cofinal system of $i$'s for which the outcomes
(\ref{outcome}) are the same.

\textbf{Step 1}: construct a section.

Let $i$ be large enough, so that we get a log \'{e}tale morphism $(V,N_i) \to
\mathbb{T}^{n-r,r}$ where $N_i = \mathrm{Spa}(R_i,R_i^{\circ})$. By
\Cref{Abhyankar's Lemma}, we see that there is an $m \in \mathbb{N}$ such that
$(N_i \times_{\mathbb{T}^{n-r,r}} \mathbb{T}^{n-r,r}[\sqrt[m]{Z_l}])^{\nu}
\eqqcolon \mathrm{Spa}(R_{i'},R_{i'}^{\circ}) \to
\mathbb{T}^{n-r,r}[\sqrt[m]{Z_l}]$ is \'{e}tale. We will take
$\mathrm{Spa}(R_{i'},R_{i'}^{\circ})$ to be the $N_{i'}$ we want.

To simplify the notations further, let us denote $\mathbb{B}_r \coloneqq
\frac{\mathbb{B}_{\mathrm{dR}}^+(R,R^+)[\![X_1,\ldots,X_n]\!]}{(\xi,X_i)^r}$.
For technical reason we also want to consider, for each $r$, the
$\mathbb{B}_{\mathrm{dR}}^+(R,R^+)$-algebra $\mathbb{B}'_r \coloneqq
\frac{\mathbb{B}_{\mathrm{dR}}^+(R,R^+)[X_1,\ldots,X_{n-r},\tilde{X}_{n-r+1},\ldots,\tilde{X}_n]}
{(\xi,X_1,\ldots,X_{n-r},\tilde{X}_{n-r+1},\ldots,\tilde{X}_n)^r}$. There is a
natural morphism $\mathbb{B}'_r \xrightarrow{\beta_r} \mathbb{B}_r$ where
$\beta_r(\tilde{X}_l) = \frac{[(Z_l^{1/m})^{\flat}]}{(1-X_l)^{1/m}} -
[(Z_l^{1/m})^{\flat}]$. Note that $\frac{1}{(1-X_l)^{1/m}}$ can be written as a
power series in $\mathbb{Q}[\![X_l]\!]$, hence our expression makes sense. We
still denote the composition $\theta \circ \beta_r$ by $\theta$.

In the following, we will show that there is a natural morphism $R_{i'} \to
\mathbb{B}'_r$, whose image is contained in a open and bounded (w.r.t.~the
$p$-adic topology induced from $\mathbb{B}_r$) subring inside $\mathbb{B}'_r$,
which is compatible with $\theta$ map for all $r$.

First note that for all $r$, there is a map
\[
W(\kappa)[p^{-1}][Z_1^{\pm 1},\ldots,Z_{n-r}^{\pm 1},Z_{n-r+1}^{1/m},\ldots,Z_n^{1/m}] 
\to \mathbb{B}'_r
\]
by sending $Z_j \mapsto X_j+[Z_j^\flat]$ for $j \leq n-r$ and
$Z_l^{1/m} \mapsto \tilde{X}_l$ for all $l > n-r$.

Now we need the following lemma.

\begin{lemma}
  Let $\mathcal{O}$ be an excellent complete rank $1$ valuation ring with a
  pseudo-uniformizer $\varpi$, and let $F$ be its fraction field which is viewed
  as a non-archimedean field. Let $A_0^+$ be a finitely presented flat
  $\mathcal{O}$-algebra. Let $A = \widehat{A_0^+}[1/p]$, where the completion is
  with respect to $\varpi A_0^+$, which is an affinoid $F$-algebra. Let $U =
  \mathrm{Sp}(B)$ be an affinoid rigid space admitting an \'{e}tale map $U \to
  \mathrm{Sp}(A)$. Then there exists a finitely presented $\mathcal{O}$-flat
  $A_0^+$-algebra $B_0^+$, such that $B_0 = B_0^+[1/p]$ is \'{e}tale over
  $A_0^+[1/p]$ and $B^{\circ}$ is the $\varpi$-adic completion of $B_0^+$.
\end{lemma}

%\begin{lemma}
%  Let $U=\mathrm{Sp}(R)$ be a rigid space over $\mathrm{Sp}(W(\kappa)[p^{-1}])$
%  with an \'{e}tale map $U \rightarrow \mathbb{T}^{n-r,r}[\sqrt[m]{Z_l}]$. Then
%  there exists a finitely generated $W(\kappa)[Z_1^{\pm 1},\ldots,Z_{n-r}^{\pm
%  1},Z_{n-r+1}^{1/m},\ldots,Z_n^{1/m}]$-algebra $R_0^+$, such that
%  $R_0=R_0^+[\frac{1}{p}]$ is \'{e}tale over
%\[
%  W(\kappa)[\frac{1}{p}][Z_1^{\pm 1},\ldots,Z_{n-r}^{\pm
%  1},Z_{n-r+1}^{1/m},\ldots,Z_n^{1/m}]
%\]
%and $R^{\circ}$ is the $p$-adic completion of $R_0^+$.
%\end{lemma}

\begin{proof}
This is a slight generalization of~\cite[Lemma 6.12]{Sch13} and it
follows from the same proof as~\cite[Proof of Lemma 6.12]{Sch13}.
\end{proof}

Apply the above lemma to $\mathcal{O} = W(\kappa)$, $A_0^+ = W(\kappa)[Z_1^{\pm
  1},\ldots,Z_{n-r}^{\pm 1},Z_{n-r+1}^{1/m},\ldots,Z_n^{1/m}]$ and $B = R_{i'}$
gives a finitely generated $W(\kappa)[Z_1^{\pm 1},\ldots,Z_{n-r}^{\pm
  1},Z_{n-r+1}^{1/m},\ldots,Z_n^{1/m}]$-algebra $R_{i'0}^{\circ}$ whose generic
fibre $R_{i'0}$ is \'{e}tale over
\[
  W(\kappa)[p^{-1}][Z_1^{\pm 1},\ldots,Z_{n-r}^{\pm
    1},Z_{n-r+1}^{1/m},\ldots,Z_n^{1/m}].
\]
By Hensel's Lemma, we get a unique lift $R_{i'0} \to \mathbb{B}'_r$. In
particular we get a lift of $R_{i'0}^{\circ}$. This extends to the $p$-adic
completion with image lands in an open bounded subring (see~\cite[Lemma 6.11
and its proof]{Sch13}). Hence we get a lift of $R_{i'} \to \mathbb{B}'_r$ with
image lands in an open bounded subring.

\textbf{Step 2}: injectivity of~\ref{key morphism}.

After composing with $\beta_r$,
we get a map (recall that $\frac{[Z_l^\flat]}{Z_l} = 1 - X_l$)
\[
S_{i',r} \to \mathbb{B}_r
\]
for which the composition
\[
\mathbb{B}_r \to S_{i',r}
\to \mathbb{B}_r
\]
is the identity. Therefore we see that~\ref{key morphism} is injective.

\textbf{Step 3}: surjectivity of~\ref{key morphism}.

Now we only need to show that
\[
\mathbb{B}_r \to S_{i',r}
\]
is surjective. Let us consider the following commutative diagrams
\[
  \xymatrix{ \mathbb{B}'_r \ar[d]^{\beta_r} \ar[r]^(0.20){\alpha_r} &
    \frac{(R_{i'}^{\circ} \hat{\otimes}_{W(\kappa)}
      (\mathbb{A}_{\mathrm{inf}}(R,R^+)/\ker(\theta)^r))[\frac{1}{p}]}{\ker (1
      \otimes \theta)^r}
    \ar[d]^{\epsilon_r} \\
    \mathbb{B}_r
    \ar[r]^(0.35){\gamma_r} & S_{i',r} \\
  }
\]
and
\[
  \xymatrix{ \frac{(R_{i'}^{\circ} \hat{\otimes}_{W(\kappa)}
      (\mathbb{A}_{\mathrm{inf}}(R,R^+)/\ker(\theta)^r))[\frac{1}{p}]}{\ker (1
      \otimes \theta)^r}
    \ar[r]^(0.70){\epsilon_r} \ar[d] & S_{i',r} \\
    \frac{(R_{i'}^{\circ} \hat{\otimes}_{W(\kappa)}
      (\mathbb{A}_{\mathrm{inf}}(R,R^+)/\ker(\theta)^r))[\frac{1}{p}]}{\ker (1
      \otimes \theta)^r} [Y_{n-r+1},\ldots,Y_n] \ar@{->>}[ur]^{\delta_r} }
\]
where $\alpha_r(\tilde{X}_l) = Z_l^{1/m} \otimes 1 - 1 \otimes
[(Z_l^{1/m})^{\flat}]$, $\delta_r(Y_l) = \frac{[Z_l^\flat]}{Z_l}$ for all $l >
n-r$ and $\epsilon_r$ is the natural morphism. Note that $\delta_r$ is a
surjection. Also the formula $\gamma_r(1-X_l) = \frac{[Z_l^\flat]}{Z_l}$ tells
us that $\frac{[Z_l^\flat]}{Z_l}$ is in the image of $\gamma_r$. Therefore to
show $\gamma_r$ is surjective, it suffices to show that $\alpha_r$ is
surjective. This just follows from the argument in~\cite{Corrigendum} and is
written down below for the sake of completeness of our argument.

First the map
\[
\label{map}
\tag{\epsdice{3}} \xymatrix{
  \frac{R_{i'}^{\circ}[X_1,\ldots,X_{n-r},\tilde{X}_{n-r+1},\ldots,\tilde{X}_n]}
  {(X_1,\ldots,X_{n-r},\tilde{X}_{n-r+1},\ldots,\tilde{X}_n)^r} \ar[r] &
  (R_{i'}^{\circ} \hat{\otimes}_{W(\kappa)} R_{i'}^{\circ})/(\ker \theta_{i'})^r \\
}
\]
is injective, with cokernel killed by a power of p, where $\theta_{i'} \colon
R_{i'}^{\circ} \hat{\otimes}_{W(\kappa)} R_{i'}^{\circ} \to R_{i'}^{\circ}$ is
the multiplication map. Here we used the fact that $\mathrm{Sp}(R_{i'}) \to
\mathbb{T}^{n-r,r}[\sqrt[m]{Z_l}]$ is \'{e}tale.

Recall that we have constructed, in step 1, a map $R_{i'}^{\circ} \to
\mathbb{B}'_r$ taking values in some open and bounded subring. Composing with
the projection onto $\mathbb{B}_{\mathrm{dR}}^+/\ker(\theta)^r$, we see that
there is a map $R_{i'} \to \mathbb{B}_{\mathrm{dR}}^+/\ker(\theta)^r$ compatible
with $\theta$ taking values in some open and bounded subring $\mathbb{B}_{r,0}
\subset \mathbb{B}_{\mathrm{dR}}^+/\ker(\theta)^r$ (notice the typo
in~\cite{Corrigendum} here). Now we apply $\hat{\otimes}_{R_{i'}^{\circ}}
\mathbb{B}_{r,0}$ to the map (\ref{map}). We get
\[
\mathbb{B}_{r,0}[X_1,\ldots,X_{n-r},\tilde{X}_{n-r+1},\ldots,\tilde{X}_n]/
(X_1,\ldots,X_{n-r},\tilde{X}_{n-r+1},\ldots,\tilde{X}_n)^r \to
\]
\[
(R_{i'}^{\circ} \hat{\otimes}_{W(\kappa)} \mathbb{B}_{r,0})/((\ker \theta_{i'})^r
\hat{\otimes}_{R_{i'}^{\circ}} \mathbb{B}_{r,0})
\]
is an isomorphism up to a bounded power of $p$. Finally we invert $p$ and use
\[
  ((\ker \theta_{i'})^r \hat{\otimes}_{R_{i'}^{\circ}} \mathbb{B}_{r,0}) \subset
  (\ker \theta)^r
\]
to conclude that $\alpha_r$ is a surjection.
\end{proof}

\begin{corollary}[logarithmic Poincar\'{e} Lemma]
\label{corologPL}
Let $X$ be a smooth rigid space of dimension $n$ over $\mathrm{Sp}(k)$ with SSNC
divisor $D$. The following sequence of sheaves on $X_{\mathrm{prolog}}$ is
exact.
\[
  0 \rightarrow \mathbb{B}_{\mathrm{dR}}^+ \rightarrow
  \mathcal{O}\mathbb{B}_{\log\mathrm{dR}}^+ \xrightarrow{\nabla}
  \mathcal{O}\mathbb{B}_{\log\mathrm{dR}}^+ \otimes_{\mathcal{O}_X}
  \Omega_X^1(\log D) \xrightarrow{\nabla} \ldots \xrightarrow{\nabla}
  \mathcal{O}\mathbb{B}_{\log\mathrm{dR}}^+\otimes_{\mathcal{O}_X}
  \Omega_X^n(\log D) \rightarrow 0.
\]
Moreover, the derivation $\nabla$ satisfies Griffiths transversality with
respect to the filtration on $\mathcal{O}\mathbb{B}_{\log\mathrm{dR}}^+$, and
with respect to the grading giving $\Omega^i_X(\log D)$ degree $i$, the sequence
is strict exact.
\end{corollary}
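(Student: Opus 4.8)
The plan is to reduce everything to the explicit local description furnished by \Cref{compute OBDR} and then to recognize the resulting complex as the de Rham complex of a formal power series ring, for which the Poincar\'e lemma is elementary. Since exactness of a complex of sheaves may be checked on a generating family, I would work on the localized sites $X_{\mathrm{prolog}}/(U_K,\tilde{U}_K)$ as $(U_K,\tilde{U}_K)$ ranges over the affinoid perfectoid basis of \Cref{compute OBDR}. There that proposition provides an isomorphism $\mathcal{O}\mathbb{B}_{\log\mathrm{dR}}^+ \cong \mathbb{B}_{\mathrm{dR}}^+[\![X_1,\ldots,X_n]\!]$ carrying each $X_i$ to the element $u_i$.

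The key step is to express $\nabla$ in these coordinates. Using that $\nabla$ is $\mathbb{B}_{\mathrm{dR}}^+$-linear (hence kills $1 \otimes [Z_i^{\flat}]$) together with the explicit connection formula for $\mathcal{O}\mathbb{B}_{\log\mathrm{dR}}^+$, one computes $\nabla(u_i) = (Z_i \otimes 1)\, d\log Z_i$ for $i \leq n-r$ and $\nabla(u_j) = (1-u_j)\, d\log Z_j$ for $j > n-r$. In both cases the coefficient is a unit of $\mathcal{O}\mathbb{B}_{\log\mathrm{dR}}^+$: for $i \leq n-r$ because $Z_i = u_i + [Z_i^{\flat}]$ with $[Z_i^{\flat}]$ invertible and $u_i$ in the augmentation ideal, and for $j > n-r$ because $u_j$ lies in the augmentation ideal so that $1-u_j$ is invertible. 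Consequently $\{\nabla u_1,\ldots,\nabla u_n\}$ is an $\mathcal{O}\mathbb{B}_{\log\mathrm{dR}}^+$-basis of $\mathcal{O}\mathbb{B}_{\log\mathrm{dR}}^+ \otimes_{\mathcal{O}_X}\Omega^1_X(\log D)$, obtained from $\{d\log Z_i\}$ by an invertible diagonal change of basis, and Griffiths transversality is immediate from these formulas. Since $\nabla$ is a continuous $\mathbb{B}_{\mathrm{dR}}^+$-linear derivation, the Leibniz rule forces $\nabla(F) = \sum_i (\partial F/\partial X_i)\, \nabla u_i$ on power series, so after the change of basis the whole complex becomes the de Rham (Koszul) complex of $\mathbb{B}_{\mathrm{dR}}^+[\![X_1,\ldots,X_n]\!]$ over $\mathbb{B}_{\mathrm{dR}}^+$, with differential $\sum_i (\partial/\partial X_i) \otimes \nabla u_i$.

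Exactness then follows from the formal Poincar\'e lemma: over the $\mathbb{Q}$-algebra $\mathbb{B}_{\mathrm{dR}}^+$ the de Rham complex of a formal power series ring is a resolution of the base ring, the horizontal sections being exactly the constants $\mathbb{B}_{\mathrm{dR}}^+$ and the higher cohomology vanishing by termwise integration of power series (dividing by positive integers, legitimate in characteristic $0$). This yields the asserted exact sequence with kernel $\mathbb{B}_{\mathrm{dR}}^+$.

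For the filtered refinement I would transport the filtration through the isomorphism of \Cref{compute OBDR}. Since $\theta(u_i)=0$ and $\mathrm{Fil}^1\mathbb{B}_{\mathrm{dR}}^+$ is generated by $\xi$, the ideal $\ker\theta$ corresponds to $(\xi,X_1,\ldots,X_n)$, whence $\mathrm{Fil}^i\mathcal{O}\mathbb{B}_{\log\mathrm{dR}}^+$ corresponds to $(\xi,X_1,\ldots,X_n)^i$. Assigning $\Omega^i_X(\log D)$ degree $i$, the associated graded of the complex becomes the de Rham complex of the polynomial ring $\mathrm{gr}^{\bullet}\mathbb{B}_{\mathrm{dR}}^+[X_1,\ldots,X_n]$ over $\mathrm{gr}^{\bullet}\mathbb{B}_{\mathrm{dR}}^+ = R[\xi]$ (with $R = \hat{\mathcal{O}}_{X_{\log}}$, as in \Cref{summary remark}), and its exactness in positive degrees is the graded Poincar\'e lemma; this is precisely strict exactness. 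The main obstacle I anticipate is the connection computation of the second paragraph: pinning down the exact unit coefficients of $\nabla u_i$ and, above all, justifying that the continuous $\mathbb{B}_{\mathrm{dR}}^+$-linear $\nabla$ really acts as formal differentiation on all of the power series ring. The two Poincar\'e-lemma inputs, by contrast, are formal once the complex has been identified.
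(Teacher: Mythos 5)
Your proposal is correct and takes essentially the same route as the paper: the paper's own proof consists precisely of invoking \Cref{compute OBDR} together with the key computation $\nabla(X_l) = (1-X_l)\,d\log (Z_l)$ for the logarithmic coordinates, leaving implicit the identification of the complex with the de Rham complex of $\mathbb{B}_{\mathrm{dR}}^+[\![X_1,\ldots,X_n]\!]$ and the formal Poincar\'e lemma in characteristic $0$. Your writeup simply makes explicit the steps the paper compresses (unit coefficients of $\nabla u_i$, the Leibniz/continuity argument identifying $\nabla$ with formal differentiation, and the passage to graded pieces for Griffiths transversality and strict exactness).
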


\begin{proof}
This follows from \Cref{compute OBDR} and the equation
\[
  \,d (X_l) = \,d (1-\frac{[Z_l^\flat]}{Z_l}) = \frac{[Z_l^\flat]}{Z_l^2} \,d
  Z_l = \frac{[Z_l^\flat]}{Z_l} \frac{\,d Z_l}{Z_l} = (1-X_l) \cdot \,d
  \log(Z_l).
\]
\end{proof}

\begin{remark}
  From the above Corollary, especially the strict exactness, we get the following exact
  sequence
\[
\label{Poincare}
0 \rightarrow \mathbb{B}_{\mathrm{dR}} \rightarrow
\mathcal{O}\mathbb{B}_{\log\mathrm{dR}} \xrightarrow{\nabla}
\mathcal{O}\mathbb{B}_{\log\mathrm{dR}} \otimes_{\mathcal{O}_X} \Omega_X^1(\log
D) \xrightarrow{\nabla} \ldots \xrightarrow{\nabla}
\mathcal{O}\mathbb{B}_{\log\mathrm{dR}} \otimes_{\mathcal{O}_X} \Omega_X^n(\log
D) \rightarrow 0
\]
which share the same properties as the sequence above.
\end{remark}

In particular, we get the following short exact sequence, which is due to
Faltings in the case of algebraic varieties, see~\cite[Theorem 4.3]{Fal}.

\begin{corollary}[Faltings's extension]
\label{Faltings's extension}
Let $X$ be a smooth rigid space over $\mathrm{Sp}(k)$ with SSNC divisor $D$.
Then we have a short exact sequence of sheaves over $X_{\mathrm{prolog}}$,
\[
  0 \rightarrow \hat{\mathcal{O}}_{X_{\log}}(1) \rightarrow \mathrm{gr}^1
  \mathcal{O} \mathbb{B}_{\log\mathrm{dR}}^+ \rightarrow
  \hat{\mathcal{O}}_{X_{\log}} \otimes_{\mathcal{O}_{X_{\log}}} \Omega_X^1(\log
  D) \rightarrow 0
\]
\end{corollary}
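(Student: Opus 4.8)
The plan is to reduce the whole statement to the explicit local description of $\mathcal{O}\mathbb{B}_{\log\mathrm{dR}}^+$ furnished by \Cref{compute OBDR}, combined with the connection and its Griffiths transversality from \Cref{corologPL}. Since exactness of a complex of sheaves may be checked on a basis of the topology, I would work over the localized sites $X_{\mathrm{prolog}}/(U_K,\tilde{U}_K)$ with $U$ ranging over the basis used in \Cref{compute OBDR}. There we have the isomorphism $\mathbb{B}_{\mathrm{dR}}^+|_{(U_K,\tilde{U}_K)}[\![X_1,\ldots,X_n]\!] \xrightarrow{\sim} \mathcal{O}\mathbb{B}_{\log\mathrm{dR}}^+|_{(U_K,\tilde{U}_K)}$ sending $X_i$ to $u_i$. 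As $\theta(u_i)=0$ for all $i$ and $\theta(\xi)=0$ for a generator $\xi$ of $\mathrm{Fil}^1\mathbb{B}_{\mathrm{dR}}^+$, the kernel of $\theta$ is the ideal $(\xi,X_1,\ldots,X_n)$; hence $\mathrm{gr}^0\mathcal{O}\mathbb{B}_{\log\mathrm{dR}}^+=\hat{\mathcal{O}}_{X_{\log}}$ and $\mathrm{gr}^1\mathcal{O}\mathbb{B}_{\log\mathrm{dR}}^+=(\ker\theta)/(\ker\theta)^2$ is free over $\hat{\mathcal{O}}_{X_{\log}}$ on the classes $\bar{\xi},\bar{u}_1,\ldots,\bar{u}_n$.

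Next I would pin down the two maps of the claimed sequence. The first is the composite $\hat{\mathcal{O}}_{X_{\log}}(1)=\mathrm{gr}^1\mathbb{B}_{\mathrm{dR}}^+\hookrightarrow\mathrm{gr}^1\mathcal{O}\mathbb{B}_{\log\mathrm{dR}}^+$ induced by $\mathbb{B}_{\mathrm{dR}}^+\hookrightarrow\mathcal{O}\mathbb{B}_{\log\mathrm{dR}}^+$, where the Tate twist identification $\mathrm{gr}^1\mathbb{B}_{\mathrm{dR}}^+\cong\hat{\mathcal{O}}_{X_{\log}}(1)$ is the one recorded in \Cref{summary remark}; locally this is the inclusion of the $\bar{\xi}$-summand. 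The second map is the one induced by $\nabla$: by the Griffiths transversality in \Cref{corologPL}, $\nabla(\mathrm{Fil}^1)\subseteq\mathrm{Fil}^0\otimes\Omega^1_X(\log D)$, so $\nabla$ descends to $\mathrm{gr}^1\mathcal{O}\mathbb{B}_{\log\mathrm{dR}}^+\to\mathrm{gr}^0\mathcal{O}\mathbb{B}_{\log\mathrm{dR}}^+\otimes\Omega^1_X(\log D)=\hat{\mathcal{O}}_{X_{\log}}\otimes_{\mathcal{O}_{X_{\log}}}\Omega^1_X(\log D)$.

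It then remains to compute $\nabla$ on the generators and read off exactness. Since $\xi\in\mathbb{B}_{\mathrm{dR}}^+$ is horizontal, $\nabla(\bar{\xi})=0$, so the first map factors through the kernel of the second. From the formulas in the proof of \Cref{corologPL} one has $\nabla(u_j)=(1-u_j)\,d\log Z_j$ for the disc coordinates $j>n-r$ and $\nabla(u_i)=dZ_i=Z_i\,d\log Z_i$ for the torus coordinates $i\leq n-r$; applying $\theta$ to the coefficients gives $\bar{u}_j\mapsto d\log Z_j$ and $\bar{u}_i\mapsto Z_i\,d\log Z_i$. Because $\{d\log Z_1,\ldots,d\log Z_n\}$ is an $\mathcal{O}_{X_{\log}}$-basis of $\Omega^1_X(\log D)$ and each $Z_i$ with $i\leq n-r$ is a unit, the images $\nabla(\bar{u}_1),\ldots,\nabla(\bar{u}_n)$ form a basis of $\hat{\mathcal{O}}_{X_{\log}}\otimes\Omega^1_X(\log D)$. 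Thus the second map is surjective with kernel exactly the rank-one free summand $\hat{\mathcal{O}}_{X_{\log}}\cdot\bar{\xi}$, which is the image of the first map, giving the short exact sequence over each basis object. As both maps are defined globally and exactness is local, the sequence is exact on all of $X_{\mathrm{prolog}}$.

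The step I expect to be the crux is the filtration bookkeeping in the first paragraph: confirming that $\ker\theta$ is generated precisely by $\xi$ and the $X_i$ so that $\mathrm{gr}^1$ is free of rank $n+1$, and that the Griffiths-transversal reduction of $\nabla$ indeed outputs the $d\log Z_l$ up to units. Once these are in hand, the remaining argument is the elementary linear algebra of a split surjection between finite free $\hat{\mathcal{O}}_{X_{\log}}$-modules, together with the standard fact that exactness of sheaves is checked on a basis.
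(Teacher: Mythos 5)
Your proposal is correct and is in substance the paper's own argument: the paper deduces this corollary by taking the degree-one graded piece of the strict exact logarithmic Poincar\'{e} sequence of \Cref{corologPL} (noting $\mathrm{gr}^{-1}\mathcal{O}\mathbb{B}^+_{\log\mathrm{dR}}=0$, so the graded complex truncates to the stated three terms), and that strictness is itself established via the explicit local model of \Cref{compute OBDR}. Your direct verification on the basis objects $(U_K,\tilde{U}_K)$ --- that $\ker\theta=(\xi,X_1,\ldots,X_n)$, that $\nabla\bar{\xi}=0$, and that the $\nabla\bar{u}_l$ give the $d\log Z_l$ up to units in $\hat{\mathcal{O}}_{X_{\log}}$ --- simply unpackages the same computation rather than citing the strictness statement.
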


\begin{corollary}
\label{analogue Corollary 6.15}
Let $X \to \mathbb{T}^{n-r,r}, \tilde{X}, K$ and $X_i$ be as above. For any $i \in \mathbb{Z}$,
we have an isomorphism of sheaves over $X_{\mathrm{prolog}}/(X_K, \tilde{X}_K)$,
\[
\mathrm{gr}^i \mathcal{O}\mathbb{B}_{\log\mathrm{dR}} \cong \xi^i\hat{\mathcal{O}}_{X_{\log}}[X_1/\xi,\ldots,X_n/\xi].
\]
In particular,
\[
\mathrm{gr}^{\bullet}\mathcal{O}\mathbb{B}_{\log\mathrm{dR}} \cong 
\hat{\mathcal{O}}_{X_{\log}}[\xi^{\pm 1},X_1,\ldots,X_n],
\]
where $\xi$ and $X_i$ have degree $1$.
\end{corollary}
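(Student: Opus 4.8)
The plan is to reduce the statement to the explicit description of $\mathcal{O}\mathbb{B}_{\log\mathrm{dR}}^+$ given in \Cref{compute OBDR} and then pass to associated gradeds. First I would work on the localized site $X_{\mathrm{prolog}}/(X_K,\tilde{X}_K)$ and invoke the isomorphism of \Cref{compute OBDR}, which identifies $\mathcal{O}\mathbb{B}_{\log\mathrm{dR}}^+$ with $\mathbb{B}_{\mathrm{dR}}^+[\![X_1,\ldots,X_n]\!]$ via $X_i \mapsto u_i$. Under this identification the map $\theta \colon \mathcal{O}\mathbb{B}_{\log\mathrm{dR}}^+ \to \hat{\mathcal{O}}_X^+$ restricts to the usual $\theta$ on the coefficient sheaf $\mathbb{B}_{\mathrm{dR}}^+$ and sends each $X_i = u_i$ to $0$ (as observed in the proof of \Cref{compute OBDR}, the ideal $(\xi,X_i)$ lands inside $\ker(1\otimes\theta)$, and one checks directly $\theta(u_i)=0$). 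Consequently $\mathrm{Fil}^1\mathcal{O}\mathbb{B}_{\log\mathrm{dR}}^+ = \ker\theta$ is the ideal $(\xi,X_1,\ldots,X_n)$, where $\xi$ is the non-zero-divisor generator of $\mathrm{Fil}^1\mathbb{B}_{\mathrm{dR}}^+$ from \Cref{summary remark}, and by the very definition of the filtration $\mathrm{Fil}^i\mathcal{O}\mathbb{B}_{\log\mathrm{dR}}^+ = (\xi,X_1,\ldots,X_n)^i$.

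Next I would compute the associated graded of this $(\xi,X_1,\ldots,X_n)$-adic filtration on $\mathbb{B}_{\mathrm{dR}}^+[\![X_1,\ldots,X_n]\!]$. The key inputs are that $\mathrm{gr}^{\bullet}\mathbb{B}_{\mathrm{dR}}^+ = \hat{\mathcal{O}}_{X_{\log}}[\xi]$ with $\xi$ in degree $1$ (the non-negative part of the description $\mathrm{gr}^{\bullet}\mathbb{B}_{\mathrm{dR}} = \hat{\mathcal{O}}_{X_{\log}}[\xi^{\pm 1}]$ in \Cref{summary remark}), and that the $X_i$ are formal variables which one places in degree $1$. Adjoining them, the associated graded ring becomes the polynomial ring $\hat{\mathcal{O}}_{X_{\log}}[\xi,X_1,\ldots,X_n]$, so that $\mathrm{gr}^{\bullet}\mathcal{O}\mathbb{B}_{\log\mathrm{dR}}^+ \cong \hat{\mathcal{O}}_{X_{\log}}[\xi,X_1,\ldots,X_n]$ with $\xi,X_1,\ldots,X_n$ all of degree $1$. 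Finally, since $\mathcal{O}\mathbb{B}_{\log\mathrm{dR}} = \mathcal{O}\mathbb{B}_{\log\mathrm{dR}}^+[t^{-1}]$ with $t$ a degree-one generator of $\mathrm{Fil}^1\mathbb{B}_{\mathrm{dR}}^+$ (so $t = \xi$ up to a unit), and since forming the associated graded commutes with inverting the homogeneous element $t$, one obtains $\mathrm{gr}^{\bullet}\mathcal{O}\mathbb{B}_{\log\mathrm{dR}} \cong \hat{\mathcal{O}}_{X_{\log}}[\xi^{\pm 1},X_1,\ldots,X_n]$. Extracting the degree-$i$ piece and factoring out $\xi^i$ (a unit of degree $i$) rewrites it as $\xi^i\hat{\mathcal{O}}_{X_{\log}}[X_1/\xi,\ldots,X_n/\xi]$, which is the claimed formula; the ``in particular'' statement is just the totality over all $i$.

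The main obstacle I anticipate is justifying the second and third steps rigorously, namely that the associated graded of the $(\xi,X_1,\ldots,X_n)$-adic filtration on the power-series ring is genuinely the polynomial ring over $\mathrm{gr}^{\bullet}\mathbb{B}_{\mathrm{dR}}^+$, with no collapsing of graded pieces. This amounts to checking that $\xi$ together with the power-series variables behaves like a regular sequence at the level of associated gradeds; the non-zero-divisor property of $\xi$ from \Cref{summary remark} is what makes the $\xi$-direction clean, while the $X_i$-directions are harmless because they are honest formal indeterminates. Since all objects in play are computed on the affinoid perfectoid basis of $X_{\mathrm{prolog}}$ (\Cref{perfectoid basis}), where \Cref{compute OBDR} is an isomorphism of sheaves, the graded computation can be performed section-wise and then sheafified, so no further sheaf-theoretic subtlety intervenes. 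This mirrors \cite[Corollary 6.15]{Sch13}, and the argument is formally the same once \Cref{compute OBDR} is in hand.
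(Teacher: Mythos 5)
Your proposal is correct and follows exactly the route the paper intends: the corollary is stated without a written proof precisely because it is the immediate consequence of \Cref{compute OBDR} together with $\mathrm{gr}^{\bullet}\mathbb{B}_{\mathrm{dR}} \cong \hat{\mathcal{O}}_{X_{\log}}[\xi^{\pm 1}]$ from \Cref{summary remark}, mirroring how \cite[Corollary 6.15]{Sch13} is deduced from \cite[Proposition 6.10]{Sch13}. Your verification that $\theta(u_i)=0$, that $\ker\theta = (\xi,X_1,\ldots,X_n)$ under the power-series identification, and that the graded pieces do not collapse (using that $\xi$ is a non-zero-divisor) is exactly the filling-in of details the paper leaves implicit.
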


The following is analogous to~\cite[Proposition 6.16]{Sch13}.

\begin{proposition}
\label{analogue Proposition 6.16}
Let $X = \mathrm{Spa}(R,R^{\circ})$ be an affinoid adic space of finite type over
$\mathrm{Spa}(k,k^{\circ})$ with an \'{e}tale map $X \to \mathbb{T}^{n-r,r}$ that factors
as a composite of rational embeddings and finite \'{e}tale maps.
\begin{enumerate}
    \item Assume that $K$ contains all roots of unity. Then
    \[
    H^q (X_{K,\mathrm{prolog}}, \mathrm{gr}^0 \mathcal{O}\mathbb{B}_{\log\mathrm{dR}}) = 0
    \]
    unless $q=0$, in which case it is $R \hat{\otimes}_k K$.
    \item We have
    \[
    H^q (X_{\mathrm{prolog}}, \mathrm{gr}^i \mathcal{O}\mathbb{B}_{\log\mathrm{dR}}) = 0
    \]
    unless $i = 0$ and $q = 0,1$. We also have $H^0 (X_{\mathrm{prolog}},\mathrm{gr}^0
    \mathcal{O}\mathbb{B}_{\log\mathrm{dR}}) = R$ and $H^1 (X_{\mathrm{prolog}},\mathrm{gr}^0
    \mathcal{O}\mathbb{B}_{\log\mathrm{dR}}) = R \log \chi$. Here $\chi \colon \mathrm{Gal}(\bar{k}/k)
    \to \mathbb{Z}_p^\times$ is the cyclotomic character and
    \[
    \log \chi \in \mathrm{Hom}_{cont}(\mathrm{Gal}(\bar{k}/k),\mathbb{Q}_p) = 
    H^1_{cont}(\mathrm{Gal}(\bar{k}/k),\mathbb{Q}_p)
    \]
    is its logarithm.
\end{enumerate}
\end{proposition}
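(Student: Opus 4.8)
The plan is to compute all of these cohomology groups through the Cartan--Leray spectral sequence attached to the pro-log-\'etale Galois covering $(X_K,\tilde{X}_K)\to (X_K,X_K)$, thereby reducing everything to a continuous group cohomology computation governed by \Cref{analogue Lemma 5.5}. The covering is Galois with group $\Delta = \mathbb{Z}_p^{n-r}\times\widehat{\mathbb{Z}}^{r}$, where the first $n-r$ (toric) coordinates acquire only $p$-power roots and the last $r$ (logarithmic) coordinates acquire all roots; the appearance of the full $\widehat{\mathbb{Z}}^r$ in the ramified directions is the essential difference from \cite[Proposition 6.16]{Sch13}. By \Cref{lemmascholze410}(5) the higher cohomology of $\hat{\mathcal{O}}^+_{X_{\log}}/p$ on the affinoid perfectoid $(X_K,\tilde{X}_K)$ is almost zero, hence so is that of $\hat{\mathcal{O}}_{X_{\log}}$ and, via \Cref{analogue Corollary 6.15}, of each $\mathrm{gr}^i\mathcal{O}\mathbb{B}_{\log\mathrm{dR}}$ (an infinite direct sum of Tate twists of $\hat{\mathcal{O}}_{X_{\log}}$, the cohomology commuting with the sum since $(X_K,\tilde{X}_K)$ is quasi-compact quasi-separated). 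Thus the spectral sequence collapses to an almost isomorphism
\[
H^q(X_{K,\mathrm{prolog}},\mathrm{gr}^i\mathcal{O}\mathbb{B}_{\log\mathrm{dR}}) \cong^a H^q_{cont}\big(\Delta,\, \mathrm{gr}^i\mathcal{O}\mathbb{B}_{\log\mathrm{dR}}(X_K,\tilde{X}_K)\big),
\]
whose coefficient module is, by \Cref{analogue Corollary 6.15}, the polynomial ring $\xi^i R_\infty[X_1/\xi,\ldots,X_n/\xi]$ with $R_\infty$ the perfectoid ring of $\hat{\tilde{X}}_K$.

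Next I would pin down the $\Delta$-action on this polynomial ring: on $R_\infty$ it is the rotation-through-roots-of-unity action of \Cref{analogue Lemma 5.5}, while on the generators $X_l/\xi$ it is a unipotent translation dictated by the connection $\nabla$ (obtained by differentiating the defining formulas $X_i=Z_i-[Z_i^\flat]$ and $X_l=1-[Z_l^\flat]/Z_l$ from \Cref{compute OBDR}). The crucial point is that this translation turns the Koszul complex computing $H^\ast_{cont}(\Delta,-)$ in the $X$-directions into the algebraic de Rham complex of the polynomial ring, so that a Poincar\'e-lemma contraction kills the positive-degree contributions of the $X$-variables; combining this with $H^\ast_{cont}(\Delta,R_\infty)$ from \Cref{analogue Lemma 5.5}(1)--(3) shows that $H^q(X_{K,\mathrm{prolog}},\mathrm{gr}^i\mathcal{O}\mathbb{B}_{\log\mathrm{dR}})$ is concentrated in degree $q=0$, with value $(R\hat{\otimes}_k K)(i)$ as a $\mathrm{Gal}(\bar{k}/k)$-module. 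Specializing to $i=0$ gives part (1).

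For part (2), I would descend from $K=\hat{\bar{k}}$ (which is perfectoid and contains all roots of unity, so part (1) applies) to $k$ by running the Cartan--Leray spectral sequence for the arithmetic group $G=\mathrm{Gal}(\bar{k}/k)$ acting on $X_{K,\mathrm{prolog}}$ over $X_{\mathrm{prolog}}$, as in \Cref{rmkpc}. Since the geometric cohomology just computed is concentrated in degree $0$ with value $(R\hat{\otimes}_k K)(i)$, the spectral sequence degenerates and gives
\[
H^q(X_{\mathrm{prolog}},\mathrm{gr}^i\mathcal{O}\mathbb{B}_{\log\mathrm{dR}}) = H^q_{cont}\big(G,\,(R\hat{\otimes}_k K)(i)\big).
\]
By Tate's computation of the Galois cohomology of $\mathbb{C}_p(i)$, the right-hand side vanishes in all degrees when $i\neq 0$, and for $i=0$ equals $R$ in degree $0$ and $R\cdot\log\chi$ in degree $1$ and vanishes for $q\ge 2$; this is precisely part (2).

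The step I expect to be the main obstacle is the middle one: correctly identifying the unipotent $\Delta$-action on the polynomial variables $X_l/\xi$ and showing that it turns the group-cohomology Koszul complex into a de Rham complex that is acyclic in positive degrees, so that \Cref{analogue Lemma 5.5} applies cleanly. This is the only genuinely non-formal input, and it is exactly where the logarithmic directions --- the full $\widehat{\mathbb{Z}}^r$ acquiring all roots rather than only $p$-power roots --- must be handled, in contrast to the purely $p$-adic torus directions.
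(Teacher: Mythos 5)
Your part (1) is, in outline, exactly the paper's own proof: Cartan--Leray along the $\Delta=\mathbb{Z}_p^{n-r}\times\widehat{\mathbb{Z}}^r$-cover $\tilde{X}_K\to X_K$, with the almost-vanishing input from \Cref{lemmascholze410} and \Cref{analogue Corollary 6.15}, followed by a continuous group cohomology computation in which polynomial variables carry a translation action and a Poincar\'e-lemma contraction kills positive degrees. However, the step you yourself flag as ``the main obstacle'' --- handling the full $\widehat{\mathbb{Z}}^r$ --- is left unresolved in your plan, and no Koszul-to-de-Rham trick alone resolves it: the prime-to-$p$ part of $\Delta$ acts \emph{nontrivially} on $\hat{R}=\hat{\mathcal{O}}(\widehat{\tilde{X}_K})$ (it moves the prime-to-$p$ roots $Z_l^{1/m}$), so it cannot simply be discarded. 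The paper's mechanism is: (a) change variables from your $X_l/\xi$ to $V_l=t^{-1}\log([Z_l^\flat]/Z_l)$, so that the action becomes literally $\gamma_i(V_j)=V_j+\delta_{ij}$; (b) prove the decompletion statement that the inclusion $(R\hat{\otimes}_k K)[V_1,\ldots,V_n]\subset \hat{R}[V_1,\ldots,V_n]$ induces an isomorphism on continuous group cohomology, checked on polynomial-degree graded pieces using \Cref{lemmafet}(2) and \Cref{analogue Lemma 5.5}; (c) observe that on the \emph{smaller} ring the prime-to-$p$ factor $\prod_{l\neq p}\mathbb{Z}_l^r$ acts trivially on a $p$-adically topologized module, hence contributes nothing to continuous cohomology, reducing everything to $\mathbb{Z}_p^n$; (d) conclude by the difference-operator Poincar\'e lemma in the last paragraph of \cite[proof of Proposition 6.16(i)]{Sch13}. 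Steps (b) and (c) together are the missing idea: one must first decomplete and only then does the $\widehat{\mathbb{Z}}^r$-problem disappear.

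For part (2) your route genuinely differs from the paper's and contains a gap. After your descent along $\bar{k}/k$ the coefficient module is $R\hat{\otimes}_k\hat{\bar{k}}(i)$, not $\mathbb{C}_p(i)$; continuous cohomology does not commute with $R\hat{\otimes}_k(-)$, so ``Tate's computation'' does not literally apply --- what you need is a relative decompletion theorem for Banach coefficients over the full group $\mathrm{Gal}(\bar{k}/k)$, which is a statement requiring proof, not a citation. The paper avoids this by descending along the completed full cyclotomic extension $K=\widehat{k(\mu_\infty)}$ (part (1) forces $K$ to contain \emph{all} roots of unity, which both this $K$ and your $\hat{\bar{k}}$ do), writing $G=\mathrm{Gal}(k(\mu_\infty)/k)=H\times\Gamma$, computing the $\Gamma$-part by the decompletion $H^q_{cont}(\Gamma,R\hat{\otimes}_k K(i))=R_{k'}\otimes_{\mathbb{Q}_p}H^q_{cont}(\Gamma,\mathbb{Q}_p(i))$ together with Tate \cite{p-divisible}, killing the prime-to-$p$ factor $H$ by a Hilbert 90/inverse-limit argument, and assembling via Hochschild--Serre. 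Your descent is legitimate in principle, but filling in the ``by Tate'' step for these relative coefficients essentially amounts to reproducing the paper's two-step $H\times\Gamma$ argument.
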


\begin{proof}
(1) As before, denote $X_K \times_{\mathbb{T}^{n-r,r}_K} \tilde{\mathbb{T}}^{n-r,r}_K 
\eqqcolon \tilde{X}_K$ where $\widehat{\tilde{X}_K} = \mathrm{Spa}(\hat{R},\hat{R}^{\circ})$. 
We see that $\tilde{X}_K \to X_K$ is a $\mathbb{Z}_p^{n-r} \times
\hat{\mathbb{Z}}^r$-cover and all multiple-fold fibre products of $\tilde{X}_K$ over
$X_K$ are affinoid perfectoid. By \Cref{analogue Corollary 6.15} and \Cref{summary remark}
we see that all higher cohomology groups of the sheaves considered vanish and
\[
H^q (X_{K,\mathrm{prolog}}, \mathrm{gr}^0\mathcal{O}\mathbb{B}_{\log\mathrm{dR}}) = 
H^q_{cont} (\mathbb{Z}_p^{n-r} \times \hat{\mathbb{Z}}^r,
\mathrm{gr}^0\mathcal{O}\mathbb{B}_{\log\mathrm{dR}}(\tilde{X}_K)).
\]
Note that we may write
\[
\mathrm{gr}^0\mathcal{O}\mathbb{B}_{\log\mathrm{dR}}(\tilde{X}_K) = \hat{R} [V_1,\ldots,V_n],
\]
where $V_i = t^{-1} \log(\frac{[T_i^\flat]}{T_i})$ and $t = \log ([\epsilon])$.
Let $\gamma_i$ be the $i$-th basis vector of $\mathbb{Z}_p^{n-r} \times
\hat{\mathbb{Z}}^r$, then we have (c.f.~\cite[Lemma 6.17]{Sch13})
\[
\gamma_i(V_j) = V_j + \delta_{ij}.
\]

Next we claim the inclusion
\[
(R \hat{\otimes}_k K)[V_1,\ldots,V_n] \subset \hat{R}[V_1,\ldots,V_n]
\]
induces an isomorphism on the continuous group cohomologies. This can be seen via checking the
graded pieces given by the degree of polynomials. On the gradeds the group action on $V_i$'s
is trivial, therefore it suffices to check that $R \hat{\otimes}_k K \subset \hat{R}$
induces an isomorphism on continuous group cohomologies. This just follows from
\Cref{lemmafet}(2) and \Cref{analogue Lemma 5.5}, c.f.~\cite[Lemma 6.18]{Sch13}.

Lastly we need to compute
\[
H^q_{cont} (\mathbb{Z}_p^{n-r} \times \hat{\mathbb{Z}}^r, (R \hat{\otimes}_k K)[V_1,\ldots,V_n]).
\]
But since all the factors $\hat{\mathbb{Z}}_{(p)} \coloneqq \prod_{l \not= p} \mathbb{Z}_l$
acts trivially on $(R \hat{\otimes}_k K)[V_1,\ldots,V_n]$ which has $p$-adic topology, we see that
the continuous group cohomology is the same as
\[
H^q_{cont} (\mathbb{Z}_p^n, (R \hat{\otimes}_k K)[V_1,\ldots,V_n]).
\]
Now the last paragraph of the proof of~\cite[Proposition 6.16(i)]{Sch13} shows that these
cohomology groups are $0$ whenever $q > 0$ and is equal to $R \hat{\otimes}_k K$ when $q=0$.

(2) Let $k'$ be the completion of $\cup_{p \nmid n} k(\mu_n)$ and take $K$ as the completion of
$k'(\mu_{p^{\infty}})$. Also let us denote $G = \mathrm{Gal}(k(\mu_{\infty})/k) = 
H \times \Gamma$ where $H = \mathrm{Gal}((\cup_{p \nmid n} k(\mu_n))/k)$ and $\Gamma = 
\mathrm{Gal}(k(\mu_{p^\infty})/k)$. By the same argument as in the proof
of~\cite[Proposition 6.16(ii)]{Sch13}, we see that
\[
H^q (X_{\mathrm{prolog}}, \mathrm{gr}^i \mathcal{O}\mathbb{B}_{\log\mathrm{dR}}) = 
H^q_{cont}(G, R \hat{\otimes}_k K(i))
\]
and
\[
H^q_{cont}(\Gamma, R \hat{\otimes}_k K(i)) = R_{k'} \otimes_{\mathbb{Q}_p} H^q_{cont} (\Gamma, \mathbb{Q}_p(i))
\]
and the latter is well-known, see~\cite{p-divisible}.
Moreover we know that the action of $H$ on $\log \chi$ is trivial and
\[
H^q_{cont}(H, R_{k'}) = 0
\]
unless $q=0$ in which case it is $R$. Indeed, since $H$ is a profinite group, we know that 
$H^q_{cont}(H, R_{k'}) = (H^q_{cont}(H, R^{\circ} \hat{\otimes}_{\mathcal{O}_k} \mathcal{O}_{k'}))[1/p]$.
Now it suffices to show $H^q_{cont}(H, R^{\circ} \hat{\otimes}_{\mathcal{O}_k} \mathcal{O}_{k'})=0$ for all $q > 0$,
and $H^0_{cont}(H, R^{\circ} \hat{\otimes}_{\mathcal{O}_k} \mathcal{O}_{k'})= R^{\circ}$. 
We claim that $H^q_{cont}(H, (R^{\circ} \hat{\otimes}_{\mathcal{O}_k} \mathcal{O}_{k'})/\varpi^m) = 0$ 
for all $m > 0$, unless $q=0$ in which case it is given by $R^{\circ}/\varpi^m$. To prove this claim we simply notice
that by induction on $m$ and the fact that $R^{\circ} \hat{\otimes}_{\mathcal{O}_k} \mathcal{O}_{k'}$ is
$\varpi$-torsion free, it suffices to prove it when $m=1$ which follows from Hilbert 90. The above claim yields that
$H^q_{cont}(H, R^{\circ} \hat{\otimes}_{\mathcal{O}_k} \mathcal{O}_{k'}) = R^q \varprojlim_m R^{\circ}/\varpi^m$,
which easily implies what we want.

Put all these together along with Hochschild--Serre spectral sequence yields the results we want.
\end{proof}

\begin{corollary}
\label{pushforward obdr}
Let $X$ be a smooth adic space over $\mathrm{Spa}(k,\mathcal{O}_{k})$ with an SSNC divisor $D$.
Let $i,j$ be two integers and let $m$ be a positive integer, then we have
\begin{enumerate}
\item $R^q \nu_* (\mathrm{Fil}^{i}\mathcal{O}\mathbb{B}_{\log\mathrm{dR}}/\mathrm{Fil}^{i+m}\mathcal{O}\mathbb{B}_{\log\mathrm{dR}})
= 0$ unless $q = 0,1$ and $0 \in [i,i+m)$, in which case $R^0 \nu_*$ is given by $\mathcal{O}_{X_{\log}}$ and 
$R^1 \nu_*$ is given by $\mathcal{O}_{X_{\log}} \cdot \log \chi$.
\item $R^q \nu_* \mathrm{Fil}^{i}\mathcal{O}\mathbb{B}_{\log\mathrm{dR}} = 0$ unless $q = 0,1$ and $i \leq 0$ in which case
$R^0 \nu_*$ is given by $\mathcal{O}_{X_{\log}}$ and  $R^1 \nu_*$ is given by $\mathcal{O}_{X_{\log}} \log \chi$.
The above computation also holds for $i = -\infty$ where $\mathrm{Fil}^{-\infty}\mathcal{O}\mathbb{B}_{\log\mathrm{dR}} = 
\mathcal{O}\mathbb{B}_{\log\mathrm{dR}}$.
\item $R^i \nu_* \hat{\mathcal{O}}_{X_{\log}}(j) = 0$ unless 
\begin{itemize}
    \item $i = j$ in which case it is given by $\Omega^j_{X_{\log}}(\log D)$ or;
    \item $i = j+1$ in which case it is given by $\Omega^j_{X_{\log}}(\log D) \cdot \log \chi$.\footnote{Note the typo
    in~\cite[Remark 6.20]{Sch13}.}
\end{itemize}
Moreover the isomorphism $R^1 \nu_* \hat{\mathcal{O}}_{X_{\log}}(1) \cong \Omega^1_{X_{\log}}(\log D)$ is
given by the Faltings's extension (c.f.~\Cref{Faltings's extension}). 
\end{enumerate}
\end{corollary}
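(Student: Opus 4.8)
My plan is to bootstrap all three statements from the single cohomology computation \Cref{analogue Proposition 6.16}, which already carries the non-formal content; everything else is homological bookkeeping with the filtration, the projection formula, and the logarithmic Poincar\'e lemma. First I would record the value of $R^q\nu_*\mathrm{gr}^i\mathcal{O}\mathbb{B}_{\log\mathrm{dR}}$. Since $R^q\nu_*\mathcal{F}$ is the sheaf on $X_{\log}$ associated to $(V,N)\mapsto H^q\big(X_{\mathrm{prolog}}/(V,N),\mathcal{F}\big)$, and since the objects $(V,N)$ admitting an \'etale chart $V\to\mathbb{T}^{n-r,r}$ factoring through rational embeddings and finite \'etale maps form a basis of $X_{\log}$ (by \Cref{Abhyankar's Lemma} and \Cref{analogue Lemma 5.2}), I can read the answer off \Cref{analogue Proposition 6.16}(2): the sheaf $R^q\nu_*\mathrm{gr}^i\mathcal{O}\mathbb{B}_{\log\mathrm{dR}}$ vanishes unless $i=0$ and $q\in\{0,1\}$, in which case it is $\mathcal{O}_{X_{\log}}$ for $q=0$ and $\mathcal{O}_{X_{\log}}\cdot\log\chi$ for $q=1$. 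Here one uses that the localized site $X_{\mathrm{prolog}}/(V,N)$ is the pro-log-\'etale site of the smooth affinoid $N$ with its SSNC divisor $f^{-1}(D)$, so that \Cref{analogue Proposition 6.16} applies verbatim.

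Next I would deduce (1) by inducting on $m$ through the short exact sequences
\[
0\to\mathrm{gr}^i\mathcal{O}\mathbb{B}_{\log\mathrm{dR}}\to \mathrm{Fil}^i\mathcal{O}\mathbb{B}_{\log\mathrm{dR}}/\mathrm{Fil}^{i+m}\mathcal{O}\mathbb{B}_{\log\mathrm{dR}}\to \mathrm{Fil}^{i+1}\mathcal{O}\mathbb{B}_{\log\mathrm{dR}}/\mathrm{Fil}^{i+m}\mathcal{O}\mathbb{B}_{\log\mathrm{dR}}\to 0,
\]
the base case $m=1$ being the previous paragraph. Because every graded piece except $\mathrm{gr}^0$ has vanishing pushforward, and the surviving piece sits in degrees $q=0,1$, the long exact sequences collapse with no surviving connecting maps: the pushforward equals $R^q\nu_*\mathrm{gr}^0\mathcal{O}\mathbb{B}_{\log\mathrm{dR}}$ exactly when $\mathrm{gr}^0$ occurs among $\mathrm{gr}^i,\dots,\mathrm{gr}^{i+m-1}$, i.e.\ when $0\in[i,i+m)$, and vanishes otherwise. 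For (2) I would write $\mathrm{Fil}^i\mathcal{O}\mathbb{B}_{\log\mathrm{dR}}=\varprojlim_m \mathrm{Fil}^i/\mathrm{Fil}^{i+m}$ and commute $R\nu_*$ with $R\varprojlim$: for $i>0$ all terms of (1) vanish, while for $i\le 0$ the terms are eventually constant equal to $\mathcal{O}_{X_{\log}}$ and $\mathcal{O}_{X_{\log}}\log\chi$ with isomorphism transition maps, so Mittag--Leffler kills $R^1\varprojlim$ and yields the stated values. The case $i=-\infty$ follows instead by writing $\mathcal{O}\mathbb{B}_{\log\mathrm{dR}}=\varinjlim_{i\to-\infty}\mathrm{Fil}^i$ and commuting $R\nu_*$ with this filtered colimit, legitimate since $X_{\mathrm{prolog}}$ is coherent and quasi-compact objects are cofinal, the transition maps being again isomorphisms for $i\le 0$.

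The heart of (3) is to apply the logarithmic Poincar\'e lemma \Cref{corologPL} at the level of associated gradeds. Strict exactness together with Griffiths transversality turns $\mathrm{gr}^j$ of the filtered complex into the exact resolution
\[
0\to \hat{\mathcal{O}}_{X_{\log}}(j)\to \mathrm{gr}^j\mathcal{O}\mathbb{B}_{\log\mathrm{dR}}\xrightarrow{\nabla}\mathrm{gr}^{j-1}\mathcal{O}\mathbb{B}_{\log\mathrm{dR}}\otimes\Omega^1_X(\log D)\to\cdots\to\mathrm{gr}^{j-n}\mathcal{O}\mathbb{B}_{\log\mathrm{dR}}\otimes\Omega^n_X(\log D)\to 0,
\]
using $\mathrm{gr}^j\mathbb{B}_{\mathrm{dR}}\cong\hat{\mathcal{O}}_{X_{\log}}(j)$ from \Cref{summary remark}. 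Applying $R\nu_*$ and the projection formula (each $\Omega^k_X(\log D)=\nu^*\Omega^k_{X_{\log}}(\log D)$ is locally free), the hypercohomology spectral sequence has $E_1$-term $\big(R^q\nu_*\mathrm{gr}^{j-k}\mathcal{O}\mathbb{B}_{\log\mathrm{dR}}\big)\otimes\Omega^k_{X_{\log}}(\log D)$, which by the first paragraph vanishes unless $k=j$. Being concentrated in a single column it degenerates, giving $R^{j+q}\nu_*\hat{\mathcal{O}}_{X_{\log}}(j)=\big(R^q\nu_*\mathrm{gr}^0\mathcal{O}\mathbb{B}_{\log\mathrm{dR}}\big)\otimes\Omega^j_{X_{\log}}(\log D)$; this is $\Omega^j_{X_{\log}}(\log D)$ for $q=0$ (so $i=j$), $\Omega^j_{X_{\log}}(\log D)\log\chi$ for $q=1$ (so $i=j+1$), and zero otherwise, with the cases $j<0$ or $j>n$ forced to vanish since then the column is out of range, consistent with $\Omega^j_X(\log D)=0$. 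To identify the isomorphism for $j=1$ with Faltings's extension, I would apply $R\nu_*$ to \Cref{Faltings's extension}: as $R^q\nu_*\mathrm{gr}^1\mathcal{O}\mathbb{B}_{\log\mathrm{dR}}^+=0$ for all $q$, its connecting map sends $R^0\nu_*\big(\hat{\mathcal{O}}_{X_{\log}}\otimes\Omega^1_X(\log D)\big)=\Omega^1_{X_{\log}}(\log D)$ isomorphically onto $R^1\nu_*\hat{\mathcal{O}}_{X_{\log}}(1)$, and this coincides with the boundary map of the resolution above.

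The genuinely delicate points are not the individual identifications but the two interchanges in the second paragraph: commuting $R\nu_*$ with the inverse limit defining $\mathrm{Fil}^i$ and with the colimit defining $\mathcal{O}\mathbb{B}_{\log\mathrm{dR}}$. These rest on the coherence of $X_{\mathrm{prolog}}$ and on the derived completeness and exhaustiveness of the filtration, which I would verify on the affinoid perfectoid basis using \Cref{summary remark}; once these are in place the rest of the argument is formal.
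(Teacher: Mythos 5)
Your skeleton coincides with the paper's own (very terse) proof: part (1) by d\'evissage through the filtration from the graded computation in \Cref{analogue Proposition 6.16}(2), sheafified over the basis of charted objects; part (2) by commuting $R\nu_*$ with limits and colimits; part (3) by applying $R\nu_*$ to the $j$-th graded piece of the logarithmic Poincar\'e lemma and observing the single surviving column. Parts (1) and the main computation in (3) are correct as you write them. But two of the details you supply are genuinely flawed. First, in part (2) you assert $\mathrm{Fil}^i\mathcal{O}\mathbb{B}_{\log\mathrm{dR}}=\varprojlim_m \mathrm{Fil}^i/\mathrm{Fil}^{i+m}$ and propose to verify completeness on the affinoid perfectoid basis. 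That verification would fail: the filtration on the $t$-inverted sheaf is exhaustive and separated but \emph{not} complete. On a chart where $\mathcal{O}\mathbb{B}^+_{\log\mathrm{dR}}\cong\mathbb{B}^+_{\mathrm{dR}}[\![X_1,\ldots,X_n]\!]$ (\Cref{compute OBDR}), the partial sums of $\sum_{m\ge 1}\xi^{-m}X_1^{2m}$ form a compatible system in $\varprojlim_m \mathrm{Fil}^i/\mathrm{Fil}^{i+m}$ (the $m$-th tail lies in $\mathrm{Fil}^m$), yet the limit has unbounded poles in $t$ and so is not a section of $\mathcal{O}\mathbb{B}_{\log\mathrm{dR}}$, whose sections over a quasi-compact object have bounded denominators. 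Your inverse limit therefore computes the pushforward of the filtration-completion, not of $\mathrm{Fil}^i$. The operations must be taken in the other order, as the paper's phrase ``limit and colimit'' suggests: first $\mathrm{Fil}^i\mathcal{O}\mathbb{B}_{\log\mathrm{dR}}=\varinjlim_j t^{-j}\,\mathrm{Fil}^{i+j}\mathcal{O}\mathbb{B}^+_{\log\mathrm{dR}}$, commuting $R\nu_*$ with the filtered colimit by coherence, and only then a completeness argument, which is valid for the plus sheaf but requires control of the plus-filtration quotients; this step is not the routine bookkeeping you make it out to be.

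Second, for the ``moreover'' clause you claim $R^q\nu_*\,\mathrm{gr}^1\mathcal{O}\mathbb{B}^+_{\log\mathrm{dR}}=0$ for all $q$. Nothing you established gives this: your first paragraph and \Cref{analogue Corollary 6.15} concern the graded pieces of the $t$-inverted sheaf, and $\mathrm{gr}^1\mathcal{O}\mathbb{B}^+_{\log\mathrm{dR}}$ (locally $\hat{\mathcal{O}}_{X_{\log}}\xi\oplus\bigoplus_i\hat{\mathcal{O}}_{X_{\log}}X_i$) is a proper subsheaf of $\mathrm{gr}^1\mathcal{O}\mathbb{B}_{\log\mathrm{dR}}$ (locally $\xi\,\hat{\mathcal{O}}_{X_{\log}}[X_1/\xi,\ldots,X_n/\xi]$ in degree one); vanishing for the larger sheaf says nothing about the subsheaf. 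Worse, by the long exact sequence attached to \Cref{Faltings's extension}, together with the values of $R^q\nu_*\hat{\mathcal{O}}_{X_{\log}}(1)$ and $R^q\nu_*(\hat{\mathcal{O}}_{X_{\log}}\otimes\Omega^1)$ from part (3), your vanishing claim is \emph{equivalent} to the two connecting maps being isomorphisms, i.e., to the very statement you are proving, so the argument as written is circular. (The vanishing is in fact true, but it needs an independent local group-cohomology computation, which you do not perform.) The paper's route sidesteps this: map the short exact sequence of \Cref{Faltings's extension} into the $j=1$ graded Poincar\'e resolution; compatibility of boundary maps then factors the Faltings connecting map as the identification $R^0\nu_*(\hat{\mathcal{O}}_{X_{\log}}\otimes\Omega^1)=\Omega^1_{X_{\log}}(\log D)=R^0\nu_*(\mathrm{gr}^0\mathcal{O}\mathbb{B}_{\log\mathrm{dR}}\otimes\Omega^1)$ followed by the isomorphism $R^1\nu_*\hat{\mathcal{O}}_{X_{\log}}(1)\cong\Omega^1_{X_{\log}}(\log D)$ that your own single-column degeneration already produced, with no appeal to the unproven vanishing.
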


\begin{proof}
(1) trivially follows from \Cref{analogue Proposition 6.16}(2).

(2) follows from (1) by commuting limit and colimit with cohomology.

(3) follows from applying $R \nu_*$ to $j$-th graded piece of~\Cref{Poincare} which reads
\[
0 \to \hat{\mathcal{O}}_{X_{\log}}(j) \to \mathrm{gr}^j \mathcal{O}\mathbb{B}_{\log\mathrm{dR}} \to
\mathrm{gr}^{j-1} \mathcal{O}\mathbb{B}_{\log\mathrm{dR}} \otimes_{\mathcal{O}_{X_{\log}}} \Omega^1_{X_{\log}}(\log D)
\to \cdots.
\]
The last statement can be seen via the natural morphism from the sequence in \Cref{Faltings's extension} to the 
above sequence where $j=1$.
\end{proof}

\begin{remark}
\label{ramified cover obdr}
Let $X=\mathrm{Sp}(R) \xrightarrow{f} \mathbb{T}^{n-r,r}$ be as in~\Cref{analogue Proposition 6.16}.
Denote $f^*(T_l)$ by $f_l$ where $l > n-r$. Then by the same argument, one can show that
\[
H^q ((X,X[\sqrt[m]{f_l}]_{\bar{k}}),\mathrm{gr}^0 \mathcal{O}\mathbb{B}_{\log\mathrm{dR}}) = 0
\]
unless $q=0$, in which case it is $R[\sqrt[m]{f_l}] \hat{\otimes}_k \hat{\bar{k}}$.
\end{remark}

\begin{remark}
Let $X$ be a smooth adic space over $\mathrm{Spa}(C,\mathcal{O}_C)$ where $C$ is an algebraically closed
non-archimedean extension of $\mathbb{Q}_p$. Similar as in~\cite[Proposition 3.23 and Lemma 3.24]{ScholzeCDM},
one can show that there is a commutative diagram
\[
\xymatrix{
\bigwedge^k (R^1 \nu_* \hat{\mathcal{O}}_{X_{\log}}(1)) \ar[r] \ar[d]^{\cong} & 
R^k \nu_* \hat{\mathcal{O}}_{X_{\log}}(k) \ar[d]^{\cong} \\
\bigwedge^k (\Omega^1_{X_{\log}}(\log D)) \ar[r] &
\Omega^k_{X_{\log}}(\log D)
}
\]
where the vertical maps are obtained in the same fashion as above.
\end{remark}

% * Comparisons
\section{Comparisons}
\label{section 7}

% ** Vector bundles on $X_{\log}$
\subsection{Vector bundles on $X_{\log}$}
\label{subsection 7.1}

\begin{definition}
A \emph{vector bundle} $\mathcal{F}$ on $X_{\mathrm{log}}$ is a sheaf of $\mathcal{O}_{X_{\log}}$-modules
such that there exists a finite affinoid covering 
$(V_i,N_i) \to (X,X)$
and finite projective $\Gamma (N_i,\mathcal{O}_{N_i})$-modules $M_i$ with isomorphism
\[
\mathcal{F}|_{(V_i,N_i)} \cong M_i \otimes_{\Gamma (N_i,\mathcal{O}_{N_i})} \mathcal{O}_{X_{\log}}.
\]
Here $(M_i \otimes_{\Gamma (N_i,\mathcal{O}_{N_i})} \mathcal{O}_{X_{\log}}) (W,M) \coloneqq 
M_i \otimes_{\Gamma (N_i,\mathcal{O}_{N_i})} \Gamma(M, \mathcal{O}_{M})$ 
for any object $(W,M)$ over $(V_i,N_i)$, and by affinoid covering we mean a covering
with all $V_i$ (hence $N_i$) being affinoid.
\end{definition}

\begin{remark}
Note that since $M_i$'s are assumed to be finite projective, they are direct summand in finite free
modules. Therefore $M_i \otimes_{\mathcal{O}(N)} \mathcal{O}_{X_{\log}}$ indeed defines a sheaf
on the localized site $X_{\log}/(V,N)$. We say $\mathcal{F}$ is represented by a finite projective
module on $(V,N) \in X_{\log}$ if one can find an $M$ and an isomorphism as in the previous definition.
\end{remark}

\begin{theorem}[Theorem A]
\label{Theorem A}
Let $\mathcal{F}$ be a vector bundle on $X_{\log}$. Then there exists a positive integer $m$ such that
for any affinoid $V \xrightarrow{f} X$ \'{e}tale over $X$ with $f^{-1}(D_l)$ being defined by $f_l$ (where $D_l$ is the $l$-th component of $D$), 
there exists a finite projective $\mathcal{O}(V[\sqrt[m]{f_l}])$-module $M$ and an isomorphism
\[
\mathcal{F}|_{(V,V[\sqrt[m]{f_l}])} \cong M \otimes_{\mathcal{O}(V[\sqrt[m]{f_l}])} \mathcal{O}_{X_{\log}}.
\]
\end{theorem}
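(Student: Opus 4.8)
The plan is to reduce Theorem A to étale descent of vector bundles on the affinoid $\tilde V := V[\sqrt[m]{f_l}]$, after choosing $m$ uniformly (depending only on $\mathcal F$) via Abhyankar's Lemma so that the pullback to $(V,\tilde V)$ of the covering trivializing $\mathcal F$ consists of objects whose $N$-parts are finite étale.

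First I would choose $m$. By definition $\mathcal F$ is represented on a finite affinoid covering $\{(V_i,N_i)\to(X,X)\}$ by finite projective $\mathcal O(N_i)$-modules $M_i$, with each $N_i\to V_i$ finite and $N_i^{\circ}\to V_i^{\circ}$ finite étale. Letting $g_{i,l}$ cut out $f^{-1}(D_l)$ on $V_i$, \Cref{Abhyankar's Lemma} produces an integer $m_i$ so that $(N_i\times_{V_i}V_i[\sqrt[m_i]{g_{i,l}}])^{\nu}\to V_i[\sqrt[m_i]{g_{i,l}}]$ is finite étale. Since the covering is finite, I take $m$ to be a common sufficiently divisible integer; base-changing along the finite map $V_i[\sqrt[m]{g_{i,l}}]\to V_i[\sqrt[m_i]{g_{i,l}}]$ (the source is normal and the map was already finite étale, so normalization changes nothing) shows the same finite étaleness holds with $m$ in place of each $m_i$ simultaneously. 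This $m$ depends only on $\mathcal F$.

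Next, fix an arbitrary affinoid $V\xrightarrow{f}X$ étale with $f^{-1}(D_l)=V(f_l)$ and set $\tilde V=V[\sqrt[m]{f_l}]$, a normal affinoid with $(V,\tilde V)\in X_{\log}$. I would pull back the covering along $(V,\tilde V)\to(X,X)$; by the fiber-product formula of \Cref{lemmopen0}(1) this yields a finite covering $\{(W_i,P_i)\to(V,\tilde V)\}$ with $W_i=V\times_X V_i$ étale over $V$ and $P_i=(\tilde V\times_X N_i)^{\nu}$, where (refining $W_i$ by affinoid opens if necessary) on each piece $\mathcal F$ is represented by the finite projective $\mathcal O(P_i)$-module $M_i\otimes_{\mathcal O(N_i)}\mathcal O(P_i)$, being a pullback of a representable bundle. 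The \emph{main obstacle} is to check that $P_i\to \tilde V|_{W_i}=W_i[\sqrt[m]{f_l}]$ is finite étale. Over $W_i$ the two sets of defining functions $f_l$ (from $V$) and $g_{i,l}$ (from $V_i$) cut out the same divisor, hence differ by units $u_l\in\mathcal O(W_i)^{\times}$; thus $W_i[\sqrt[m]{f_l},\sqrt[m]{g_{i,l}}]$ is finite étale over both $W_i[\sqrt[m]{f_l}]$ and $W_i[\sqrt[m]{g_{i,l}}]$, being obtained by adjoining the $m$-th roots of the $u_l$. After base change to this common Kummer cover, $P_i$ becomes finite étale by the choice of $m$, and descent along the finite étale map $W_i[\sqrt[m]{f_l},\sqrt[m]{g_{i,l}}]\to W_i[\sqrt[m]{f_l}]$ yields that $P_i\to\tilde V|_{W_i}$ is finite étale. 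This reconciliation of two Kummer covers is exactly the mechanism internal to the proof of \Cref{Abhyankar's Lemma}, and arranging it to hold with a single uniform $m$ is where the real work lies.

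Finally I would descend. Composing the finite étale $P_i\to\tilde V|_{W_i}$ with the étale base change $\tilde V|_{W_i}\to\tilde V$ shows each $P_i\to\tilde V$ is étale, and the covering condition $|\tilde V|=\bigcup|P_i|$ makes $\{P_i\to\tilde V\}$ an étale covering of the affinoid $\tilde V$. Over the localized site $X_{\log}/(V,\tilde V)$ the sheaf $\mathcal F$ supplies the finite projective $\mathcal O(P_i)$-modules together with gluing isomorphisms over the iterated fiber products, i.e.\ an étale descent datum for vector bundles on $\tilde V$. Since finite projective modules over affinoid algebras satisfy étale descent (faithful flatness together with Tate acyclicity), this datum is effective and produces a finite projective $\mathcal O(\tilde V)=\mathcal O(V[\sqrt[m]{f_l}])$-module $M$. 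As both $\mathcal F|_{(V,\tilde V)}$ and $M\otimes_{\mathcal O(\tilde V)}\mathcal O_{X_{\log}}$ are sheaves agreeing on the generating covering $\{(W_i,P_i)\}$, they coincide, giving the desired isomorphism; the uniformity of $m$ is guaranteed by the first paragraph.
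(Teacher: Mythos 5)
Your proposal is correct and follows essentially the same route as the paper's proof: choose $m$ uniformly by applying rigid Abhyankar's Lemma to the finite trivializing covering, pull the covering back along $(V,V[\sqrt[m]{f_l}])\to(X,X)$ so that the resulting $N$-parts form an \'{e}tale covering of the affinoid $V[\sqrt[m]{f_l}]$ in the usual rigid sense, and conclude by \'{e}tale descent of finite projective modules. The only difference is one of explicitness: you spell out the reconciliation of the two Kummer covers (the functions $f_l$ and $f_{i,l}$ cut out the same divisor on $V\times_X V_i$, hence differ by units whose $m$-th roots give a common finite \'{e}tale cover) and the uniform choice of $m$ over the finitely many $(V_i,N_i)$, points the paper's proof subsumes in the single assertion ``From the diagram and our choice of $m$, we see that $\coprod_i N_i' \to V[\sqrt[m]{f_l}]$ is an \'{e}tale covering.''
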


\begin{proof}
Let $V_i$ and $N_i$ be as in the definition, by passing to refinement we may assume the preimage of $D_l$
in $V_i$ is defined by a single function $f_{i.l}$. Then by \Cref{Abhyankar's Lemma} we can find a positive
integer $m$ such that $N_i[\sqrt[m]{f_{i,l}}] \to V_i[\sqrt[m]{f_{i,l}}]$ 
is finite \'{e}tale. Consider the following
diagram:
\[
\xymatrix{
\coprod_i (V_i',N_i') \ar@{=}[r] & \coprod_i \bigg( (V_i,N_i) \times_{(X,X)} (V,V[\sqrt[m]{f_l}]) \bigg)
\ar[r] \ar[d] & (V,V[\sqrt[m]{f_l}]) \ar[d] \\
& \coprod_i (V_i,N_i) \ar[r] & (X,X). \\
}
\]
From the diagram and our choice of $m$, 
we see that $\coprod_i N_i' \to V[\sqrt[m]{f_l}]$ is an \'{e}tale covering in the 
usual sense in rigid geometry and our sheaf $\mathcal{F}$ is represented by finite projective
modules $M_i$ on $(V_i',N_i')$. Therefore \'{e}tale descent implies what we want.
\end{proof}

\begin{theorem}[Theorem B]
\label{Theorem B}
For any vector bundle $\mathcal{F}$ and any affinoid $(V,N) \in X_{\log}$, assume one of the following
conditions holds
\begin{enumerate}
\item $\mathcal{F}|_{(V,N)}$ is represented by a finite projective $\mathcal{O}(N)$-module $M$ or;
\item preimage of $D_l$ in $V$ is defined by a single function $f_l$ for all $l$,
\end{enumerate}
then we have
\[
H^q((V,N),\mathcal{F}) = 0
\]
for all $q > 0$.
\end{theorem}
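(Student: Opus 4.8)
The plan is to deduce the statement from the acyclicity of the structure sheaf together with the Cartan--Leray criterion computing sheaf cohomology via \v{C}ech cohomology. First I would reduce both cases to the vanishing $H^q((V,N),\mathcal{O}_{X_{\log}}) = 0$ for $q > 0$. In case (1), since $M$ is finite projective it is a direct summand of a finite free $\mathcal{O}(N)$-module, so $\mathcal{F}|_{(V,N)}$ is a retract of $\mathcal{O}_{X_{\log}}^{\oplus s}|_{(V,N)}$ on the localized site $X_{\log}/(V,N)$; as cohomology is additive and preserves retracts, the claim for $\mathcal{F}$ follows from that for $\mathcal{O}_{X_{\log}}$. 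In case (2), \Cref{Theorem A} produces an $m$ and a finite projective module representing $\mathcal{F}$ over $(V,W)$ with $W \coloneqq V[\sqrt[m]{f_l}]$. Setting $\tilde N \coloneqq (N \times_V W)^{\nu}$, the morphism $(V,\tilde N) \to (V,N)$ is a covering, and by \Cref{lemmopen0}(1) every term of its \v{C}ech nerve maps to $(V,W)$; hence $\mathcal{F}$ is represented there by a finite projective module and case (1) makes all these terms $\mathcal{F}$-acyclic. The \v{C}ech-to-derived-functor spectral sequence then reduces case (2) to the vanishing of the \v{C}ech cohomology of this Kummer covering, which is a special case of the computation below.

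For the acyclicity of $\mathcal{O}_{X_{\log}}$ I would take the basis of affinoid objects, which is generating and stable under fibre products by \Cref{lemmopen0}, and apply the criterion (cf.~\cite{SGA42}) that it suffices to exhibit, for each affinoid covering, a refinement whose \v{C}ech complex of $\mathcal{O}_{X_{\log}}$ is exact in positive degrees. By quasi-compactness (\Cref{lemmopen0}(3)) I reduce to a single surjection $(V',N') \to (V,N)$, which factors as a finite log-\'etale cover $N' \to N \times_V V'$ followed by the \'etale base change $N \times_V V' \to N$ induced by $V' \to V$. These two types I treat separately. For the \'etale part, all iterated fibre products $N \times_V V'^{\times_V \bullet}$ are again normal, being \'etale over the normal $N$, so no normalization intervenes and the associated \v{C}ech complex is the Amitsur complex of the faithfully flat map $\mathcal{O}(N) \to \mathcal{O}(N \times_V V')$; its exactness is faithfully flat descent, i.e.~Tate's acyclicity theorem for the structure sheaf of affinoids.

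The finite log-\'etale (Kummer) part carries the real content. After a refinement I may assume $N' = N[\sqrt[m]{g_l}]$ is a model Kummer cover. The key point is that on $N'$ each $g_l$ already admits an $m$-th root, so the normalized iterated fibre products $(N' \times_N \cdots \times_N N')^{\nu}$ become finite \'etale over $N'$: after adjoining $m$-th roots of unity the covering is a $\mu_m^{\oplus r}$-torsor away from the divisor and these normalized fibre products split as disjoint unions of copies of $N'$, with $\mathcal{O}(N')^{\mu_m^{\oplus r}} = \mathcal{O}(N)$. Thus the \v{C}ech complex is identified with the complex computing the group cohomology $H^{\bullet}(\mu_m^{\oplus r}, \mathcal{O}(N'))$, which vanishes in positive degrees because the group order $m^r$ is invertible in the $\mathbb{Q}_p$-algebra $\mathcal{O}(N')$. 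I expect this step to be the main obstacle: verifying that passing to the normalization converts the ramified Kummer fibre products into \'etale torsor data, so that \v{C}ech cohomology collapses to the cohomology of a finite group of invertible order, is where the roots-of-unity bookkeeping (and, over a base lacking them, a faithfully flat base extension followed by Galois descent) must be carried out with care. Assembling the \'etale and Kummer computations through the refinement in the criterion --- and using \Cref{Abhyankar's Lemma} to arrange $N$ smooth when convenient --- yields $H^q((V,N),\mathcal{O}_{X_{\log}}) = 0$ for $q>0$, completing both cases.
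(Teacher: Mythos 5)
Your proposal is correct in substance but organizes the argument differently from the paper. The paper proves case (2) first, by induction on the cohomological degree $q$ and a class chase: given $\xi$ killed by some covering $(V',N')$, it passes to the Kummer cover $(V,N[\sqrt[m]{f_l}]_{k'})$ with $k'=k(\zeta_m)$ furnished by \Cref{Abhyankar's Lemma} and \Cref{Theorem A}, kills $\xi$ there by \'{e}tale descent (the induction hypothesis collapses the \v{C}ech-to-cohomology spectral sequence for the \'{e}tale covering $N''_{k'} \to N[\sqrt[m]{f_l}]_{k'}$), and then identifies the remaining obstruction for $\alpha\colon (V,N[\sqrt[m]{f_l}]_{k'}) \to (V,N)$ with the group cohomology $H^q(G,M)$ of the finite Galois group $G$ (which includes $\mathrm{Gal}(k'/k)$, the paper's way of handling the roots of unity you worry about), vanishing since $M$ is a $\mathbb{Q}_p$-vector space; case (1) is then deduced from case (2). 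You run the reductions in the opposite direction: case (1) is reduced, via the retract of $\mathcal{O}^{\oplus s}$, to acyclicity of $\mathcal{O}_{X_{\log}}$ on affinoids, proved once and for all by Cartan's criterion, and case (2) is deduced from case (1) by making the \v{C}ech nerve of the Kummer covering $\mathcal{F}$-acyclic via \Cref{Theorem A}. Your two local computations --- exactness of the \'{e}tale Amitsur complex (note that literal Tate acyclicity covers only rational coverings; for a general \'{e}tale covering one factors into rational embeddings and finite \'{e}tale maps, which is what the paper's ``\'{e}tale descent'' amounts to) and the identification of the Kummer \v{C}ech complex with the cochain complex of a finite group of invertible order via the splitting $(N'^{\times_N (j+1)})^{\nu} \cong N' \times G^{j}$ --- are exactly the two computations powering the paper's proof, including the normality point $\mathcal{O}(N')^{G}=\mathcal{O}(N)$.

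The one step you leave genuinely unaddressed is the assembly: exactness of the \v{C}ech complexes of the \'{e}tale piece and of the Kummer piece separately does not formally yield exactness for the composite covering $N'' \to N[\sqrt[m]{g_l}] \to N$, which is what Cartan's criterion requires of the refinement. This is fixable: since $N'' \to P \coloneqq N[\sqrt[m]{g_l}]_{k'}$ is \'{e}tale and normalization commutes with \'{e}tale base change, $(N''^{\times_N (j+1)})^{\nu}$ splits as a disjoint union indexed by $G^{j}$ of fibre products of $N''$ over $P$, so the composite \v{C}ech complex is the total complex of a double complex whose columns are exact \'{e}tale \v{C}ech complexes and whose rows then compute $H^{\bullet}(G,-)$ with $\mathbb{Q}_p$-linear coefficients; a Cartan--Leray argument finishes. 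The paper's induction on $q$ is precisely the device that circumvents this double-complex bookkeeping, killing the class in two successive stages instead of proving exactness of one composite complex. Two smaller points: when factoring a covering $(V',N') \to (V,N)$ through $N\times_V V'$, the map $N' \to N\times_V V'$ is in general surjective only onto a union of connected components (\Cref{lemmopen0}(2)), so replace $N\times_V V'$ by this open and closed image before splitting into your two types; and your appeal to \Cref{Abhyankar's Lemma} to produce model Kummer covers needs the divisor components to be principal on the relevant affinoid, so the refinement must first localize $V$ accordingly --- which is exactly why hypothesis (2) appears in the statement.
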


\begin{proof}
We first observe that the statement of this theorem for objects satisfying condition (2) implies
the statement for objects satisfying condition (1). Indeed, we can cover $V$ by $V_i$ satisfying
(2). Therefore by the statement for objects satisfying condition (2), we see that 
$(V_i,V_i \times_V N) \to (V,N)$ is an acyclic cover for $\mathcal{F}$. Hence
by \v{C}ech-to-cohomology spectral sequence we see that
$H^q((V,N),\mathcal{F})$ is the same as $q$-th \v{C}ech cohomology for this covering, which is the cohomology of 
\v{C}ech complex associated to the affinoid covering $\{V_i \times_V N\}$ for our finite projective
module $M$. Hence we get $H^q((V,N),\mathcal{F}) = 0$ as $N$ is an affinoid. 

From now on we will assume that our $(V,N)$ satisfies condition (2). 
We will prove the vanishing of
cohomology by induction on $q$ (the starting case $q=1$ follows from the same argument),
therefore we will assume for objects satisfying (2) the cohomology of 
$\mathcal{F}$ vanishes up to degree $q-1$.

Let $\xi \in H^q((V,N),\mathcal{F})$ be a cohomology class. Then there exists a covering by qcqs objects
$(V',N') \to (V,N)$ such that $\xi$ pulls back to zero in $H^q((V',N'),\mathcal{F})$.
Then by \Cref{Abhyankar's Lemma} and \Cref{Theorem A} we can find an $m$ such that 
\begin{enumerate}
\item $\mathcal{F}|_{(V,N[\sqrt[m]{f_l}])}$ is represented by a finite projective
$\mathcal{O}(N[\sqrt[m]{f_l}])$-module $M$;
\item $N'' \eqqcolon (N[\sqrt[m]{f_l}] \times_N N')^{\nu} \to N[\sqrt[m]{f_l}]$ is an \'{e}tale covering.
\end{enumerate}
Let $k' = k[\zeta_m]$ where $\zeta_m$ is a primitive $m$-th root of unity.
Let us consider the following diagram
\[
\xymatrix{
(V,N[\sqrt[m]{f_l}]_{k'}) \ar[d]^{\alpha} & (V',N''_{k'}) \ar[l]_(0.4){\beta} \ar[d] \\
(V,N) & (V',N') \ar[l] \\
}
\]
where subscript $(\cdot)_{k'}$ means the base change of spaces from $k$ to $k'$.
The cohomology class $\xi$ is assumed to be zero on $(V',N')$, hence it is zero on $(V',N''_{k'})$.
Now by \v{C}ech-to-cohomology spectral sequence
\[
\xymatrix{ E^{a,b}_2=\textnormal{\v{H}}^a(\beta, \underline{H}^b{\mathcal{F}}) \ar@{=>}[r]
& H^{a+b}((V,N[\sqrt[m]{f_l}]_{k'}),\mathcal{F})}
\]
and induction hypotheses, we have an exact sequence as follows
\[
0 \rightarrow \textnormal{\v{H}}^{q}(\beta, \mathcal{F}) \rightarrow
H^q((V,N[\sqrt[m]{f_l}]_{k'}),\mathcal{F}) \rightarrow 
\textnormal{H}^{q}((V',N''_{k'}), \mathcal{F}).
\]
From this sequence, we see that $\xi_{(V,N[\sqrt[m]{f_l}]_{k'})}$
is represented by a class in \v{C}ech cohomology of $\mathcal{F}$
associated to the cover given by $\beta$. Moreover,
for $q\geq 1$, we have that $\textnormal{\v{H}}^{q}(\beta, \mathcal{F})$ is zero by 
(1), (2) and \'etale descent. It follows that $\xi_{(V,N[\sqrt[m]{f_l}]_{k'})}=0$.

Therefore, as above, by induction hypothese
and \v{C}ech-to-cohomology spectral sequence 
we see that $\xi$ is represented by a class in $\check{H}^q(\alpha,\mathcal{F})$, the
\v{C}ech cohomology of $\mathcal{F}$
associated to the cover given by $\alpha$. Now we notice that the $j$-th fold product of 
$(V,N[\sqrt[m]{f_l}]_{k'})$ over $(V,N)$ is isomorphic to 
$(V,N[\sqrt[m]{f_l}]_{k'}) \times G \times \ldots \times G$ with $(j-1)$-st copies of $G$'s appearing in
the product. Here $G$ is the Galois group of the covering $N[\sqrt[m]{f_l}]_{k'}$ over $N$.
The sheaf condition gives us an action of $G$ on $M$ and 
$\check{H}^q(\alpha,\mathcal{F}) = H^q(G,M)$ which is zero because $M$ is divisible
and $G$ is a finite group. This proves that
$\xi = 0$. 
\end{proof}

\begin{corollary}
\label{pushforward vector bundle}
Let $\lambda \colon X_{\log} \to X_{\mathrm{\acute{e}t}}$ be the morphism of sites sending $U$ to $(U,U)$.
Then we have
\[
R \lambda_{*} \mathcal{O}_{X_{\log}} \cong \mathcal{O}_{X_{\mathrm{\acute{e}t}}}
\]
Therefore for any vector bundle $\mathcal{F}$ on $X_{\mathrm{\acute{e}t}}$, we have
\[
\mathcal{F} \cong R \lambda_{*} \lambda^* \mathcal{F}.
\]
In particular, we see that $\lambda^*(\cdot)$ gives a fully faithful embedding from the category of vector bundles
on $X_{\mathrm{\acute{e}t}}$ to that on $X_{\log}$.
\end{corollary}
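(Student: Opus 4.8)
The plan is to reduce everything to \Cref{Theorem B} together with the observation that $\mathcal{O}_{X_{\log}}$ is itself a vector bundle on $X_{\log}$. Recall that for the morphism of sites $\lambda \colon X_{\log} \to X_{\mathrm{\acute{e}t}}$ induced by the functor $V \mapsto (V,V)$, the sheaf $R^q \lambda_* \mathcal{G}$ is the sheafification of the presheaf $V \mapsto H^q((V,V), \mathcal{G})$ on $X_{\mathrm{\acute{e}t}}$. So the heart of the matter is to compute $R\lambda_* \mathcal{O}_{X_{\log}}$.

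First I would compute this pushforward. Covering $X$ by affinoid opens shows that $\mathcal{O}_{X_{\log}}$ is a vector bundle in the sense of our definition, represented on each $(V_i,V_i)$ by the free rank-one module $\mathcal{O}(V_i)$. Hence for any affinoid $V$ \'{e}tale over $X$, the object $(V,V) \in X_{\log}$ satisfies condition (1) of \Cref{Theorem B} with $M = \mathcal{O}(V)$, so $H^q((V,V), \mathcal{O}_{X_{\log}}) = 0$ for all $q > 0$. Since affinoids form a basis of $X_{\mathrm{\acute{e}t}}$, the presheaf $V \mapsto H^q((V,V), \mathcal{O}_{X_{\log}})$ vanishes on a basis, whence its sheafification $R^q\lambda_* \mathcal{O}_{X_{\log}}$ is zero for $q > 0$. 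For $q = 0$ we have $(\lambda_* \mathcal{O}_{X_{\log}})(V) = \mathcal{O}_{X_{\log}}(V,V) = \mathcal{O}_V(V) = \mathcal{O}_{X_{\mathrm{\acute{e}t}}}(V)$, so $\lambda_* \mathcal{O}_{X_{\log}} = \mathcal{O}_{X_{\mathrm{\acute{e}t}}}$, and therefore $R\lambda_* \mathcal{O}_{X_{\log}} \cong \mathcal{O}_{X_{\mathrm{\acute{e}t}}}$.

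Next, for an arbitrary vector bundle $\mathcal{F}$ on $X_{\mathrm{\acute{e}t}}$, I would check that the adjunction unit $\mathcal{F} \to R\lambda_* \lambda^* \mathcal{F}$ is an isomorphism. This is a local statement on $X_{\mathrm{\acute{e}t}}$, so I may restrict to an affinoid $V$ over which $\mathcal{F}$ is free, say $\mathcal{F}|_V \cong \mathcal{O}_{X_{\mathrm{\acute{e}t}}}^{\oplus n}|_V$. Then $\lambda^* \mathcal{F}$ becomes $\mathcal{O}_{X_{\log}}^{\oplus n}$ over $(V,V)$, and since $R\lambda_*$ commutes with finite direct sums the previous computation gives $R\lambda_* \lambda^* \mathcal{F}|_V \cong (R\lambda_* \mathcal{O}_{X_{\log}})^{\oplus n}|_V \cong \mathcal{O}_{X_{\mathrm{\acute{e}t}}}^{\oplus n}|_V$, with the adjunction map corresponding to the identity on each summand; thus the unit is an isomorphism.

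Finally, full faithfulness of $\lambda^*$ on vector bundles follows formally. Since a vector bundle $\mathcal{F}$ is locally free we have $L\lambda^*\mathcal{F} = \lambda^*\mathcal{F}$, so for vector bundles $\mathcal{F},\mathcal{G}$ the derived adjunction $(\lambda^*, R\lambda_*)$ together with the isomorphism just proved yields
\[
\mathrm{Hom}_{X_{\log}}(\lambda^* \mathcal{F}, \lambda^* \mathcal{G}) \cong \mathrm{Hom}_{D(X_{\mathrm{\acute{e}t}})}(\mathcal{F}, R\lambda_* \lambda^* \mathcal{G}) \cong \mathrm{Hom}_{X_{\mathrm{\acute{e}t}}}(\mathcal{F}, \mathcal{G}).
\]
The only genuine input is \Cref{Theorem B}; once the vanishing $H^q((V,V), \mathcal{O}_{X_{\log}}) = 0$ is in hand, the rest is a formal consequence of adjunction and the locality of the statements. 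Accordingly, the main points to get right are merely the identification of $(V,V)$ as an object satisfying the hypotheses of \Cref{Theorem B} and the reduction of the general vector bundle to the free case, rather than any substantial new computation.
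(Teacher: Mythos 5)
Your proposal is correct and follows essentially the same route as the paper: the paper's (very terse) proof also deduces the vanishing $R^q\lambda_*\mathcal{O}_{X_{\log}}=0$ for $q>0$ from \Cref{Theorem A} and \Cref{Theorem B}, and gets $\mathcal{F}\cong R\lambda_*\lambda^*\mathcal{F}$ and full faithfulness from adjunction, exactly as you do. The only (harmless) divergence is that you observe \Cref{Theorem A} is not actually needed here, since $\mathcal{O}_{X_{\log}}$ restricted to any affinoid $(V,V)$ is tautologically represented by the free rank-one module $\mathcal{O}(V)$, so condition (1) of \Cref{Theorem B} applies directly.
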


\begin{proof}
The first assertion follows from Theorem A and B above. The second assertion follows from adjunction formula.
\end{proof}

\begin{theorem}
\label{Theorem C}
For any vector bundle $\mathcal{F}$ on $X_{\log}$, the cohomology groups
\[
H^q((X,X),\mathcal{F})
\]
are finite dimensional $k$ vector spaces for all $q$.
\end{theorem}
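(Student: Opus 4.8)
The plan is to reduce the statement to the finiteness theorem for coherent cohomology of proper rigid analytic spaces, by pushing $\mathcal{F}$ forward along the morphism of sites $\lambda \colon X_{\log} \to X_{\mathrm{\acute{e}t}}$ of \Cref{pushforward vector bundle}. Since the terminal object $(X,X)$ is the image of the terminal object $X \in X_{\mathrm{\acute{e}t}}$ under the functor defining $\lambda$, the Leray spectral sequence gives a canonical identification $R\Gamma((X,X),\mathcal{F}) \cong R\Gamma(X_{\mathrm{\acute{e}t}}, R\lambda_*\mathcal{F})$. Hence it suffices to show that $R\lambda_*\mathcal{F}$ is concentrated in degree $0$ and that $\lambda_*\mathcal{F}$ is a coherent $\mathcal{O}_X$-module; granting this, properness of $X$ together with Kiehl's finiteness theorem shows that each $H^q(X_{\mathrm{\acute{e}t}}, \lambda_*\mathcal{F})$ is a finite-dimensional $k$-vector space, which is what we want.

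First I would fix, using \Cref{Theorem A}, a single positive integer $m$ working for all affinoid $V$ \'{e}tale over $X$. The affinoids $V \xrightarrow{f} X$ that are \'{e}tale over $X$ and for which each $f^{-1}(D_l)$ is cut out by one function $f_l$ form a basis of $X_{\mathrm{\acute{e}t}}$; for such a $V$, the object $(V,V) \in X_{\log}$ satisfies condition (2) of \Cref{Theorem B}, so $H^q((V,V),\mathcal{F}) = 0$ for all $q > 0$. As $R^q\lambda_*\mathcal{F}$ is the sheafification of $V \mapsto H^q((V,V),\mathcal{F})$, this proves $R^q\lambda_*\mathcal{F} = 0$ for $q > 0$, so $R\lambda_*\mathcal{F} = \lambda_*\mathcal{F}$ is a single sheaf.

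It remains to identify $\lambda_*\mathcal{F}$ as a coherent sheaf, which is the main technical point. Here I would use the Kummer cover: set $k' = k[\zeta_m]$ and, for $V$ in the basis above, put $W = V[\sqrt[m]{f_l}]_{k'}$, so that $(V,W) \to (V,V)$ is a covering in $X_{\log}/(V,V)$ which is Galois with finite group $G$ (the Kummer group $(\mathbb{Z}/m)^r$ extended by $\mathrm{Gal}(k'/k)$). By \Cref{Theorem A}, $\mathcal{F}$ is represented on $(V,W)$ by a finite projective $\mathcal{O}(W)$-module $M$, and all iterated fibre products of $(V,W)$ over $(V,V)$ are disjoint unions of copies of $(V,W)$, on which $\mathcal{F}$ is acyclic by \Cref{Theorem B}(1). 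The \v{C}ech-to-cohomology spectral sequence for this cover therefore identifies $R\Gamma((V,V),\mathcal{F})$ with the $0$-th \v{C}ech term in degree $0$, giving $\lambda_*\mathcal{F}(V) = \mathcal{F}(V,V) = M^{G}$, where $\mathcal{O}(W)^{G} = \mathcal{O}(V)$.

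Finally I would verify coherence and base change. The module $M$ is finite over $\mathcal{O}(V)$ because $W \to V$ is finite, so the $\mathcal{O}(V)$-submodule $M^{G}$ is finite over the Noetherian ring $\mathcal{O}(V)$. For a rational open $V' \subset V$ (again in our basis) one has $W' = W \times_V V'$ and $M \otimes_{\mathcal{O}(V)} \mathcal{O}(V') = M \otimes_{\mathcal{O}(W)} \mathcal{O}(W')$; since taking $G$-invariants is the kernel of $x \mapsto (gx - x)_{g}$ and the base change $\mathcal{O}(V) \to \mathcal{O}(V')$ is flat, the formation of $M^{G}$ commutes with this localization. Thus $\lambda_*\mathcal{F}|_V$ is the coherent sheaf associated to $M^{G}$, so $\lambda_*\mathcal{F}$ is a coherent $\mathcal{O}_X$-module and the reduction in the first paragraph concludes the argument. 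I expect the delicate step to be precisely this last one: passing from ``$\lambda_*\mathcal{F}$ has finitely generated sections on a basis'' to ``$\lambda_*\mathcal{F}$ is a genuine coherent sheaf'', for which the clean Galois description $\lambda_*\mathcal{F}(V) = M^{G}$ and its compatibility with flat localization are the essential inputs; everything else follows formally from Theorems A and B.
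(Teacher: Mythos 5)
Your argument is correct, but it takes a genuinely different route from the paper's. The paper proves finiteness directly on $X_{\log}$ by the functional-analytic mechanism underlying Kiehl's proper mapping theorem: using \Cref{analogue Lemma 5.3} it builds two nested finite coverings $\coprod_i (V_i,V_i[\sqrt[m]{f_{i,l}}])$ and $\coprod_i (V_i',V_i'[\sqrt[m]{f_{i,l}}])$ with $V_i' \Subset_X V_i$, notes via Theorems A and B that both coverings are acyclic so that each \v{C}ech complex computes $H^q((X,X),\mathcal{F})$, and concludes because the restriction map between the two \v{C}ech complexes of Banach spaces is a compact (inner) map inducing an isomorphism on cohomology --- the Cartan--Serre/Schwartz argument from the proof of Kiehl's theorem. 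You instead descend $\mathcal{F}$ to $X$ itself: the vanishing $R^q\lambda_*\mathcal{F}=0$ for $q>0$ follows from \Cref{Theorem B} on the basis of objects $(V,V)$ satisfying its condition (2), and your identification $\lambda_*\mathcal{F}(V)=M^G$ via the Galois--\v{C}ech computation for the cover $(V,V[\sqrt[m]{f_l}]_{k'}) \to (V,V)$ is legitimate --- it is exactly the computation (fibre products $\cong (V,W)\times G \times \cdots \times G$, sheaf condition giving the $G$-action on $M$) that appears inside the paper's own proof of \Cref{Theorem B} --- after which finiteness of $M$ over the Noetherian ring $\mathcal{O}(V)$ plus flat base change for the kernel defining $G$-invariants gives coherence, and Kiehl's finiteness theorem is quoted as a black box. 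Both proofs are sound; yours buys a stronger structural statement (every vector bundle on $X_{\log}$ has coherent derived pushforward concentrated in degree $0$, sharpening \Cref{pushforward vector bundle}), while the paper's avoids the coherence bookkeeping entirely and would survive even if $\lambda_*\mathcal{F}$ were hard to identify. Two points you should make explicit to close the argument: (i) for a rational subset $V' \subset V$, the fibre product $W \times_V V'$ is again normal (affinoid algebras are excellent and rational localization preserves normality), hence coincides with $V'[\sqrt[m]{f_l}]_{k'}$, which is what licenses running the same Galois computation over $V'$; and (ii) your coherence check lives on rational subsets, i.e.\ on the analytic site, so either work with $\mu \colon X_{\log} \to X_{an}$ throughout (as the paper does in the proof of \Cref{analogue Lemma 5.8}) or invoke the agreement of analytic and \'{e}tale coherent cohomology before applying Kiehl's theorem.
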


\begin{proof}
By \Cref{analogue Lemma 5.3} we may find two affinoid coverings $\{V_i\}$ and $\{V_i'\}$ of $X$,
such that
\begin{enumerate}
\item $V_i' \Subset_X V_i$ for all $i$;
\item $V_i$ (hence $V_i'$) satisfies condition (2) in Theorem B, i.e., $D_l \cap V_i$ is given by
vanishing of $f_{i,l}$.
\end{enumerate}
Now by Theorem A, there is an $m$ such that $\mathcal{F}|_{(V_i,V_i[\sqrt[m]{f_{i,l}}])}$ is represented
by a finite projective module $M_i$. By the same reasoning $\mathcal{F}|_{(V_i',V_i'[\sqrt[m]{f_{i,l}}])}$
is represented by $M_i|_{V_i'[\sqrt[m]{f_{i,l}}]}$. By Theorem B, we see that the covering
$\coprod_i (V_i,V_i[\sqrt[m]{f_{i,l}}]) \to (X,X)$ 
(resp.~$\coprod_i (V_i',V_i'[\sqrt[m]{f_{i,l}}]) \to (X,X)$) is acyclic for $\mathcal{F}$.
Therefore we see that
\[
H^q((X,X),\mathcal{F}) = \check{H}^q(\coprod_i (V_i,V_i[\sqrt[m]{f_{i,l}}]) \to (X,X), \mathcal{F})
\]
\[
= \check{H}^q(\coprod_i (V_i',V_i'[\sqrt[m]{f_{i,l}}]) \to (X,X), \mathcal{F}).
\]
On the other hand, by our choice of $V_i$ and $V_i'$, we have that $V_i'[\sqrt[m]{f_{i,l}}]$ is
strictly contained in $V_i[\sqrt[m]{f_{i,l}}]$. Therefore the map from the \v{C}ech complex of
$(V_i,V_i[\sqrt[m]{f_{i,l}}])$ to that of $(V_i',V_i'[\sqrt[m]{f_{i,l}}])$ is strictly continuous
and an isomorphism on cohomology groups. Hence we see that these cohomology groups are finite dimensional
$k$ vector spaces. See also the proof of Kiehl's proper mapping theorem 
in~\cite[6.4]{Lectures}.
\end{proof}

The above theorem implies the following base change lemma, which will be used later.

\begin{lemma}
\label{analogue Lemma 7.13}
Let $X$ be a smooth adic space over $\mathrm{Spa}(k,\mathcal{O}_{k})$ with an SSNC divisor $D$.
Let $\mathcal{A}$ be a vector bundle on $X_{\log}$. Then for all $i,j \in \mathbb{Z}$, 
we have an isomorphism
\[
H^j((X,X),\mathcal{A}) \otimes_k \mathrm{gr}^i B_{\mathrm{dR}} \cong
H^j((X,X_{\bar{k}}),\mathcal{A} \otimes_{\mathcal{O}_{X_{\log}}} \mathrm{gr}^i 
\mathcal{O}\mathbb{B}_{\mathrm{logdR}})
\]
where the latter group is computed on $X_{\mathrm{prolog}}$.
\end{lemma}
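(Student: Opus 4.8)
The plan is to reduce everything to the case $i = 0$ by a Tate twist, compute the right-hand side by a \v{C}ech argument against the acyclic covers furnished by \Cref{Theorem A} and \Cref{analogue Lemma 5.3}, and finally compare with $H^j((X,X),\mathcal{A})$ via completed base change from $k$ to $\hat{\bar k}$.

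First I would invoke \Cref{analogue Corollary 6.15}: since $\mathrm{gr}^{\bullet}\mathcal{O}\mathbb{B}_{\log\mathrm{dR}} \cong \hat{\mathcal{O}}_{X_{\log}}[\xi^{\pm 1}, X_1, \ldots, X_n]$ with $\xi$ invertible, multiplication by $\xi^i$ gives a canonical isomorphism $\mathrm{gr}^0\mathcal{O}\mathbb{B}_{\log\mathrm{dR}}(i) \cong \mathrm{gr}^i\mathcal{O}\mathbb{B}_{\log\mathrm{dR}}$ which is $\mathcal{O}_{X_{\log}}$-linear, hence compatible with tensoring by the pulled-back vector bundle $\nu^*\mathcal{A}$. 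Over the arithmetic object $(X, X_{\bar k})$ the Tate twist $(i)$ is the rank-one $\hat{\bar k}$-line $\mathrm{gr}^i B_{\mathrm{dR}}$, so this identification reduces the statement for general $i$ to the case $i = 0$ after tensoring the target cohomology with $\mathrm{gr}^i B_{\mathrm{dR}}$ over $\hat{\bar k}$; matching coefficients on the left-hand side then recovers the general case from $H^j((X,X),\mathcal{A})\otimes_k\hat{\bar k}$.

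For $i = 0$, I would choose, exactly as in the proof of \Cref{Theorem C}, two finite affinoid covers with $V_i' \Subset_X V_i$, both satisfying condition (2) of \Cref{Theorem B} together with the \'etale-to-torus structure of \Cref{analogue Lemma 5.3}, and an integer $m$ (from \Cref{Theorem A}) so that $\mathcal{A}$ is represented on each $(V_i, V_i[\sqrt[m]{f_{i,l}}])$ by a finite projective $\mathcal{O}(V_i[\sqrt[m]{f_{i,l}}])$-module $M_i$. The key local input is \Cref{ramified cover obdr}: on each ramified cover base-changed to $\bar k$ one has $H^{q}((V_i, V_i[\sqrt[m]{f_{i,l}}]_{\bar k}), \mathrm{gr}^0\mathcal{O}\mathbb{B}_{\log\mathrm{dR}})$ equal to $\mathcal{O}(V_i[\sqrt[m]{f_{i,l}}])\hat{\otimes}_k\hat{\bar k}$ for $q = 0$ and $0$ for $q > 0$. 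Since $M_i$ is a direct summand of a finite free module, the projection formula upgrades this to acyclicity for $\nu^*\mathcal{A}\otimes\mathrm{gr}^0\mathcal{O}\mathbb{B}_{\log\mathrm{dR}}$, with $H^0$ on $(V_i, V_i[\sqrt[m]{f_{i,l}}]_{\bar k})$ equal to $M_i\hat{\otimes}_k\hat{\bar k}$. Consequently the \v{C}ech complex computing $H^j((X, X_{\bar k}), \nu^*\mathcal{A}\otimes\mathrm{gr}^0\mathcal{O}\mathbb{B}_{\log\mathrm{dR}})$ is the completed base change along $k \to \hat{\bar k}$ of the \v{C}ech complex $C^{\bullet}$ computing $H^j((X,X),\mathcal{A})$.

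The last step, which I expect to be the main obstacle, is to show that this completed base change commutes with passing to cohomology, i.e.\ that $H^j(C^{\bullet}\hat{\otimes}_k\hat{\bar k}) \cong H^j(C^{\bullet})\otimes_k\hat{\bar k}$. This is precisely where the finiteness of \Cref{Theorem C} and the strict containments $V_i'\Subset_X V_i$ enter: exactly as in Kiehl's proper mapping theorem and the proof of \Cref{Theorem C}, the map from the $V_i'$-\v{C}ech complex to the $V_i$-\v{C}ech complex is strict and a quasi-isomorphism onto finite-dimensional cohomology, so the differentials have closed image and completed base change stays exact on it; finite-dimensionality of $H^j((X,X),\mathcal{A})$ over $k$ then turns $\hat{\otimes}_k\hat{\bar k}$ into $\otimes_k\hat{\bar k}$. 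The remaining points requiring care are the acyclicity on the multi-fold intersections, which I would handle by the same \Cref{analogue Proposition 6.16}-type group-cohomology computation applied to the rational subsets $V_i\cap V_j$ provided by \Cref{analogue Lemma 5.3}(4), and the verification that the strictness hypotheses for exactness of completed base change are genuinely met.
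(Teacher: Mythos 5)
Your proposal is correct and follows essentially the same route as the paper's proof: reduce to $i=0$ by twisting, take the acyclic covering $\coprod_i (V_i, V_i[\sqrt[m]{f_{i,l}}])$ from the proof of \Cref{Theorem C}, use \Cref{ramified cover obdr} to identify the right-hand side with the cohomology of $\mathcal{C}^{\bullet} \hat{\otimes}_k \hat{\bar{k}}$, and conclude by commuting completed base change with cohomology via the finite-dimensionality of $H^j(\mathcal{C}^{\bullet})$ supplied by \Cref{Theorem C}. The extra details you supply (the twist via \Cref{analogue Corollary 6.15}, acyclicity on multi-fold intersections through \Cref{analogue Lemma 5.3}(4), and strictness of the differentials) are exactly the points the paper leaves implicit, and they are handled correctly.
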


\begin{proof}
By twisting, it suffices to prove the case where $i=0$. The statement reads
\[
H^j((X,X),\mathcal{A}) \otimes_k \hat{\bar{k}} \cong
H^j((X,X_{\bar{k}}),\mathcal{A} \otimes_{\mathcal{O}_{X_{\log}}} \mathrm{gr}^0 
\mathcal{O}\mathbb{B}_{\mathrm{logdR}}).
\]
To this end, let $\coprod (V_i,V_i[\sqrt[m]{f_{i,l}}])$ be an acyclic covering of
$\mathcal{A}$ as in the proof of~\Cref{Theorem C}. Denote the \v{C}ech complex
associated to $\mathcal{A}$ and this covering by $\mathcal{C}^{\bullet}$.
By~\Cref{ramified cover obdr}, we know that RHS is cohomology groups of
$\mathcal{C}^{\bullet} \hat{\otimes}_k \hat{\bar{k}}$.
Therefore we reduce to the statement
\[
H^j(\mathcal{C}^{\bullet}) \hat{\otimes}_k \hat{\bar{k}} \cong
H^j(\mathcal{C}^{\bullet} \hat{\otimes}_k \hat{\bar{k}}).
\]
This follows from the fact that $\mathcal{C}^{\bullet}$ has finite dimensional (as $k$ vector spaces)
cohomology groups.
\end{proof}

\begin{remark}
(1) There are interesting vector bundles on $X_{\log}$ not coming from $X_{\mathrm{\acute{e}t}}$.
Assume $D \subset X$ is a smooth divisor, the ``square root'' of the ideal sheaf of $D$,
given by $\sqrt{I_D}(V,N)=\{a\in \Gamma(N,\mathcal{O}_{N})|a^2 \in g^* I_{(D)}\}$
(for $g \colon N \to V$), is
such an example.

(2) One can develop a more general theory of ``coherent'' sheaf and prove similar theorems
as above for these sheaves. We will not work it out in this note however, since it is
irrelevant to the theme of this note.
\end{remark}

% ** Comparisons
\subsection{Proof of the Comparison}
\label{subsection 7.2}

In this subsection, 
let $k$ be an discretely valued complete non-archimedean extension of 
$\mathbb{Q}_p$ with perfect residue field $\kappa$. 
Let $X$ be a smooth adic space over $\mathrm{Spa}(k,\mathcal{O}_{k})$
with an SSNC divisor $D$. Denote $X \setminus D$ by $U$.
Denote an algebraic closure of $k$ by $\overline{k}$ and its completion by $\hat{\bar{k}}$. 
Let $A_{\mathrm{inf}}$, $B_{\mathrm{inf}}$, etc.~be the period rings as defined by Fontaine.

\begin{theorem}
\label{first comparison}
There is a canonical isomorphism
\[
H^{m}\big((X,X_{\bar{k}}),\mathbb{B}_{\mathrm{dR}}^+ \big) 
\otimes_{B_{\mathrm{dR}}^+} B_{\mathrm{dR}} \cong 
H^m\big(X, \Omega_{X_{\log}}^{\bullet}(\log D) \big) \otimes_k B_{\mathrm{dR}}
\]
compatible with filtrations and $\mathrm{Gal}(\bar{k}/k)$-actions.

Moreover, we have a $\mathrm{Gal}(\bar{k}/k)$-equivariant isomorphism
\[
H^m\big((X,X_{\bar{k}}), \hat{\mathcal{O}}_{X_{\log}} \big) \cong 
\bigoplus_{a+b=m} H^{a}(X,\Omega_{X}^{b}(\log D)) \otimes_k \hat{\bar{k}}(-b).
\]
\end{theorem}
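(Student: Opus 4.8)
The plan is to deduce the first (filtered) comparison from the logarithmic Poincar\'{e} lemma and then to read off the Hodge--Tate decomposition as its degree-zero associated graded piece. First I would invoke \Cref{corologPL} and its $t$-inverted version to replace the sheaf $\mathbb{B}_{\mathrm{dR}}^+$ (resp.~$\mathbb{B}_{\mathrm{dR}}$) by the logarithmic de Rham complex $\big[\mathcal{O}\mathbb{B}_{\log\mathrm{dR}}^+ \xrightarrow{\nabla} \mathcal{O}\mathbb{B}_{\log\mathrm{dR}}^+ \otimes \Omega^1_X(\log D) \to \cdots\big]$ (resp.~its analogue), so that $R\Gamma((X,X_{\bar k}),\mathbb{B}_{\mathrm{dR}})$ is computed by the hypercohomology of this de Rham complex on $X_{\mathrm{prolog}}$. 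The comparison map itself I would produce from the evident inclusion of the constant complex $\Omega^\bullet_{X_{\log}}(\log D)$ together with the structural map $B_{\mathrm{dR}}^+ \to R\Gamma(\mathbb{B}_{\mathrm{dR}}^+)$; naturality makes it $\mathrm{Gal}(\bar k/k)$-equivariant for free.

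The heart of the argument is to compute this hypercohomology compatibly with filtrations. Here I would use the strict exactness asserted in \Cref{corologPL}: endowing $\mathbb{B}_{\mathrm{dR}}$ with its filtration and the de Rham complex with the convolution of the Hodge filtration and $\mathrm{Fil}^\bullet \mathcal{O}\mathbb{B}_{\log\mathrm{dR}}$, Griffiths transversality yields for each $r$ a resolution
\[
\mathrm{Fil}^r \mathbb{B}_{\mathrm{dR}} \simeq \big[\mathrm{Fil}^r\mathcal{O}\mathbb{B}_{\log\mathrm{dR}} \to \mathrm{Fil}^{r-1}\mathcal{O}\mathbb{B}_{\log\mathrm{dR}}\otimes\Omega^1_X(\log D)\to\cdots\big],
\]
and hence a resolution of $\mathrm{gr}^r\mathbb{B}_{\mathrm{dR}}$ by the complex with terms $\mathrm{gr}^{r-p}\mathcal{O}\mathbb{B}_{\log\mathrm{dR}}\otimes\Omega^p_X(\log D)$. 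I would then feed each term into the base change \Cref{analogue Lemma 7.13} with the vector bundle $\mathcal{A}=\Omega^p_{X_{\log}}(\log D)$ and twist $i=r-p$, giving
\[
H^j\big((X,X_{\bar k}),\Omega^p_X(\log D)\otimes\mathrm{gr}^{r-p}\mathcal{O}\mathbb{B}_{\log\mathrm{dR}}\big)\cong H^j(X,\Omega^p_X(\log D))\otimes_k\mathrm{gr}^{r-p}B_{\mathrm{dR}}.
\]
Assembling these over $p$ identifies the $E_1$-page of the spectral sequence computing the hypercohomology of the $\mathrm{gr}^r$-complex with $\mathrm{gr}^r$ of the $E_1$-page of the Hodge-filtered de Rham complex of $X$ tensored with $B_{\mathrm{dR}}$.

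To pass from this associated-graded isomorphism to the desired filtered isomorphism I would run a dimension count: \Cref{Theorem C} guarantees that all the logarithmic de Rham groups $H^a(X,\Omega^b_X(\log D))$ are finite-dimensional over $k$, so comparing total ranks across the comparison map forces both the logarithmic Hodge--de Rham spectral sequence and the analogous spectral sequence for $\mathbb{B}_{\mathrm{dR}}$ to degenerate, whereupon strictness upgrades the graded comparison to a filtered one; inverting $t$ then gives the first isomorphism with its filtration and Galois action. I expect \textbf{this degeneration-and-strictness step to be the main obstacle}: one has to set up the convolution filtration so that the two spectral sequences match term by term and check that the comparison map is strictly filtered, which is exactly the point where the finiteness of \Cref{Theorem C} and the completeness of the $B_{\mathrm{dR}}$-filtration are indispensable.

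Finally, the Hodge--Tate decomposition is the degree-zero graded piece of what has just been proved. Using $\mathrm{gr}^0\mathbb{B}_{\mathrm{dR}}\cong\hat{\mathcal{O}}_{X_{\log}}$ from \Cref{summary remark}, we have $H^m((X,X_{\bar k}),\hat{\mathcal{O}}_{X_{\log}})=H^m((X,X_{\bar k}),\mathrm{gr}^0\mathbb{B}_{\mathrm{dR}})$, and the graded computation of the previous paragraph, together with the degeneration already established, gives
\[
H^m((X,X_{\bar k}),\hat{\mathcal{O}}_{X_{\log}})\cong\bigoplus_{a+b=m}H^a(X,\Omega^b_X(\log D))\otimes_k\mathrm{gr}^{-b}B_{\mathrm{dR}}=\bigoplus_{a+b=m}H^a(X,\Omega^b_X(\log D))\otimes_k\hat{\bar k}(-b),
\]
the $\mathrm{Gal}(\bar k/k)$-equivariance being inherited from the first part. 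Alternatively, one could obtain this decomposition directly from the Leray spectral sequence for $\nu$ using the identification of $R^b\nu_*\hat{\mathcal{O}}_{X_{\log}}$ in \Cref{pushforward obdr}(3), with the same degeneration input.
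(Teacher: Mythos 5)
Your overall skeleton---replacing $\mathbb{B}_{\mathrm{dR}}$ by the logarithmic de Rham complex of $\mathcal{O}\mathbb{B}_{\log\mathrm{dR}}$ via \Cref{corologPL}, producing the map from the inclusion of the constant complex $\Omega^{\bullet}_{X_{\log}}(\log D)$, reducing to graded pieces, and invoking \Cref{analogue Lemma 7.13} with $\mathcal{A}=\Omega^p_{X_{\log}}(\log D)$ and twist $i$---is exactly the paper's proof. The genuine gap is the step you yourself flag as the main obstacle, the ``degeneration-and-strictness'' argument, which is both unnecessary and, as justified, incorrect. It is unnecessary because the paper works in the filtered derived category: once the map of filtered complexes is a quasi-isomorphism on every graded piece, it is a filtered quasi-isomorphism (the Hodge filtration is finite and the $B_{\mathrm{dR}}$-filtration complete and exhaustive, so a map inducing isomorphisms on all $E_1$-terms induces isomorphisms on $E_\infty$ and then, by induction up the finite filtration of each $H^m$, a strictly filtered isomorphism on abutments); no degeneration of any spectral sequence enters. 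The Hodge--Tate statement likewise needs no degeneration: the differential on $\mathrm{gr}^0$ of the convolution-filtered complex $\Omega^{\bullet}_{X}(\log D)\otimes_k B_{\mathrm{dR}}$ vanishes identically (the de Rham differential raises the naive degree by one, hence lands in the next step of the convolution filtration), so $H^m(\mathrm{gr}^0)$ is automatically $\bigoplus_{a+b=m}H^a(X,\Omega^b_X(\log D))\otimes_k\hat{\bar{k}}(-b)$, and $\mathrm{gr}^0\mathbb{B}_{\mathrm{dR}}\cong\hat{\mathcal{O}}_{X_{\log}}$ (\Cref{summary remark}) identifies the other side; this is what the paper means by ``taking cohomology of the $0$-th graded piece of both sides.''

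More seriously, your proposed justification of degeneration cannot work: finite-dimensionality (\Cref{Theorem C}) alone never forces degeneration, since $E_\infty$ is a subquotient of $E_1$ and finiteness only yields the inequality $\sum_{a+b=m}\dim_k H^a(X,\Omega^b_X(\log D))\geq\dim_k H^m(X,\Omega^{\bullet}_X(\log D))$; forcing equality requires an independent computation of the abutment. Knowing that the two abutments on either side of the comparison map agree does not help, because both are unknown at this stage. In the paper that independent input is the \'{e}tale side: \Cref{second comparison} (resting on the primitive comparison \Cref{thmpc}) shows $H^i((X,X_{\bar{k}}),\mathbb{B}^+_{\mathrm{dR}})$ is a \emph{free} $B^+_{\mathrm{dR}}$-module, and only then---in the proof of \Cref{Main Theorem}, after both comparison theorems are available---does the dimension count deliver degeneration of the Hodge--de Rham spectral sequence. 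As written, your argument either silently assumes a statement as strong as the one being bootstrapped or must import \Cref{second comparison}, which it never invokes; the correct fix is to delete the step entirely and argue in the filtered derived category as above.
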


\begin{remark}
By~\Cref{pushforward vector bundle}, we have canonical isomorphisms:
\[
H^m\big(X, \Omega_{X_{\log}}^{\bullet}(\log D) \big) \cong
H^m\big((X,X), \Omega_{X_{\log}}^{\bullet}(\log D) \big)
\]
and
\[
H^{a}(X,\Omega_{X}^{b}(\log D)) \cong H^{a}((X,X),\Omega_{X}^{b}(\log D)),
\]
where the left hand side denotes the cohomology computed on the rigid space $X$ and
the right hand side denotes the cohomology computed on the Faltings site $X_{\log}$.
\end{remark}

\begin{proof}
In the filtered derived category we have
\[
R\Gamma\big((X,X_{\bar{k}}),\mathbb{B}_{\mathrm{dR}}^+\big) 
\otimes_{B_{\mathrm{dR}}^+} B_{\mathrm{dR}} = 
R\Gamma\big((X,X_{\bar{k}}),\mathbb{B}_{\mathrm{dR}} \big)
= R\Gamma\big((X,X_{\bar{k}}), \mathcal{O}\mathbb{B}_{\mathrm{logdR}}
\otimes_{\mathcal{O}_{X_{\log}}} \Omega_{X_{\log}}^{\bullet}(\log D) \big)
\]
where the second equality follows from Poincar\'{e} Lemma (c.f.~\Cref{Poincare}).
We claim that the natural map of filtered complexes
\[
\Omega_{X_{\log}}^{\bullet}(\log D) \to 
\mathcal{O}\mathbb{B}_{\mathrm{logdR}}
\otimes_{\mathcal{O}_{X_{\log}}} \Omega_{X_{\log}}^{\bullet}(\log D)
\]
induces a quasi-isomorphism
\[
R\Gamma\big((X,X), \Omega_{X_{\log}}^{\bullet}(\log D) \big) \otimes_k
\mathrm{B}_{\mathrm{dR}} \to 
R\Gamma\big((X,X_{\bar{k}}), \mathcal{O}\mathbb{B}_{\mathrm{logdR}}
\otimes_{\mathcal{O}_{X_{\log}}} \Omega_{X_{\log}}^{\bullet}(\log D) \big).
\]
It suffices to check the claim above on graded pieces. 
Further filtering by using naive filtration of 
$\Omega_{X_{\log}}^{\bullet}(\log D)$, 
one is reduced to show that for any vector bundle $\mathcal{A}$
on $X_{\log}$ and $i \in \mathbb{Z}$, the map
\[
R\Gamma\big((X,X), \mathcal{A} \big) \otimes_k
\mathrm{gr}^i \mathrm{B}_{\mathrm{dR}} \to 
R\Gamma\big((X,X_{\bar{k}}), \mathcal{A} \otimes_{\mathcal{O}_{X_{\log}}}
\mathrm{gr}^i \mathcal{O}\mathbb{B}_{\mathrm{logdR}}\big)
\]
is a quasi-isomorphism.
This follows from~\Cref{analogue Lemma 7.13}. 

Therefore we have constructed a quasi-isomorphism
\[
R\Gamma\big((X,X), \Omega_{X_{\log}}^{\bullet}(\log D) \big) \otimes_k
\mathrm{B}_{\mathrm{dR}} \to
R\Gamma\big((X,X_{\bar{k}}),\mathbb{B}_{\mathrm{dR}}^+ \big) 
\otimes_{B_{\mathrm{dR}}^+} B_{\mathrm{dR}}
\]
in filtered derived category. Now we get comparison results, by taking cohomology
of both sides (resp.~of the $0$-th graded piece of both sides).
\end{proof}

Let us make a remark about the notion of local systems on sites $X_{\log}$
and $X_{\mathrm{prolog}}$.

\begin{remark}
\label{limit remark}
Note that for any $\mathbb{Z}/p^n$-local system $\mathbb{L}_n$ on $U$, 
$(u_{X,*}\mathbb{L}_n)(V,N)= \mathbb{L}_n(N^{\circ})$ for any $(V,N) \in X_{\log}$ (see \Cref{comparison logtopos}). 
By \Cref{lemmacohomology}(1), for any $(V,N = \varprojlim N_i) \in X_{\mathrm{prolog}}$ and any $i \geq 0$ 
we have
\[
H^i((V,N),\nu^*(u_{X,*} \mathbb{L}_n)) = \varinjlim_i H^i((N_i^{\circ}),\mathbb{L}_n).
\]
If no confusion shall arise, we will
still denote $u_{X,*} \mathbb{L}_n$ (resp.~$\nu^*(u_{X,*} \mathbb{L}_n)$) by $\mathbb{L}_n$. 
%In fact,
%we already did so in the above definition of $\hat{\mathbb{Z}}_p$ on $X_{\mathrm{prolog}}$.
\end{remark}

Recall the notion of lisse $\mathbb{Z}_p$-sheaf as in~\cite[Definition 8.1]{Sch13}.
Analogously, we make the following definition.

\begin{definition}
Let $\hat{\mathbb{Z}}_p \coloneqq \varprojlim \mathbb{Z}/p^n$ as sheaves on $X_{\mathrm{prolog}}$. Then a
\emph{lisse $\hat{\mathbb{Z}}_p$-sheaf} on $X_{\mathrm{prolog}}$ is a sheaf $\mathbb{L}$ of 
$\hat{\mathbb{Z}}_p$-modules on $X_{\mathrm{prolog}}$, such that locally $\mathbb{L}$ is isomorphic to
$\hat{\mathbb{Z}}_p \otimes_{\mathbb{Z}_p} M$, where $M$ is a finitely generated $\mathbb{Z}_p$-module.
\end{definition}

In concrete terms, $\mathbb{L}$ is a lisse $\hat{\mathbb{Z}}_p$-sheaf just means that there is a covering
$\coprod_j (V,N)_j \to (X,X)$ in $X_{\mathrm{prolog}}$ such that for each $j$ there is a finitely generated
$\mathbb{Z}_p$-module $M_j$ and a (non-canonical)-isomorphism 
\[
\mathbb{L}|_{(V,N)_j} \simeq (\hat{\mathbb{Z}}_p \otimes_{\mathbb{Z}_p} M_j)|_{(V,N)_j} \\
\coloneqq \varprojlim_m (\nu^*(u_{X,*}M_j/p^m))|_{(V,N)_j}.
\]
Note that if $X$ is connected, then all the $M_j$'s are automatically isomorphic to each other as finitely generated
$\mathbb{Z}_p$-modules.

\begin{proposition}
\label{lisse sheaf on prolog}
Let $\mathbb{L}_{\bullet}$ be a lisse $\mathbb{Z}_p$-sheaf on $U_{\mathrm{\acute{e}t}}$. 
Then $\mathbb{L} = \varprojlim \nu^*(u_{X,*}\mathbb{L}_m)$
is a lisse sheaf of $\hat{\mathbb{Z}}_p$-modules on
$X_{\mathrm{prolog}}$. This functor gives an equivalence of categories. Moreover, 
$R^j \varprojlim \nu^*(u_{X,*}\mathbb{L}_m) = 0$ for $j > 0$.
\end{proposition}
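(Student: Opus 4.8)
The plan is to establish the three assertions---lisseness of $\mathbb{L}$, exactness of the inverse limit, and the equivalence of categories---in parallel with~\cite[Proposition 8.2]{Sch13}, first proving the corresponding mod $p^m$ statements and then passing to the limit. Throughout I would use two inputs repeatedly: by~\Cref{comparison logtopos} the functor $u_{X,*}$ sends a $\mathbb{Z}/p^m$-local system $\mathbb{L}_m$ on $U_{\mathrm{\acute{e}t}}$ to the sheaf $(V,N) \mapsto \mathbb{L}_m(N^{\circ})$ with no higher direct images; and by~\Cref{lemmacohomology}(2) the adjunction $\mathcal{F} \to R\nu_* \nu^* \mathcal{F}$ is an isomorphism, so in particular $\nu_* \nu^* = \mathrm{id}$ on sheaves pulled back from $X_{\log}$.

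For lisseness I would first treat a single $\mathbb{L}_m$. By~\Cref{lemmext} (together with~\Cref{Abhyankar's Lemma}) each $\mathbb{L}_m$ becomes constant after pullback along a finite \'etale cover of $U$; since the $\mathbb{L}_m$ form an inverse system with $\mathbb{L}_m = \mathbb{L}_{m+1}/p^m$, I would choose these trivializing covers compatibly, obtaining over a suitable affinoid \'etale $V \to X$ a tower of finite \'etale covers $N_1^{\circ} \leftarrow N_2^{\circ} \leftarrow \cdots$ of $V^{\circ}$ on which $\mathbb{L}_m|_{N_m^{\circ}}$ is constant. Normalizing $V$ in each $N_i^{\circ}$ and invoking~\Cref{Abhyankar's Lemma} to ensure the $N_i$ are normal with $f_i^{-1}(D)$ nowhere dense, the pair $(V, N = \varprojlim N_i)$ is an object of $X_{\mathrm{prolog}}$, which may be further refined to an affinoid perfectoid one by~\Cref{perfectoid basis}. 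As $V$ ranges over a cover of $X$, these $(V,N)$ form a covering of $(X,X)$, and by construction $\mathbb{L}|_{(V,N)} = \varprojlim_m \nu^*(u_{X,*}\mathbb{L}_m)|_{(V,N)} \cong \hat{\mathbb{Z}}_p \otimes_{\mathbb{Z}_p} M$, where $M$ is the finitely generated (hence $p$-adically complete) $\mathbb{Z}_p$-module underlying the stalk of $\mathbb{L}_{\bullet}$. This exhibits $\mathbb{L}$ as lisse.

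For the vanishing $R^j\varprojlim \nu^*(u_{X,*}\mathbb{L}_m) = 0$ with $j > 0$, I would note that since the system is indexed by $\mathbb{N}$ the case $j \geq 2$ is automatic, so only $R^1\varprojlim = 0$ requires argument. I would check this locally on the affinoid perfectoid cover just produced: there the sections of $\nu^*(u_{X,*}\mathbb{L}_m)|_{(V,N)}$ are computed by~\Cref{lemmacohomology}(1) as $\varinjlim_i \mathbb{L}_m(N_i^{\circ}) = M/p^m$ (on connected pieces), with surjective transition maps, so the system is Mittag--Leffler and $R^1\varprojlim$ of local sections vanishes. Since $R^1\varprojlim$ of the sheaf system is the sheafification of the presheaf of such $R^1\varprojlim$'s, the claim follows.

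Finally, for the equivalence I would proceed in two steps. At finite level, $\nu^* u_{X,*}$ is an equivalence between $\mathbb{Z}/p^m$-local systems on $U_{\mathrm{\acute{e}t}}$ and locally constant $\mathbb{Z}/p^m$-sheaves with finite stalks on $X_{\mathrm{prolog}}$: full faithfulness of $u_{X,*}$ comes from~\Cref{comparison logtopos}(1) (the pushforward recovers sections $\mathbb{L}_m(N^{\circ})$), that of $\nu^*$ from $\nu_*\nu^* = \mathrm{id}$ via~\Cref{lemmacohomology}(2), and essential surjectivity is the reverse of the lisseness argument, the quasi-inverse sending a locally constant sheaf $\mathcal{G}$ to the unique $\mathbb{Z}/p^m$-local system on $U_{\mathrm{\acute{e}t}}$ whose $u_{X,*}$-pushforward is $\nu_*\mathcal{G}$. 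Passing to the limit, $\mathbb{L}_{\bullet} \mapsto \varprojlim \nu^*(u_{X,*}\mathbb{L}_m)$ is then an equivalence onto lisse $\hat{\mathbb{Z}}_p$-sheaves, with quasi-inverse $\mathbb{L} \mapsto \big(\text{the } \mathbb{Z}/p^m\text{-local system attached to } \nu_*(\mathbb{L}/p^m)\big)_m$; here the vanishing of $R^j\varprojlim$ guarantees that no information is lost in the limit. The main obstacle I anticipate is precisely the trivialization in the lisseness step: one must produce a single pro-finite-\'etale object of $X_{\mathrm{prolog}}$ simultaneously trivializing the whole tower $(\mathbb{L}_m)$ and arrange it to be affinoid perfectoid, which is exactly where~\Cref{Abhyankar's Lemma} and~\Cref{perfectoid basis} carry the weight of the argument.
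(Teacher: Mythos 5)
Your treatment of lisseness and of the equivalence of categories tracks the paper's argument closely (the paper trivializes the whole tower at once via~\cite[Theorem 1.6]{Hansen17} applied to a compatible system of covers $U_m$, then goes back from lisse $\hat{\mathbb{Z}}_p$-sheaves by restricting to $N_j^{\circ} \in U_{\mathrm{pro\acute{e}t}}$ and citing~\cite[Proposition 8.2]{Sch13}; your local-on-affinoids variant and your quasi-inverse via $\nu_*$ are essentially the same mechanism). The problem is the ``moreover'' part, where your argument for $R^j\varprojlim \nu^*(u_{X,*}\mathbb{L}_m)=0$ has a genuine gap, in two places. First, the claim that $R^j\varprojlim = 0$ for $j \geq 2$ is ``automatic'' for $\mathbb{N}$-indexed systems is false in a category of abelian sheaves on a site: unlike abelian groups, sheaf categories do not satisfy AB4* (countable products are not exact), so the derived inverse limit can have cohomological amplitude $> 1$. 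This failure is exactly why~\cite[Lemma 3.18]{Sch13} exists and why it carries a second hypothesis beyond Mittag--Leffler.

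Second, even for $R^1\varprojlim$, checking Mittag--Leffler of sections over basis objects is not enough, and the assertion that $R^1\varprojlim$ of the sheaf system is the sheafification of the presheaf of sectionwise $R^1\varprojlim$'s is incorrect. Since $R\Gamma((V,N),-)$ commutes with derived limits, for each basis object one has the Milnor sequence
\[
0 \to {R^1\varprojlim}\, H^{j-1}((V,N),\mathbb{L}_m) \to H^j((V,N), R\varprojlim \mathbb{L}_m) \to \varprojlim H^j((V,N),\mathbb{L}_m) \to 0,
\]
so $R^j\varprojlim$ is controlled by \emph{both} the sectionwise $R^1\varprojlim$ \emph{and} the terms $\varprojlim H^j((V,N),\mathbb{L}_m)$ for $j>0$. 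To kill the latter you need the local vanishing $H^j((V,N),\mathbb{L}_m)=0$ for $j>0$ on a basis --- condition (ii) of~\cite[Lemma 3.18]{Sch13} --- and this is the non-formal heart of the paper's proof: it is supplied by \Cref{corokill} together with \Cref{comparison logtopos}, \Cref{lemmacohomology}(1) and~\cite[Theorem 1.18]{Kiehl67}. Note also that for this step the basis objects must be taken as $(V_i, N_i)$ with $N_i$ the pro-system of \emph{all} objects $N_{i,l}$ over $V_i$, so that $H^j((V_i,N_i),\mathbb{L}_m) = \varinjlim_l H^j(N_{i,l}^{\circ},\mathbb{L}_m)$ vanishes because every class is killed by some further finite \'{e}tale cover; over your specific tower trivializing $\mathbb{L}_{\bullet}$ this colimit need not vanish, since trivializing a local system does not kill its higher cohomology classes.
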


\begin{proof}
Without loss of generality, let us assume that $X$ is connected. First notice that there exists a system of finite
\'{e}tale covers $\{U_m\}$ to $U$ and a compatible system of isomorphisms $\mathbb{L}_m|_{U_m} \xrightarrow{\simeq}
(M/p^m)|_{U_m}$ where $M$ is a finitely generated $\mathbb{Z}_p$-module. By~\cite[Theorem 1.6]{Hansen17}, each $U_m$
extends to an $N_m \to X$. Let $N = \varprojlim N_m$, then $(X,N) \to (X,X)$ is a covering in $X_{\mathrm{prolog}}$.
We see that, with $\mathbb{L}$ as defined in this proposition, we have an
isomorphism $\mathbb{L}|_{(X,N)} \simeq (\hat{\mathbb{Z}}_p \otimes_{\mathbb{Z}_p} M)|_{(X,N)}$. Hence $\mathbb{L}$
as defined in this proposition is a lisse sheaf of $\hat{\mathbb{Z}}_p$-modules on
$X_{\mathrm{prolog}}$. Conversely, let $\mathbb{L}$ be a lisse sheaf of $\hat{\mathbb{Z}}_p$-modules. 
Let $(V,N)_j$ and $M_j = M$ be as in the discussion before this proposition. Then we see that 
$\mathbb{L}|_{N_j^{\circ}} \simeq (\hat{\mathbb{Z}}_p \otimes_{\mathbb{Z}_p} M_j)|_{N_j^{\circ}}$ gives rise to
a lisse sheaf of $\hat{\mathbb{Z}}_p$-modules on $U_{\mathrm{pro\acute{e}t}}$, here each $N_j = \varprojlim N_{j,l}$
is a pro-object in $V_{\mathrm{prof\acute{e}t}}$ and $N_j^{\circ} \coloneqq \varprojlim N_{j,l}^{\circ}$ naturally
is an object in $U_{\mathrm{pro\acute{e}t}}$. Therefore by~\cite[Proposition 8.2]{Sch13}, we get back a lisse
$\mathbb{Z}_p$-sheaf on $U_{\mathrm{\acute{e}t}}$. One verifies that this construction is an inverse to the functor
described in this proposition, therefore the two categories are equivalent under 
$\mathbb{L}_{\bullet} \mapsto \mathbb{L} = \varprojlim \nu^*(u_{X,*}\mathbb{L}_n)$.

To check that $R^j \varprojlim \nu^*(u_{X,*}\mathbb{L}_n) = 0$, we verify the conditions in~\cite[Lemma 3.18]{Sch13}
for $\mathcal{F}_m = \mathbb{L}_m$. 
The condition (i) of \cite[Lemma 3.18]{Sch13} trivially follows from 
the fact that $\mathbb{L}_m$ takes value in finite abelian groups.
The condition (ii) of \cite[Lemma 3.18]{Sch13} 
follows from \Cref{corokill}, \Cref{comparison logtopos}, \Cref{lemmacohomology}(1) 
and~\cite[Theorem 1.18]{Kiehl67}. Indeed, 
\cite[Theorem 1.18]{Kiehl67} tells us that there is an open cover $\{V_i\}$ of $X$ with each $V_i$ of the form 
$S \times \mathbb{D}^{r}$ (and $D \cap V_i = S \times \Delta$) as in \Cref{corokill}. Now we take $N_i$ to be 
the pro-system of all $N_{i,l}$ over $V_i$. By \Cref{comparison logtopos} and \Cref{lemmacohomology}(1), we have 
$H^j((V_i,N_i),\mathbb{L}_m) = \displaystyle \varinjlim_l H^j(N_{i,l}^{\circ}, \mathbb{L}_m)$
which is zero by \Cref{corokill}.
%by \Cref{corokill}, 
%we can find an $N_{i,1} \to V_i$ such that all the classes in $H^j(V_i^{\circ}, \mathbb{L}_m)$ pulls back to zero
%in $H^j()$
\end{proof}

In this note, we will only consider the case where $\mathbb{L}_m = \mathbb{Z}/p^m$.

\begin{theorem}
\label{second comparison}
We have a natural $\mathrm{Gal}(\bar{k}/k)$-equivariant isomorphism
\[
H^i_{\mathrm{\acute{e}t}}(U_{\bar{k}},\mathbb{Z}_p) 
\otimes_{\mathbb{Z}_p} B_{\mathrm{dR}}^+ \cong
H^i((X,X_{\bar{k}}),\mathbb{B}_{\mathrm{dR}}^+).
\]
\end{theorem}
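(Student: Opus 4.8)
The plan is to follow the strategy of \cite[Theorem 8.4]{Sch13}, reducing the statement to the primitive comparison \Cref{thmpc} by means of the filtration on $\mathbb{B}_{\mathrm{dR}}^+$. Throughout I set $C = \hat{\bar{k}}$ and work on the localized site over $(X, X_{\bar{k}})$. Since $B_{\mathrm{dR}}^+$ is a $\mathbb{Q}_p$-algebra and $\mathbb{B}_{\mathrm{dR}}^+$ is a sheaf of $\mathbb{Q}_p$-algebras, the source $H^i_{\mathrm{\acute{e}t}}(U_{\bar{k}}, \mathbb{Z}_p)\otimes_{\mathbb{Z}_p}B_{\mathrm{dR}}^+$ only depends on the rationalization $H^i_{\mathrm{\acute{e}t}}(U_{\bar{k}}, \mathbb{Q}_p)$, so I may freely replace $\mathbb{Z}_p$ by $\mathbb{Q}_p$ wherever convenient.

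First I would reduce the étale side to the pro-log site: combining \Cref{comparison logtopos}, \Cref{lemmacohomology} and \Cref{lisse sheaf on prolog} identifies $H^i_{\mathrm{\acute{e}t}}(U_{\bar{k}}, \mathbb{Z}_p)$ with $H^i((X, X_{\bar{k}}), \hat{\mathbb{Z}}_p)$, where $\hat{\mathbb{Z}}_p = \varprojlim \nu^*(u_{X,*}(\mathbb{Z}/p^m))$, and the vanishing $R^j\varprojlim = 0$ recorded there lets me pass freely between the finite and profinite coefficients. The finiteness of all these groups (\Cref{finiteness remark}) will be used repeatedly to commute cohomology with the inverse limits below. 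A natural, hence $\mathrm{Gal}(\bar{k}/k)$-equivariant, comparison map $H^i((X,X_{\bar{k}}),\hat{\mathbb{Z}}_p)\otimes_{\mathbb{Z}_p}B_{\mathrm{dR}}^+ \to H^i((X,X_{\bar{k}}),\mathbb{B}_{\mathrm{dR}}^+)$ is induced by the unit $\hat{\mathbb{Z}}_p \to \mathbb{B}_{\mathrm{dR}}^+$ together with the $B_{\mathrm{dR}}^+$-module structure, and this map is filtered for the $\mathrm{Fil}$-adic filtrations on both sides.

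The heart of the argument is the graded (Hodge--Tate) comparison. Using $\mathrm{gr}^j\mathbb{B}_{\mathrm{dR}} \cong \hat{\mathcal{O}}_{X_{\log}}(j)$ (\Cref{summary remark}), I would first prove that the natural map induces a quasi-isomorphism $R\Gamma((X,X_{\bar{k}}),\hat{\mathbb{Z}}_p)\otimes_{\mathbb{Z}_p} C(j) \xrightarrow{\sim} R\Gamma((X,X_{\bar{k}}), \mathrm{gr}^j\mathbb{B}_{\mathrm{dR}}^+)$ for every $j$. By twisting it suffices to treat $j=0$, i.e.\ the comparison with $\hat{\mathcal{O}}_{X_{\log}}$; this is obtained by bootstrapping the primitive comparison \Cref{thmpc} from $\mathbb{F}_p$ to $\mathbb{Z}/p^m$ by dévissage along the $p$-power filtration of $\mathcal{O}^+_{X_{\log}}/p^m$, passing to the limit over $m$ (where finiteness kills the relevant $R^1\varprojlim$, as in the proof of \Cref{thmpc} via the analogue of \cite[Lemma 3.18]{Sch13}), and finally inverting $p$ to turn the resulting almost isomorphism into an honest one. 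Granting this, I would run the comparison in the filtered derived category: for each $r$ the morphism of complexes $R\Gamma(\hat{\mathbb{Z}}_p)\otimes_{\mathbb{Z}_p} (B_{\mathrm{dR}}^+/\mathrm{Fil}^r) \to R\Gamma(\mathbb{B}_{\mathrm{dR}}^+/\mathrm{Fil}^r)$ is filtered with associated graded the direct sum of the $\mathrm{gr}^j$-comparisons just established, hence is a quasi-isomorphism because it is one on the graded pieces (the filtration being finite). This simultaneously yields the isomorphism on $H^i$ and the degeneration, avoiding any circular appeal to the latter.

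Finally I would pass to the limit over $r$. Since $\mathbb{B}_{\mathrm{dR}}^+ = \varprojlim_r \mathbb{B}_{\mathrm{inf}}/(\ker\theta)^r$ with $\ker\theta$ principal (\Cref{summary remark}) and the cohomology groups are finite, a Mittag--Leffler argument gives $H^i(\mathbb{B}_{\mathrm{dR}}^+) = \varprojlim_r H^i(\mathbb{B}_{\mathrm{dR}}^+/\mathrm{Fil}^r)$ and commutes this limit past $H^i(\hat{\mathbb{Z}}_p)\otimes_{\mathbb{Z}_p}(-)$, producing the desired isomorphism $H^i_{\mathrm{\acute{e}t}}(U_{\bar{k}},\mathbb{Z}_p)\otimes_{\mathbb{Z}_p}B_{\mathrm{dR}}^+ \cong H^i((X,X_{\bar{k}}),\mathbb{B}_{\mathrm{dR}}^+)$; as every map is natural, the isomorphism is $\mathrm{Gal}(\bar{k}/k)$-equivariant. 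I expect the main obstacle to be the base case of the third step, namely upgrading the mod $p$ primitive comparison to the integral Hodge--Tate comparison with $\hat{\mathcal{O}}_{X_{\log}}$-coefficients while controlling the almost-mathematics and the exactness of the limits, together with the bookkeeping required to commute cohomology with the $\mathrm{Fil}$-adic limit in the final step.
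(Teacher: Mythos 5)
Your proposal is correct, but it takes a genuinely different route from the paper, whose proof follows \cite[Theorem 8.4]{Sch13} essentially verbatim: there one first proves $H^i((X,X_{\bar k}),\mathbb{Z}/p^m)\otimes_{\mathbb{Z}_p} A_{\mathrm{inf}}^a \cong H^i((X,X_{\bar k}),\mathbb{A}_{\mathrm{inf}}^a/p^m)$ by induction on $m$ starting from the primitive comparison (\Cref{rmkpc}), passes to the limit over $m$ by the almost version of \cite[Lemma 3.18]{Sch13}, inverts $p$, and then --- this is precisely the step your route replaces --- uses that the Teichm\"uller ideal $[\hat{\mathfrak{m}}]$ generates the unit ideal in $B_{\mathrm{inf}}/\ker\theta$, so that multiplication by $\xi^l$ turns the almost isomorphism into an honest isomorphism modulo $(\ker\theta)^l$, before taking $\varprojlim_l$. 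You instead pass from almost to honest earlier, by inverting $p$ over $C=\hat{\bar{k}}$ (valid since $\hat{\mathfrak{m}}\cdot C = C$), thereby establishing the Hodge--Tate graded comparison with $\hat{\mathcal{O}}_{X_{\log}}(j)$-coefficients first, and then climb the $\xi$-adic filtration of $\mathbb{B}_{\mathrm{dR}}^+/\mathrm{Fil}^r$ by d\'evissage on graded pieces --- legitimate because $\xi$ is locally a non-zero-divisor (\Cref{summary remark}), so the filtration sequences are exact as sheaves --- before taking $\varprojlim_r$; note that your final Mittag--Leffler step is, in substance, again the statement $\mathbb{B}_{\mathrm{dR}}^+ = R\varprojlim_r \mathbb{B}_{\mathrm{dR}}^+/\mathrm{Fil}^r$ verified through \cite[Lemma 3.18]{Sch13}, just as in the paper. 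What your decomposition buys is that the Hodge--Tate comparison appears as an explicit intermediate result (the paper only extracts it a posteriori, by combining \Cref{first comparison} with this theorem), and the almost-to-honest passage is more transparent; what the paper's route buys is a single $p$-power d\'evissage performed once at the $\mathbb{A}_{\mathrm{inf}}$-level, with no need to verify the comparison twist by twist or to track filtered maps of complexes. One detail worth making explicit in your bootstrap from $\mathbb{F}_p$ to $\mathbb{Z}/p^m$: the exactness of $0\to u_{X,*}(\mathbb{Z}/p)\to u_{X,*}(\mathbb{Z}/p^m)\to u_{X,*}(\mathbb{Z}/p^{m-1})\to 0$ on $X_{\log}$ uses the vanishing $R^1u_{X,*}=0$ from \Cref{comparison logtopos}; the same point is implicit in the paper's induction.
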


\begin{remark}
\label{remark second comparison}
Here by $H^i_{\mathrm{\acute{e}t}}(U_{\bar{k}},\mathbb{Z}_p)$ we mean 
$\varprojlim_m H^i_{\mathrm{\acute{e}t}}(U_{\bar{k}},\mathbb{Z}/p^m)$. Note that $U$ is the complement of an SSNC divisor in a proper smooth adic spaces. It is easy to check that $H^i_{\mathrm{\acute{e}t}}(U_{\bar{k}},\mathbb{Z}/p^m) = 
\varinjlim_{l/k} H^i_{\mathrm{\acute{e}t}}(U_{l},\mathbb{Z}/p^m)$ 
where the left hand side is understood as the \'{e}tale cohomology of $\mathbb{Z}/p^m$ on the adic space $U_{\bar{k}}$ and the colimit on the right hand side is taking over finite field extensions $l$ of $k$.

It follows from Theorem \ref{comparison logtopos} that \[H^i(U_l,\mathbb{Z}/p^m)=H^i((X,X_l),\mathbb{Z}/p^m).\]
By \Cref{limit remark}, we can take the colimit over finite field extensions $l$ of $k$ and get
\[H^i(U_{\overline{k}},\mathbb{Z}/p^m)=H^i((X,X_{\overline{k}}),\mathbb{Z}/p^m).\]
Taking inverse limit over $m$, we have
\[\varprojlim_m H^i(U_{\overline{k}},\mathbb{Z}/p^m)=R\varprojlim_m H^i((X,X_{\overline{k}}),\mathbb{Z}/p^m)=H^i((X,X_{\overline{k}}),\varprojlim_m \mathbb{Z}/p^m)  \]
where the first identity is due to the finiteness of $H^i((X,X_{\overline{k}}),\mathbb{Z}/p^m)$ (\Cref{summary remark}) and the second identity is due to \Cref{lisse sheaf on prolog} and the fact that $R\varprojlim$ and $R\Gamma((X,X_{\overline{k}}),-)$ commutes. Therefore, we have
\[
H^i_{\mathrm{\acute{e}t}}(U_{\bar{k}},\mathbb{Z}_p) 
\cong H^i((X,X_{\bar{k}}),\hat{\mathbb{Z}}_p).
\]
\end{remark}

Before we start the proof of \Cref{second comparison}, we need a preliminary discussion
on A-R $p$-adic projective systems, c.f.~\cite[10.1]{fulei}.

\begin{lemma}
\label{lemma take inverse limit}
Let $\mathbb{L} = \varprojlim \nu^*(u_{X,*}\mathbb{L}_m)$
be a lisse sheaf of $\hat{\mathbb{Z}}_p$-modules on
$X_{\mathrm{prolog}}$. 
Let $H_m$ be the cohomology group
$H^i(U_{\overline{k}},\mathbb{L}_m)=H^i((X,X_{\overline{k}}),\mathbb{L}_m)$. 
Then the system $(H_m)_{m\in \mathbb{N}}$ is A-R $p$-adic.
\end{lemma}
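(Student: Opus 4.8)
The plan is to verify directly the criterion for an A-R $p$-adic system recorded in \cite[10.1]{fulei}. Since we only treat $\mathbb{L}_m = \mathbb{Z}/p^m$, each $H_m = H^i(U_{\bar{k}},\mathbb{Z}/p^m)$ is a $\mathbb{Z}/p^m$-module, the transition maps $H_{m+1} \to H_m$ being induced by the reduction $\mathbb{Z}/p^{m+1} \twoheadrightarrow \mathbb{Z}/p^m$. The two ingredients I need are: (i) that every $H_m$ is a finite group, and (ii) that the kernels and cokernels of all transition maps, together with the graded pieces for the multiplication-by-$p$ structure, have order bounded independently of $m$. Granting these, the system differs from the $p$-adic system attached to the finitely generated $\mathbb{Z}_p$-module $\varprojlim_m H_m$ only by an A-R-null system, which is exactly the assertion.

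For (i), \Cref{remark second comparison} identifies $H_m$ with $H^i((X,X_{\bar{k}}),\mathbb{Z}/p^m)$, and \Cref{finiteness remark} (via the primitive comparison \Cref{thmpc}) gives finiteness of $H^j((X,X_{\bar{k}}),\mathbb{F}_p)$ in every degree $j$; dévissage along $0 \to \mathbb{F}_p \to \mathbb{Z}/p^m \to \mathbb{Z}/p^{m-1} \to 0$ and induction on $m$ then yield finiteness of each $H_m$. Set $d_j \coloneqq \dim_{\mathbb{F}_p} H^j((X,X_{\bar{k}}),\mathbb{F}_p) < \infty$; these numbers are fixed, independent of $m$, and they will supply the uniform bounds.

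For (ii), I would feed the two short exact sequences of sheaves on $X_{\mathrm{prolog}}$,
\[
0 \to \mathbb{F}_p \xrightarrow{p^{m}} \mathbb{Z}/p^{m+1} \to \mathbb{Z}/p^{m} \to 0
\quad\text{and}\quad
0 \to \mathbb{Z}/p^{m} \xrightarrow{p} \mathbb{Z}/p^{m+1} \to \mathbb{F}_p \to 0,
\]
into the long exact cohomology sequences. From the first, $\ker(H_{m+1} \to H_m)$ is a quotient of $H^i((X,X_{\bar{k}}),\mathbb{F}_p)$ and $\mathrm{coker}(H_{m+1} \to H_m)$ injects into $H^{i+1}((X,X_{\bar{k}}),\mathbb{F}_p)$, so both have order at most $p^{d_i}$ and $p^{d_{i+1}}$ respectively; the second sequence controls in the same manner the torsion and the mod-$p$ reduction governing the multiplication-by-$p$ maps. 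Hence all the relevant kernels and cokernels have order bounded by $p^{d_i + d_{i+1}}$, uniformly in $m$.

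The remaining step is purely formal: I invoke the criterion of \cite[10.1]{fulei} to pass from these uniform bounds to the conclusion that $(H_m)_{m \in \mathbb{N}}$ is A-R $p$-adic. The main obstacle is precisely this last translation: the uniform control of kernels and cokernels is the genuine geometric input, but one must still match it against the precise definition of an A-R $p$-adic system, i.e.\ exhibit the A-R isomorphism with the $p$-adic system associated to $\varprojlim_m H_m$, which is the bookkeeping carried out in loc.\ cit.; the entire contribution of the geometry is contained in producing the single finite bound $d_i + d_{i+1}$.
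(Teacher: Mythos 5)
Your two preliminary ingredients are fine: the finiteness of each $H_m$ (via the primitive comparison, \Cref{thmpc} and \Cref{finiteness remark}, plus d\'{e}vissage along $0 \to \mathbb{F}_p \to \mathbb{Z}/p^{m+1} \to \mathbb{Z}/p^{m} \to 0$) and the uniform bounds on kernels and cokernels extracted from the long exact sequences are both correct, and the paper needs the same finiteness input. The gap is the step you call ``purely formal'': there is no criterion in \cite[10.1]{fulei} passing from uniform bounds on kernels and cokernels of the transition maps (and of multiplication by $p$) to A-R $p$-adicity, and the implication is in fact false. Consider $M_m = \mathbb{Z}/p^{\lfloor m/2 \rfloor}$ with the natural projections as transition maps: each level is finite and killed by $p^m$, the transitions are surjective with kernels of order at most $p$, and multiplication by $p$ has kernel and cokernel of order at most $p$ --- so all the bounds you establish hold with a single constant --- yet this system is not A-R $p$-adic. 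Indeed, since $\varprojlim$ is invariant under A-R isomorphism and $\varprojlim_m M_m = \mathbb{Z}_p$, the only candidate $p$-adic system is $(\mathbb{Z}/p^{m+1})_m$; but an A-R isomorphism requires, for some fixed shift $k$, composites $\mathbb{Z}/p^{m+2k+1} \to M_{m+k} \to \mathbb{Z}/p^{m+1}$ equal to the canonical (surjective) projections, which is impossible because the middle group has exponent $p^{\lfloor (m+k)/2 \rfloor} < p^{m+1}$ for $m$ large. The moral is that bounded kernels and cokernels of transition maps do not record how those maps interact with the $p$-power filtration, and it is exactly that interaction which A-R $p$-adicity encodes; your claim that the system ``differs from the $p$-adic system attached to $\varprojlim_m H_m$ only by an A-R-null system'' is the entire content of the lemma, not a consequence of the bounds.

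This is precisely why the paper's proof works at the level of complexes rather than cohomology groups. Setting $K^{\bullet}_m = R\Gamma(U_{\bar{k}},\mathbb{L}_m)$, it first shows that the natural map $u_n \colon K^{\bullet}_{n+1} \otimes^{L}_{\mathbb{Z}/p^{n+1}} \mathbb{Z}/p^{n} \to K^{\bullet}_{n}$ is an isomorphism in the derived category (computing with a truncated injective resolution of $\mathbb{L}_{n+1}$ and a free resolution of $\mathbb{Z}/p^{n}$ over $\mathbb{Z}/p^{n+1}$, using $\mathbb{L}_{n+1}/p^{n} \cong \mathbb{L}_{n}$); it then replaces $K^{\bullet}_n$ by a bounded complex of flat modules --- the key point being that $\mathrm{coker}(K_n^{-1} \to K_n^{0})$ is flat over $\mathbb{Z}/p^{n}$, checked by a Tor computation and the local flatness criterion \cite[Theorem 22.3]{Mat} --- and, using the finiteness of cohomology, by complexes $L^{\bullet}_n$ of finite free $\mathbb{Z}/p^{n}$-modules with honest quasi-isomorphisms $L^{\bullet}_{n+1} \otimes_{\mathbb{Z}/p^{n+1}} \mathbb{Z}/p^{n} \to L^{\bullet}_{n}$ (\cite[Lemmas 10.1.13 and 10.1.14]{fulei}). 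Only then does \cite[Proposition 10.1.15]{fulei} apply, and it is that proposition --- whose hypotheses concern the system of \emph{complexes}, not the system of cohomology groups --- that delivers A-R $p$-adicity of $(H_m)$. To repair your argument you would have to reproduce this d\'{e}vissage at the complex level; the long exact sequences alone cannot do it.
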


\begin{proof}
The proof is similar to the case of schemes. We may assume that the inverse system $\mathbb{L}^{\bullet}$ satisfies $\mathbb{L}_{m+1}/p^m\cong \mathbb{L}_{m}$. We apply results in the theory of $l$-adic systems to prove this lemma. In fact, we denote $R\Gamma (U_{\overline{k}},\mathbb{L}_m)$ by $K^{\bullet}_m$. We claim that the natural maps 
\[
\label{un}
\tag{\epsdice{4}}
u_n: K_{n+1}^{\bullet} \otimes^{L}_{\mathbb{Z}/p^{n+1}} \mathbb{Z}/p^n \xrightarrow{\cong} K_n^{\bullet} 
\]
are isomorphisms in the derived category. Note that $H^j(K^{\bullet}_n)=H^j(U_{\overline{k}},\mathbb{L}_n)$ is zero if $j\notin [0,2\dim (X)]$.
Represent each $K^{\bullet}_n$ by a bounded above complex of flat $\mathbb{Z}/p^n$-modules with $K_n^j=0$ for $j> 2\dim(X)$. Moreover, the complex $\ldots\rightarrow K_n^{-1}\rightarrow K_n^0 \rightarrow 0$ is a resolution of $\mathrm{coker}(K_n^{-1}\rightarrow K_n^0)$ by flat $\mathbb{Z}/p^{n}$-modules. It follows that
\[\mathrm{Tor}^i_{\mathbb{Z}/p^n}\bigg( (\mathrm{coker}(K_n^{-1}\rightarrow K_n^0), \mathbb{Z}/p\bigg)= H^{-i}( K_n^{\bullet}\otimes_{\mathbb{Z}/p^n} \mathbb{Z}/p)= H^{-i}(K_1^{\bullet})=0\]
for $i>0$ where we use the fact that $K_n^{\bullet} \otimes_{\mathbb{Z}/p^n} \mathbb{Z}/p \cong K_1^{\bullet}.$ Therefore, by the local flatness criterion \cite[Theorem 22.3]{Mat}, we conclude that $\mathrm{coker}(K_n^{-1}\rightarrow K_n^0)$ is a flat $\mathbb{Z}/p^n$-modules. It follows that the complex $K_n^{\bullet}$ is quasi-isomorphic to the bounded complex of flat $\mathbb{Z}/p^n$-modules
\[0\rightarrow \mathrm{coker}(K_n^{-1}\rightarrow K_n^0)\rightarrow K_n^1\rightarrow\ldots \rightarrow K_n^{2\dim (X)}\rightarrow 0 .\]
By \cite[Lemma 10.1.14]{fulei}, each complex $K_n^{\bullet}$ is isomorphic in the derived category to a complex $L_n^{\bullet}$ of free $\mathbb{Z}/p^n$-modules of finite ranks with $L^{j}_n=0$ for $j\notin [0,2\dim(X)]$. The natural isomorphism $u_n$ gives an isomorphism \[v_n:L_{n+1}^{\bullet} \otimes^{L}_{\mathbb{Z}/p^{n+1}} \mathbb{Z}/p^n \xrightarrow{\cong} L_n^{\bullet}\] in the derived category. By \cite[Lemma 10.1.13]{fulei}, this isomorphism $v_n$ is induced by a quasi-isomorphism $L_{n+1}^{\bullet} \otimes^{L}_{\mathbb{Z}/p^{n+1}} \mathbb{Z}/p^n \xrightarrow{\cong} L_n^{\bullet}$ of complexes. We apply \cite[Proposition 10.1.15]{fulei} to the system $(L^{\bullet}_m)_{m\in \mathbb{Z}}$ and show that $H^i(L_m^{\bullet})=H_m$ is A-R $p$-adic.

We give a proof of our claim as follows.
\begin{lemma}
The natural morphism $u_n$ (see \ref{un}) is an isomorphism in the derived category.
\end{lemma}

\begin{proof}
Take an injective resolution of the $\mathbb{Z}/p^m$-modules 
\[\mathbb{L}_m\xrightarrow{qis} I^0\rightarrow I^1\rightarrow \ldots.\] 
Note that $H^j(K^{\bullet}_m)=H^j(U_{\overline{k}},\mathbb{L}_m)$ is zero if $j\notin [0,2\dim (X)]$. The truncated complex $I'^{\bullet}$
\[I^0\rightarrow \ldots \rightarrow \mathrm{Im}(I^{2\dim(X) -1} \rightarrow I^{2\dim(X)})\rightarrow 0\]
is an $R\Gamma(U_{\overline{k}},-)$-acyclic resolution of $\mathbb{L}_m$. In the following, we let $m=n+1$.
Take a resolution $A^{\bullet}$ of $\mathbb{Z}/p^n$ by free $\mathbb{Z}/p^{n+1}$-modules
\[\ldots\rightarrow A^{-1} \rightarrow A^0 \rightarrow \mathbb{Z}/p^{n} \rightarrow 0.\]
We have that
\[\begin{aligned}
\mathbb{Z}/p^{n} \otimes^L_{\mathbb{Z}/p^{n+1}} K^{\bullet}_{n+1} &\cong A^{\bullet} \otimes_{\mathbb{Z}/p^{n+1}} \Gamma(U_{\overline{k}}, I'^{\bullet})\\
&\cong \Gamma(U_{\overline{k}}, A^{\bullet}\otimes_{\mathbb{Z}/p^{n+1}} I'^{\bullet})\\
&\cong R\Gamma(U_{\overline{k}}, \mathbb{Z}/p^n \otimes^L_{\mathbb{Z}/p^{n+1}} I'^{\bullet})\\
&\cong R\Gamma(U_{\overline{k}}, \mathbb{L}_n)=K_n^{\bullet}
\end{aligned}
 \]
	where the second isomorphism is due to that $A^i$ are free $\mathbb{Z}/p^{n+1}$-modules, the third isomorphism is due to that $A^i\otimes I'^j$ is $R\Gamma$-acyclic and the last isomorphism is due to our assumption $\mathbb{L}_{n+1}/{p^n}\cong \mathbb{L}_{n} $  .
\end{proof}

\end{proof}

\begin{proof}[Proof of \Cref{second comparison}]
This follows from the argument in~\cite[Theorem 8.4]{Sch13}, for the sake of completeness let us repeat the argument
in below.

First we claim that
\[
H^i((X,X_{\bar{k}}),\mathbb{Z}/p^m) \otimes_{\mathbb{Z}_p} A_{\mathrm{inf}}^a \cong
H^i((X,X_{\bar{k}}), \mathbb{A}_{\mathrm{inf}}^a/p^m).
\]
Indeed, when $m=1$ this follows from \Cref{rmkpc} 
(applied to $\mathbb{L} = \mathbb{F}_p$) and the general case follows from induction. 
Notice that the almost setting here is with respect to 
$[\hat{\mathfrak{m}}]$, the ideal generated by $([a],a \in \hat{\mathfrak{m}})$
where $\hat{\mathfrak{m}}$ is the maximal ideal in $\hat{\bar{k}}^{\circ}$. Now the sheaves 
$\mathbb{A}_{\mathrm{inf}}^a/p^m$ satisfy the hypotheses of the almost
version of~\cite[Lemma 3.18]{Sch13}. Therefore we may pass to the inverse limit 
$\mathbb{A}_{\mathrm{inf}}^a$ and get an almost isomorphism
\[
H^i((X,X_{\bar{k}}),\hat{\mathbb{Z}}_p) \otimes_{\mathbb{Z}_p} A_{\mathrm{inf}}^a \cong
H^i((X,X_{\bar{k}}),  \mathbb{A}_{\mathrm{inf}}^a).
\]
Now we invert $p$ and get almost isomorphisms
\[
H^i((X,X_{\bar{k}}),\hat{\mathbb{Z}}_p) \otimes_{\mathbb{Z}_p} B_{\mathrm{inf}}^a \cong
H^i((X,X_{\bar{k}}), \mathbb{B}_{\mathrm{inf}}^a).
\]
Since $[\hat{\mathfrak{m}}]$ becomes the unit ideal in $B_{\inf}/\ker(\theta)$, multiplication by $\xi^l$ 
(where $\xi$ is any generator in $\ker(\theta)$) gives that
\[
H^i((X,X_{\bar{k}}),\hat{\mathbb{Z}}_p) \otimes_{\mathbb{Z}_p} B_{\mathrm{inf}}/(\ker(\theta))^l \cong
H^i((X,X_{\bar{k}}), \mathbb{B}_{\mathrm{inf}}/(\ker(\theta))^l).
\]
Again the sheaves $\mathbb{B}_{\mathrm{inf}}/(\ker(\theta))^l$
satisfy the conditions in~\cite[Lemma 3.18]{Sch13}, hence we have that
\[
H^i((X,X_{\bar{k}}),\hat{\mathbb{Z}}_p) \otimes_{\mathbb{Z}_p} B_{\mathrm{dR}}^+ \cong
H^i((X,X_{\bar{k}}),\mathbb{B}_{\mathrm{dR}}^+),
\]
which is what we want by \Cref{remark second comparison}.
\end{proof}

Finally let us show \Cref{Main Theorem}, which we restate below.

\begin{theorem}
The Hodge--de Rham spectral sequence
\[
\xymatrix{ 
E_1^{j,i} = H^i(X,\Omega_X^j(\log D))\ar@{=>}[r] & H^{i+j}(X,\Omega^{\bullet}_{X}(\log D))
}
\]
degenerates, and there is a $\mathrm{Gal}(\bar{k}/k)$-equivariant isomorphism
\[
H^i_{\mathrm{\acute{e}t}}(U_{\bar{k}},\mathbb{Z}_p) 
\otimes_{\mathbb{Z}_p} B_{\mathrm{dR}} \cong
H^i(X,\Omega^{\bullet}_{X}(\log D)) \otimes_k B_{\mathrm{dR}}
\]
preserving filtrations. In particular, there is also a $\mathrm{Gal}(\bar{k}/k)$-equivariant isomorphism
\[
H^i_{\mathrm{\acute{e}t}}(U_{\bar{k}},\mathbb{Z}_p) 
\otimes_{\mathbb{Z}_p} \hat{\bar{k}} \cong
\bigoplus_j H^{m-j}(X,\Omega_X^j(\log D ))\otimes_k \hat{\bar{k}}(-j).
\]
\end{theorem}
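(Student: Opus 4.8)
The plan is to obtain all three assertions by feeding the $B_{\mathrm{dR}}^+$-comparison of \Cref{second comparison} into the filtered de Rham comparison of \Cref{first comparison}, and then running a dimension count; essentially all of the substance is already contained in those two theorems, so what remains is homological bookkeeping. Write $H^m_{\mathrm{dR}} \coloneqq H^m(X,\Omega^{\bullet}_X(\log D))$ and $d_m \coloneqq \dim_{\mathbb{Q}_p}\big(H^m_{\mathrm{\acute et}}(U_{\bar k},\mathbb{Z}_p)\otimes\mathbb{Q}_p\big)$; both are finite by \Cref{Theorem C}, \Cref{finiteness remark} and \Cref{summary remark}. Since $B_{\mathrm{dR}}^+$ is a $\mathbb{Q}_p$-algebra, \Cref{second comparison} reads
\[
H^i\big((X,X_{\bar k}),\mathbb{B}_{\mathrm{dR}}^+\big)\;\cong\;\big(H^i_{\mathrm{\acute et}}(U_{\bar k},\mathbb{Z}_p)\otimes\mathbb{Q}_p\big)\otimes_{\mathbb{Q}_p}B_{\mathrm{dR}}^+,
\]
so the left side is a \emph{free} $B_{\mathrm{dR}}^+$-module of rank $d_i$. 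Tensoring over $B_{\mathrm{dR}}^+$ with $B_{\mathrm{dR}}$ and plugging into the first isomorphism of \Cref{first comparison} (and identifying Faltings-site with rigid-space de Rham cohomology via \Cref{pushforward vector bundle}) yields the $\mathrm{Gal}(\bar k/k)$-equivariant, filtration-preserving isomorphism
\[
H^i_{\mathrm{\acute et}}(U_{\bar k},\mathbb{Z}_p)\otimes_{\mathbb{Z}_p}B_{\mathrm{dR}}\;\cong\;H^i_{\mathrm{dR}}\otimes_k B_{\mathrm{dR}},
\]
which is the middle assertion. Comparing $B_{\mathrm{dR}}$-ranks gives $d_i=\dim_k H^i_{\mathrm{dR}}$.

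For the Hodge--Tate decomposition I would next pass from $\mathbb{B}_{\mathrm{dR}}^+$ to $\hat{\mathcal{O}}_{X_{\log}}$. Using $\mathrm{gr}^0\mathbb{B}_{\mathrm{dR}}\cong\hat{\mathcal{O}}_{X_{\log}}$ (\Cref{summary remark}) there is a short exact sequence $0\to\mathbb{B}_{\mathrm{dR}}^+(1)\xrightarrow{\;t\;}\mathbb{B}_{\mathrm{dR}}^+\to\hat{\mathcal{O}}_{X_{\log}}\to 0$, where $t$ generates $\mathrm{Fil}^1$. As $H^i(\mathbb{B}_{\mathrm{dR}}^+)$ is free, hence $t$-torsion free, multiplication by $t$ is injective on cohomology, the connecting maps in the associated long exact sequence vanish, and we obtain a $\mathrm{Gal}(\bar k/k)$-equivariant isomorphism
\[
H^i\big((X,X_{\bar k}),\hat{\mathcal{O}}_{X_{\log}}\big)\;\cong\;H^i(\mathbb{B}_{\mathrm{dR}}^+)/t\;\cong\;H^i_{\mathrm{\acute et}}(U_{\bar k},\mathbb{Z}_p)\otimes_{\mathbb{Z}_p}\hat{\bar k}.
\]
Composing with the second (Hodge--Tate) isomorphism of \Cref{first comparison}, $H^i(\hat{\mathcal{O}}_{X_{\log}})\cong\bigoplus_{a+b=i}H^a(X,\Omega^b_X(\log D))\otimes_k\hat{\bar k}(-b)$, already produces the stated decomposition
\[
H^i_{\mathrm{\acute et}}(U_{\bar k},\mathbb{Z}_p)\otimes_{\mathbb{Z}_p}\hat{\bar k}\;\cong\;\bigoplus_j H^{i-j}(X,\Omega^j_X(\log D))\otimes_k\hat{\bar k}(-j),
\]
the twists being tracked by the two Galois-equivariant isomorphisms. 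Taking $\hat{\bar k}$-dimensions gives $d_i=\sum_{a+b=i}\dim_k H^a(X,\Omega^b_X(\log D))$.

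The degeneration then follows purely formally from the two dimension identities. The Hodge--de Rham spectral sequence $E_1^{j,i}=H^i(X,\Omega^j_X(\log D))\Rightarrow H^{i+j}_{\mathrm{dR}}$ is a bounded spectral sequence of finite-dimensional $k$-vector spaces, so for every $m$ one has $\dim_k H^m_{\mathrm{dR}}\le\sum_{i+j=m}\dim_k E_1^{j,i}$, with equality for all $m$ if and only if it degenerates at $E_1$. The first paragraph gives $d_m=\dim_k H^m_{\mathrm{dR}}$ and the second gives $d_m=\sum_{i+j=m}\dim_k E_1^{j,i}$, so the two sides coincide and degeneration holds. Alternatively one may read off degeneration by taking $\mathrm{gr}^0$ of the filtered comparison, which gives $H^m_{\mathrm{\acute et}}\otimes\hat{\bar k}\cong\bigoplus_a\mathrm{gr}^a_{\mathrm{Hodge}}H^m_{\mathrm{dR}}\otimes_k\hat{\bar k}(-a)$, and comparing with the Hodge--Tate decomposition above.

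The one place that needs genuine care — and which I regard as the main, if modest, obstacle — is the transition from $\mathbb{B}_{\mathrm{dR}}^+$-cohomology to $\hat{\mathcal{O}}_{X_{\log}}$-cohomology: one must first use \Cref{second comparison} to conclude that $H^i(\mathbb{B}_{\mathrm{dR}}^+)$ is free (so that multiplication by $t$ is injective and the long exact sequence collapses into short exact sequences), and then verify that every isomorphism in play is simultaneously compatible with filtrations and with the $\mathrm{Gal}(\bar k/k)$-action, so that the Hodge--Tate weights emerge precisely as the Tate twists $\hat{\bar k}(-j)$ recorded in the statement.
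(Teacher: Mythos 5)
Your proposal is correct and follows essentially the same route as the paper's own proof: both use \Cref{second comparison} to see that $H^i((X,X_{\bar k}),\mathbb{B}_{\mathrm{dR}}^+)$ is a free $B_{\mathrm{dR}}^+$-module, feed this into the two isomorphisms of \Cref{first comparison}, and obtain degeneration from the resulting dimension identities together with the standard spectral-sequence inequality. The only difference is one of explicitness: where the paper compresses the passage from freeness to the equality $\sum_j \dim_k H^{i-j}(X,\Omega_X^j(\log D)) = \dim_k H^i(X,\Omega^{\bullet}_X(\log D))$ into a single sentence, you spell out the underlying mechanism --- the short exact sequence $0 \to \mathbb{B}_{\mathrm{dR}}^+(1) \xrightarrow{t} \mathbb{B}_{\mathrm{dR}}^+ \to \hat{\mathcal{O}}_{X_{\log}} \to 0$, injectivity of multiplication by $t$ on the free cohomology modules, and the identification $H^i((X,X_{\bar k}),\hat{\mathcal{O}}_{X_{\log}}) \cong H^i_{\mathrm{\acute et}}(U_{\bar k},\mathbb{Z}_p)\otimes_{\mathbb{Z}_p}\hat{\bar k}$ --- which is a faithful filling-in of the paper's implicit step rather than a new argument.
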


\begin{proof}
By~\Cref{second comparison}, we have
\[
H^i_{\mathrm{\acute{e}t}}(U_{\bar{k}},\mathbb{Z}_p) 
\otimes_{\mathbb{Z}_p} B_{\mathrm{dR}}^+ \cong
H^i((X,X_{\bar{k}}),\mathbb{B}_{\mathrm{dR}}^+).
\]
In particular, $H^i((X,X_{\bar{k}}),\mathbb{B}_{\mathrm{dR}}^+)$ 
is a free $B_{\mathrm{dR}}^+$-module of finite rank.
This, together with~\Cref{first comparison}, implies that
\[
\sum_j \dim_k H^{i-j}(X,\Omega_X^j(\log D)) = 
\dim_{B_{\mathrm{dR}}} 
(H^{i}(X,\Omega^{\bullet}_{X}(\log D)) \otimes_k B_{\mathrm{dR}}),
\]
hence the Hodge--de Rham spectral sequence degenerates. Also by~\Cref{first comparison}, we get
\[
H^i_{\mathrm{\acute{e}t}}(U_{\bar{k}},\mathbb{Z}_p) 
\otimes_{\mathbb{Z}_p} B_{\mathrm{dR}} \cong
H^i((X,X_{\bar{k}}),\mathbb{B}_{\mathrm{dR}}^+) \otimes_{B_{\mathrm{dR}}^+} B_{\mathrm{dR}} \cong H^i(X,\Omega^{\bullet}_{X}(\log D)) \otimes_k B_{\mathrm{dR}}.
\]
\end{proof}

% * End of document
% ** Bibliography
\bibliographystyle{amsalpha}
\bibliography{padichodge}

\end{document}